\newcommand{\overbar}{\overline}
\newcommand{\e}{\varepsilon}
\newcommand{\R}{\mathbb{R}}
\newcommand{\N}{\mathbb{N}}
\newcommand{\Filt}{\mathrm{Filt}}
\newcommand{\vertex}{v}
\newcommand\paramspace{\manifoldm}
\newcommand\param{\theta}
\newcommand\parametrization{F}
\newcommand\simplex{\sigma}
\newcommand\manifoldm{\mathcal{M}}
\newcommand\manifoldn{\mathcal{N}}
\newcommand\manifoldx{\mathcal{X}}
\newcommand\elementm{x}
\newcommand\obarcode{\tilde{D}}
\newcommand\GenPos{\tilde{\mathcal{P}}}
\newcommand\GenPosA{\tilde{\mathcal{A}}}
\newcommand\Sing{\mathrm{Sing}}
\newcommand\Crit{\mathrm{Crit}}
\newcommand\Dgm{\mathrm{Dgm}}
\newcommand\Perm{\mathrm{Perm}}
\newcommand\db{d_\infty}
\DeclareMathOperator*{\argmax}{\mathrm{argmax}}
\newcommand{\rmathbb}[1]{{\mathpalette\dude@rmathbb{#1}}}
\newcommand{\dude@rmathbb}[2]{%
  \scalebox{-1}[1]{$\m@th#1\mathbb{#2}$}%
  }
\theoremstyle{plain}
\newtheorem{theorem}{Theorem}[section]
\newtheorem{lemma}[theorem]{Lemma}
\newtheorem{proposition}[theorem]{Proposition}
\newtheorem{corollary}[theorem]{Corollary}
\newtheorem*{theorem*}{Theorem}
\newtheorem*{proposition*}{Proposition}
\theoremstyle{definition}
\newtheorem{remark}[theorem]{Remark}
\newtheorem{definition}[theorem]{Definition}
\newtheorem{example}[theorem]{Example}
\begin{document}

	\title{A Framework for Differential Calculus on Persistence Barcodes}
	\author{
  Jacob Leygonie\\
  Mathematical Institute\\
  University of Oxford\\
  Oxford OX2 6GG, UK \\
  \texttt{jacob.leygonie@maths.ox.ac.uk} \\
   \And
 Steve Oudot \\
 INRIA Saclay – Île-de-France\\
  1, rue Honor\'e d'Estienne d'Orves\\
  91120 Palaiseau, France\\
  \texttt{steve.oudot@inria.fr} \\
  \AND
  Ulrike Tillmann \\
   Mathematical Institute \\
University of Oxford\\
  Oxford OX2 6GG, UK \\
   \texttt{tillmann@maths.ox.ac.uk} \\
}

	\maketitle

\begin{abstract}
We define notions of differentiability for
maps from and to the space of persistence barcodes. Inspired by the theory of diffeological spaces, the proposed framework uses lifts to the space of
ordered barcodes, from which derivatives can be computed. The two
derived notions of differentiability (respectively from and to the space
of barcodes) combine together naturally to produce a chain rule that
enables the use of gradient descent for objective functions factoring
through the space of barcodes. We illustrate the versatility of this
framework by showing how it can be used to analyze the smoothness of
various parametrized families of filtrations arising in  topological data analysis.

\textup{2020} \textit{Mathematics Subject Classification}: \textup{55N31, 62R40}
\end{abstract}

\tableofcontents
\newpage

\section{Introduction}
\label{section_introduction}
\subsection{Motivation}

\textit{Barcodes} have been introduced in topological data analysis (TDA) as a means to encode the topological structure of spaces and real-valued functions. 
They have been shown to provide complementary information compared to classical geometric or statistical methods, which explains their interest for applications. However, so far they have been essentially used as an alternative representation of the input, engineered by the user, as opposed to optimized to fit the problem best.

Optimizing barcodes using e.g. gradient descent requires to differentiate objective functions that factor through the space~$Bar$ of barcodes: 
\begin{equation} 
    \label{equation_composition}
    \ \xymatrix{ \manifoldm \ar[rr] && Bar \ar[rr] && \R,} 
    \end{equation}
where $\manifoldm$ is a parameter space equipped with a differential structure, typically a smooth finite-dimensional manifold. 
A compelling example arises in the context of supervised learning, where the barcodes can be used as features for data, generated by using some {\em filter function} $f:K\to\R$ on a fixed graph or simplicial complex~$K$.  Instead of considering~$f$ as a hyperparameter, it can be beneficial to optimize it among a family $\{f_\param{}:K\rightarrow \mathbb{R}\}_{\param{} \in \paramspace{}}$ parametrized by a smooth map which we call the \textit{parametrization}:
\[\parametrization{}:\param{}\in \paramspace{} \longmapsto f_{\param{}}\in \mathbb{R}^K.\]
Post-composing $F$ with the {\em persistent homology} operator~$\Dgm_p$ in homology degree~$p$ 
yields a map $\Dgm_p\circ F: \mathcal{M}\to Bar$. Given a loss function $\mathcal{L}:Bar\rightarrow \R$, the goal is then to minimize the functional

\begin{equation} 
    \label{equation_composition_machine_learning}
    \ \xymatrix{  \manifoldm \ar[rr]^{\Dgm_p\circ F} &&  Bar \ar[rr]^{\mathcal{L}} && \R}
    \end{equation}
using variational approaches, which are standard in large-scale learning applications. In order to do so, we need to put a sensible smooth structure on $Bar$ and to derive an analogue of the chain rule, so that we can compute the differential of $\mathcal{L}\circ \Dgm_p \circ F$ as the composition of the differentials of $\mathcal{L}$ and $\Dgm_p \circ F$. 
The difficulty arises as $Bar$ is not a manifold and so far has not been given a structure in which the above makes sense.

Beyond optimization, we want to be able to address other types of applications where differential calculus is involved.
For this,  a variety of potential scenarios must be considered, e.g. when the filter functions are defined on a  fixed smooth manifold, or when the second arrow in~\eqref{equation_composition} takes its values in~$\R^n$ or more generally in some smooth finite-dimensional manifold. The goal of our study is to provide a unified framework that accounts for all these scenarios.
\subsection{Related work}

Despite the lack of a smooth structure on the space~$Bar$, developing heuristic methods to differentiate the composition in Equation \eqref{equation_composition_machine_learning} has been an active direction of research lately, leading to innovative computational applications. In Table~\ref{tab:my-table}, we specify, for each of these contributions, the choice of parametrization $\parametrization{}$ and of loss function  $\mathcal{L}$, the optimization problem under consideration, and the sufficient conditions worked out to guarantee the differentiability of the composition in~\eqref{equation_composition_machine_learning}.

In the context of point cloud inference considered by~\cite{gameiro2016continuation}, the positions of points in a fixed Euclidean space form the parameter space~$\manifoldm$, and the resulting Rips filtration (resp. Alpha filtration) of the total complex on the point cloud is the parametrization $\parametrization{}$. The loss function~$\mathcal{L}$ is given by the least-squares approximation of a fixed barcode. By developing a clear functional point of view on the connection between the barcode of the Rips or Alpha filtration and the positions of the points in the cloud, based on lifts to Euclidean space, the authors show that~$\mathcal{L}$ is 
differentiable wherever the pairwise distances between points in the cloud are distinct. 
The approach is further refined by~\cite{chazal2018density}, where it is observed that the parametrization~$\parametrization$ is a subanalytic map, which implies that the barcode-valued map admits subanalytic (hence generically differentiable) lifts. In turn, this fact is leveraged to show that any probability measure with a density w.r.t. the Hausdorff measure on~$\manifoldm$ induces an expected persistence diagram (viewed as a measure in the plane) with a density w.r.t. the Lebesgue measure.

In many applications,~$\parametrization$ parametrizes \textit{lower-star filtrations}, i.e. filter functions induced by their restrictions to the vertices of~$K$ \citep{gabrielsson2018topology,chen2019topological,1905.12200,2019arXiv190510996H,hu2019topology,poulenard2018topological}. In~\cite{poulenard2018topological}, the problem of shape matching is cast into an optimization problem involving the barcodes of the shapes. \cite{chen2019topological} uses the degree-$0$ persistent homology as a regularizer for classifiers. Similarly, \cite{hu2019topology} proposes a persistence based regularization as an additional loss for deep learning models in the context of image segmentation. In~\cite{2019arXiv190510996H}, a dataset of graphs is seen as part of a bigger common simplicial complex, which allows to learn a filter function which is shared across the whole dataset. These contributions require the differentiability of~\eqref{equation_composition_machine_learning}, and they show that it holds whenever the filter function $f_\param$ is injective over the vertex set.

 Functions on a grid are used in~\cite{gabrielsson2018topology} to tackle the problem of surface reconstruction. These functions are sums of gaussians whose means and variances are parameters one wants to optimize according to an objective/loss that depends on the degree-$1$ persistent homology of the functions. \cite{1905.12200} considers optimization problems involving persistence with many useful applications as in generative modelling, classification robustness, and adversarial attacks. Both contributions need to take the derivative of~\eqref{equation_composition_machine_learning}, and to do so, they require the existence of an inverse  map taking interval endpoints in the persistence diagram $\Dgm_p(f_\param)$ to the corresponding vertices of $K$. This is a strictly weaker requirement than the injectivity of $f_\param$, as used in the previous contributions, because an inverse map always exists (provided for instance by the standard reduction algorithm for persistent homology). However, per se, it does not guarantee the differentiability of the composition---see e.g.~\cite{2019arXiv190510996H} for a counter-example.

\begin{table}[]
\centering
\tabulinesep=2pt
\begin{tabu}{|l|l|l|l|l|l}
\hline

\begin{tabular}[c]{@{}l@{}}\textbf{Targeted} \\ \textbf{Application}   \end{tabular}

         & 
         \begin{tabular}[c]{@{}l@{}}\textbf{Loss} \\ \textbf{function $\mathcal{L}$}   \end{tabular}
         
                                                                    & \textbf{Parametrization $\parametrization$}                                                                                 &  

\begin{tabular}[c]{@{}l@{}}\textbf{Conditions for} \\ \textbf{Differentiability}\\ \textbf{of ~\eqref{equation_composition_machine_learning}}    \end{tabular} 
                                                                                                                           \\ \hline

\begin{tabular}[c]{@{}l@{}}Topological regulariser \\ for classification \citep{chen2019topological}\end{tabular}                  &    

\begin{tabular}[c]{@{}l@{}}Penalty on the \\ degree-$0$ \\ persistent \\ homology\end{tabular}    & \multirow{6}{*}[4em]{\begin{tabular}{c} Parametrizations \\ of lower star \\ filtrations\end{tabular}}                   & \multirow{4}{*}[1.5em]{\begin{tabular}[c]{@{}l@{}}Injectivity of the filter \\ function over the vertices\end{tabular}}                                                    \\ \cline{1-2}

\begin{tabular}[c]{@{}l@{}} Topological regulariser \\ for image segmentation \citep{hu2019topology}     \end{tabular}                                                                                    &       \begin{tabular}[c]{@{}l@{}}Penalty on the \\ degrees $0$ and $1$ \\ persistent \\ homology\end{tabular}        &                                                                                                                             &                                                                                                                                                                       \\ \cline{1-2}

\begin{tabular}[c]{@{}l@{}} Graph classification \citep{2019arXiv190510996H}   \end{tabular} 
                                                                                        & \begin{tabular}[c]{@{}l@{}}Any loss of \\ supervised \\ learning
\end{tabular}                 &                                                                                                                             &                                                                                                                                                                       \\ \cline{1-2}

\begin{tabular}[c]{@{}l@{}} Shape matching \citep{poulenard2018topological}  \end{tabular}                                                                                      & \begin{tabular}[c]{@{}l@{}}Bottleneck or \\ Wasserstein \\ distance\end{tabular}                 &                                                                                                                             &                                                                                                                                                                       \\ \cline{1-2} \cline{4-4} 
\begin{tabular}[c]{@{}l@{}}Surface reconstruction \citep{gabrielsson2018topology}\end{tabular}                                      & \begin{tabular}[c]{@{}l@{}}Sum of \\ persistences\end{tabular}                                                                          &                                                                                                                             & \multirow{2}{*}[-1em]{\begin{tabular}[c]{@{}l@{}}Local correspondence between\\  barcodes interval endpoints \\ and vertices in the simplicial \\ complex\end{tabular}} \\ \cline{1-2}

\begin{tabular}[c]{@{}l@{}}Generative modelling,\\ Adversarial attacks, and\\ Regularization \citep{1905.12200}\end{tabular} & \begin{tabular}[c]{@{}l@{}}Weighted sum\\ of persistences\end{tabular}                       &                                                                                                                             &                                                                                                                                                                       \\ \hline
\begin{tabular}[c]{@{}l@{}}Point cloud continuation \citep{gameiro2016continuation}\end{tabular}                                    & \begin{tabular}[c]{@{}l@{}} Least-squares \\ approximation  \\ of a fixed\\ barcode
\end{tabular}
 & \begin{tabular}[c]{@{}l@{}}Point clouds \\ determining\\ a Rips filtration\end{tabular} & \begin{tabular}[c]{@{}l@{}}Distinct pairwise distances\\ between points\end{tabular}                                                                              \\ \hline
\end{tabu}\\[0.5em]
\caption{Current frameworks for differentiating the composition in~\eqref{equation_composition_machine_learning}. The first column lists the targeted applications. The second and third columns show the choices of loss function~$\mathcal{L}$  and parametrization $F$. The differentiability of $\mathcal{L}\circ \Dgm_p\circ \parametrization$ is guaranteed under the conditions listed in the fourth column.}
\label{tab:my-table}
\end{table}

This variety of applications motivates the search for  a unified framework for expressing the differentiability of the arrows in diagrams of the form:
\begin{equation}
\label{eq_loss_persistence}
 \xymatrix{\paramspace{} \ar[rr] && Bar \ar[rr] &&  \R. } 
\end{equation}

Since the first appearance of this paper as a preprint, there have been novel applications of persistence differentiability in optimisation. For instance, the first author has developped a graph classification framework based on the Laplacian operator~\cite{yim2021optimisation}, applying the differentiability of the persistence map (Theorem~\ref{theorem_PH_differentiable_global}) to the case of extended persistence. In addition, new heuristics to smooth and regularise loss functions as in Eq.~\eqref{eq_loss_persistence} improved the optimisation procedure for specific data science problems~\citep{corcoran2020regularization,solomon2020fast}. Another strong guarantee is provided when the loss in~Eq.~\eqref{eq_loss_persistence} is semi-algebraic (and more generally subanalytic or definable in some o-minimal structure), as then the classic stochastic gradient descent (SGD) algorithm converges to critical points~\cite{davis2020stochastic}. The bridge between this result in non-smooth analysis and persistence based optimisation problems is made in~\cite{carriere2020note}, where sufficient conditions for loss functions as in Eq.~\eqref{eq_loss_persistence} to be semi-algebraic are given. The main results of~\cite{carriere2020note} also derive from our general framework, see Remark~\ref{remark_definable_lift_for_definable_parametrization}. 
\subsection{Contributions and outline of the paper}

Ultimately, our framework should make it possible to determine when and how maps between smooth manifolds~$\manifoldm{}$ and~$\manifoldn{}$ that factor through the space of barcodes can be differentiated:
\[ \xymatrix{\manifoldm{} \ar[rr]^-{B} && Bar \ar[rr]^-{V} && \manifoldn{}}. \]
To achieve this goal, in Section~\ref{section_definition_differentiability_barcode_valued_maps} we define differentiability via lifts in full generality, thereby extending the approach initially proposed by \cite{gameiro2016continuation} for the specific case of parametrizations by Rips filtrations. Here we provide some of the details.
As a space of multi-sets (assumed by default to have finitely many off-diagonal points), $Bar$ does not naturally come equipped with a differential structure. However, it is covered by maps of the form:
\begin{center}
\label{lift_bar}
\begin{tikzpicture}
\node[label=above:{$\mathbb{R}^{2m}\times \mathbb{R}^n$}] (a) at (0,0) {};
\node[label=below:{$Bar$}] (b) at (0,-1) {};
\node[label=right:{$Q_{m,n}$}] (c) at (0,-0.5) {};
\draw[->] (a)--(b);
\end{tikzpicture}
\end{center}
%
where $\mathbb{R}^{2m}\times \mathbb{R}^n$ can be thought of as the space of ordered barcodes with fixed number $m$ (resp. $n$) of finite (resp. infinite) intervals, and where $Q_{m,n}$ is the quotient map modulo the order---turning vectors into multisets (Definition~\ref{definition_ordered_barcodes_quotient_map_Q}). Then, the map $B:\manifoldm{}\rightarrow Bar$ is said to be {\em $r$-differentiable} at parameter~$\param \in \manifoldm{}$ if it admits a local $C^r$ lift $\tilde{B}$ into~$\R^{2m}\times\R^n$ for  some $m,n\in\mathbb{N}$:
\begin{equation}
\label{lift_B}
\begin{gathered}
\begin{tikzpicture}
\node[label=above:{$\mathbb{R}^{2m}\times \mathbb{R}^n$}] (a) at (0,0) {};
\node[] (b) at (0,-1) {$Bar$};
\node[label=right:{$Q_{m,n}$}] (c) at (0,-0.5) {};
\draw[->] (a)--(b);
\node[] (d) at (-3,-1) {$\param\in U\subset \manifoldm{}$};
\draw[->] (d)--(b);
\draw[->,dotted] (d)--(a);
\node[] at (-2,-0.3) {$\exists \tilde{B}$};
\node[] at (-1.2,-1.3) {$B$};
\end{tikzpicture}
\end{gathered}
\end{equation}
This means that the map~$\tilde B$ tracks smoothly and consistently the points in the barcodes $B(\param')$, for $\param'$ ranging over some open neighborhood $U$ of $\param$. 
Dually, the map~$V:Bar \to \manifoldn$ is $r$-differentiable at~$D\in Bar$ if for every possible choice of~$m,n$, the composition~$V \circ Q_{m,n}:\mathbb{R}^{2m}\times \mathbb{R}^n \rightarrow Bar$ is~$C^r$ on an open neighborhood of every pre-image~$\obarcode{}$ of~$D$:
\begin{equation}
\label{lift_V}
\begin{gathered}
\begin{tikzpicture}
\node[label=above:{$\mathbb{R}^{2m}\times \mathbb{R}^n$}] (a) at (0,0) {};
\node[] (b) at (0,-1) {$Bar$};
\node[label=right:{$Q_{m,n}$}] (c) at (0,-0.5) {};
\draw[->] (a)--(b);
\node[] (e) at (3,-1) {$\manifoldn{}$};
\draw[->] (b)--(e);
\node[] at (1.5,-1.3) {$V$};
\end{tikzpicture}
\end{gathered}
\end{equation}
The choice of $m,n$ and pre-image $\obarcode{}$ of $D$ should be thought of as the type of perturbation we allow around $D$. Thus, essentially, $V$ is asked to be smooth with respect to any finite perturbation of~$D$. 
In section \ref{subsection_diffeology} we connect these definitions to the theory of diffeological spaces, showing that our two definitions of differentiability for maps $B$ and $V$ are dual to each other and make the barcode space $Bar$  a \textit{diffeological space}.

We then define the differentials of the maps $B$ and $V$, given simply by the differentials of the lift~$\tilde B: \manifoldm\to\R^{2m}\times\R^n$ (for~$B$) and of the composition $V\circ Q_{m,n}$ on the pre-image $\obarcode\in\R^{2m}\times\R^n$ (for~$V$). Although these differentials taken individually are not defined uniquely, their corresponding diagrams~\eqref{lift_B} and~\eqref{lift_V} combine together as follows:
\begin{center}
\label{composition_V_B}
\begin{tikzpicture}
\node[label=above:{$\mathbb{R}^{2m}\times \mathbb{R}^n$}] (a) at (0,0) {};
\node[] (b) at (0,-1) {$Bar$};
\node[label=right:{$Q_{m,n}$}] (c) at (0,-0.5) {};
\draw[->] (a)--(b);
\node[] (e) at (3,-1) {$\manifoldn{}$};
\draw[->] (b)--(e);
\node[] at (-1.2,-1.3) {$B$};
\node[] at (1.5,-1.3) {$V$};
\node[] (d) at (-3,-1) {$\param{}\in U\subset \manifoldm{}$};
\draw[->] (d)--(b);
\draw[->,dotted] (d)--(a);
\node[] at (-2,-0.3) {$\exists \tilde{B}$};
\end{tikzpicture}
\end{center}
implying that the composition $V\circ B = (V\circ Q_{m,n}) \circ \tilde B$ is a $C^r$ map between smooth manifolds, whose derivative is obtained by composing the differentials of  $B$ and $V$, and this regardless of the choice of lift and pre-image. This is our analogue of the chain rule in ordinary differential calculus (Proposition~\ref{proposition_chain_rule_vectorization_barcode}).

In Sections~\ref{section_discrete_smoothness} and~\ref{section_continuous_smoothness_theorem}, we focus on barcode-valued maps $B:\manifoldm{}\rightarrow Bar$ arising from filter functions on fixed smooth manifolds or simplicial complexes. These maps are usually not differentiable everywhere on their domain. However, motivated by the aforementioned applications, we seek conditions under which $B$ is differentiable almost everywhere on~$\manifoldm$. A natural approach for this would be to use Rademacher's theorem \citep[Thm.~3.1.6]{federer2014geometric}, as we know that $B$ is Lispchitz continuous by the Stability Theorem of persistent homology \citep{bl-indaspb-15,chazal2016structure,cohen2007stability}. However, this approach has several important shortcomings:
\begin{itemize}
    \item it depends on a choice of measure on~$\manifoldm$;
    \item it calls for a generalization of Rademacher's theorem to maps taking values in arbitrary metric spaces, and to the best of our knowledge, existing generalizations only provide directional metric differentials (see e.g. \cite{pansu1989metriques});
    \item more fundamentally, it is not constructive and therefore does not provide formulae for the differentials;
    \item finally, in the context of optimization, it is important to guarantee the existence of differentials/gradients in an open neighborhood of the considered parameter~$\param$, and not just in a full-measure subset.
\end{itemize}
We therefore propose to follow a different approach, seeking conditions that ensure the differentiability of~$B$ on a generic (i.e. open and dense) subset of~$\manifoldm$, with explicit differential.

Our first scenario (Section~\ref{section_discrete_smoothness}) considers a parametrization
$\parametrization{}: \paramspace{} \longrightarrow \mathbb{R}^K$ of filter functions on a fixed simplicial complex~$K$.
Given a homology degree $p \leqslant d$, where $d$ is the maximal simplex dimension in~$K$, the barcode-valued map~$B$ decomposes as $B=\Dgm_p\circ \parametrization$, and in Theorem~\ref{theorem_PH_differentiable_global} we show that $B$ is $r$-differentiable on a generic subset of $\paramspace{}$ whenever $\parametrization$ is $C^r$ over $\manifoldm$ or a generic subset thereof. The proof relies on the fact that the pre-order on the simplices of $K$ induced by the values assigned by the filter function $F(\param)$  is generically constant around~$\param$ in $\paramspace$.
%
We then relate the diffential of~$B$ to those of~$F$ in Proposition~\ref{proposition_derivatives_barcode_simplicial_complex}, yielding a closed formula that can be leveraged in practical implementations.
Finally, we study the behavior of~$B$ at singular points by means of a stratification of the parameter space~$\manifoldm{}$, whereby the top-dimensional strata are the locations where $B$ is differentiable, and the lower-dimensional strata characterize the defect of differentiability of~$B$. We show in Theorem~\ref{theorem_PH_partial_derivatives_stratification} that we can define \textit{directional derivatives} along each incident stratum at any given point~$\param\in\paramspace$. We also show that the barcode valued map can be globally lifted and expressed as a permutation map on each stratum (Corollary~\ref{corollary_global_lift_parametrized}). 

In Section~\ref{section_discrete_smoothness_subsection_examples}  we illustrate the impact of our framework on a series of  examples of parametrizations coming from earlier work, including lower-star filtrations, Rips filtrations and some of their generalizations. For each example, we examine the differentiability of the barcode-valued map and, whenever readily computable, we give the expressions of its differential. This allows us to recover the differentiability results from earlier work in a principled way.

Our second scenario (Section~\ref{section_continuous_smoothness_theorem}) considers a parametrization $\parametrization{}: \paramspace{} \longrightarrow C^\infty(\manifoldx,\mathbb{R})$ of smooth filter functions on a fixed smooth compact $d$-dimensional manifold~$\manifoldx$.
In this scenario, given a parameter~$\param{}\in\paramspace$, the barcode-valued map~$B$ computes all the barcodes of $f_\param$ at once, and collates them in a vector of barcodes:
\[ B: \param{}\in \paramspace{} \longmapsto (\Dgm_0(f_{\param{}}),...,\Dgm_d(f_{\param{}}))\in Bar^{d+1}. \]
We show  that $B$ is $\infty$-differentiable at any parameter $\param{}$ such that $f_\param$ is Morse with distinct critical values (Theorem \ref{smoothness_barcode_manifold}). The key insights are: on the one hand, that at any such parameter~$\param$ the implicit function theorem allows us to smoothly track the critical points of $f_{\param{}'}$ as $\param'$ ranges over a small enough open neighborhood around~$\param{}$; on the other hand, that the Stability Theorem provides a consistent correspondence between the critical points of $f_{\param'}$ and the interval endpoints in its barcodes.

In Section~\ref{section_differentiability_vectorization} we look at
examples of classes of maps $V:Bar \rightarrow \manifoldn{}$. We first
consider persistence images \citep{adams2017persistence} and more generally linear representations of barcodes, as an
illustration of our framework on barcode vectorizations. We show that persistence images and linear representations are $\infty$-differentiable under suitable choices of weighting function
(Propositions~\ref{proposition_persistence_image_smooth} and~\ref{proposition_linear_representation_differentiable}). We then consider the case where~$V:Bar \rightarrow \R$ is the bottleneck or Wasserstein distance to a fixed barcode, and show it is semi-algebraic in a suitable sense (Proposition~\ref{prop_distance_diagram_semi-_algebraic}), which is useful in a context of optimisation. We then focus on the bottleneck distance to a fixed barcode~$D_0$, which we
believe can be of interest in the context of inverse problems. We show that this distance is differentiable on a generic subset of~$Bar$ (Propositions~\ref{prop:dist_empty_diag_diff} and~\ref{prop:dist__diag_diff}).

Finally, throughout the paper we sprinkle our exposition with examples of parametrizations and loss functions that illustrate our results and demonstrate their potential for applications.

\subsection*{Acknowledgements}
The authors wish to thank Vidit Nanda and Oliver Vipond for the frequent conversations that influenced this project. The authors are also indebted to the anonymous reviewers for their valuable insights in the final revisions of the manuscript. JL wishes to thank Heather Harrington for general guidance, Yixuan Wang for sharing knowledge on Morse theory and differential geometry, and finally Ambrose Yim and Naya Yerolemou for their feedback. This research has been supported in part by ESPRC grant EP/R018472/1.

\section{Preliminary notions}
\label{section_general_persistence}

Throughout the paper, vector spaces and homology groups are taken over a fixed field $\mathds{k}$, omitted in our notations whenever clear from the context.
As much as possible, we keep separate terminologies for different notions of differentiability, for instance: maps from or to the space of barcodes are called {\em $r$-differentiable} when maps between manifolds are simply called $C^r$. The only exception to this rule is the term {\em smooth} for maps, which has a versatile meaning that should nonetheless always be clear from the context.

\subsection{Persistence modules and persistent homology}
\label{sec:pers_modules}

\begin{definition}
\label{definition_persistence_module_real_line}
A \textit{persistence module} $\mathbb{V}$ is a functor from the poset $(\mathbb{R},\leqslant)$ to the category $\mathbf{Vect}_\mathds{k}$ of vector spaces over~$\mathds{k}$. 
\end{definition}
In other words, a persistence module is a collection  $\mathbb{V}=\{V_t, v_{s,t}:V_s \rightarrow V_t\}_{(s,t)\in \mathbb{R}^2, s\leqslant t }$ of  vector spaces $V_t$ and linear maps $v_{s,t}$, such that $v_{t,t}=\mathrm{id}_{V_t}$ for all $t\in\R$ and $v_{s,t}\circ v_{r,s}= v_{r,t}$ for all  $r \leqslant s \leqslant t \in \R$. We say that $\mathbb{V}$ is {\em pointwise finite-dimensional} (or {\em pfd} for short) if every $V_t$ is finite-dimensional. Unless otherwise stated, persistence modules in the following  will be pfd.
\begin{definition}
\label{definition_morphism_persistence_module}
A \textit{morphism} $\eta: \mathbb{V} \rightarrow \mathds{W}$ between two persistence modules is a natural transformation between functors.
\end{definition} 
In other words, writing $\mathbb{V}=\{V_t, v_{s,t}\}_{s\leqslant t}$ and $\mathds{W}=\{W_t, w_{s,t}\}_{s\leqslant t}$, a morphism $\eta: \mathbb{V} \rightarrow \mathds{W}$ is a collection of linear maps $\{\eta_t: V_t\rightarrow W_t\}_{t\in \mathbb{R}}$ such that the following diagram commutes for all~$s\leqslant t$:
\[
\xymatrix{V_s \ar^-{v_{s,t}}[r] \ar_-{\eta_s}[d] & V_t \ar^-{\eta_t}[d] \\
  W_s \ar^-{w_{s,t}}[r] & W_t}
\]
We say that $\eta$ is an \textit{isomorphism} of persistence modules if all the $\eta_t$ are isomorphisms of vector spaces.
We denote by $\mathbf{Pers}$ the category of persistence modules. $\mathbf{Pers}$ is an abelian category, so it admits kernels, cokernels, images and direct sums, which are defined pointwise. By Crawley-Bovey's Theorem \citep{crawley2015decomposition}, we know that persistence modules essentially uniquely decompose as direct sums of elementary modules called {\em interval modules}. The interval module~$\mathbb{I}_J$ associated to an interval~$J$ of~$\R$ is defined as the module with copies of the field~$\mathds{k}$ over~$J$ and zero spaces elsewhere, the copies of~$\mathds{k}$ being connected by identity maps.
\begin{theorem}
\label{theorem_crawley_bovey}
For any persistence module $\mathbb{V}$, there is a unique multi-set $\mathcal{J}$ of intervals of~$\R$ such that 
\begin{equation}
    \label{equation_decomposition_into_interval_modules}
    \mathbb{V}\cong \oplus_{J \in \mathcal{J}} \mathbb{I}_J,
\end{equation}
%
\end{theorem}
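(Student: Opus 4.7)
The plan is to prove the result via the Azumaya-Krull-Remak-Schmidt theorem: in any additive category, if an object decomposes as a direct sum of indecomposables whose endomorphism rings are local, then such a decomposition is unique up to isomorphism and permutation of summands. Applied to our setting, this reduces the task to (a) classifying the indecomposable pfd persistence modules as interval modules, and (b) establishing that every pfd persistence module admits such a decomposition at all.

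For step (a), I would first verify that for any interval $J \subseteq \R$ the endomorphism ring of $\mathbb{I}_J$ is isomorphic to $\mathds{k}$, by unwinding the naturality conditions on a would-be endomorphism: commuting with identity transition maps on $J$ forces a single scalar. In particular $\mathbb{I}_J$ is indecomposable with local endomorphism ring. Conversely, given an indecomposable pfd module $\mathbb{V}$, the plan is to construct an isomorphism to some $\mathbb{I}_J$ by picking a nonzero $x \in V_{t_0}$ and taking the maximal connected set $J$ of indices where $x$ propagates consistently: on the right, the images $v_{t_0,t}(x)$ remain nonzero; on the left, $x$ lies in the image of $v_{s,t_0}$ for every admissible $s \leqslant t_0$ in~$J$. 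Pointwise finite-dimensionality lets one choose the preimages coherently, and indecomposability then forces the resulting $\mathbb{I}_J$-shaped submodule to be a direct summand, hence all of $\mathbb{V}$.

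For step (b), one needs to know that an arbitrary pfd persistence module decomposes into indecomposables at all; this is the genuine obstacle, because pfd does not imply global finite-dimensionality, so Krull-Schmidt is not automatic. Here I would follow Crawley-Boevey's strategy of exploiting the total order on $\R$: one shows that the category of pfd functors on a totally ordered poset satisfies the refinement property guaranteed by the Azumaya framework, typically by arguing that every finitely generated submodule is finite-dimensional and that local-endomorphism-ring summands lift to the whole module via transfinite induction. Combining this with the uniqueness part of Azumaya then yields the uniqueness of the multiset~$\mathcal{J}$.

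The hardest step is undoubtedly (b). For finite totally ordered indexing sets it reduces to Gabriel's classification of type-$A$ quiver representations and is elementary, but passing to arbitrary pfd modules indexed by $\R$ requires the transfinite machinery of \cite{crawley2015decomposition}. Since the theorem is used in this paper only as a foundational black box, a realistic plan is to spell out (a) and the appeal to Azumaya in detail while citing Crawley-Boevey for the existence statement in (b).
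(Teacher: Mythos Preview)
The paper does not prove this theorem at all: it is stated as a background result in Section~\ref{sec:pers_modules} and attributed to Crawley-Boevey via the citation \cite{crawley2015decomposition}, with no argument given. Your proposal therefore goes well beyond what the paper does, by actually outlining the standard proof architecture (Azumaya--Krull--Remak--Schmidt for uniqueness, Crawley-Boevey's existence result for the decomposition, and the computation $\mathrm{End}(\mathbb{I}_J)\cong\mathds{k}$ to verify the local-endomorphism-ring hypothesis).

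Your sketch is broadly correct and is the right way to think about the result. One point worth tightening: in step~(a), the ``converse'' direction you describe---building an interval submodule from a chosen element and then invoking indecomposability to conclude it is a summand---is not a separate, easier fact; it is essentially the heart of the existence proof and is precisely where the transfinite arguments of \cite{crawley2015decomposition} are needed. So in practice steps~(a) and~(b) collapse into a single appeal to Crawley-Boevey for existence, after which Azumaya's theorem (together with the elementary computation of $\mathrm{End}(\mathbb{I}_J)$) handles uniqueness. Your final paragraph already recognizes this, so the plan is sound; just be aware that the ``classification of indecomposables'' is not logically prior to, or easier than, the decomposition theorem itself.
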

Persistence modules of particular interest are the ones induced by the sub-level sets of real-valued functions.
\begin{definition}
\normalfont
\label{definition_persistence_homology_module}
Let $f:\manifoldx\rightarrow \mathbb{R}$ be a real-valued function on a topological space. Write $\manifoldx^t:=f^{-1}((-\infty,t])$ for the closed sublevel set of $f$ at level~$t\in \mathbb{R}$. Given  $p\in \mathbb{N}$, the \textit{sublevel set persistent homology} of $f$ in degree $p$ is the (non-necessarily pfd) persistence module $\mathbf{H}_p(f)$ defined by:
  \begin{itemize}
    \item the vector spaces $\{H_p(\manifoldx^t)\}_{t\in \mathbb{R}}$, where $H_p$ is the singular homology functor in degree $p$ with coefficients in $\mathds{k}$;
    \item the linear maps $\{v_{s,t}:H_p(\manifoldx^s)\rightarrow H_p(\manifoldx^t)\}_{s\leqslant t}$ induced by inclusions $\manifoldx^s \hookrightarrow \manifoldx^t$.
\end{itemize}
\end{definition}
In the following we restrict our focus to {\em finite-type} persistence modules induced by {\em tame} functions, defined as follows:
\begin{definition}
\label{definition_finite_type}
A persistence module $\mathbb{V}$ is of \textit{finite type} if it admits a decomposition into finitely many interval modules.
\end{definition}
\begin{definition}
\normalfont
A function $f:\manifoldx\rightarrow \mathbb{R}$ is \textit{tame} if its persistent homology modules in any degree are of finite type.
\label{definition_tame_function}
\end{definition}
In particular, {\em filter functions} on a finite simplicial complex (see below) and {\em Morse functions} on a smooth manifold (see Section~\ref{sec:Morse}) are tame.
%
\begin{definition}
\label{definition_filter_function}
\normalfont
Let $K$ be a finite simplicial complex. A  \textit{filter function} $f:K \rightarrow \mathbb{R}$ is a function that is monotonous with respect to inclusions of faces in~$K$, i.e.
$f(\simplex{})\leqslant f(\simplex{}')$ for all $\simplex \subseteq \simplex'\in K$. This implies in particular that every sublevel set  $K^t:=\{\simplex{}\in K \mid f(\simplex{})\leqslant t\}$ is a sub-complex of~$K$. 
\end{definition}

\subsection{Persistence barcodes / diagrams}

Given a decomposition of a finite-type persistence module~$\mathbb{V}$ as in~\eqref{equation_decomposition_into_interval_modules}, the (finite) multi-set $\mathcal{J}$ is called the {\em barcode} of~$\mathbb{V}$. An alternate representation is as a (finite) multiset~$B$ of points in the plane, where each interval $J\in \mathcal{J}$ is mapped to the point $(\inf J, \sup J)$. To this multiset of points we add $\Delta^\infty$, that is the multiset containing countably many copies of the diagonal~$\Delta:=\{(b,b) \mid b \in \mathbb{R}\}$, to obtain the so-called {\em persistence diagram} of~$\mathbb{V}$. When $\mathbb{V}$ is the sublevel set persistent homology of a tame function~$f$ in degree~$p$, we denote by~$\Dgm_p(f)$ its persistence diagram. Persistence diagrams can also be defined independently of persistence modules as follows:
\begin{definition}
\label{barcode_definition}
A {\em persistence diagram} is the union $B\cup \Delta^\infty$ of a finite multiset~$B$ of elements in $\mathbb{R}\times \bar{\mathbb{R}}$, where $\bar\R := \mathbb{R}\cup \{+\infty\}$, with countably many copies of the diagonal~$\Delta$. The set of persistence diagrams is denoted by $Bar$.
\end{definition}
From now on we also use the terminology {\em barcodes} for persistence diagrams. Following this terminology, we also call {\em intervals} the points in a persistence diagram. Points lying on the diagonal~$\Delta$ are qualified as {\em diagonal},  the others are qualified as {\em off-diagonal}.

\begin{remark}
  In the above definitions we follow the literature on extended persistence, in which persistence diagrams can have points everywhere in the extended plane $\R\times\bar \R$. This is because our framework extends naturally to that setting.
  Note also that, in the literature,  the  diagonal is sometimes not included  in the diagrams. Here we are including it with infinite multiplicity. 
  This is in the spirit of taking  the quotient category of observable persistence modules, as defined by \cite{chazal2014observable}.
\end{remark}
\begin{definition}
\label{definition_matching}
\normalfont
Given two barcodes $D,D'\in Bar$, viewed as multisets, a \textit{matching} is a bijection $\gamma:D \rightarrow D'$. The \textit{cost} of $\gamma$ is the quantity 
\[c(\gamma):=\sup_{x \in D} \|x-\gamma(x)\|_\infty \in \bar\R. \]
\end{definition}
We denote by $\Gamma(D,D')$ the set of all matchings between $D$ and $D'$. 
\begin{definition}
\normalfont
The \textit{bottleneck distance} between two barcodes $D,D'\in Bar$ is  
\[\db(D,D'):= \inf_{\gamma \in \Gamma(D,D')} c(\gamma) \]
\label{definition_bottleneck}
\end{definition}
Given~$q\in \R^{*}_+$, a slight modification of the matching cost yields the~$q$-th {\em Wasserstein distance} on barcodes as introduced in~\cite{cohen2010lipschitz}:
\begin{equation}
\label{eq_wasserstein}
d_q(D,D'):= \inf_{\gamma \in \Gamma(D,D')} (\sum_{x\in D}\|x-\gamma(x)\|_{\infty}^q)^{\frac{1}{q}}
\end{equation}
Since we include all points in the diagonal with infinite multiplicity in our definition of barcodes,~$\db$ is a true metric\footnote{In fact an extended metric as it can take infinite values.} and not just a pseudo-metric. Indeed,  for any $D,D'\in Bar$, we have $\db(D,D')=0\Rightarrow D=D'$. We call \textit{bottleneck topology} the topology induced by $\db$, which by the previous observation makes~$Bar$ a Hausdorff space. 

A key fact is the Lipschitz continuity of the barcode function, known as the {\em Stability Theorem} \citep{bl-indaspb-15,chazal2016structure,cohen2007stability}:
\begin{theorem}
\label{stability theorem}
Let $f,g: \manifoldx\rightarrow \mathbb{R}$ be two real-valued functions with well-defined barcodes. Then,
\[ \db(\Dgm_p(f),\Dgm_p(g))\leqslant \|f-g\|_{\infty}. \]
\end{theorem}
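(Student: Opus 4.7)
The plan is to deduce the theorem from the algebraic stability theorem (a.k.a.\ the isometry theorem) for persistence modules, by exhibiting an interleaving between the sublevel set modules $\mathbf{H}_p(f)$ and $\mathbf{H}_p(g)$ with interleaving parameter $\varepsilon := \|f-g\|_\infty$. This reduces the functional statement to a purely algebraic one about persistence modules, and then appeals to the already-known matching result.

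First, I would observe that because $|f(x)-g(x)|\leqslant \varepsilon$ pointwise, one has the inclusions of sublevel sets
\[
f^{-1}(-\infty,t]\subseteq g^{-1}(-\infty,t+\varepsilon]\subseteq f^{-1}(-\infty,t+2\varepsilon]
\]
for every $t\in\R$, and the symmetric inclusions with the roles of $f$ and $g$ swapped. Applying the singular homology functor $H_p$ in degree $p$, these inclusions yield natural linear maps
\[
\varphi_t: H_p(f^{-1}(-\infty,t])\longrightarrow H_p(g^{-1}(-\infty,t+\varepsilon]), \qquad \psi_t: H_p(g^{-1}(-\infty,t])\longrightarrow H_p(f^{-1}(-\infty,t+\varepsilon]),
\]
and because all of these are induced by inclusions of topological spaces, functoriality of $H_p$ shows that $\varphi$ and $\psi$ commute with the internal transition maps of $\mathbf{H}_p(f)$ and $\mathbf{H}_p(g)$ and satisfy $\psi_{t+\varepsilon}\circ\varphi_t = v^f_{t,t+2\varepsilon}$ and $\varphi_{t+\varepsilon}\circ\psi_t = v^g_{t,t+2\varepsilon}$. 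This is exactly the statement that $\mathbf{H}_p(f)$ and $\mathbf{H}_p(g)$ are $\varepsilon$-interleaved as persistence modules.

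Next, I would invoke the algebraic stability theorem for pointwise finite-dimensional persistence modules (Chazal--de Silva--Glisse--Oudot, Bauer--Lesnick): any two $\varepsilon$-interleaved pfd persistence modules have barcodes within bottleneck distance at most $\varepsilon$. Applied to our interleaving this gives
\[
\db\bigl(\Dgm_p(f),\Dgm_p(g)\bigr)\leqslant \varepsilon = \|f-g\|_\infty,
\]
which is the desired inequality. Note that the tameness / finite-type hypothesis implicit in ``well-defined barcodes'' is what justifies invoking the algebraic stability theorem in its pfd formulation; if one works only with the original setting of tame functions, Crawley-Boevey's theorem (Theorem~\ref{theorem_crawley_bovey}) already guarantees decomposability into interval modules and provides the multiset on which the bottleneck distance is measured.

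The main obstacle is really the algebraic stability theorem itself, which I am treating as a black box; the functional part of the proof (constructing the interleaving from the $L^\infty$ bound on $f-g$) is essentially immediate from functoriality of singular homology. A secondary subtlety is making sure the interleaving diagram on the nose commutes with transition maps at all scales, which follows because every square involved comes from a commutative diagram of inclusions of sublevel sets of $f$ and $g$, and $H_p$ is a functor. No further input about the geometry of $f$ or $g$ is needed.
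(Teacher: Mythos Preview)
Your argument is correct and is exactly the standard route to the Stability Theorem via the algebraic stability/isometry theorem. Note, however, that the paper does not actually prove this statement: it records it as a known fact with citations to \cite{bl-indaspb-15,chazal2016structure,cohen2007stability}, so there is no ``paper's own proof'' to compare against---your sketch is essentially the proof one finds in those references.
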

Note that the assumptions in the theorem are quite general and hold in our cases of interest: tame functions on a compact manifold, and filter functions on a simplicial complex.

\subsection{Morse functions}
\label{sec:Morse}

Morse functions are a special type of tame functions, for which there is a bijective correspondence between critical points in the domain and interval endpoints in the barcode. This correspondence, detailed in Proposition~\ref{endpoints_in_barcode_are_critical_values}, will be instrumental in the analysis of Section~\ref{section_continuous_smoothness_theorem}. For a proper introduction to Morse theory, we refer the reader to \cite{milnor2016morse}. %

\begin{definition}\normalfont
\label{definition_morse_function}
Given a smooth $d$-dimensional manifold~$\manifoldx$, a smooth function~$f:\manifoldx\rightarrow \mathbb{R}$ is called \textit{Morse} if its Hessian at critical points (i.e. points where the
gradient of~$f$ vanishes) is nondegenerate. 
\end{definition}
Note that we do not assume a priori
that the values of~$f$ at critical points (called {\em critical
  values}) are all distinct. For such a value~$a$, we call {\em
  multiplicity} of~$a$ the number of critical points in the level-set
$f^{-1}(a)$. We also introduce the notation~$\Crit(f)$ to refer to the
set of critical points, which is discrete in~$\manifoldx$. In particular, if~$\manifoldx$ is compact, which will be the case in this paper,~$\Crit(f)$ is finite. The number of negative eigenvalues of~$f$ at a critical point~$x$ is called the {\em index} of~$x$.


%
\begin{proposition}
\label{endpoints_in_barcode_are_critical_values}
Assume~$\manifoldx$ is compact and all the critical values of~$f$ have multiplicity~$1$. Denote by $E(f)$ the multiset of finite endpoints of off-diagonal intervals (including the left endpoints of infinite intervals) of $\Dgm_0(f)\sqcup...\sqcup \Dgm_d(f)$. Then, $f$ induces a bijection $\Crit(f) \to  E(f)$. 
\end{proposition}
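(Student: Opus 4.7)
The plan is to prove this via classical Morse theory: the sublevel sets $\manifoldx^t$ change homotopy type only when $t$ crosses a critical value of $f$, and the change at each critical value is described by attaching a handle whose index determines the effect on persistent homology. The multiplicity-$1$ hypothesis ensures critical values are pairwise distinct, so these local changes do not interfere with each other, giving a clean book-keeping of barcode endpoints.

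First I would order the critical points $x_1,\ldots,x_N\in \Crit(f)$ so that $c_i := f(x_i)$ satisfies $c_1 < c_2 < \cdots < c_N$; this is possible by compactness of $\manifoldx$ and the multiplicity-$1$ assumption. Pick regular values $t_0 < c_1 < t_1 < c_2 < \cdots < c_N < t_N$. Between consecutive critical values, $\manifoldx^{t_{i-1}}$ deformation-retracts onto $\manifoldx^{t_i}$, so all persistence modules $\mathbf{H}_p(f)$ are constant on each interval $(t_{i-1},t_i)$. At each $c_i$, the classical Morse lemma tells us that $\manifoldx^{t_i}$ is homotopy equivalent to $\manifoldx^{t_{i-1}}$ with a single $k$-cell attached, where $k=\mathrm{ind}(x_i)$.

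Next I would analyze the effect of this single handle attachment on the homology in all degrees via the long exact sequence of the pair $(\manifoldx^{t_i},\manifoldx^{t_{i-1}})$: since $H_p(\manifoldx^{t_i},\manifoldx^{t_{i-1}})=\mathds{k}$ if $p=k$ and $0$ otherwise, exactly one of the following happens. Either the connecting morphism is zero, and a new class is born in $H_k$ at level $c_i$---producing an interval of $\Dgm_k(f)$ with left endpoint $c_i$; or the connecting morphism is nonzero, and a class in $H_{k-1}$ dies at level $c_i$---producing an interval of $\Dgm_{k-1}(f)$ with right endpoint $c_i$. Define $\varphi:\Crit(f)\to E(f)$ sending $x_i$ to this uniquely determined endpoint, which equals $c_i$ by construction.

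To finish, I would establish that $\varphi$ is a bijection. Injectivity follows from the distinctness of the $c_i$. For surjectivity, every endpoint in $E(f)$ must be a critical value, since the persistent homology is constant on intervals of regular values; pairing each birth-endpoint (respectively finite death-endpoint) with its associated critical point recovers $\varphi^{-1}$. As a sanity check, a count gives $|E(f)| = \#\{\text{births}\} + \#\{\text{finite deaths}\}$, and by the above each critical point contributes exactly once, so $|\Crit(f)|=|E(f)|$. The main obstacle is the local analysis at each $c_i$: verifying that the attachment of a single $k$-handle produces exactly one birth or one death (never both, never neither), and correctly accounting for infinite bars---which correspond precisely to those ``birth'' critical points whose class survives in $H_*(\manifoldx)$ and whose left endpoint is included in $E(f)$ by definition. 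The distinctness of critical values is what makes this one-handle-at-a-time analysis valid; without it, multiple simultaneous attachments could combine to produce a different homology change that is harder to attribute to individual critical points.
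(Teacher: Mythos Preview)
Your proposal is correct and follows essentially the same approach as the paper: both use the Morse-theoretic handle attachment together with the long exact sequence of the pair to show that each critical value contributes exactly one birth (in degree $k$) or one finite death (in degree $k-1$), then obtain injectivity from the distinctness of critical values and surjectivity from the constancy of homology on intervals of regular values. One small slip in your write-up: the sentence ``Between consecutive critical values, $\manifoldx^{t_{i-1}}$ deformation-retracts onto $\manifoldx^{t_i}$'' is garbled, since $t_{i-1}$ and $t_i$ straddle the critical value $c_i$ (and retraction goes from the larger sublevel set to the smaller); what you mean is that the persistence modules are constant on each interval $(c_{i-1},c_i)$ of regular values.
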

This result is folklore, 
and we give a proof only for completeness.
%
\begin{proof}
\normalfont
Let $a\leqslant b$ be real numbers. Write $\manifoldx^a$ for the sublevel set $f^{-1}((-\infty,a])$. 
If $[a,b]$ contains a unique critical value~$c$ of~$f$, then~$\manifoldx^b$ has the homotopy type of $\manifoldx^a$ glued together with a cell $e_p$ of dimension $p$, where $p$ is the index of the unique critical point~$x$ associated to~$c$ \citep{milnor2016morse}. 
%
Therefore, $H_*(\manifoldx^b,\manifoldx^a)$ is trivial except for~$*=p$ where it is spanned by the homology class of $e_p$. This does not depend on the choice of~$a,b$ surrounding~$c$ and sufficiently close to it. Then, using the long exact sequence in homology, we deduce that either there is
 one birth in degree~$p$ at value~$c$ in the persistent homology module, or there is one death  in degree $p-1$. Hence,~$c$ is either a left endpoint of an interval of~$\Dgm_p(f)$, or a right endpoint of an interval of~$\Dgm_{p-1}(f)$. In either case, we can define the map $x\mapsto f(x)$ for any~$x\in \Crit(f)$, and we have just shown that its codomain is indeed $E(f)$. The map is injective because the critical values of~$f$ have multiplicity~$1$ by assumption. We now show it is onto. Let $a\in \R$ be a non-critical value of~$f$. For any (small enough)~$\e,\eta>0$, the interval~$[a-\eta, a+\e]$ contains no critical value of~$f$, therefore $\manifoldx^{a+\e}$ deform retracts onto $\manifoldx^{a-\eta}$, thus implying that the inclusions $ H_p(\manifoldx^{a-\eta})\rightarrow H_p(\manifoldx^{a+\e})$ are identity maps for any homology degree~$p$. By the decomposition Theorem~\ref{theorem_crawley_bovey}, this implies that $a$ cannot be an endpoint of an interval summand, i.e.~$a\notin E(f)$.
\end{proof} 
The assumption that each critical value of~$f$ has multiplicity 1 is superfluous in Proposition \ref{endpoints_in_barcode_are_critical_values}, if we allow the correspondence map to match trivial intervals. Let~$[a,b]$ be an interval containing a unique critical value~$c$. One can still use Morse theory and glue as many critical cells~$e_p$ to~$\manifoldx^a$ as there are critical points in $f^{-1}(c)$ in order to obtain a CW structure on~$\manifoldx^b$ from the one of~$\manifoldx^a$. Considering the different critical cells, we know exactly the ranks of the morphisms $H_p(\manifoldx^{a})\rightarrow H_p(\manifoldx^{b})$ induced by inclusions in each homology degree $p$. 

\subsection{Diffeology theory}
\label{section_diffeology_theory_preliminary_notions}

\textit{Diffeology} theory provides a principled approach to equip a set with a smooth structure. We use some concepts of the theory in Section~\ref{subsection_diffeology}, where we equip the set $Bar$ of barcodes with a diffeology and identify the resulting smooth maps. We refer the reader to \cite{iglesias2013diffeology} for a detailed introduction to the material presented below. In the following, we call {\em domain} any open set in any arbitrary Euclidean space. 
\begin{definition}
\normalfont
\label{definition_diffeology}
Given a non-empty set $S$, a \textit{diffeology} is a collection $\mathcal{D}$ of pairs $(U,P)$, called \textit{plots}, where $U$ is a domain and $P:U\rightarrow S$ is a map from $U$ to $S$, satisfying the following axioms:
\begin{itemize}
\item[(\textbf{Covering})] For any element $s\in S$ and any integer $n\in \mathbb{N}$, the constant map $x\in \mathbb{R}^n \mapsto s\in S$ is a plot.
\item[(\textbf{Locality})] If for a pair $(U,P)$ we have that, for any $x\in U$ there exists an open neighborhood $U'\subseteq U$ of $x$ such that the restriction $(U',P_{|U'})$ is a plot, then $(U,P)$ itself is a plot.
\item[(\textbf{Smoothness compatibility})] For any plot $(U,P)$ and any smooth map~$F:W\rightarrow U$ where~$W$ is a domain, the composition $(W,P\circ F)$ is a plot.
\end{itemize} 
\end{definition}
If a set $S$ comes equipped with a diffeology $\mathcal{D}$, then it is called a \textit{diffeological space}. We think of a diffeological space $S$ as a space where we impose which functions, the plots, from a manifold to $S$, are smooth. Notice that any set can be made a diffeological space by taking all possible maps as plots. This is the \textit{coarsest} diffeology on $S$, where $\mathcal{D}$ is said to be \textit{finer} than the diffeology $\mathcal{D}'$ if $\mathcal{D}\subset \mathcal{D}'$, and \textit{coarser} if the converse inclusion holds.\footnote{This terminology is the opposite to the one used when comparing topologies.} The prototypical diffeological space is the Euclidean space $\mathbb{R}^n$ with the usual smooth maps from domains to~$\R^n$ as plots. 
\begin{definition}
\label{definition_diffeology_morph}
\normalfont
A \textit{morphism} $f:S\rightarrow S'$, or \textit{smooth map}, between two diffeological spaces $S$ and $S'$, is a map such that for each plot $P$ of $S$, $f\circ P$ is a plot of $S'$. $f$ is called a \textit{diffeomorphism} if it is a bijection and $f^{-1}:S'\rightarrow S$ is smooth. A map $f:A\rightarrow S'$, where $A\subseteq S$, is \textit{locally smooth} if for any plot $P$ of $S$, $f\circ P_{|P^{-1}(A)}$ is a plot of $S'$. $f$ is a \textit{local diffeomorphism} if it is a bijection onto its image and if $f^{-1}$ is locally smooth as a map $S'\supseteq f(A)\to S$. 
\end{definition}
Obviously, identities are smooth, and smooth maps compose together into smooth maps, therefore we can consider the category \textbf{Diffeo} of diffeological spaces. Finite dimensional smooth manifolds with or without boundaries and corners, Fr\'echet manifolds and Frölicher spaces, viewed as diffeological spaces with their usual smooth maps, form strict subcategories of $\textbf{Diffeo}$. In fact, finite dimensional smooth manifolds can be defined in the context of diffeology as follows:
\begin{definition}
\label{definition_diffeological_manifold}
\normalfont
A diffeological space $\manifoldm{}$ is a \textit{$n$-dimensional diffeological manifold} if it is locally diffeomorphic to $\mathbb{R}^n$ at every point in $\manifoldm{}$. 
\end{definition}
\begin{theorem}[{\cite[\textsection~4.3]{iglesias2013diffeology}}]\label{thm:smooth_diffeo_manifold}
    Every  $n$-dimensional smooth manifold~$\manifoldm$ is an $n$-dimensional diffeological manifold once equipped with the diffeology given by the smooth maps $U\to\manifoldm$ from arbitrary domains~$U$. Conversely, every  $n$-dimensional diffeological manifold is an $n$-dimensional smooth manifold.
\end{theorem}
One appealing feature of {\bf Diffeo}, compared to the category of smooth manifolds for instance, is that it is closed under usual set operations---here we only consider coproducts and quotients:
\begin{definition}
\label{definition_union_diffeo}
For an arbitrary family of diffeological spaces $\{(S_j,D_j)\}_{j\in \mathcal{J}}$, the {\em sum diffeology} on $\bigsqcup_{j\in \mathcal{J}} S_j$ is the finest diffeology making the injections $S_i\rightarrow \bigsqcup_{j\in \mathcal{J}} S_j$ smooth. 
\end{definition}
\begin{definition}
\label{definition_quotient_diffeo}
For a diffeological space $(S,\mathcal{D})$ and an equivalence relation $\sim$ on $S$, the~{\em quotient diffeology} on $S/\!\!\sim$ is the finest diffeology making the quotient map $S\rightarrow S/\!\!\sim$~smooth.
\end{definition}

\subsection{Stratified manifolds}
\label{section_stratification_subsection_stratification_definition}

Stratified manifolds play a role in Section~\ref{section_stratification} of this paper.
%
%
For background material on the subject, see e.g. \cite{mather2012notes}. 
%
\begin{definition}
\label{Whitney_stratified_manifold_definition}
Let $\paramspace{}$ be a smooth $d$-dimensional manifold. A \textit{Whitney stratification} $\mathcal{S}_\paramspace{}$ of $\paramspace{}$ is a collection of connected smooth submanifolds (not necessarily closed) of $\manifoldm$, called \textit{strata}, satisfying the following axioms:
\begin{itemize}
  \item[(Partition)] The strata partition $\paramspace{}$.
  \item[(Locally finite)] Each point of $\manifoldm$ has an open neighborhood meeting with finitely many strata.
  \item[(Frontier)] For each stratum $\paramspace{}' \in \mathcal{S}_\paramspace{}$, the set $\overbar{\paramspace{}'}\setminus\paramspace{}'$ is a union of strata, where $\overbar{\paramspace{}'}$ is the closure of $\paramspace{}'$ in $\paramspace$. 
  \item[(Condition b)] Consider a pair of strata $(\paramspace{}',\paramspace{}'')$ and an element $\param{}\in \paramspace{}'$. If there are sequences of points $(\param{}'_{k})_{k \in \mathds{N}}$ and $(\param{}''_{k})_{k \in \mathds{N}}$ lying in $\paramspace{}'$ and $\paramspace{}''$ respectively, both converging to $\param{}$, such that the line $(\param{}'_{k},\param{}''_{k})$ (defined in some local coordinate system around $\param$) converges to some line $l$ and $T_{\param{}''_{k}}\paramspace{}''$ converges to some flat, then this flat contains~$l$.
\end{itemize}
\end{definition}

Stratified maps are those that behave nicely with respect to stratifications. Here we only use a subset of the axioms they satisfy, hence we talk about {\em weakly stratified maps}.
\begin{definition}
\label{definition_stratified_map}

Let $\manifoldm, \manifoldn$ be stratified manifolds. A map $f:\manifoldm \rightarrow \manifoldn$ is \textit{weakly stratified} if the pre-images $f^{-1}(\manifoldn')$, for any stratum $\manifoldn'\in \mathcal{S}_{\manifoldn}$, is a union of strata in $\mathcal{S}_{\manifoldm}$.
\end{definition}

\section{Differentiability for maps from or to the space of barcodes}
\label{section_definition_differentiability_barcode_valued_maps}

In Section~\ref{subsection_differentiability_barcode_valued_maps} we provide a general framework for studying the differentiability of maps from a smooth manifold to $Bar$. Then in Section~\ref{subsection_differentiability_vectorization} we provide the analogue for maps with $Bar$ as domain and a smooth manifold as co-domain. Both frameworks are in some sense dual to each other, and inspired by the theory of diffeological spaces---we develop this connection in Section~\ref{subsection_diffeology}. We then derive a chain rule in Section~\ref{subsection_chain_rule}: if a map between manifolds factors through $Bar$, then it is smooth whenever both terms in the factorization are smooth according to our definitions, and in this case its differential can be computed explicitly.  

\subsection{Differentiability of barcode valued maps}
\label{subsection_differentiability_barcode_valued_maps}

Throughout this section, $\manifoldm{}$ denotes a smooth finite-dimensional manifold without boundary, which may or may not be compact. Our approach to characterizing the smoothness of a barcode valued map is to factor it through the bundle of {\em ordered barcodes}:
\begin{definition}
\normalfont
\label{definition_ordered_barcodes_quotient_map_Q}
For each choice of non-negative integers $m,n$, the space of \textit{ordered barcodes} with $m$ finite bars and $n$ infinite ones is $\mathbb{R}^{2m}\times \mathbb{R}^n$, equipped with the Euclidean norm and the resulting smooth structure. The corresponding \textit{quotient map} $Q_{m,n}:\mathbb{R}^{2m}\times \mathbb{R}^n \rightarrow Bar$ quotients the space by the action\footnote{$\mathfrak{S}_m$ acts on $\mathbb{R}^{2m}$ by permutation of pairs of adjacent coordinates while $\mathfrak{S}_n$ acts on $\mathbb{R}^{n}$ by permutation of coordinates.} of the  product of symmetric groups~$\mathfrak{S}_m\times\mathfrak{S}_n$, that is: for any ordered barcode $\obarcode{}=(b_1,d_1,...,b_{m},d_{m},v_{1},...,v_{n}) \in \mathbb{R}^{2m} \times \mathbb{R}^n$,
\[ Q_{m,n}(\obarcode{}):=\{(b_i,d_i)\}_{i=1}^m \cup \{(v_{j},+\infty)\}_{j=1}^n \cup \Delta^\infty. \]
\end{definition}
One can think of an ordered barcode $\obarcode{}\in \mathbb{R}^{2m}\times \mathbb{R}^{n}$ as a vector describing a persistence diagram with at most $m$ bounded off-diagonal points and exactly $n$ unbounded points. The former have their coordinates encoded in the adjacent pairs of the $2m$ first components in $\obarcode{}$, while the latter have the abscissa of their left endpoint encoded in the last $n$ components of $\obarcode{}$. The quotient
 map~$Q_{m,n}$ forgets about the ordering of the bars in the barcodes. So far $Q_{m,n}$ is merely a map between sets, and it is natural to ask whether it is regular in some reasonable sense: 
\begin{proposition}
 \label{proposition_continuity_quotient_map}
For any $m,n\in \mathds{N}^2$, $Q_{m,n}$ is $1$-Lipschitz when $Bar$ is equipped with the bottleneck topology. 
 \end{proposition}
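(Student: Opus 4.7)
The plan is to exhibit an explicit matching between $Q_{m,n}(\tilde{D})$ and $Q_{m,n}(\tilde{D}')$ whose cost is controlled by the Euclidean distance between $\tilde{D}$ and $\tilde{D}'$, and then invoke the infimum definition of $d_\infty$.

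Concretely, given two ordered barcodes
\[ \tilde{D}=(b_1,d_1,\ldots,b_m,d_m,v_1,\ldots,v_n), \qquad \tilde{D}'=(b'_1,d'_1,\ldots,b'_m,d'_m,v'_1,\ldots,v'_n), \]
I would define the matching $\gamma \in \Gamma(Q_{m,n}(\tilde{D}), Q_{m,n}(\tilde{D}'))$ that sends the off-diagonal point $(b_i,d_i)$ to $(b'_i,d'_i)$ for $i=1,\ldots,m$, sends the infinite point $(v_j,+\infty)$ to $(v'_j,+\infty)$ for $j=1,\ldots,n$, and matches the countably many remaining diagonal points of $\Delta^\infty$ on each side identically. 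Using the convention $|{+\infty} - {+\infty}|=0$ for the comparison of infinite intervals, the cost of $\gamma$ is
\[ c(\gamma) \;=\; \max\!\Bigl(\max_{1\leqslant i\leqslant m}\max(|b_i-b'_i|,|d_i-d'_i|),\; \max_{1\leqslant j\leqslant n}|v_j-v'_j|\Bigr) \;=\; \|\tilde{D}-\tilde{D}'\|_\infty, \]
the diagonal pairs contributing $0$.

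The final step is to compare norms: since $\|x\|_\infty \leqslant \|x\|_2$ for any vector $x$, we obtain
\[ d_\infty(Q_{m,n}(\tilde{D}),Q_{m,n}(\tilde{D}')) \;\leqslant\; c(\gamma) \;\leqslant\; \|\tilde{D}-\tilde{D}'\|_\infty \;\leqslant\; \|\tilde{D}-\tilde{D}'\|_2, \]
which is the claimed $1$-Lipschitz bound with respect to the Euclidean structure on $\mathbb{R}^{2m}\times\mathbb{R}^n$ declared in Definition~\ref{definition_ordered_barcodes_quotient_map_Q}.

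There is no real obstacle here; the only point to handle with some care is the treatment of infinite bars, i.e.\ confirming that the distance between $(v_j,+\infty)$ and $(v'_j,+\infty)$ is $|v_j-v'_j|$ in the extended-plane convention used for barcodes with infinite intervals (as recalled in the remark following Definition~\ref{barcode_definition}). Once this convention is in place, the argument reduces to the elementary matching above.
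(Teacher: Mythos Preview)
Your proposal is correct and follows essentially the same approach as the paper: construct the entry-wise matching between the two image barcodes, observe that its cost is bounded by the supremum norm $\|\tilde D-\tilde D'\|_\infty$, and then use $\|\cdot\|_\infty\leqslant\|\cdot\|_2$ to conclude. You are simply more explicit than the paper about the treatment of the infinite bars and the diagonal copies, which is harmless.
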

\begin{proof}
For any two elements $\obarcode{}_1, \obarcode{}_2 \in \mathbb{R}^{2m}\times \mathbb{R}^n$, there is an obvious matching $\gamma$ on their images $ Q_{m,n}(\obarcode{}_1),Q_{m,n}(\obarcode{}_2)$ given by matching the components of the vectors $\obarcode{}_1$ and $\obarcode{}_2$ entry-wise. The cost of this matching is then bounded above by the supremum norm of $\obarcode{}_1-\obarcode{}_2$, by the definition of the matching cost $c(\gamma)$. In turn, the supremum norm is bounded above by the $\ell^2$ norm.
\end{proof}

We then say that a barcode valued map is smooth if it admits a smooth lift into the space of ordered barcodes for some choice of $m,n$:
\begin{definition}
\normalfont
\label{differentiability_definition_barcode}
Let $B: \manifoldm{} \rightarrow Bar$ be a barcode valued map. Let $x \in \manifoldm{}$ and $r\in \mathds{N}\cup\{+\infty\}$.
We say that \textit{$B$ is $r$-differentiable at $x$} if there exists an open neighborhood $U$ of $x$, integers $m,n\in \mathbb{N}$ and a map $\tilde{B}:U\rightarrow \mathbb{R}^{2m}\times \mathbb{R}^n$ of class $C^r$ such that $B=Q_{m,n}\circ \tilde{B}$ on $U$. For an integer $d\in \mathds{N}$, a function $\mathcal{B}: \manifoldm{} \rightarrow Bar^{d+1}$ is \textit{$r$-differentiable at $\elementm{}\in \manifoldm{}$} if each of its $d+1$ components is. We call $\tilde{B}$ a \textit{local lift} of $B$.
\end{definition}
\begin{remark}[Locally finite number of off-diagonal points]
\label{remark_locally_finite_off_points}
\normalfont
If a function~$B$ as above is $r$-differentiable at~$\elementm{} \in \manifoldm{}$, then locally for any~$\elementm{}'$ around~$\elementm{}$ we can upper-bound the number of off-diagonal points arising in~$B(\elementm{}')$ by~$m+n$. Notice that off-diagonal points can possibly appear in $B(\elementm{}')$ and become part of the diagonal $\Delta$ in $B(\elementm{})$, which is to say that Defnition \ref{differentiability_definition_barcode} does not restrict the function $B$ to locally consist in a fixed number of off-diagonal points. Informally, in analogy with the fact that a barcode has finitely many off-diagonal points, our definition of smoothness allows finitely many appearances or disappearances of off-diagonal points in the neighborhood of a barcode.
\end{remark}
\begin{remark}[$0$-differentiability is stronger than bottleneck continuity]
\normalfont
\label{remark_relationship_continuity_B}
If $B:\manifoldm{}\rightarrow Bar$ is $0$-differentiable, then $B$ is continuous when $Bar$ is given the bottleneck topology. This comes from the Lipschitz continuity of $Q_{m,n}$ (Proposition \ref{proposition_continuity_quotient_map}) and the fact that continuity is stable under composition. The converse is false, because, on the one hand, if $B$ is $0$-differentiable then locally the number of off-diagonal points in the image of $B$ is uniformly bounded (see the previous remark), while on the other hand, the number of off-diagonal points appearing in barcodes in any given open bottleneck ball is arbitrarily large.
\end{remark}

\begin{definition}
\label{definition_derivatives_barcode_valued_maps}
\normalfont
Let $B:\manifoldm{}\rightarrow Bar$ be $1$-differentiable at some $x$, and $\tilde{B}:U \rightarrow \mathbb{R}^{2m}\times \mathbb{R}^n$ be a $C^1$ lift of $B$ defined on an open neighborhood $U$ of $x$. The \textit{differential} (or \textit{derivative}) $d_{x,\tilde{B}} B$ of $B$ at $x$ with respect to $\tilde{B}$ is defined to be the differential of $\tilde B$ at~$x$: $$ T_x \manifoldm{} \xrightarrow[d_x \tilde{B}]{} \mathbb{R}^{2m}\times \mathbb{R}^n.$$
\end{definition}
Post-composing with the quotient map, we can see $Q_{m,n}\circ d_{x,\tilde{B}} B: T_x \manifoldm{}\rightarrow Bar$ as a multi-set of co-vectors, one above each off-diagonal point of $B(x)$ (plus some distinguished diagonal points), describing linear changes in the coordinates of the points of $B(x)$ under infinitesimal perturbations of $x$. In this respect, the spaces of ordered barcodes $\mathbb{R}^{2m+n}$ play the role of tangent spaces over $Bar$. For practical computations, it can be convenient to work with an alternate yet equivalent notion of differentiability, based on point trackings:
\begin{definition}
\normalfont
\label{definition_coordinate_system}
Let $B: \manifoldm{} \rightarrow Bar$ be a barcode valued map. Let $x \in \manifoldm{}$ and $r\in \mathds{N}\cup\{+\infty\}$. A $C^r$ \textit{local coordinate system} for $B$ at $x$ is a collection of maps $\{b_i,d_i:U \rightarrow \mathbb{R}\}_{i\in I}$ and $\{v_j: U \rightarrow \mathbb{R}\}_{j \in J}$ for finite sets $I,J$ defined on an open neighborhood $U$ of $x$, such that:
\begin{itemize}
    \item[(\textbf{Smooth})] The maps $b_i,d_i,v_j$ are of class $C^r$;
    \item[(\textbf{Tracking})] For any $x' \in U$ we have the multi-set equality \[B(\elementm{}')=\{(b_i(\elementm{}'),d_i(\elementm{}'))\}_{i\in I} \cup \{(v_j(\elementm{}'),+\infty)\}_{j\in J} \cup \Delta^\infty .\]
\end{itemize}
\end{definition}
Thus, in a local coordinate system, we have maps $b_i,d_i$ (resp. $v_j$) that track the endpoints of bounded (resp. unbounded) intervals in the image barcode through~$B$. We will often abbreviate the data of a local coordinate system of $B$ at $\elementm{}$ by $\mathcal{T}=(U,\{b_i,d_i\}_{i\in I}, \{v_j\}_{j \in J})$.

Our two notions of differentiability are indeed equivalent:
\begin{proposition}
\label{proposition_relating_differentiability_barcode_ordered_barcode}
Let $B: \manifoldm{} \rightarrow Bar$ be a barcode valued map and $\elementm{} \in \manifoldm{}$. Then $B$ is $r$-differentiable at $\elementm{}$ if and only if it admits a $C^r$ local coordinate system at~$\elementm{}$. Specifically, post-composing a $C^r$ local lift $\tilde{B}: U \to \R^{2m}\times\R^n$ around~$x$ with the quotient map~$Q_{m,n}$ yields a $C^r$ local coordinate system, and conversely, fixing an order on the functions of a $C^r$ local coordinate system yields a $C^r$ local lift.
\end{proposition}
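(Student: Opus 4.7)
The proposition asserts an equivalence between two formulations of $r$-differentiability that differ only by whether one packages the tracking data as an ordered tuple or as an unordered collection of scalar functions. The plan is to verify both implications by explicit construction, as essentially suggested by the proposition's second sentence.

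For the forward direction, suppose $B$ is $r$-differentiable at $\elementm{}$ with $C^r$ local lift $\tilde{B}: U \to \R^{2m} \times \R^n$ satisfying $B = Q_{m,n} \circ \tilde{B}$ on $U$. I would write out $\tilde{B}$ in its $2m + n$ scalar components as $\tilde{B}(\elementm{}') = (b_1(\elementm{}'), d_1(\elementm{}'), \ldots, b_m(\elementm{}'), d_m(\elementm{}'), v_1(\elementm{}'), \ldots, v_n(\elementm{}'))$. Each component is $C^r$ because $\tilde B$ is, so the \textbf{Smooth} condition of Definition~\ref{definition_coordinate_system} is immediate, with $I = \{1,\dots,m\}$ and $J = \{1,\dots,n\}$. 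The \textbf{Tracking} condition follows directly from the explicit formula for $Q_{m,n}$ in Definition~\ref{definition_ordered_barcodes_quotient_map_Q}: plugging $\tilde{B}(\elementm{}')$ into that formula gives exactly the multiset on the right-hand side of the tracking equation.

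For the reverse direction, suppose $(U, \{b_i, d_i\}_{i \in I}, \{v_j\}_{j \in J})$ is a $C^r$ local coordinate system at $\elementm{}$. Set $m := |I|$ and $n := |J|$, and choose any total orderings $I = \{i_1, \ldots, i_m\}$ and $J = \{j_1, \ldots, j_n\}$. Define $\tilde{B}: U \to \R^{2m} \times \R^n$ by stacking the scalar functions in this order, namely $\tilde{B}(\elementm{}') := (b_{i_1}(\elementm{}'), d_{i_1}(\elementm{}'), \ldots, b_{i_m}(\elementm{}'), d_{i_m}(\elementm{}'), v_{j_1}(\elementm{}'), \ldots, v_{j_n}(\elementm{}'))$. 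Smoothness of each component gives $\tilde{B}\in C^r(U, \R^{2m} \times \R^n)$, and the tracking property combined with the definition of $Q_{m,n}$ gives $Q_{m,n} \circ \tilde{B} = B$ on $U$, as required.

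There is no real obstacle here: the proof is a straightforward unpacking of Definitions~\ref{definition_ordered_barcodes_quotient_map_Q}, \ref{differentiability_definition_barcode} and~\ref{definition_coordinate_system}, since the quotient map $Q_{m,n}$ is designed precisely so that its coordinate slots correspond to the tracking data. The only mild point worth noting is that the choice of ordering is arbitrary (so neither the lift nor the derivative computed from it is canonical), but any choice works, which matches the discussion after Definition~\ref{definition_derivatives_barcode_valued_maps} concerning the non-uniqueness of the differential $d_{x,\tilde{B}} B$.
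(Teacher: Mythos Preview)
Your proof is correct and follows essentially the same approach as the paper's: both directions amount to extracting (respectively stacking) the scalar component functions of a $C^r$ lift, using only the explicit form of $Q_{m,n}$ and the definitions of local lift and local coordinate system.
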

\begin{proof}
  $(\Rightarrow)$  Let $\tilde{B}:U\rightarrow \mathbb{R}^{2m}\times \mathbb{R}^n$ be a $C^r$ local lift of $B$ at $x$. Extract the components of $(b_1(\elementm{}'),d_1(\elementm{}'),...,b_m(\elementm{}'),d_m(\elementm{}'),v_1(\elementm{}'),...,v_n(\elementm{}')):=\tilde{B}(\elementm{}')$ to get a local coordinate system, which is $C^r$ over $U$ as $\tilde{B}$ is. 
$(\Leftarrow)$ Let $\mathcal{T}=(U,\{b_i,d_i\}_{i\in I}, \{v_j\}_{j \in J})$  be a $C^r$ local coordinate system for $B$ at $x$. Set $m=|I|$ and $n=|J|$, and fix two arbitrary bijections $s:\{1,...,m\}\to I$ and $t:\{1,...,n\}\to J$. Then the map $\tilde{B}:U\rightarrow \mathbb{R}^{2m}\times \mathbb{R}^n$ defined as:
\[\tilde{B}(\elementm{}'):= [b_{s(1)}(\elementm{}'),d_{s(1)}(\elementm{}'),...,b_{s(m)}(\elementm{}'),d_{s(m)}(\elementm{}'),v_{t(1)}(\elementm{}'),...,v_{t(n)}(\elementm{}')]\] is a lift of~$B$. As a map valued in a Euclidean space,~$\tilde{B}$ is~$C^r$ because all its coordinate functions are. 
\end{proof}

\begin{remark}[Non uniqueness of differentials]
\normalfont
\label{remark_non_uniqueness_derivatives}
It is important to keep in mind that the differential of~$B$ at~$\elementm{}$ is not uniquely defined, as it depends on the
choice of local lift. Indeed, for two distinct lifts $\tilde B, \tilde B'$ of $B$ at $\elementm{}$, we usually get
distinct differentials $d B_{\elementm{},\tilde B}$, $d
B_{\elementm{},\tilde B'}$. For instance, if $\tilde B'$ is obtained from~$\tilde B$ by appending an extra pair of coordinates of the form~$(f, f)$, where $f$ is a smooth real function, then  $d B_{\elementm{},\tilde B'}$ takes its values in a different codomain than that of $d B_{\elementm{},\tilde B}$.
Note that this will not be an issue in the rest of the paper, as any choice of differential will yield a valid chain rule (Section \ref{subsection_chain_rule}).
\end{remark}

\subsection{Differentiability of maps defined on barcodes}
\label{subsection_differentiability_vectorization}

Let $\manifoldn{}$ be a smooth finite-dimensional manifold without boundary. Our notion of differentiability for maps $V: Bar \rightarrow \manifoldn{}$ is in some sense dual to the one for maps $B:\manifoldm{}\rightarrow Bar$, as will be justified formally in the next section.
\begin{definition}
\label{definition_smooth_vectorization}
\normalfont
Let $V: Bar \rightarrow \manifoldn{}$ be a map on barcodes. Let $D\in Bar$ and $r\in \mathds{N}\cup \{+\infty\}$. $V$ is said to be \textit{$r$-differentiable} at $D$, if for all integers $m,n$ and all vectors $\obarcode{}\in \mathbb{R}^{2m}\times \mathbb{R}^n$ such that $Q_{m,n}(\obarcode{})=D$, the map $V\circ Q_{m,n}: \mathbb{R}^{2m}\times \mathbb{R}^n\rightarrow \manifoldn$ is $C^r$ on an open neighborhood of $\obarcode{}$.
\end{definition}
Notice that for each choice of $m,n$ we have a unique map $V\circ Q_{m,n}$, and we must check its differentiability at all the (possibly many) distinct pre-images $\obarcode{}$ of $D$ and for all $m, n$. One can think of a choice of $m,n$ and pre-image $\obarcode{}$ of $D$ as a choice of tangent space of~$Bar$ at~$D$.

\begin{example}[Total persistence function]
\label{example_total_persistence}
Let $V:Bar\rightarrow \mathbb{R}$ be defined as the sum, over bounded intervals $(b,d)$ in a barcode $D$, of the length $(d-b)$. Given $D\in Bar$ and an ordered barcode $\obarcode\in \mathbb{R}^{2m+n}$ such that $Q_{m,n}(\obarcode{})=D$, the map $V\circ Q_{m,n}$ is a linear form and in particular is of class $C^\infty$ at $\obarcode$. Explicitly, we have
$$V\circ Q_{m,n}: (b_1,d_1,...,b_m,d_m,v_1,...,v_n)\in \mathbb{R}^{2m+n} \mapsto \sum_{i=1}^m d_i-b_i \in \mathbb{R}$$
Therefore, $V$ is $\infty$-differentiable everywhere on $Bar$.
\end{example}
The relationship between $0$-differentiability and the bottleneck continuity for maps $V$ is the opposite to the one that holds for maps $B$ (recall Remark~\ref{remark_relationship_continuity_B}):
\begin{remark}[Bottleneck continuity is stronger than $0$-differentiability]
\normalfont
If $V:Bar\rightarrow \manifoldn{}$ is continuous when $Bar$ is equipped with the bottleneck topology, then $V$ is $0$-differentiable. This is because the quotient map $Q_{m,n}$ is continuous (Proposition \ref{proposition_continuity_quotient_map}) and the composition of continuous maps is continuous. The converse is false, as seen for instance when taking $V$ to be the total persistence function: although $0$-differentiable (because $\infty$-differentiable) on $Bar$, $V$ is not continuous in the bottleneck topology as it is unbounded in any open bottleneck ball. 
\end{remark}
\begin{definition}
\label{definition_derivative_vectorization}
\normalfont
Let $V: Bar \rightarrow \manifoldn{}$ be $1$-differentiable at $D\in Bar$, and $\obarcode{}\in \mathbb{R}^{2m+n}$ be a pre-image of $D$ via $Q_{m,n}$. The \textit{differential} (or \textit{derivative}) of $V$ at $D$ with respect to $\obarcode{}$ is the map 
$$d_{D,\obarcode{}} V: \mathbb{R}^{2m+n} \xrightarrow[d_{\obarcode{}} (V\circ Q_{m,n})]{} T_{V(D)} \manifoldn{}.$$
\end{definition}

\subsection{Chain rule}
\label{subsection_chain_rule}

We now combine the previous definitions to produce a chain rule.
\begin{proposition}
\label{proposition_chain_rule_vectorization_barcode}
Let $B:\manifoldm{} \rightarrow Bar$ be $r$-differentiable at $\elementm{} \in \manifoldm{}$, and  $V:Bar \rightarrow \manifoldn{}$ be $r$-differentiable at $B(\elementm{})$. Then: 
\begin{itemize}
    \item [(i)] $V\circ B:\manifoldm{} \rightarrow \manifoldn{}$ is $C^r$ at $\elementm{}$ as a map between smooth manifolds;
    \item [(ii)] If $r\geq 1$, then for any local $C^1$ lift $\tilde{B}:U \rightarrow \mathbb{R}^{2m+n}$ of $B$ around~$\elementm$ we have: 
      \[ d_{\elementm{}} (V\circ B)= d_{B(x),\tilde{B}(\elementm{})}V \circ d_{\elementm{},\tilde{B}}B. \]
\end{itemize}
\end{proposition}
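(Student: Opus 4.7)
The plan is to exhibit $V\circ B$ locally as a composition of two standard $C^r$ maps between smooth manifolds in the usual sense, by combining the local lift of $B$ provided by its $r$-differentiability with the post-composition regularity guaranteed by the $r$-differentiability of $V$. First I unpack the hypothesis on $B$ (Definition~\ref{differentiability_definition_barcode}): there exist integers $m,n\in\N$, an open neighborhood $U$ of $x$, and a $C^r$ map $\tilde{B}:U\to\R^{2m}\times\R^n$ such that $B=Q_{m,n}\circ\tilde{B}$ on $U$. In particular $Q_{m,n}(\tilde{B}(x))=B(x)$, so $\tilde{B}(x)$ is a pre-image of $B(x)$ under $Q_{m,n}$.

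Next I specialize the hypothesis on $V$ (Definition~\ref{definition_smooth_vectorization}) to this specific choice of $(m,n)$ and to this specific pre-image $\tilde{B}(x)$: the composition $V\circ Q_{m,n}:\R^{2m}\times\R^n\to\manifoldn$ is $C^r$ on some open neighborhood $W$ of $\tilde{B}(x)$. Shrinking $U$ if needed (using continuity of $\tilde{B}$) so that $\tilde{B}(U)\subseteq W$, I obtain the local factorization
\[
V\circ B \;=\; (V\circ Q_{m,n})\circ \tilde{B} \quad\text{on } U,
\]
which is a composition of two $C^r$ maps between smooth manifolds in the classical sense. This proves (i). For (ii), with $r\geq 1$, the ordinary chain rule applied to this factorization yields
\[
d_x(V\circ B) \;=\; d_{\tilde{B}(x)}(V\circ Q_{m,n})\circ d_x\tilde{B},
\]
and by Definitions~\ref{definition_derivatives_barcode_valued_maps} and~\ref{definition_derivative_vectorization} the two factors on the right are exactly $d_{B(x),\tilde{B}(x)}V$ and $d_{x,\tilde{B}}B$, yielding the claimed identity.

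There is no genuine obstacle: the definitions were tailored so that the chain rule falls out by mere composition. The one structural point worth flagging is why $V$ has to be required $r$-differentiable with respect to \emph{every} pre-image of $B(x)$ rather than a single one of them: the lift $\tilde{B}$ of $B$ is chosen without reference to $V$, so the particular pre-image $\tilde{B}(x)$ that shows up in the factorization above is not under the control of $V$, and the hypothesis must be universal in order for the compatibility $\tilde{B}(U)\subseteq W$ to be available for arbitrary lifts. This also makes transparent why the formula in (ii) is independent of the choice of lift in the sense needed for Remark~\ref{remark_non_uniqueness_derivatives}: different lifts may give differentials with different codomains, but each is meaningful after post-composition with the corresponding $d_{B(x),\tilde{B}(x)}V$, and both sides recover the same intrinsic differential $d_x(V\circ B)$.
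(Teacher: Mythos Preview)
Your proof is correct and follows essentially the same approach as the paper: factor $V\circ B$ locally as $(V\circ Q_{m,n})\circ\tilde{B}$ using the lift of $B$ and the regularity of $V\circ Q_{m,n}$ at $\tilde{B}(x)$, then apply the classical chain rule. Your explicit shrinking of $U$ so that $\tilde{B}(U)\subseteq W$ and your commentary on why the hypothesis on $V$ must be universal over pre-images are nice additions but do not change the substance of the argument.
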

The meaning of this formula is that, even though the differentials of~$B$ and of~$V$ may
depend on the choice of lift~$\tilde
B:\manifoldm\to\mathbb{R}^{2m+n}$, their composition
does not, and in fact it matches with the usual differential of~$V\circ B$ as a map between smooth manifolds.
\begin{proof}
Since $B$ is $r$-differentiable at $\elementm$, there exists an open neighborhood~$U$ of~$\elementm$ and a local $C^r$ lift $\tilde{B}: U \rightarrow \mathbb{R}^{2m}\times \mathbb{R}^n$ for some integers $m,n$, such that $B|_U = Q_{m,n}\circ \tilde B$. Meanwhile, since $V$ is $r$-differentiable at $B(\elementm)$, the map $V\circ Q_{m,n}: \mathbb{R}^{2m}\times \mathbb{R}^n\rightarrow \manifoldn{}$ is $C^r$ at $\tilde{B}(\elementm{})$. This implies that the composition $V\circ B|_U=(V\circ Q_{m,n}) \circ \tilde{B}$ is $C^r$ at~$\elementm{}$, and therefore that $V\circ B$ itself is $C^r$ at~$\elementm$ since $U$ is open. This proves~(i). The formula of~(ii) follows then from applying the usual chain rule to $(V\circ Q_{m,n})$ and $\tilde{B}$, which are $C^1$ maps between smooth manifolds without boundary.
\end{proof}
\begin{example}
In~\cite{2019arXiv190510996H}, given a $C^\infty$ neural network architecture $F_0:\R^{N}\rightarrow \R^{K_0}$ valued in the set of functions over the vertices of a fixed graph $K$, the optimization pipeline requires taking the gradient of the following loss function:
\[\mathcal{L}:\param{} \in \R^N \longmapsto \sum_{(b,d)\in \Dgm_p(F_0(\param))\setminus \Delta \ \mathrm{ bounded}} s(b,d) \in \R,\]
where $s:\R^2\rightarrow \R$ is a fixed smooth map, and $\Dgm_p(F_0(\param))$ is the degree-$p$ persistence diagram associated to the lower star filtration induced by $F_0(\param)$ on $K$ (see Section \ref{section_discrete_smoothness_subsection_examples_ex1} dedicated to the full analysis of lower star filtrations). We may see $\mathcal{L}$ as the composition:
\[ \mathcal{L}: \xymatrix{ \param \in \R^N \ar^-{B}[r] &  \Dgm_p(F_0(\param))\in Bar \ar^-{V}[r] & V(\Dgm_p(F_0(\param)))\in \R}, \]
where $V: D\in Bar \mapsto \sum_{(b,d)\in D\setminus \Delta \ \mathrm{ bounded}} s(b,d) \in \R$. On the one hand, $B$ is $\infty$-differentiable at every $\param$ where $\parametrization_0(\param)$ is injective, as will be detailed in Section \ref{section_discrete_smoothness_subsection_examples_ex1}. On the other hand, $V$ is $\infty$-differentiable everywhere on $Bar$, a fact obtained exactly as in the case of the total persistence function of Example \ref{example_total_persistence}. By the chain rule (Proposition \ref{proposition_chain_rule_vectorization_barcode}), we deduce that the loss $\mathcal{L}$ is smooth at every $\param$ where $\parametrization_0(\param)$ is injective. Thus we recover the differentiability result of \cite{2019arXiv190510996H}. In fact, the upcoming Theorem~\ref{theorem_PH_differentiable_global} ensures that $B$ is $\infty$-differentiable over an open dense subset of $\R^N$, and therefore so is $\mathcal{L}$ by the chain rule. 
\end{example}

\subsection{Higher-order derivatives}

The notions of derivatives introduced in Definitions~\ref{definition_derivatives_barcode_valued_maps} and~\ref{definition_derivative_vectorization} extend naturally to higher orders. For simplicity, we place ourselves in the Euclidean setting,  letting $\manifoldm=\R^N$ and $\manifoldn=\R^{N'}$ for some $N, N'\in \mathbb{N}$. 
\begin{definition}
\label{definition_higher_order_derivatives}
Let $B:\R^N\rightarrow Bar$ be $r$-differentiable at some $x$, and $\tilde{B}:U \rightarrow \mathbb{R}^{2m}\times \mathbb{R}^n$ be a $C^r$ lift of $B$ defined on an open neighborhood $U$ of $x$. The \textit{$r$-th differential} (or \textit{derivative}) of $B$ at $x$ with respect to $\tilde{B}$ is defined to be the $r$-th Fr\'echet differential of $\tilde{B}$ at $x$:
$$d^r_x \tilde{B}: (\mathbb{R}^{N})^r \xrightarrow[]{} \mathbb{R}^{2m}\times \mathbb{R}^n. $$
\end{definition}
Dually:
\begin{definition}
\label{definition_derivative_vectorization_higher}
\normalfont
Let $V: Bar \rightarrow \R^{N'}$ be $r$-differentiable at $D\in Bar$, and $\obarcode{}\in \mathbb{R}^{2m+n}$ be a pre-image of $D$ via $Q_{m,n}$. The \textit{$r$-th differential} (or \textit{derivative}) of $V$ at $D$ with respect to $\obarcode{}$ is the $r$-th Fr\'echet differential of $V\circ Q_{m,n}$ at $\obarcode$: 
$$d^r_{\obarcode{}} (V\circ Q_{m,n}):(\mathbb{R}^{2m+n})^r \xrightarrow[]{} \R^{N'}.$$
\end{definition}
Note that, given maps $B:\R^N\to Bar$ and $V: Bar\to \R^{N'}$ that are $r$-differentiable at $\elementm$ and $B(\elementm)$ respectively, the chain rule of Section~\ref{subsection_chain_rule} adapts readily to higher-order derivatives of $B\circ V$ at $x$. 

Meanwhile, we get a natural {\em Taylor expansion} of $B$ at $x$ with respect to $\tilde{B}$: 
\[T^r_{x,\tilde{B}} B: 
  h\in \mathbb{R}^{N} \longmapsto \tilde{B}(x)+ d_{x}\tilde{B}(h)+ \cdots+ \frac{1}{r!}d^r_{x}\tilde{B}(h,\cdots,h) \in \R^{2m}\times \R^n. \]

\begin{proposition}
\label{proposition_Taylor}
Let $B:\R^N\rightarrow Bar$ be $r$-differentiable at some $x$, and $\tilde{B}:U \rightarrow \mathbb{R}^{2m}\times \mathbb{R}^n$ be a $C^r$ lift of $B$ defined on an open neighborhood $U$ of $x$. 
%
%
Then, \[\db(B(x+h),(Q_{m,n}\circ T^r_{x,\tilde{B}} B) (h))=o(\|h\|^r).\]
\end{proposition}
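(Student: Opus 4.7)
The plan is to reduce the statement to the classical Taylor expansion for the smooth lift $\tilde{B}$ and then transfer the bound to the bottleneck distance via the Lipschitz property of the quotient map $Q_{m,n}$.

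First, I would shrink $U$ if necessary to assume $x+h\in U$ for all sufficiently small $h\in\mathbb{R}^N$, so that the identity $B(x+h)=Q_{m,n}(\tilde{B}(x+h))$ holds. Since $\tilde{B}:U\to \mathbb{R}^{2m}\times\mathbb{R}^n$ is a genuine $C^r$ map between Euclidean spaces, the classical Taylor theorem applies and yields
\[
\bigl\|\tilde{B}(x+h)-T^r_{x,\tilde{B}}B(h)\bigr\| = o(\|h\|^r).
\]
This is the only nontrivial analytic input, and it is entirely standard.

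Next, I would invoke Proposition~\ref{proposition_continuity_quotient_map}, which tells us that $Q_{m,n}$ is $1$-Lipschitz when the Euclidean source is equipped with its standard norm and the target $Bar$ with the bottleneck distance. Applying this to the two vectors $\tilde{B}(x+h)$ and $T^r_{x,\tilde{B}}B(h)$ yields
\[
\db\bigl(Q_{m,n}(\tilde{B}(x+h)),\, Q_{m,n}(T^r_{x,\tilde{B}}B(h))\bigr) \leqslant \bigl\|\tilde{B}(x+h)-T^r_{x,\tilde{B}}B(h)\bigr\|.
\]
Combining the two displays with the identity $B(x+h)=Q_{m,n}(\tilde{B}(x+h))$ from the first step gives exactly the claimed bound $\db(B(x+h),(Q_{m,n}\circ T^r_{x,\tilde{B}}B)(h))=o(\|h\|^r)$.

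There is essentially no obstacle here: the proposition is really just a packaging of the classical Taylor theorem through the $1$-Lipschitz quotient map. The only point worth a line of verification is that $T^r_{x,\tilde{B}}B(h)$ lies in $\mathbb{R}^{2m}\times\mathbb{R}^n$ so that $Q_{m,n}$ can be applied, which is immediate from its definition as a polynomial expression in $h$ with coefficients in $\mathbb{R}^{2m}\times\mathbb{R}^n$.
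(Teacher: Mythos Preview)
Your proof is correct and follows essentially the same approach as the paper: apply the classical Taylor--Young theorem to the $C^r$ lift $\tilde{B}$, then push the estimate forward through the $1$-Lipschitz quotient map $Q_{m,n}$ (Proposition~\ref{proposition_continuity_quotient_map}). The paper's proof is just the one-line version of what you wrote out in detail.
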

\begin{proof}
This follows from applying the standard Taylor-Young theorem to $\tilde{B}$, then post-composing by $Q_{m,n}$---which is $1$-Lipschitz by Proposition~\ref{proposition_continuity_quotient_map}.
\end{proof}

To our knowledge, there is in general no equivalent of this result for the map $V$, due to the lack of a Lipschitz-continuous section of $Q_{m,n}$.

\subsection{The space of barcodes as a diffeological space}
\label{subsection_diffeology}

In this subsection, we detail how $Bar$, when viewed as the quotient of a disjoint union of Euclidean spaces, is canonically made into a diffeological space, as defined in Section~\ref{section_diffeology_theory_preliminary_notions}. We then show that the resulting notions of diffeological smooth maps from and to~$Bar$ coincide with the definitions~\ref{differentiability_definition_barcode} and~\ref{definition_smooth_vectorization} of differentiability we chose for maps from and to $Bar$ in the previous sections, thus making these two definitions dual to each other.

As a set, $Bar$ is isomorphic to $\left(\bigsqcup_{m,n\in \mathbb{N}}\R^{2m+n}\right)/\!\!\sim$, where $\sim$ is the transitive closure of the following relations for $m,n$ ranging over~$\mathbb{N}$:
\begin{itemize}
    \item For any permutations $\pi,\tau$ of $\{1,...,m\}$ and $\{1,...,n\}$ respectively,  \[[(b_i,d_i)_{i=1}^m, (v_j)_{j=1}^n]\sim [(b_{\pi(i)},d_{\pi(i)})_{i=1}^m, (v_{\tau(j)})_{j=1}^n]\text{,}\]
    which indicates that persistence diagrams are multisets (i.e. intervals are not ordered);
    \item Any element $[(b_i,d_i)_{i=1}^m, (v_j)_{j=1}^n]\in \R^{2m+n}$ such that one of the first $m$ adjacent pairs $(b_i, d_i)$ satisfies  $b_i=d_i$ is equivalent to the element of $\R^{2(m-1)+n}$ obtained by removing $(b_i,d_i)$. These identifications correspond to quotienting multisets by the diagonal~$\Delta$.
\end{itemize}

Since the Euclidean spaces~$\R^{2m+n}$ are equipped with their Euclidean diffeologies, we obtain a canonical diffeology~$\mathcal{D}(Bar)$ over~$Bar$ from Definitions~\ref{definition_union_diffeo} and~\ref{definition_quotient_diffeo}. The plots of~$\mathcal{D}(Bar)$ can be concretely characterized as follows:
\begin{proposition}
\label{proposition_plots_infinity_differentiable}
Let $U \subseteq \mathbb{R}^d$ be open and $B:U \rightarrow Bar $. Then~$B$ is a plot in~$\mathcal{D}(Bar)$ if and only if, for every $x\in U$, there exists an open neighborhood $V\subseteq U$ of $x$ and a $C^\infty$ lift $\tilde{B}:V\rightarrow \mathbb{R}^{2m+n}$ such that $B_{|V}= Q_{m,n} \circ \tilde{B}$. 
\end{proposition}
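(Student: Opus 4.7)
The plan is to unfold the definitions of the sum and quotient diffeologies and use two standard characterizations of their plots (see \cite[\textsection1.39 and \textsection1.50]{iglesias2013diffeology}): a map into a coproduct of diffeological spaces is a plot if and only if it locally factors through one of the summands as a plot there; and a map into a quotient $S/\!\!\sim$ is a plot if and only if it locally lifts to a plot of $S$.

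The direction ($\Leftarrow$) is immediate. Suppose that around every $x\in U$ there are integers $m,n$, an open neighborhood $V\subseteq U$ of $x$, and a $C^\infty$ map $\tilde B:V\to\R^{2m+n}$ with $B_{|V}=Q_{m,n}\circ\tilde B$. The inclusion $\R^{2m+n}\hookrightarrow\bigsqcup_{m',n'}\R^{2m'+n'}$ is smooth by the very definition of the sum diffeology (Definition~\ref{definition_union_diffeo}), and composing with the quotient map to $Bar$ is smooth by the definition of the quotient diffeology (Definition~\ref{definition_quotient_diffeo}). Hence $Q_{m,n}$ itself is smooth as a map $\R^{2m+n}\to Bar$, and by the smoothness compatibility axiom applied to $\tilde B$, the restriction $B_{|V}$ is a plot. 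By the locality axiom, $B$ itself is then a plot.

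For ($\Rightarrow$), assume $B:U\to Bar$ is a plot. Let $\pi:\bigsqcup_{m,n\in\N}\R^{2m+n}\to Bar$ denote the canonical surjection. Since $\mathcal{D}(Bar)$ is the quotient diffeology induced by $\pi$, the standard characterization of quotient plots yields, for every $x\in U$, an open neighborhood $V\subseteq U$ of $x$ and a plot $\hat B:V\to \bigsqcup_{m,n}\R^{2m+n}$ of the sum diffeology with $B_{|V}=\pi\circ \hat B$. Now, plots of a sum diffeology locally factor through exactly one summand: shrinking $V$ further around $x$ if necessary, there exist integers $m,n$ and a smooth map $\tilde B:V\to\R^{2m+n}$ such that $\hat B_{|V}$ is $\tilde B$ followed by the inclusion of $\R^{2m+n}$ into the coproduct. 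Since $\pi$ restricted to this summand coincides with $Q_{m,n}$, we obtain $B_{|V}=Q_{m,n}\circ\tilde B$ with $\tilde B$ of class $C^\infty$, as desired.

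The only subtle point is justifying the two quotient/coproduct plot characterizations; both are standard consequences of the finest-diffeology definitions in Section~\ref{section_diffeology_theory_preliminary_notions}, and the intermediate step of lifting a quotient plot locally uses that $\pi$ is surjective so that pullback diffeologies along set-theoretic sections exist locally in the axiomatic sense. Given these, the statement follows as above without further computation.
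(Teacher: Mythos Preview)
Your proof is correct and follows essentially the same route as the paper: both arguments unfold the quotient and sum diffeologies via their standard plot characterizations (locally lift through the quotient map, then locally factor through a single summand), citing the same results from \cite{iglesias2013diffeology}. The only cosmetic difference is that the paper invokes the push-forward characterization \cite[\textsection~1.43]{iglesias2013diffeology} and explicitly disposes of the ``locally constant'' alternative by exhibiting a constant lift, whereas you absorb this into the surjectivity remark in your final paragraph.
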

In other words, a plot in~$\mathcal{D}(Bar)$ is an $\infty$-differentiable map from a domain $U$ to $Bar$.
\begin{proof}
Note that the characterization of the quotient diffeology, as given in Definition~\ref{definition_quotient_diffeo}, is in fact the  characterization of the so-called {\em push-forward diffeology} induced by the quotient map---see \cite[\textsection~1.43]{iglesias2013diffeology}. According to that characterization, $B:U \rightarrow Bar$ is a plot if and only if, for every element $z\in U$, there exists an open neighborhood $W\subseteq U$ of $z$ such that the restriction $B_{|W}$ admits a lift\footnote{Strictly speaking, according to \cite[\textsection~1.43]{iglesias2013diffeology} there is also the alternative that the restriction $B_{|W}$ be constant, but in this case it also admits a lift to $\bigsqcup_{m,n\in \mathbb{N}}\R^{2m+n}$. Indeed, calling~$D$ the unique barcode in the image of~$B_{|W}$, we can choose one pre-image~$\tilde D$ of~$D$  in one of the spaces of ordered barcodes~$\R^{2m+n}$, then take $\tilde B$ to be the constant map $W\to\{\tilde D\}$.} $\tilde{B}:W \rightarrow \bigsqcup_{m,n\in \mathbb{N}}\R^{2m+n}$, i.e. a plot~$\tilde{B}$ of $\bigsqcup_{m,n\in \mathbb{N}}\R^{2m+n}$ that matches with $B_{|W}$ once post-composed with the quotient map modulo $\sim$. In turn, by the characterization of the sum diffeology in \cite[\textsection~1.39]{iglesias2013diffeology}, $\tilde{B}$ is a plot of $\bigsqcup_{m,n\in \mathbb{N}}\R^{2m+n}$ if and only if, for any $x\in W$, there is an open neighborhood $V\subseteq W$ of $x$ and a pair of indices~$(m,n)$  such that the restriction $\tilde{B}_{|V}$
maps into $\R^{2m+n}$ and is in fact a plot of 
$\R^{2m+n}$.
Equivalently, we have $B_{|V} = Q_{m,n} \circ \tilde{B}_{|V}$, where $\tilde{B}_{|V}$ is of class $C^\infty$ (since the spaces of ordered barcodes are equipped with their canonical Euclidean diffeologies).
\end{proof}

\begin{corollary}
  The smooth maps in \textbf{Diffeo} from a smooth manifold  $\manifoldm{}$ without boundary (equipped with the diffeology from Theorem~\ref{thm:smooth_diffeo_manifold}) to the diffeological space $Bar$ are exactly the $\infty$-differentiable maps from $\manifoldm{}$ to $Bar$.
\end{corollary}
\begin{proof}
Let $B:\manifoldm{}\rightarrow Bar$ be a smooth map in \textbf{Diffeo}. For any plot $\phi:U\rightarrow \manifoldm{}$, the composition $B\circ\phi$ is a plot in $\mathcal{D}(Bar)$, therefore it locally rewrites as $Q_{m,n}\circ \tilde{B}$ for some $C^\infty$ lift $\tilde{B}$, by Proposition \ref{proposition_plots_infinity_differentiable}. Choosing $\phi$ to be a local coordinate chart, we then locally have $B=Q_{m,n}\circ \tilde{B}\circ \phi^{-1}$, which means that $B$ is $\infty$-differentiable. Conversely, if $B$ is $\infty$-differentiable, it locally rewrites as $B=Q_{m,n}\circ \tilde{B}$, hence for any plot $\phi:U\rightarrow \manifoldm{}$ the composition $B \circ \phi$ locally rewrites as $Q_{m,n}\circ \tilde{B}\circ \phi$ and therefore is a plot in $\mathcal{D}(Bar)$ by Proposition \ref{proposition_plots_infinity_differentiable}. 
\end{proof}
Dually:
\begin{corollary}
The smooth maps in \textbf{Diffeo} from the diffeological space $Bar$ to a smooth manifold $\manifoldn{}$ without boundary (equipped with the diffeology from Theorem~\ref{thm:smooth_diffeo_manifold})  are exactly the $\infty$-differentiable maps from $Bar$ to $\manifoldn{}$.
\end{corollary}
\begin{proof}
Let $V:Bar \rightarrow \manifoldn{}$ be a smooth map in \textbf{Diffeo}. By Proposition \ref{proposition_plots_infinity_differentiable}, any $\infty$-differentiable map $B:U\rightarrow Bar$ defined on a domain $U$ is a plot, therefore the composition $V\circ B:U\to\manifoldn$ is a plot hence $C^\infty$. In
particular, the map $Q_{m,n} = Q_{m,n}\circ \text{Id}_{\R^{2m+n}} :
\R^{2m+n} \to Bar$ is $\infty$-differentiable, therefore $V\circ Q_{m,n}$ is $C^\infty$. This shows that $V$ is $\infty$-differentiable. Conversely, if $V$ is $\infty$-differentiable, the maps $V\circ Q_{m,n}:\mathbb{R}^{2m}\times \mathbb{R}^n\rightarrow \manifoldn$, for varying integers $m,n$, are $C^\infty$. By Proposition \ref{proposition_plots_infinity_differentiable}, if $B:U\rightarrow Bar$ is a plot, then it locally rewrites as $Q_{m,n}\circ \tilde{B}$ for some $C^\infty$ lift $\tilde{B}$, therefore $V\circ B$ is locally of the form $(V\circ Q_{m,n}) \circ \tilde{B}$, which is of class $C^\infty$ as a map between manifolds by the chain rule. Thus, $V\circ B$ is a plot, and therefore $V$ is smooth in \textbf{Diffeo}. 
\end{proof}

Conceptually, we have made $Bar$ into a diffeological space by viewing it as the quotient of the direct limit of the spaces of ordered barcode. Then, $\infty$-differentiable maps are simply morphisms in \textbf{Diffeo} from or to smooth manifolds, rather than maps satisfying the a priori unrelated definitions \ref{differentiability_definition_barcode} and \ref{definition_smooth_vectorization}. More generally, by seeing $Bar$ as one object in $\textbf{Diffeo}$ where morphisms can come in or out, we have notions of smooth maps from or to $Bar$ with respect to any other diffeological space. For instance, a map $f:Bar\rightarrow Bar$ is smooth if and only if all the maps $f\circ Q_{m,n}$, for varying integers $m,n$, are $\infty$-differentiable (the proof is left as an exercise to the reader). Note however that diffeology does not characterize the $r$-differentiable maps for finite~$r$ nor the maps that are differentiable only locally, two concepts that are prominent in our analysis.

\section{The case of barcode valued maps derived from real functions on a simplicial complex}
\label{section_discrete_smoothness}

In this section we consider barcode valued maps $B_p: \manifoldm{} \to Bar$ that factor through the space~$\mathbb{R}^K$ of real functions on a fixed finite abstract simplicial complex~$K$:
\[ B_p:\ \xymatrix{\manifoldm{} \ar^-{F}[r] & \mathbb{R}^K \ar^-{\Dgm_p}[r] & Bar} \]
In other words, we consider barcodes derived from real functions
on~$K$. Note that $\Dgm_p$, the barcode map in degree~$p$, is only
defined on the subspace of filter functions, i.e. functions $K\to
\mathbb{R}$ that are monotonous with respect to inclusions of faces
in~$K$. This subspace is a convex polytope bounded by the hyperplanes
of equations $f(\simplex) = f(\simplex')$ for
$\simplex\subsetneq\simplex'\in K$. From now on, we consistently assume that
$F$ takes its values in this polytope.

\begin{example}[Height filters]
\label{example_parametrization}
\normalfont
Given an embedded simplicial comple~$K\subseteq \mathbb{R}^d$, let $\manifoldm{}=\mathbb{S}^{d-1}$ and $F:\param \mapsto (\simplex\in K \mapsto \max_{x\in\simplex}\, \langle \param,x \rangle)$. The filter functions considered here are the height functions on~$K$, parametrized on the unit sphere~$\mathbb{S}^{d-1}$ by the map~$F$.
\end{example}

By analogy with the previous example, we generally call~$F$ the {\em
  parametrization} associated to~$B$, although it may not always be a
topological embedding of~$\manifoldm{}$ into~$\mathbb{R}^{K}$ (it
may not even be injective). We also call~$\manifoldm$ the {\em parameter space}, and use the generic notation~$\param{}$ to refer to an element in~$\paramspace$.

As we shall see in Section~\ref{section_discrete_smoothness_subsection_smoothness_theorem}, a local coordinate system for the map $B_p$ at $\param{}\in\manifoldm{}$ can be derived when  the order of the values of the filter function $F(\param{})$ remains constant locally around $\param{}$. For this purpose we introduce the following equivalence relation on filter functions $K\to\mathbb{R}$:

\begin{definition}
\normalfont
\label{definition_ordering_equivalent_and_integer_filter}
Given a filter function $f:K\rightarrow \mathbb{R}$, the increasing order of its values induce a pre-order on the simplices of $K$. Two filter functions $f,g$ are said to be \textit{ordering equivalent}, written $f\sim g$, if they induce the same pre-order on $K$. This relation is an equivalence relation on filter functions, and we denote by $[f]$ the equivalence class of $f$. The (finite) set of equivalence classes is denoted by~$\Omega(\mathbb{R}^K)$.
\end{definition}
In order to compare barcodes across an entire equivalence class of functions, we introduce  {\em barcode templates} as follows:
\begin{definition}
\label{definition_pairing_simplices}
Given a filter function $f\in \mathbb{R}^K$ and a homology degree $0\leqslant p \leqslant d$, a \textit{barcode template} $(P_p, U_p)$ is composed of a multiset $P_p$ of pairs of simplices in $K$, together with a multiset $U_p$ of simplices in $K$, such that:
\begin{equation}
\label{equation_pairing_definition}
    \Dgm_p(f)=\big\{(f(\simplex),f(\simplex{}'))\big \}_{(\simplex,\simplex{}')\in P_p}\cup \big\{(f(\simplex),+\infty)\big \}_{\simplex{}\in U_p}\cup \Delta^\infty
\end{equation}
Note that we do not require a priori that $\dim \simplex =p$ and $\dim\simplex' = p+1$.
\end{definition}
\begin{proposition}
\label{proposition_existence_pairing}
For any filter function $f\in \mathbb{R}^K$ and homology degree $0\leqslant p \leqslant d$, there exists a barcode template $(P_p,U_p)$ of $f$.
\end{proposition}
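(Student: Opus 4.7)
The plan is to reduce the problem to the classical simplex-wise persistence pairing. First I would extend the pre-order on $K$ induced by $f$ to a total order $\simplex_1 < \simplex_2 < \cdots < \simplex_N$ compatible with the face relation (whenever $\simplex \subsetneq \simplex'$, we place $\simplex$ before $\simplex'$, breaking ties among simplices with equal $f$-value by dimension then arbitrarily). This yields a simplex-wise filtration $\emptyset = K_0 \subsetneq K_1 \subsetneq \cdots \subsetneq K_N = K$ where $K_i = K_{i-1} \cup \{\simplex_i\}$. Each sublevel set $K^t = \{\simplex \in K \mid f(\simplex) \leqslant t\}$ coincides with one of the $K_i$, so the persistent homology module of $f$ agrees (up to reindexing by the $f$-values) with that of the simplex-wise filtration, and in particular they have the same barcode decomposition.

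Next, I would invoke the standard pairing from the matrix reduction algorithm applied to this simplex-wise filtration. This classifies each $\simplex_i$ as either \emph{positive} (its insertion creates a homology class) or \emph{negative} (its insertion destroys one), and produces a bijection between negative simplices and a subset of positive simplices of dimension one lower: each negative $\simplex_j$ is paired with a positive $\simplex_i$ with $i<j$ and $\dim \simplex_i = \dim \simplex_j - 1$, while unpaired positive simplices correspond to essential classes. Defining $P_p$ to be the multiset of pairs $(\simplex_i,\simplex_j)$ such that $\dim \simplex_i = p$ (with $\simplex_j$ paired to $\simplex_i$), and $U_p$ to be the multiset of unpaired positive simplices of dimension $p$, the correspondence between the simplex-wise pairing and the interval decomposition of Theorem~\ref{theorem_crawley_bovey} gives exactly
\[
\Dgm_p(f) = \big\{(f(\simplex_i),f(\simplex_j))\big\}_{(\simplex_i,\simplex_j)\in P_p} \cup \big\{(f(\simplex),+\infty)\big\}_{\simplex \in U_p} \cup \Delta^\infty,
\]
which is the required identity \eqref{equation_pairing_definition}.

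The only subtle point, and the main obstacle to sort out carefully, is the handling of simplices sharing the same $f$-value. Different admissible linear extensions of the pre-order may produce different pairings, and some pairs $(\simplex_i,\simplex_j)\in P_p$ may satisfy $f(\simplex_i) = f(\simplex_j)$, contributing a trivial bar at a diagonal point. However, such trivial bars are absorbed into $\Delta^\infty$ by the definition of $\Dgm_p(f)$, so this non-uniqueness does not affect the \emph{existence} statement: any choice of linear extension yields a valid barcode template, and Proposition~\ref{proposition_existence_pairing} follows. Note in particular that this explains the remark following Definition~\ref{definition_pairing_simplices} that one does not impose $\dim \simplex = p$ and $\dim \simplex' = p+1$ \emph{a priori}, since any refinement of $f$ into a generic filter function on $K$ gives a pairing matching dimensions, but the relaxed definition allows for more flexible representations of the same diagram when values coincide.
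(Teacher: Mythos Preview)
Your proof is correct, but it follows a different route from the paper's. The paper gives a much more direct argument: it takes the interval decomposition $\mathbf{H}_p(f)\cong\bigoplus_{J\in\mathcal{J}}\mathbb{I}_J$, observes that every finite endpoint of an interval $J$ must be an $f$-value of some simplex (because the persistence module has internal isomorphisms between consecutive values in $f(K)$), and then simply \emph{chooses} arbitrary preimages $(\sigma_J,\sigma'_J)\in f^{-1}(b)\times f^{-1}(d)$ for bounded $J$ and $\sigma_J\in f^{-1}(v)$ for unbounded $J$. No linear extension, no simplex-wise filtration, no pairing algorithm---just the decomposition theorem and the axiom of choice on finite fibers.

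Your approach via a compatible linear extension and the matrix reduction pairing is more constructive and in fact proves more: the simplices in $P_p$ automatically have dimensions $p$ and $p+1$, and each simplex of $K$ appears at most once across all homology degrees. This extra structure is exactly what the paper later calls a \emph{total barcode template} and needs in Proposition~\ref{proposition_global_lift_permutation}, where the paper itself invokes the matrix reduction algorithm. So your argument anticipates that later step, at the cost of importing more machinery than the bare existence statement requires.
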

\begin{proof}
  Consider the interval decomposition $\mathbf{H}_p(f) \cong \oplus_{J \in \mathcal{J}} \mathbb{I}_J$ of the $p$-th persistent homology module of~$f$. Note that every interval endpoint in the decomposition corresponds to the $f$-value of some simplex of $K$ (since the persistent homology module has internal isomorphisms in-between these values). For every bounded interval $J$ with endpoints $b,d\in\R$ choose an element $(\sigma_J, \sigma'_J)$ in $f^{-1}(b)\times f^{-1}(d) \subseteq K\times K$, then form the multiset $P_p := \{(\sigma_J, \sigma'_J) \mid J\in\mathcal{J}\ \mathrm{bounded}\}$. Meanwhile, for every unbounded interval $J$ with finite endpoint $v\in\R$ choose an element $\sigma_J$ in $f^{-1}(v)$, then form the multiset $U_p := \{\sigma_J \mid J\in\mathcal{J}\ \mathrm{unbounded}\}$.
\end{proof}
Barcode templates get their name from the fact that they are an invariant of the ordering equivalence relation~$\sim$:
\begin{proposition}
\label{proposition_pairing_same_equivalence_class}
If $f, f'$ are ordering equivalent filter functions, then any barcode template
of $f$ is also a barcode template of~$f'$ and vice-versa.
\end{proposition}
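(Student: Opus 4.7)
The plan is to exploit the fact that ordering-equivalent filter functions produce, up to relabeling of real numbers, the same combinatorial filtration of $K$. First I would let $c_1<\cdots<c_N$ be the distinct values of~$f$ and $c'_1<\cdots<c'_N$ the distinct values of~$f'$. The assumption $f\sim f'$ means that two simplices have equal $f$-value exactly when they have equal $f'$-value, and the induced orderings on equivalence classes agree, so $N$ is the same for both and $f^{-1}(c_i)=(f')^{-1}(c'_i)$ for each~$i$. It follows that the sublevel set filtrations $\{K^t_f\}_{t\in\R}$ and $\{K^t_{f'}\}_{t\in\R}$ pass through the same nested sequence of subcomplexes $\emptyset=K_0\subsetneq K_1\subsetneq\cdots\subsetneq K_N=K$, merely re-parametrized via the order-preserving bijection $c_i\mapsto c'_i$.

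Next I would transfer this combinatorial identification to persistent homology. Pick any non-decreasing map $\phi:\R\to\R$ with $\phi(c_i)=c'_i$ for every~$i$. For every interval $J$ of $\R$ whose endpoints are either values $c_i$ of~$f$ or $\pm\infty$, applying $\phi$ yields an interval $\phi(J)$ with corresponding endpoints among the~$c'_i$ (or $\pm\infty$). From the equality of the underlying sequences of subcomplexes, the two persistence modules $\mathbf{H}_p(f)$ and $\mathbf{H}_p(f')$ are related by this relabeling: the interval decomposition $\mathbf{H}_p(f)\cong\bigoplus_{J\in\mathcal{J}}\mathbb{I}_J$ is matched with $\mathbf{H}_p(f')\cong\bigoplus_{J\in\mathcal{J}}\mathbb{I}_{\phi(J)}$, and uniqueness of the interval decomposition (Theorem~\ref{theorem_crawley_bovey}) guarantees multiplicities are preserved. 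Hence each bounded bar $(c_i,c_j)\in \Dgm_p(f)$ corresponds bijectively to the bar $(c'_i,c'_j)\in\Dgm_p(f')$, and analogously for unbounded bars.

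Now let $(P_p,U_p)$ be a barcode template for~$f$. By Equation~\eqref{equation_pairing_definition}, each bounded bar of $\Dgm_p(f)$ is realized as $(f(\sigma),f(\sigma'))$ for a unique pair $(\sigma,\sigma')\in P_p$, and each unbounded bar as $(f(\sigma),+\infty)$ for $\sigma\in U_p$. Since $f^{-1}(c_i)=(f')^{-1}(c'_i)$, the value $f(\sigma)=c_i$ forces $f'(\sigma)=c'_i$ and similarly for~$\sigma'$. Combining with the relabeling from the previous step, the bar of $\Dgm_p(f')$ corresponding to $(f(\sigma),f(\sigma'))$ is exactly $(f'(\sigma),f'(\sigma'))$, and analogously $(f'(\sigma),+\infty)$ for unbounded bars. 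Therefore
\[
\Dgm_p(f')=\{(f'(\sigma),f'(\sigma'))\}_{(\sigma,\sigma')\in P_p}\cup\{(f'(\sigma),+\infty)\}_{\sigma\in U_p}\cup\Delta^\infty,
\]
i.e.\ $(P_p,U_p)$ is also a barcode template of~$f'$. Exchanging the roles of~$f$ and~$f'$ yields the converse.

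The main obstacle is the bookkeeping that justifies transporting the interval decomposition along the relabeling $\phi$ while preserving multiplicities, which relies crucially on the uniqueness of the decomposition in Theorem~\ref{theorem_crawley_bovey}; once this is in place, everything else reduces to unwinding the definition of a barcode template.
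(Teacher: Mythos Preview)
Your proposal is correct and follows essentially the same approach as the paper: both construct an order-preserving map sending the values of~$f$ to those of~$f'$ and use it to transport the interval decomposition of~$\mathbf{H}_p(f)$ to that of~$\mathbf{H}_p(f')$. The only cosmetic difference is that the paper packages the ``relabeling preserves interval decompositions'' step as a separate reparametrization lemma (the shift-by-$h$ endofunctor commutes with direct sums), whereas you argue it directly from the equality of the discrete filtration $K_0\subsetneq\cdots\subsetneq K_N$; both routes are equally valid.
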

The proof, detailed hereafter, relies on the following elementary lemma.
\begin{lemma}
\label{lemma_reparametrization_module}
Let $\mathbb{V}$ be a persistence module, and $h:\mathbb{R}\rightarrow \mathbb{R}$ be a continuous increasing function. Denote by $\mathbb{V}_h$ the shift of $\mathbb{V}$ by $h$, i.e for any $s\leqslant t$, $\mathbb{V}_{h,t}:=\mathbb{V}_{h(t)}$ and $v_{s,t}^{\mathbb{V}_h}:=v_{h(s),h(t)}^{\mathbb{V}}$. If $\mathbb{V}$ decomposes as $\mathbb{V}\cong \oplus_{J\in \mathcal{J}} \mathbb{I}_J$, then $\mathbb{V}_{h}\cong \oplus_{J\in \mathcal{J}} \mathbb{I}_{h
^{-1}(J)}$.
\end{lemma}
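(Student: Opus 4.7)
The plan is to reduce the lemma to the simple case of a single interval module, exploiting the fact that the reparametrization operation $\mathbb{V}\mapsto\mathbb{V}_h$ is well-behaved with respect to direct sums. Concretely, precomposition with the monotone map $h:(\mathbb{R},\leqslant)\to(\mathbb{R},\leqslant)$ defines a functor from $\mathbf{Pers}$ to itself (it sends objects as described in the statement, and morphisms $\eta:\mathbb{V}\to\mathds{W}$ to the collection $\{\eta_{h(t)}\}_{t\in\mathbb{R}}$). This functor preserves (finite and arbitrary) direct sums pointwise, since direct sums in $\mathbf{Pers}$ are computed pointwise in $\mathbf{Vect}_\mathds{k}$. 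Consequently, applying reparametrization to the decomposition $\mathbb{V}\cong\oplus_{J\in\mathcal{J}}\mathbb{I}_J$ yields $\mathbb{V}_h\cong\oplus_{J\in\mathcal{J}}(\mathbb{I}_J)_h$, and it suffices to identify each summand $(\mathbb{I}_J)_h$ with $\mathbb{I}_{h^{-1}(J)}$.

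First I would verify that $h^{-1}(J)$ is indeed an interval of $\mathbb{R}$: since $h$ is continuous and (weakly) increasing, the preimage of any interval is convex, hence an interval (possibly empty, in which case the corresponding summand is zero and can be discarded). Next I would unwind definitions at the level of pointwise vector spaces: for each $t\in\mathbb{R}$, we have
\[
(\mathbb{I}_J)_{h,t} \;=\; \mathbb{I}_{J,h(t)} \;=\;
\begin{cases}
\mathds{k} & \text{if } h(t)\in J,\\
0 & \text{otherwise,}
\end{cases}
\]
and $h(t)\in J$ if and only if $t\in h^{-1}(J)$, so the underlying vector spaces agree with those of $\mathbb{I}_{h^{-1}(J)}$. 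For the transition maps, if $s\leqslant t$ are both in $h^{-1}(J)$, then $h(s)\leqslant h(t)$ both lie in $J$, so the map $v^{(\mathbb{I}_J)_h}_{s,t} = v^{\mathbb{I}_J}_{h(s),h(t)}$ is the identity on $\mathds{k}$, matching the transition map of $\mathbb{I}_{h^{-1}(J)}$; in all other cases one of the endpoints lies outside $h^{-1}(J)$ and the map is zero on both sides.

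Assembling these pointwise identifications into the identity natural transformation yields the isomorphism $(\mathbb{I}_J)_h \cong \mathbb{I}_{h^{-1}(J)}$, which combined with the direct-sum decomposition above gives the claimed $\mathbb{V}_h\cong\oplus_{J\in\mathcal{J}}\mathbb{I}_{h^{-1}(J)}$. There is no genuine obstacle here; the only point requiring care is ensuring that empty preimages (when $J$ lies outside the image of $h$) are handled consistently, which is harmless since $\mathbb{I}_{\emptyset}$ is the zero module. Strictly speaking the proof also requires that the isomorphism of $\mathbb{V}$ with the direct sum is a morphism in $\mathbf{Pers}$ so that the reparametrization functor can be applied to it, which follows from Definition~\ref{definition_morphism_persistence_module}.
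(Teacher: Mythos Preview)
Your proof is correct and follows the same approach as the paper: both argue that reparametrization by~$h$ is an endofunctor of~$\mathbf{Pers}$ commuting with direct sums (hence preserving isomorphisms). The paper's proof is a two-line sketch that leaves the identification $(\mathbb{I}_J)_h \cong \mathbb{I}_{h^{-1}(J)}$ implicit, whereas you spell this out explicitly---a harmless but helpful elaboration.
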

\begin{proof}
The operation that takes a persistence module to its shift by $h$ is an endofunctor of $\mathbf{Pers}$ which commutes with direct sums. In particular it preserves isomorphisms.
\end{proof}
\begin{proof}[Proof of Proposition \ref{proposition_pairing_same_equivalence_class}]
  Let $f,f'$ be two ordering equivalent filter functions. Since $f\sim f'$, we have $f(\simplex{})=f(\simplex{}')\Rightarrow f'(\simplex{})=f'(\simplex{}')$ for any pair of simplices $\simplex{},\simplex{}'\in K$. Therefore the map $h:f(\simplex{})\in f(K)\mapsto f'(\simplex{})\in f'(K)$ is well-defined. Furthermore, $h$ is an increasing function and we extend it monotonously and continuously over all $\mathbb{R}$. Then, by the reparametrization Lemma~\ref{lemma_reparametrization_module}, any barcode template of $f$ is also a barcode template of $f'$. 
\end{proof}

\subsection{Generic smoothness of the barcode valued map}
\label{section_discrete_smoothness_subsection_smoothness_theorem}

We now state our first significant results (one local and the other
global) about the differentiability of the map~$B_p$ in the context of
this section.  Equipping~$\mathbb{R}^{K}$ with the usual Euclidean
norm, we assume that the parametrization~$F$ is of class~$C^r$ as a
map $\manifoldm{}\to\mathbb{R}^{K}$.  Under this hypothesis, we show
that
$B_p$ is $r$-differentiable in the sense of Definition
\ref{differentiability_definition_barcode} on a generic (open and dense) subset of~$\manifoldm$. The intuition behind these
results is that, whenever the filter functions $F({\param{}'})$ are all
ordering equivalent in a neighborhood of $\param{}$, we can pick a
barcode template that is consistent across all filter functions~$F({\param{}'})$ in this neighborhood (by Propositions \ref{proposition_existence_pairing}
and \ref{proposition_pairing_same_equivalence_class}) and the Equation
\eqref{equation_pairing_definition} then behaves like a local
coordinate system for~$B$ at~$\param{}$.

Here is our local result:
\begin{theorem}[Local discrete smoothness]
\label{theorem_PH_differentiable_local}
Let $\param\in\paramspace$. Suppose the parametrization $F:\manifoldm{} \rightarrow \mathbb{R}^K$ is of class~$C^r$ ($r\geq 0$) on some open neighborhood~$U$ of~$\param$, and that $F(\param') \sim F(\param)$ for all $\param'\in U$. Then, $B_p$ is $r$-differentiable at~$\param$.
\end{theorem}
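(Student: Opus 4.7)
The plan is to build an explicit $C^r$ local coordinate system for $B_p$ at $\param$ and then invoke Proposition~\ref{proposition_relating_differentiability_barcode_ordered_barcode} to conclude. The starting point is Proposition~\ref{proposition_existence_pairing}, which furnishes a barcode template $(P_p,U_p)$ of the filter function $F(\param)$. By Proposition~\ref{proposition_pairing_same_equivalence_class}, ordering equivalence implies that the \emph{same} pair $(P_p,U_p)$ is simultaneously a barcode template for every $F(\param{}')$ with $\param{}' \in U$, since $F(\param{}') \sim F(\param)$ by assumption. In particular, the identification of intervals with pairs (resp.\ singletons) of simplices is constant across~$U$.

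Using this uniform template, I would define the candidate coordinate functions for each $(\simplex,\simplex{}') \in P_p$ and each $\simplex \in U_p$ by
\[
b_{(\simplex,\simplex{}')}(\param{}') := F(\param{}')(\simplex), \qquad d_{(\simplex,\simplex{}')}(\param{}') := F(\param{}')(\simplex{}'), \qquad v_{\simplex}(\param{}') := F(\param{}')(\simplex),
\]
all defined on $U$. The tracking condition of Definition~\ref{definition_coordinate_system} is then exactly Equation~\eqref{equation_pairing_definition} applied to the filter function $F(\param{}')$ with the common template $(P_p, U_p)$. The smoothness condition follows immediately from the $C^r$ regularity of $F$ on $U$, because each coordinate function is the composition of $F$ with the linear evaluation map $\mathrm{ev}_\simplex : \R^K \to \R$, which is $C^\infty$.

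Thus $\mathcal{T} = (U,\{b_{(\simplex,\simplex{}')},d_{(\simplex,\simplex{}')}\}_{(\simplex,\simplex{}')\in P_p}, \{v_\simplex\}_{\simplex\in U_p})$ is a $C^r$ local coordinate system for $B_p$ at $\param$, and Proposition~\ref{proposition_relating_differentiability_barcode_ordered_barcode} then yields $r$-differentiability of $B_p$ at $\param$ in the sense of Definition~\ref{differentiability_definition_barcode}.

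The proof is essentially transport along the template once the propositions of the preceding subsection are in hand; no real obstacle arises. The only subtlety worth flagging is that the multiset equality in the tracking condition must be verified as an equality of multisets (not just of sets), but this is automatic because the template is defined with multiplicities and each interval in the decomposition of $\mathbf{H}_p(F(\param{}'))$ is assigned exactly one pair or singleton of simplices in $P_p \cup U_p$. Note also that $r = 0$ is allowed by the statement and the argument goes through verbatim, since continuity of $F$ already suffices for the coordinate functions to be continuous.
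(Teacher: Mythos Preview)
Your proof is correct and follows essentially the same approach as the paper: pick a barcode template for $F(\param)$ via Proposition~\ref{proposition_existence_pairing}, use Proposition~\ref{proposition_pairing_same_equivalence_class} to carry it across all $F(\param')$ with $\param'\in U$, read off a $C^r$ local coordinate system from Equation~\eqref{equation_pairing_definition}, and conclude by Proposition~\ref{proposition_relating_differentiability_barcode_ordered_barcode}. Your version is slightly more explicit about the coordinate functions and the multiset subtlety, but the argument is the same.
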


\begin{proof}
Note that, as an open set, $U$ is an open submanifold of~$\paramspace$
of same dimension. By Proposition \ref{proposition_existence_pairing}, we can pick a barcode template $(P_p,U_p)$ for~$F(\param)$. By Proposition \ref{proposition_pairing_same_equivalence_class}, this barcode template is consistent for all $F(\param{}')$ where $\param'\in U$. Therefore, we can locally write:
$$\forall \param{}'\in U, \, B_p(\param{}')=\big\{(F(\param{}')(\simplex{}),F(\param{}')(\simplex{}')) \big\}_{(\simplex{},\simplex{}')\in P_p} \cup \big\{(F(\param{}')(\simplex{}),+\infty) \big\}_{\simplex{}\in U_p}\cup \Delta^\infty$$
which is a local coordinate system for $B_p$ at $\param$. This local coordinate system is $C^r$ because
$\parametrization{}$ itself is $C^r$ over~$U$. As a result, $B_p$
 is $r$-differentiable at $\param$, by
Proposition~\ref{proposition_relating_differentiability_barcode_ordered_barcode}.
\end{proof}

%
\begin{corollary}
\label{corollary_distinct_values_implies_smooth}
Let $\param\in\paramspace$. Suppose that the parametrization~$\parametrization{}$ is of class~$C^r$ ($r\geqslant 0$) on some open neighborhood of~$\param$, and that the filter function~$F(\param)$ is injective. Then, $B_p$ is $r$-differentiable at $\param{}$.
\end{corollary}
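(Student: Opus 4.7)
The plan is to reduce the corollary to Theorem~\ref{theorem_PH_differentiable_local} by showing that injectivity of $F(\param)$ forces ordering equivalence on a whole neighborhood of~$\param$. Since $K$ is finite and $F(\param):K\to\mathbb{R}$ is injective, the induced pre-order is in fact a strict total order: for every pair $\simplex\neq\simplex'$ in $K$ we have $F(\param)(\simplex)\neq F(\param)(\simplex')$, and this amounts to finitely many strict inequalities.

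First I would set $\delta := \tfrac{1}{2}\min_{\simplex\neq\simplex'}|F(\param)(\simplex)-F(\param)(\simplex')|>0$. Next, using that $F$ is $C^r$ (hence in particular continuous at $\param$) and that the evaluation maps $\mathbb{R}^K\to\mathbb{R}$ at each simplex are continuous, I would invoke continuity of $F$ to obtain an open neighborhood $U$ of $\param$, contained in the domain of $C^r$-regularity, such that $|F(\param')(\simplex)-F(\param)(\simplex)|<\delta$ for every $\simplex\in K$ and every $\param'\in U$. A triangle-inequality argument then shows that for every $\param'\in U$ and every pair $\simplex,\simplex'\in K$, the strict inequality $F(\param)(\simplex)<F(\param)(\simplex')$ implies $F(\param')(\simplex)<F(\param')(\simplex')$. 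Thus the strict total order induced by $F(\param')$ coincides with that of $F(\param)$, which yields $F(\param')\sim F(\param)$.

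With ordering equivalence established throughout $U$, the hypotheses of Theorem~\ref{theorem_PH_differentiable_local} are met (the $C^r$ regularity of $F$ holds on $U$ by assumption, possibly after shrinking $U$), and the $r$-differentiability of $B_p$ at $\param$ follows immediately. There is no real obstacle here: the only subtle point is to ensure that $U$ is small enough to simultaneously preserve every pairwise strict inequality, which is why one passes through the uniform bound $\delta$ on the finitely many pairs in $K\times K$.
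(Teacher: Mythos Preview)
Your proof is correct and follows essentially the same approach as the paper: both argue that injectivity of $F(\param)$ makes all differences $F(\param)(\simplex)-F(\param)(\simplex')$ strictly nonzero, so by continuity their signs persist on a neighborhood, yielding ordering equivalence and allowing an appeal to Theorem~\ref{theorem_PH_differentiable_local}. Your explicit $\delta$ bound just spells out what the paper compresses into the phrase ``by continuity they keep their sign.''
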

\begin{proof}
For such a $\param{}$, all the quantities $F(\param{})(\simplex{})-F(\param{})(\simplex{}')$ for $\simplex{}\neq \simplex{}'\in K$ are either strictly positive or strictly negative. Therefore, by continuity they keep their sign in an open neighborhood of $\param{}$, over which all filter functions are thus ordering equivalent. The result follows then from Theorem~\ref{theorem_PH_differentiable_local}.
\end{proof}

Here is our global result:
\begin{theorem}[Global discrete smoothness]
\label{theorem_PH_differentiable_global}
Suppose the parametrization $F:\manifoldm{} \rightarrow \mathbb{R}^K$ is continuous over~$\paramspace$ and of class~$C^r$ ($r\geq 0$) on some open subset~$U$ of~$\paramspace$. Then, $B_p$ is $r$-differentiable on  the set $U\cap \tilde\paramspace$, where
\begin{equation}
    \label{equation_locally_constant}
    \tilde\manifoldm{}:= \big\{\param\in \manifoldm{} \mid  \exists\text{ open neighborhood $U_\param$ of $\param$ s.t. }F(\param') \sim F(\param{}) \text{ for all } \param'\in U_\param\},
\end{equation}
%
which is generic (i.e. open and dense) in~$\paramspace$. In particular, if $F$ is $C^r$ on some generic subset of~$\paramspace$ in the first place, then so is~$B_p$ (on some possibly smaller generic subset).
\end{theorem}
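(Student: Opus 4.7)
The proof splits into three pieces: (i) the $r$-differentiability of $B_p$ on $U \cap \tilde\paramspace$; (ii) openness of $\tilde\paramspace$; (iii) density of $\tilde\paramspace$ in $\paramspace$. The final ``in particular'' statement then follows from (i)--(iii) by elementary properties of generic subsets.

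For (i), fix $\param \in U \cap \tilde\paramspace$. The definition of $\tilde\paramspace$ supplies an open neighborhood $U_\param$ of $\param$ on which $F(\param') \sim F(\param)$ for every $\param' \in U_\param$. The intersection $U \cap U_\param$ is an open neighborhood of $\param$ on which $F$ is simultaneously $C^r$ and ordering-invariant, so Theorem~\ref{theorem_PH_differentiable_local} yields $r$-differentiability of $B_p$ at $\param$. For (ii), openness of $\tilde\paramspace$ is immediate: if $\param \in \tilde\paramspace$ is witnessed by $U_\param$, the same $U_\param$ witnesses $\param' \in \tilde\paramspace$ for every $\param' \in U_\param$.

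The main content is (iii). For each ordered pair $(\simplex, \simplex') \in K \times K$, the function $\phi_{\simplex,\simplex'} : \param \mapsto F(\param)(\simplex) - F(\param)(\simplex')$ is continuous on $\paramspace$, and the pre-order induced by $F(\param)$ is encoded by the tuple of signs $\bigl(\operatorname{sgn} \phi_{\simplex,\simplex'}(\param)\bigr)_{\simplex,\simplex'}$, so $\param \in \tilde\paramspace$ exactly when this tuple is locally constant at $\param$. I would show density by proving that every non-empty open $W \subseteq \paramspace$ contains a non-empty open $W' \subseteq W$ on which each $\phi_{\simplex,\simplex'}$ has constant sign---either strictly positive, strictly negative, or identically zero---so that $W' \subseteq \tilde\paramspace$. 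Enumerating the pairs as $(\simplex_1,\simplex'_1), \dots, (\simplex_N,\simplex'_N)$, I build nested non-empty open sets $W = W_0 \supseteq W_1 \supseteq \cdots \supseteq W_N$ by induction: given $W_i$ and writing $\phi_{i+1} := \phi_{\simplex_{i+1},\simplex'_{i+1}}$, if $\{\phi_{i+1}=0\} \cap W_i$ has non-empty interior, let $W_{i+1}$ be that interior (so $\phi_{i+1} \equiv 0$ on $W_{i+1}$); otherwise $\{\phi_{i+1}=0\} \cap W_i$ is a closed subset of $W_i$ with empty interior, so its complement is open and dense in $W_i$ and decomposes into the two open regions $\{\phi_{i+1}>0\} \cap W_i$ and $\{\phi_{i+1}<0\} \cap W_i$, at least one of which is non-empty; let $W_{i+1}$ be that one. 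In either case $W_{i+1}$ is non-empty open, inherits the constant signs of $\phi_1,\dots,\phi_i$ from $W_i$, and acquires a constant sign for $\phi_{i+1}$; the final $W' := W_N$ then proves $W \cap \tilde\paramspace \neq \emptyset$.

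For the concluding assertion, if $F$ is $C^r$ on a generic (open dense) subset $U$ of $\paramspace$, then $U \cap \tilde\paramspace$ is an intersection of two open dense subsets, hence itself open and dense, and by (i), $B_p$ is $r$-differentiable on this set. The only non-trivial step is the density argument in (iii); the finiteness of the set of simplex pairs in $K$ is precisely what makes the inductive construction terminate and produce the desired $W'$.
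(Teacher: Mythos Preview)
Your proof is correct and follows essentially the same approach as the paper. The paper isolates the density of $\tilde\paramspace$ as a separate lemma and proves it by exhibiting an explicit generic subset $\hat\paramspace := \bigcap_{\simplex\neq\simplex'} \bigl(\partial h_{\simplex,\simplex'}^{-1}(0)\bigr)^c \subseteq \tilde\paramspace$ (where $h_{\simplex,\simplex'}(\param)=F(\param)(\simplex)-F(\param)(\simplex')$), but this and your inductive refinement of an arbitrary open set rest on the same key observation---that the boundary of the zero set of a continuous function is nowhere dense---so the two arguments are repackagings of one another.
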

\begin{proof}
  Observe that $\tilde\paramspace$ is open in~$\paramspace$. As a consequence, for every $\param\in U\cap \tilde \paramspace$ there is some open neighborhood on which $F$ is $C^r$ and all the filter functions~$F(\param')$ are ordering equivalent, which by Theorem~\ref{theorem_PH_differentiable_local} implies that~$B_p$ is $r$-differentiable at~$\param$. Thus, all that remains to be shown is that~$\tilde\paramspace$ is dense
in~$\paramspace$, which is the subject of Lemma~\ref{lemma_tildeM_dense} below.
\end{proof}

\begin{lemma}
\label{lemma_tildeM_dense}
If a parametrization~$\parametrization:\manifoldm{} \rightarrow \mathbb{R}^K$ is continuous, then the set~$\tilde\manifoldm{}$ (as defined in Eq.~\eqref{equation_locally_constant}) is dense in~$\manifoldm$.
\end{lemma}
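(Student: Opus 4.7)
The plan is to recast ordering equivalence as equality of sign patterns: two filter functions $f,g \in \R^K$ satisfy $f \sim g$ if and only if, for every ordered pair $(\simplex, \simplex') \in K \times K$, the quantities $f(\simplex) - f(\simplex')$ and $g(\simplex) - g(\simplex')$ have the same sign (where sign takes values in $\{-,0,+\}$). Correspondingly, for each such pair define the continuous map $g_{\simplex,\simplex'}:\manifoldm \to \R$ by $g_{\simplex,\simplex'}(\param) := F(\param)(\simplex) - F(\param)(\simplex')$. Then $\tilde{\manifoldm}$ is precisely the set of parameters around which the entire (finite) sign vector $\bigl(\mathrm{sign}\, g_{\simplex,\simplex'}\bigr)_{\simplex,\simplex' \in K}$ is locally constant.

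Given this reformulation, I would prove density by an iterative refinement. Fix an arbitrary non-empty open set $V \subseteq \manifoldm$. Enumerate the finitely many pairs as $(\simplex_1,\simplex_1'), \dots, (\simplex_N,\simplex_N')$, and construct a nested sequence $V = V_0 \supseteq V_1 \supseteq \cdots \supseteq V_N$ of non-empty open subsets of $V$ such that for each $i \leq N$, the sign of $g_{\simplex_j, \simplex_j'}$ is constant on $V_i$ for all $j \leq i$. The inductive step goes as follows: assuming $V_{i-1}$ is a non-empty open set, consider the relatively closed subset $Z_i := g_{\simplex_i,\simplex_i'}^{-1}(\{0\}) \cap V_{i-1}$. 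Either $Z_i$ has non-empty interior in $V_{i-1}$, in which case define $V_i$ to be that interior (where $g_{\simplex_i,\simplex_i'}$ vanishes identically); or $Z_i$ has empty interior, in which case its complement $V_{i-1} \setminus Z_i$ is open and dense in $V_{i-1}$, hence non-empty, and decomposes as the disjoint union of the two open sets $\{g_{\simplex_i,\simplex_i'} > 0\} \cap V_{i-1}$ and $\{g_{\simplex_i,\simplex_i'} < 0\} \cap V_{i-1}$; pick one that is non-empty and call it $V_i$.

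After $N$ steps, $V_N$ is a non-empty open subset of $V$ on which the whole sign vector is constant, hence on which all filter functions $F(\param)$ belong to a single $\sim$-equivalence class. Any $\param \in V_N$ therefore lies in $\tilde{\manifoldm}$, which shows $\tilde{\manifoldm} \cap V \neq \emptyset$ and so $\tilde{\manifoldm}$ is dense in $\manifoldm$. There is no real obstacle here: the argument only leverages continuity of $F$, the finiteness of the set of pairs in $K$ (so that the recursion terminates), and the elementary fact that a relatively closed subset of a non-empty open set either has non-empty interior or has dense complement.
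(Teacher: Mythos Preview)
Your proof is correct and rests on the same core observation as the paper's: ordering equivalence is determined by the sign vector of the continuous functions $g_{\simplex,\simplex'}(\param)=F(\param)(\simplex)-F(\param)(\simplex')$, and for each such function the zero set is closed, so either it has non-empty interior or its complement is dense. The only difference is organizational: the paper takes the finite intersection $\hat\manifoldm:=\bigcap_{\simplex\neq\simplex'}\bigl(\partial\, g_{\simplex,\simplex'}^{-1}(0)\bigr)^c$, observes it is generic as a finite intersection of open dense sets, and then checks $\hat\manifoldm\subseteq\tilde\manifoldm$; you instead perform a nested refinement inside a given open~$V$ to land in~$\tilde\manifoldm$. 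The paper's packaging yields an explicit generic subset contained in~$\tilde\manifoldm$, while your argument is slightly more hands-on but equally valid.
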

\begin{proof}
Let $h:\paramspace{}\rightarrow
\mathbb{R}$ be a continuous function. Consider the boundary of the
zero-level set~$h^{-1}(0)$:
  \[\partial h^{-1}(0) = \overbar{h^{-1}(0)}\ \setminus\ (h^{-1}(0))^{\mathrm{o}}.\]
Since $h$ is continuous, $h^{-1}(0)$ is closed in~$\paramspace$, therefore $\partial h^{-1}(0)$ is closed with empty interior, i.e. its complement $(\partial h^{-1}(0))^c$ in $\paramspace{}$ is open and dense.

Consider now the case of function $h_{\simplex{},\simplex{}'}: \param{}\in \paramspace{} \mapsto F(\param{})(\simplex{})-F(\param{})(\simplex{}')\in\mathbb{R}$ for some fixed simplices $\simplex{}\neq \simplex{}'$ of $K$. The map $h_{\simplex{},\simplex{}'}$ is continuous by continuity of the parametrization $\parametrization{}$, therefore the previous paragraph implies that $(\partial h_{\simplex{},\simplex{}'}^{-1}(0))^c$ is generic in $\paramspace{}$. Hence, the finite intersection 
\[\hat{\paramspace}:= \bigcap_{\simplex \neq \simplex{}'\in K} (\partial h_{\simplex{},\simplex{}'}^{-1}(0))^c\]
is also generic in~$\paramspace{}$. We now show that~$\hat{\paramspace}$ is a subspace of~$\tilde{\paramspace}$. 

Let $\param{}\in \hat{\paramspace}$ and $\simplex{}\neq \simplex{}'\in K$. If
$h_{\simplex, \simplex'}(\param)>0$, then by continuity we have
$h_{\simplex, \simplex'}>0$ over some open neighborhood $V_{\simplex{},\simplex{}'}$ of $\param{}$. Similarly if
$h_{\simplex, \simplex'}(\param)<0$. And if $h_{\simplex{},\simplex{}'}(\param)=0$, then, since $\param{}\in \hat{\paramspace{}}$, $\param{}$ lies in the interior of the level set $h_{\simplex{},\simplex{}'}^{-1}(0)$, and therefore there is also an open neighborhood $V_{\simplex{},\simplex{}'}$ of $\param{}$ over which $h_{\simplex{},\simplex{}'}=0$. Let $V$ be the finite intersection $\bigcap_{\simplex \neq \simplex{}'\in K} V_{\simplex{},\simplex{}'}$, which is open and non-empty in $\paramspace{}$. For every $\simplex{}\neq \simplex{}'\in K$, the sign $F(\param{}')(\simplex{})-F(\param{}')(\simplex{}')$
is constant over all $\param{}'\in V$, where by sign we really distinguish between three possibilities: negative, positive, null. Therefore, the pre-order on the simplices of $K$ induced by $F(\param{}')$ is constant over the $\param{}'\in V$. In other words, all the $F(\param{}')$ are ordering equivalent. Therefore, $\param\in \tilde{\paramspace}$. Since this is true for any $\param\in \hat{\paramspace}$, we conclude that $\hat{\paramspace}\subseteq \tilde{\paramspace}$, and so the latter is also dense in~$\paramspace$.
\end{proof}

\begin{example}[Height functions again]
\label{ex:height_func_diff1}
  Let us reconsider the scenario of Example~\ref{example_parametrization}. The parametrization $\parametrization$ of height filters is~$C^0$ on the entire sphere~$\mathbb{S}^{d-1}$. Moreover,~$\parametrization$ is smooth at every direction~$\param\in \mathbb{S}^{d-1}$ that is not orthogonal to some difference~$\vertex-\vertex'$ of vertices $\vertex\neq  \vertex'\in K_0$ in $\mathbb{R}^d$. The set~$U$ of such directions is generic in~$\mathbb{S}^{d-1}$, therefore~$B_p$ is $\infty$-differentiable over the generic subset~$U\cap \tilde{\mathbb{S}}^{d-1}$ by Theorem~\ref{theorem_PH_differentiable_global}, with~$\tilde{\mathbb{S}}^{d-1}$ defined as in Eq.~\eqref{equation_locally_constant}. In fact, we have $U\cap \tilde{\mathbb{S}}^{d-1} = U$ in this case. Indeed, for any direction $\param\in U$, the values of the height function $h_\param$  at the vertices of $K$ are pairwise distinct, and by continuity this remains true in a neighborhood of $\param$. The pre-order on the simplices of $K$ induced by the height function is then constant over this neighborhood. 
\end{example}

In Theorems~\ref{theorem_PH_differentiable_local}
and~\ref{theorem_PH_differentiable_global}, one cannot avoid the
condition that filter functions are locally ordering equivalent. Indeed, in the next examples, we highlight that there is generally no hope for the barcode valued map~$B_p$ to be differentiable everywhere, even if the
parametrization~$\parametrization{}$ is. This is because, essentially,
the time of appearance of a simplex is a maximum of smooth functions,
which can be non-smooth at a point where two functions achieve the
maximum. The condition that the induced pre-order is locally constant
around~$\param{}$ is only a sufficient condition though, because a
maximum of two smooth functions can still be smooth at a point where
the maximum is attained by the two functions. We provide a second
example to illustrate this fact.

\begin{example}[Singular parameter]
\label{non_differentiable_example}
\normalfont
Let us consider the following geometric simplicial complex~$K$ on the real line:\\

\begin{tikzpicture}
[scale=1, vertices/.style={draw, fill=black, circle, inner sep=0.0001pt}]
\usetikzlibrary{arrows.meta}
\node[vertices,label=below:{$a$ at 0},ultra thick] at (-2,0){};
\node[vertices,label=below:{$b$ at 1}, ultra thick] at (2,0){};
\draw (-2,0)--(2,0);
\end{tikzpicture}

That is, $K$ has vertices $K_0=\{a,b\}$ with respective coordinates $\{0,1\}$, and edges $K_1=\{ ab\}$. Consider the parametrization that filters the complex according to the squared euclidean distance to a point, i.e $\parametrization{}:\param\in \mathbb{R} \mapsto( \simplex\in K \mapsto \max_{x\in\simplex} (x-\param)^2)$. The map~$B_0$ is then essentially a real function that tracks the squared euclidean distance of the vertex closest to $\param$, specifically:
\[ B_0(\param)= \{(\min(\param^2,(1-\param)^2),+\infty)\} \cup \Delta^\infty. \]
Hence, $B_0$ is not differentiable at $\param=\frac{1}{2}$ since $\frac{1}{2}$ is a singular point of the map $\param \mapsto \min(\param^2,(1-\param)^2)$. Meanwhile, for $ \param < \frac{1}{2} $, we have $F(\param)(a)< F(\param)(b)$, whereas whenever $\param> \frac{1}{2}$, we have $F(\param)(a)> F(\param)(b)$. In particular, the pre-order induced by the filter functions $F(\param)$ is not constant around $\param=\frac{1}{2}$, and so $\frac{1}{2}\notin\tilde{\mathbb{R}}$.  
\end{example}

\begin{example}[Only sufficient condition]
\label{diffentiable_example_barcode_function_differentiable}
\normalfont
We remove the edge $ab$ from the geometric complex $K$ in the previous example, and we see the points $a$ and $b$ as lying on the $x$-axis of $\mathbb{R}^2$. Consider the parametrization of height filters $\parametrization{}:\param \in \mathbb{S}^1 \mapsto (\simplex\in K\mapsto \max_{x\in\simplex} \langle \param,x\rangle)$.  The map $B_p$ is then trivial for each degree~$p$ except $0$, where it writes as follows:
\[ B_0(\param)=\{(\langle \param,a\rangle,+\infty), (\langle \param,b\rangle,+\infty)\}\cup \Delta = \{(0,+\infty), (\langle \param,(1,0)\rangle ,+\infty)\}\cup \Delta^\infty. \]
We see that we have a valid local coordinate system given by the two smooth maps $\param \mapsto 0$ and $\param \mapsto \langle \param,(0,1)\rangle $, so the map~$B_0$ is $\infty$-differentiable everywhere on $\mathbb{S}^1$ by Proposition \ref{proposition_relating_differentiability_barcode_ordered_barcode}. Meanwhile, we have $F(\param)(a)<F(\param)(b)$ whenever $ \langle \param,(1,0)\rangle > 0 $, and $F(\param)(a)>F(\param)(b)$ whenever $ \langle \param,(1,0)\rangle < 0$, therefore the pre-order induced by the filter functions $F(\param)$ is not constant around $\param=(0,1)$ and $v=(0,-1)$, hence $(0,1),(0,-1)\notin\tilde{\mathbb{R}}$. 
\end{example}

\subsection{Differential of the barcode valued map}

Given a continuous parametrization $F:\manifoldm{}\rightarrow \mathbb{R}^K$ of class $C^1$ on some open set $U\subseteq \manifoldm{}$, Theorem \ref{theorem_PH_differentiable_global} guarantees that a barcode template, through Equation \eqref{equation_pairing_definition}, provides a $C^1$ local coordinate system for~$B_p$ around each point~$\param\in U\cap \tilde{\manifoldm}$. In turn, by Proposition~\ref{proposition_relating_differentiability_barcode_ordered_barcode}, any arbitrary ordering on the functions of this local coordinate system induces a $C^1$ local lift of~$B_p$. Hence we have the following formula for the corresponding differential:
\begin{proposition}
\label{proposition_derivatives_barcode_simplicial_complex}
Given~$\param\in U\cap \tilde{\manifoldm}$ and a barcode template $(P_p,U_p)$ of $F(\param{})$, for any choice of ordering $(\simplex_1, \simplex'_1), \cdots, (\simplex_m, \simplex'_m), \tau_1, \cdots, \tau_n$ of $(P_p,U_p)$, the map
\[ \tilde B_p :\param' \mapsto \left[ (F(\param')(\simplex_i), F(\param')(\simplex'_i))_{i=1}^m, (F(\param')(\simplex_j))_{j=1}^n \right] \]
is a local $C^1$ lift of~$B_p$ around~$\param$, and the corresponding differential for~$B_p$ at $\param$ is:
\[ d_{\param, \tilde B_p} B_p(.)=\left[ (d_\param F(\cdot)(\simplex_i), d_\param F(\cdot)(\simplex'_i))_{i=1}^m, (d_\param F(\cdot)(\tau_j))_{j=1}^n \right]. \]
\end{proposition}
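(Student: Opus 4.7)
The plan is to unpack the machinery established in the previous subsection and show that the claimed lift is precisely the one obtained by combining Theorem~\ref{theorem_PH_differentiable_global} with Proposition~\ref{proposition_relating_differentiability_barcode_ordered_barcode}. Since $\param \in U \cap \tilde{\manifoldm}$, by definition of $\tilde{\manifoldm}$ there is an open neighborhood $V \subseteq U$ of $\param$ such that $F(\param') \sim F(\param)$ for every $\param' \in V$. By Proposition~\ref{proposition_pairing_same_equivalence_class}, the chosen barcode template $(P_p, U_p)$ for $F(\param)$ is simultaneously a barcode template for every $F(\param')$ with $\param' \in V$. Hence, unfolding Equation~\eqref{equation_pairing_definition}, for every $\param' \in V$ we have
\[
B_p(\param') = \{(F(\param')(\simplex_i), F(\param')(\simplex'_i))\}_{i=1}^m \cup \{(F(\param')(\tau_j), +\infty)\}_{j=1}^n \cup \Delta^\infty.
\]

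Now I fix the ordering $(\simplex_1, \simplex'_1), \ldots, (\simplex_m, \simplex'_m), \tau_1, \ldots, \tau_n$ prescribed in the statement and assemble the map $\tilde{B}_p : V \to \mathbb{R}^{2m} \times \mathbb{R}^n$ componentwise as in the statement. Two things must be checked. First, by construction $Q_{m,n} \circ \tilde{B}_p = B_p$ on $V$, so $\tilde{B}_p$ is genuinely a local lift of $B_p$ around $\param$. Second, each coordinate function of $\tilde{B}_p$ is the composition of $F : V \to \mathbb{R}^K$ (which is $C^1$ on $U \supseteq V$ by hypothesis) with the evaluation map $\mathrm{ev}_\simplex : \mathbb{R}^K \to \mathbb{R}$ at a fixed simplex $\simplex \in K$. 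Since $\mathrm{ev}_\simplex$ is a (continuous) linear form, it is smooth, and therefore each coordinate of $\tilde{B}_p$ is $C^1$. Being $C^1$ into a finite-dimensional Euclidean codomain is equivalent to being $C^1$ coordinatewise, so $\tilde{B}_p$ itself is $C^1$.

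For the differential formula, I apply the chain rule to each coordinate function separately. For a fixed simplex $\simplex$, the differential at $\param$ of $\param' \mapsto F(\param')(\simplex) = \mathrm{ev}_\simplex \circ F(\param')$ is $d_\param \mathrm{ev}_\simplex \circ d_\param F = \mathrm{ev}_\simplex \circ d_\param F = d_\param F(\cdot)(\simplex)$, where the second equality uses linearity of $\mathrm{ev}_\simplex$. Assembling these coordinatewise differentials in the order prescribed by the chosen ordering of $(P_p, U_p)$ gives exactly the claimed expression for $d_{\param, \tilde{B}_p} B_p$, which by Definition~\ref{definition_derivatives_barcode_valued_maps} is simply $d_\param \tilde{B}_p$.

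I expect no real obstacle here: the whole content of the proposition is bookkeeping on top of the already-established facts that (i) a barcode template is an invariant of the ordering equivalence class (Proposition~\ref{proposition_pairing_same_equivalence_class}), (ii) local constancy of the induced pre-order holds by the definition of $\tilde{\manifoldm}$, and (iii) ordering a $C^r$ local coordinate system produces a $C^r$ local lift (Proposition~\ref{proposition_relating_differentiability_barcode_ordered_barcode}). The only point worth flagging is to ensure that the symbol $d_\param F(\cdot)(\simplex)$ is interpreted as the linear functional $T_\param \manifoldm \to \mathbb{R}$ sending $v$ to $(d_\param F(v))(\simplex)$, which is exactly the coordinate of $d_\param \tilde{B}_p(v)$ corresponding to simplex $\simplex$.
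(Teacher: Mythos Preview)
Your proof is correct and follows the same approach as the paper, which treats this proposition as an immediate consequence of the preceding results: the paragraph before the proposition already explains that Theorem~\ref{theorem_PH_differentiable_global} yields a $C^1$ local coordinate system via the barcode template, and Proposition~\ref{proposition_relating_differentiability_barcode_ordered_barcode} converts any ordering of it into a $C^1$ lift. Your write-up simply unpacks these references in slightly more detail (making the role of Proposition~\ref{proposition_pairing_same_equivalence_class} and the coordinate-wise chain rule explicit), which is fine.
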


\begin{remark}[Algorithm for computing derivatives]
\label{remark_algorithmic computation}
Suppose we are given a parametrization $F$ whose differential we can compute. Let $\param\in\paramspace$. If 
the barcode of $F(\param)$ is given to us, then the proof of Proposition~\ref{proposition_existence_pairing} provides an algorithm to build a barcode template~$(P_p, U_p)$ for $\parametrization(\param)$. If the barcode of $\parametrization(\param)$ is not given in the first place, then the matrix reduction  algorithm for computing persistence \citep{elz-tps-02,zomorodian2005computing} outputs both the barcode and a barcode template. In both scenarios, 
%
 Proposition~\ref{proposition_derivatives_barcode_simplicial_complex} gives a formula to compute a differential of $B_p$ at~$\param$ from the barcode template~$(P_p, U_p)$.
 The optimization pipelines mentioned in the introduction \citep{gabrielsson2018topology,chen2019topological,gameiro2016continuation,2019arXiv190510996H,poulenard2018topological} apply this strategy to compute  differentials.
\end{remark}

\subsection{Directional differentiability of the barcode valued map along strata}
\label{section_stratification}

In this section we
define directional derivatives for the barcode valued map~$B_p: \manifoldm\to Bar$ at points where it may not be differentiable in the sense of Definition~\ref{differentiability_definition_barcode}. For this we stratify the parameter space~$\paramspace$ in such a way that $B_p$ is differentiable on the top-dimensional strata, then we define its derivatives on lower-dimensional strata via directional lifts.  Intuitively, the strata in~$\paramspace$ are prescribed by the ordering equivalence classes in~$\R^K$, as we know from Theorem~\ref{theorem_PH_differentiable_local} that the pre-order on simplices plays a key role in the differentiability of~$B_p$.  

Formally, consider the stratification of~$\R^K$ formed by the collection $\Omega(\mathbb{R}^K)$ of ordering equivalence classes. This is a Whitney stratification, obtained by cutting $\mathbb{R}^K$ with the hyperplanes $\{f(\simplex)=f(\simplex')\}$ for varying simplices $\simplex\neq \simplex'\in K$. We look for stratifications of~$\paramspace$ that make the parametrization~$\parametrization$ weakly stratified (in the sense of Definition~\ref{definition_stratified_map}) and smooth on each stratum. 
Here are 
typical scenarios where such stratifications exist:
\begin{proposition}\label{semi-algebraic_stratif}
Let $\parametrization: \paramspace\to\R^K$ be a continuous parametrization. Suppose that, either
\begin{itemize}
    \item[(i)] $\paramspace$ is a semi-algebraic set in $\R^N$ and $\parametrization$ is a semi-algrebraic map, or 
    \item[(ii)] $\paramspace$ is a compact subanalytic set in a real analytic manifold and $\parametrization$ is a subanalytic map.
\end{itemize}
Then, 
there is a Whitney stratification of~$\paramspace$, made of semi-algebraic (resp. subanalytic) strata, such that $\parametrization$ is weakly stratified with $C^\infty$ restrictions to each stratum. 
\end{proposition}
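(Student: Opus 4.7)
The plan is to pull back the stratification $\Omega(\mathbb{R}^K)$ along $\parametrization$, invoke the standard existence theorems for Whitney stratifications of tame sets compatible with a prescribed finite family of subsets, and then iteratively refine to obtain $C^\infty$ restrictions.

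First I observe that $\Omega(\mathbb{R}^K)$ is itself a finite Whitney stratification of $\mathbb{R}^K$ into semi-algebraic subsets, cut out by the finitely many hyperplanes $\{f(\simplex)=f(\simplex')\}$ for $\simplex\neq\simplex'\in K$. The pullback family $\mathcal{F}:=\{\parametrization^{-1}(\omega)\}_{\omega\in\Omega(\mathbb{R}^K)}$ is then a finite partition of $\paramspace$ into semi-algebraic subsets under hypothesis (i), and into subanalytic subsets under hypothesis (ii)---here the compactness of $\paramspace$ is what ensures that preimages of subanalytic sets under a subanalytic map remain subanalytic.

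Second, I invoke a classical existence theorem from tame geometry: every semi-algebraic set in $\mathbb{R}^N$ (resp. compact subanalytic set in a real analytic manifold) admits a Whitney stratification by semi-algebraic (resp. subanalytic) smooth submanifolds that is compatible with any prescribed finite family of subsets in the same class, in the sense that each such subset is a union of strata. Applying this to $\paramspace$ and the family $\mathcal{F}$ yields a Whitney stratification $\mathcal{S}_{\paramspace}$ of $\paramspace$ by tame strata such that each stratum is mapped by $\parametrization$ into a single ordering equivalence class $\omega\in\Omega(\mathbb{R}^K)$; this is precisely weak stratification in the sense of Definition~\ref{definition_stratified_map}.

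Third, I enforce $C^\infty$ (in fact analytic) restrictions of $\parametrization$ to each stratum by an induction on dimension. On any stratum $\paramspace'\in\mathcal{S}_\paramspace$, each coordinate function $\param\mapsto\parametrization(\param)(\simplex)$ is a continuous, locally bounded semi-algebraic (resp. subanalytic) map, and by the theorem on regular points \citep[Thm.~2.3.3]{tamm1981subanalytic} applied relatively to $\paramspace'$, it is analytic outside a closed subanalytic subset $S_\simplex\subseteq\paramspace'$ of strictly smaller dimension. Adding the finite union $\bigcup_{\simplex\in K}S_\simplex$ to the compatibility family and reapplying the existence theorem produces a refined Whitney stratification whose top-dimensional strata inside $\paramspace'$ lie in the analytic locus of $\parametrization$. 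Iterating on the lower-dimensional strata terminates by finiteness of the dimension, and the final stratification retains weak stratification with respect to $\Omega(\mathbb{R}^K)$ because each refinement step subdivides existing strata.

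The main obstacle is ensuring that Whitney regularity, compatibility with $\Omega(\mathbb{R}^K)$, and smoothness of $\parametrization$ on each stratum are all preserved simultaneously through the refinement procedure. This is handled by the compatible-stratification existence theorems, which in case (i) follow from semi-algebraic cell decomposition, and in case (ii) rely essentially on compactness of $\paramspace$ (via Hironaka's and Hardt's results) to keep singular loci subanalytic and stratifications finite throughout the induction.
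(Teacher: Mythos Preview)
Your proposal is correct and amounts to an explicit unpacking of the paper's one-line proof, which simply cites Section~I.1.7 of Goresky--MacPherson after noting that $\Omega(\mathbb{R}^K)$ is semi-algebraic. That reference packages the compatible-stratification and smoothness-of-restrictions arguments you spell out into a single stratified-map existence theorem, so the approaches coincide in substance.
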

\begin{proof}
  This is Section~I.1.7 of \cite{goresky1988stratified}, after observing that the stratification~$\Omega(\mathbb{R}^K)$ is made of semi-algebraic strata.
\end{proof}

\begin{example}
\label{stratification_sphere}
We consider the parametrization~$\parametrization$ of height filters on the sphere $\mathbb{S}^{d-1}$ from Example \ref{ex:height_func_diff1}. By Proposition~\ref{semi-algebraic_stratif}, there is a stratification of~$\mathbb{S}^{d-1}$ that makes~$\parametrization$ weakly stratified and $C^\infty$ on each stratum. To be more specific, such a stratification is obtained by taking the pre-images\footnote{This is called the {\em pull-back stratification}. In fact, for any  smooth map~$\parametrization:\paramspace\to\R^K$ that is transverse with respect to~$\Omega(\R^K)$ and to any stratification of~$\paramspace$ (e.g. the trivial one), the pull-back of~$\Omega(\R^K)$ via~$\parametrization$  makes the latter weakly stratified and $C^\infty$ on each stratum---see e.g. \cite[\textsection I.1.3]{goresky1988stratified}.} of the strata of~$\Omega(\R^K)$ via $\parametrization$. Figure~\ref{fig7} illustrates the result in the case $d=3$, where the obtained stratification of~$\mathbb{S}^2$ is made of an arrangement of great circles, each circle being the pre-image of a set $\{\parametrization(\param)(\vertex)=\parametrization(\param)(\vertex')\}$ for vertices $\vertex\neq \vertex'$.  
 %
\begin{figure} 
  \begin{subfigure}[b]{0.5\linewidth}
    \centering
    \includegraphics[width=0.75\linewidth]{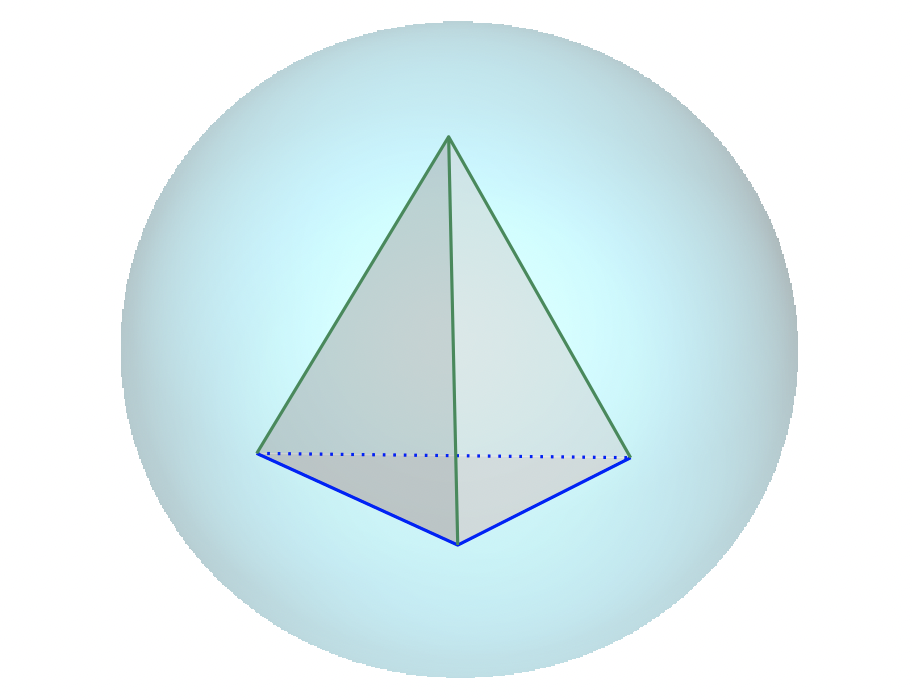} 
    \label{fig7:c} 
  \end{subfigure}
  \begin{subfigure}[b]{0.5\linewidth}
    \centering
    \includegraphics[width=0.75\linewidth]{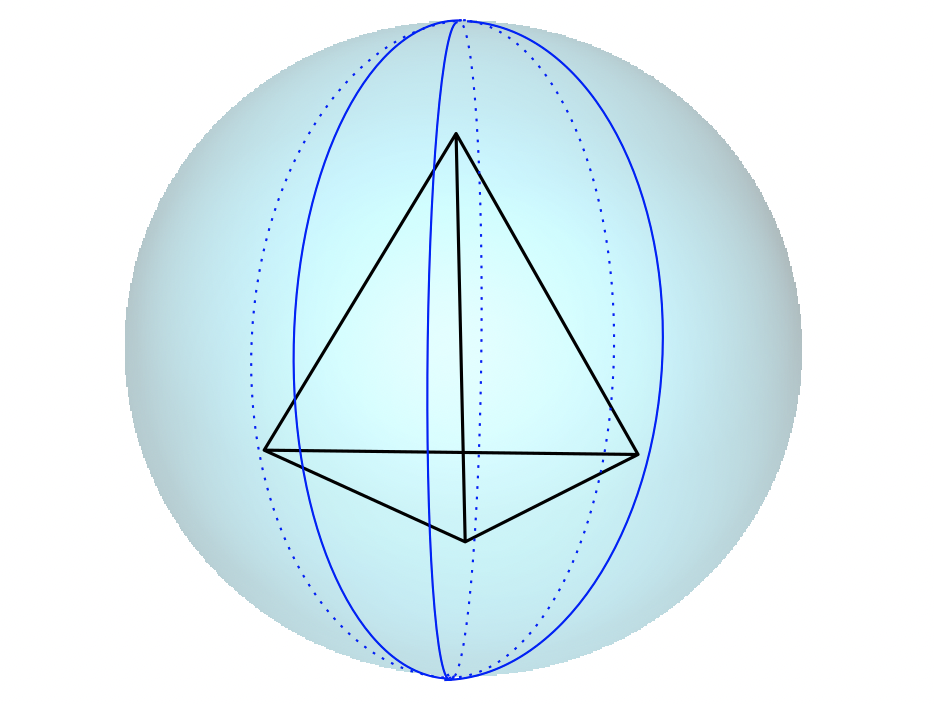} 
    \label{fig7:d} 
  \end{subfigure} 
   \begin{subfigure}[b]{0.5\linewidth}
    \centering
    \includegraphics[width=0.75\linewidth]{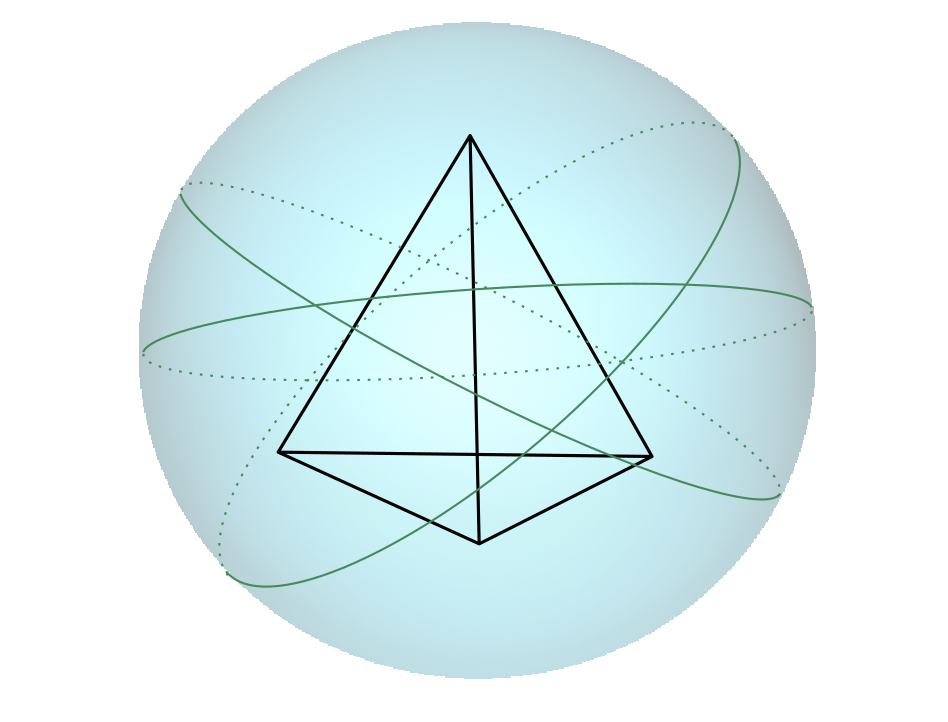} 
    \label{fig7:e} 
  \end{subfigure}
   \begin{subfigure}[b]{0.5\linewidth}
    \centering
    \includegraphics[width=0.75\linewidth]{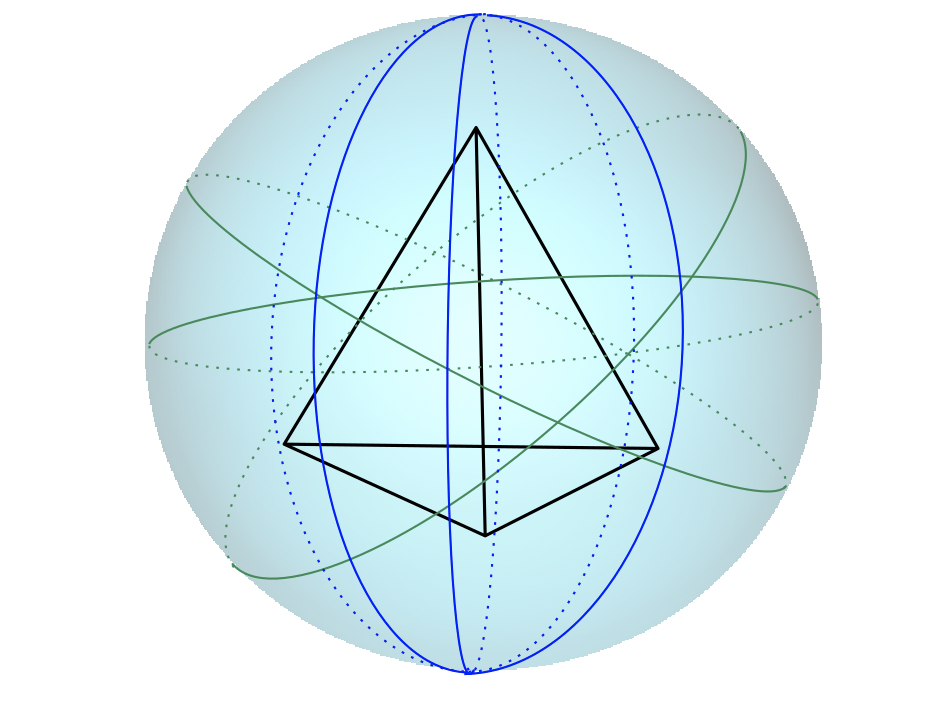} 
    \label{fig7:f} 
  \end{subfigure}
  \caption{The regular tetrahedron~$K$ embedded in~$\R^3$ (top left) induces the stratification of $\mathbb{S}^2$ for the parametrization of height filters (bottom right). The intersections of great circles are $0$-dimensional strata, the parts of the great circles that do not intersect with each other are $1$-dimensional strata, the rest of $\mathbb{S}^2$ forms the 2-dimensional strata. Each great circle corresponds to unit vectors that are orthogonal to a given edge of the simplicial complex. The edges joining the vertices in the base of the tetrahedron produce the blue circles (top right), while the edges joining the apex of the tetrahedron to the base face produce the green circles (bottom left). }
  \label{fig7} 
\end{figure}
\end{example}

Once a stratification~$\mathcal{S}_\paramspace$ of~$\paramspace$ is given, we can introduce a notion of derivative for~$B_p$ at $\param\in\manifoldm$ in the direction of an {\em incident} stratum~$\paramspace'$, i.e. a stratum whose closure in~$\paramspace$ contains~$\param$.
%
\begin{definition}
\normalfont
\label{definition_partial_derivative}
Let~$B:\paramspace \rightarrow Bar$ be a map defined on a stratified space~$(\paramspace{},\mathcal{S}_\paramspace)$. Let $\param{} \in \paramspace{}$, and let $\paramspace'\in\mathcal{S}_\paramspace$ be a stratum incident to~$\param{}$. The map $B$ is $r$-\textit{differentiable} at~$\param{}$ \textit{along} $\paramspace'$ if there is an open neighborhood $U$ of $\param{}$ in~$\paramspace$ and a $C^r$ map $\tilde{B} : U \rightarrow \mathbb{R}^{2m}\times \mathbb{R}^n$ for some integers $m,n$ such that $B=Q_{m,n}\circ \tilde{B}$ on~$U\cap \paramspace'$. The differential~$d_\param \tilde{B}$ is called a \textit{directional derivative} of~$B$ at~$\param$ along~$\manifoldm'$.
\end{definition}

This definition agrees with the notions of $r$-differentiability and derivatives introduced in Section~\ref{section_definition_differentiability_barcode_valued_maps} when $\paramspace'$ contains an open neighborhood around~$\param$, i.e. for $\param$ located in a top-dimensional stratum~$\paramspace'$. When $\param$ is located in some lower-dimensional stratum, it admits finitely many incident strata~$\paramspace'$ (possibly not top-dimensional), each one of which yields a specific directional derivative at~$\param$. The definition of each derivative involves a local $C^r$ lift~$\tilde B$ of~$B$ near~$\param$ in~$\paramspace'$. This lift is required to extend smoothly over an open neighborhood~$U$ in~$\paramspace$, to ensure that $\tilde B$ and its derivatives have well-defined limits  at~$\param$.
\begin{theorem}[Discrete smoothness along strata] \label{theorem_PH_partial_derivatives_stratification}
Let $r\in \mathbb{N}$ and $\parametrization{}:\paramspace{} \rightarrow \mathbb{R}^K$. Suppose $\mathcal{S}_\manifoldm{}$ is a Whitney stratification of~$\paramspace$ such that:
\begin{itemize}
    \item [\bf{(i)}] $\parametrization$ is a weakly stratified map with respect to $\mathcal{S}_\manifoldm$ and $\Omega(\R^K)$, and
    \item[\bf{(ii)}] the restriction of $\parametrization$ to each stratum of $\mathcal{S}_\manifoldm$ is $C^r$, and
    \item [\bf{(iii)}] for every $\param\in \manifoldm$ and every incident stratum $\manifoldm'\in \mathcal{S}_\manifoldm$, there is an open neighborhood $U$ of $\param$ in~$\manifoldm$ such that  $\parametrization_{|\manifoldm'\cap U}$ extends to a $C^r$ map $U \to \R^K$.
\end{itemize}
Then, at every $\param{} \in \paramspace{}$, the barcode valued map $B_p: \manifoldm{}  \rightarrow  Bar$ is $r$-differentiable along each stratum incident to~$\param$. In particular, $B_p$ is $r$-differentiable in the sense of Definition~\ref{differentiability_definition_barcode} inside each top-dimensional stratum.
\end{theorem}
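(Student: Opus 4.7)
The plan is to reduce the statement to the setting of Section \ref{section_discrete_smoothness_subsection_smoothness_theorem} by observing that \emph{inside a single stratum of $\mathcal{S}_\paramspace$, all filter functions are ordering equivalent}, and then to produce the required $C^r$ extension using hypothesis~(iii).

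Fix $\param \in \paramspace$ and an incident stratum $\paramspace' \in \mathcal{S}_\paramspace$. Since $\parametrization$ is weakly stratified with respect to $\mathcal{S}_\paramspace$ and $\Omega(\R^K)$, the pre-image $\parametrization^{-1}(\omega)$ of every equivalence class $\omega \in \Omega(\R^K)$ is a union of strata of $\mathcal{S}_\paramspace$. Hence the connected stratum $\paramspace'$ is entirely contained in a single $\parametrization^{-1}(\omega_0)$, which means $\parametrization(\param_1) \sim \parametrization(\param_2)$ for all $\param_1, \param_2 \in \paramspace'$. Pick any reference point $\param_0 \in \paramspace'$ and, using Proposition~\ref{proposition_existence_pairing}, choose a barcode template $(P_p, U_p)$ for $\parametrization(\param_0)$. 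By Proposition~\ref{proposition_pairing_same_equivalence_class}, this same template witnesses~\eqref{equation_pairing_definition} for every $\parametrization(\param')$ with $\param' \in \paramspace'$.

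Next, by hypothesis~(iii), there exists an open neighborhood $U$ of $\param$ in $\paramspace$ and a $C^r$ map $\tilde{\parametrization} : U \rightarrow \R^K$ extending $\parametrization|_{\paramspace'\cap U}$. Order the elements of $(P_p, U_p)$ arbitrarily as $(\simplex_1,\simplex'_1),\ldots,(\simplex_m,\simplex'_m),\tau_1,\ldots,\tau_n$ and define
\[
\tilde{B}_p : \param' \in U \ \longmapsto \ \bigl[(\tilde{\parametrization}(\param')(\simplex_i), \tilde{\parametrization}(\param')(\simplex'_i))_{i=1}^m,\,(\tilde{\parametrization}(\param')(\tau_j))_{j=1}^n\bigr] \in \R^{2m}\times\R^n.
\]
Since the coordinate evaluations $g \mapsto g(\simplex)$ on $\R^K$ are linear and $\tilde{\parametrization}$ is $C^r$ on $U$, the map $\tilde{B}_p$ is $C^r$ on $U$. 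For any $\param' \in \paramspace' \cap U$ we have $\tilde{\parametrization}(\param') = \parametrization(\param')$, so applying $Q_{m,n}$ and invoking~\eqref{equation_pairing_definition} for the common template $(P_p,U_p)$ yields $Q_{m,n}(\tilde{B}_p(\param')) = B_p(\param')$. Thus $\tilde{B}_p$ is the required $C^r$ lift, establishing $r$-differentiability of $B_p$ at $\param$ along $\paramspace'$.

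For the ``in particular'' clause, when $\paramspace'$ is top-dimensional it is open in $\paramspace$, so for $\param \in \paramspace'$ we may shrink $U$ to lie inside $\paramspace'$, giving $U \cap \paramspace' = U$ and therefore $r$-differentiability of $B_p$ at $\param$ in the sense of Definition~\ref{differentiability_definition_barcode}. The only mildly delicate point in the argument is verifying that a single barcode template applies uniformly across the stratum; this is where the weak stratification of $\parametrization$ together with Proposition~\ref{proposition_pairing_same_equivalence_class} does the heavy lifting. Everything else is a transfer of the local construction in Proposition~\ref{proposition_derivatives_barcode_simplicial_complex} from the open case to the stratified one, via the $C^r$ extension provided by hypothesis~(iii).
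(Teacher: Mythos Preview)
Your proof is correct and follows essentially the same route as the paper's: use weak stratification (hypothesis~(i)) together with Propositions~\ref{proposition_existence_pairing} and~\ref{proposition_pairing_same_equivalence_class} to obtain a single barcode template valid across the whole stratum~$\paramspace'$, then use the $C^r$ extension from hypothesis~(iii) to turn the resulting coordinate system into a $C^r$ lift defined on an open neighborhood of~$\param$. The only cosmetic difference is that you build the lift directly from the extended map~$\tilde{\parametrization}$ supplied by~(iii), whereas the paper first invokes~(ii) to get a $C^r$ coordinate system on~$\paramspace'$ and then extends; your shortcut is harmless since~(iii) already delivers what is needed.
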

\begin{proof}
Let $\param\in\paramspace$ and $\paramspace'$ a stratum incident to~$\param{}$. By~{\bf (i)}, combined with Propositions~\ref{proposition_existence_pairing} and~\ref{proposition_pairing_same_equivalence_class}, there exists a barcode template $(P_p,U_p)$ that is consistent across all $\parametrization(\param')$ for $\param'\in \paramspace'$. Therefore, for all $\param'\in \manifoldm'$:
\begin{equation}
    \label{equation_barcode_function_each_stratum_pairing}
     B_p(\param{}')=\big\{(F(\param{}')(\simplex{}),\parametrization{}(\param{}')(\simplex{}')) \big\}_{(\simplex{},\simplex{}')\in P_p} \cup \big\{(F(\param{}')(\simplex{}),+\infty) \big\}_{\simplex{}\in U_p}\cup \Delta^\infty,
\end{equation}
which by~{\bf(ii)} provides a $C^r$ local coordinate system for ${B_p}_{|\manifoldm'}$. Then by Proposition \ref{proposition_relating_differentiability_barcode_ordered_barcode}, there is a $C^r$ lift of ${B_p}_{|\manifoldm'}$, whose coordinate functions are of the form $\param'\mapsto \parametrization(\param')(\simplex)$. Using~{\bf(iii)}, we extend each coordinate function of this lift (hence the lift itself) to an open neighborhood $U$ of $\param$ in $\manifoldm$.
\end{proof}
Combining Proposition~\ref{semi-algebraic_stratif} with Theorem~\ref{theorem_PH_partial_derivatives_stratification} yields the following:
%
\begin{corollary} \label{cor:sem-alg_directional}
Under the hypotheses of Proposition~\ref{semi-algebraic_stratif},
%
there is a Whitney stratification of~$\paramspace$, made of semi-algebraic (resp. subanalytic) strata, such that~$B_p$ is $\infty$-differentiable on the top-dimensional strata (whose union is generic in~$\paramspace$). If furthermore~$F$ is globally~$C^r$, then $B_p$ is everywhere $r$-differentiable  along incident strata.
\end{corollary}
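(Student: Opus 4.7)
The plan is to combine Proposition~\ref{semi-algebraic_stratif} with Theorems~\ref{theorem_PH_differentiable_local} and~\ref{theorem_PH_partial_derivatives_stratification}. First I would invoke Proposition~\ref{semi-algebraic_stratif} to produce a Whitney stratification $\mathcal{S}_\paramspace$ of~$\paramspace$ with semi-algebraic (respectively subanalytic) strata such that $\parametrization$ is weakly stratified with respect to $\mathcal{S}_\paramspace$ and $\Omega(\R^K)$, and $C^\infty$ in restriction to each stratum. This is the only stratification data I would need for both claims.

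For the first claim (the $\infty$-differentiability on top-dimensional strata), I would proceed as follows. A standard consequence of the frontier condition together with local finiteness is that each top-dimensional stratum $\paramspace' \in \mathcal{S}_\paramspace$ is open in~$\paramspace$, and that their union is open and dense in~$\paramspace$ (the remaining strata have dimension strictly less than $\dim\paramspace$ and thus empty interior in~$\paramspace$), i.e.\ generic. Pick $\param \in \paramspace'$ and set $U := \paramspace'$, which is an open neighborhood of~$\param$ in~$\paramspace$. Weak stratification forces $\parametrization(U)$ to lie in a single ordering equivalence class of $\Omega(\R^K)$, so $\parametrization(\param') \sim \parametrization(\param)$ for every $\param' \in U$; meanwhile $\parametrization_{|U}$ is $C^\infty$. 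Theorem~\ref{theorem_PH_differentiable_local} then directly yields the $\infty$-differentiability of $B_p$ at~$\param$.

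For the second claim, I would verify the three hypotheses of Theorem~\ref{theorem_PH_partial_derivatives_stratification}. Conditions~\textbf{(i)} and~\textbf{(ii)} are exactly the conclusions of Proposition~\ref{semi-algebraic_stratif} (noting that $C^\infty$ in restriction to each stratum implies $C^r$ in restriction to each stratum). The remaining condition~\textbf{(iii)}, requiring $\parametrization_{|\paramspace' \cap U}$ to extend to a $C^r$ map $U \to \R^K$ for some open neighborhood $U$ of~$\param$ in~$\paramspace$ and every incident stratum $\paramspace'$, is where the additional hypothesis that $\parametrization$ is globally $C^r$ enters: the global map $\parametrization$ itself, restricted to any open neighborhood $U$ of $\param$ in $\paramspace$, provides such an extension. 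Theorem~\ref{theorem_PH_partial_derivatives_stratification} then delivers the $r$-differentiability of $B_p$ along every incident stratum at every $\param \in \paramspace$.

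The main conceptual obstacle is precisely the gap between Proposition~\ref{semi-algebraic_stratif} and condition~\textbf{(iii)}: the proposition only guarantees intrinsic $C^\infty$ smoothness \emph{on} each stratum, with no a priori control over how that smoothness behaves as one approaches a lower-dimensional stratum from an adjacent higher-dimensional one. Requiring $\parametrization$ to be globally $C^r$ on~$\paramspace$ is what supplies the missing extension, and is exactly the reason the second part of the corollary weakens the regularity from $\infty$ to $r$.
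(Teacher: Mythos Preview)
Your proposal is correct and follows essentially the same approach as the paper, which simply states that the corollary is obtained by combining Proposition~\ref{semi-algebraic_stratif} with Theorem~\ref{theorem_PH_partial_derivatives_stratification}. Your treatment is in fact more careful: by invoking Theorem~\ref{theorem_PH_differentiable_local} directly for the first claim, you sidestep the issue that condition~\textbf{(iii)} of Theorem~\ref{theorem_PH_partial_derivatives_stratification} is not guaranteed by Proposition~\ref{semi-algebraic_stratif} alone at points of lower-dimensional strata, whereas the paper's one-line proof leaves implicit that on top-dimensional strata this condition is trivially satisfied (one may take $U$ to be the stratum itself). One minor remark: the openness of a top-dimensional stratum follows most directly from its being a $d$-dimensional submanifold of the $d$-manifold~$\paramspace$, rather than from the frontier condition.
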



\begin{example}
\label{example_partial_derivatives_distance_function}
\normalfont
Consider again the setup of Example \ref{non_differentiable_example}.
We stratify $\mathbb{R}$ by the point $\{\frac{1}{2}\}$ and the half-lines $(-\infty;\frac{1}{2})$ and $(\frac{1}{2}; +\infty)$. The parametrization $\parametrization{}$ is $C^\infty$ and sends strata into strata, therefore by Theorem~\ref{theorem_PH_partial_derivatives_stratification} the barcode valued map $B_0$ admits directional derivatives everywhere on $\R$. More precisely, recall that we have a lift $\tilde{B_0}:\param \mapsto \min(\param^2,(1-\param)^2)$, which is smooth in the top-dimensional strata, while at $\param=\frac{1}{2}$ it admits directional derivatives along the two half-lines, whose values are $1$ and $-1$ respectively and thus do not agree.
\end{example}
\begin{example}
\label{example_partial_derivatives_height}
Consider again the stratification $\mathcal{S}_{\mathbb{S}^{d-1}}$ by the great circles of the parameter space $\mathbb{S}^{d-1}$ associated to the parametrization of height filters (Example~\ref{stratification_sphere}). By Corollary~\ref{cor:sem-alg_directional}, we know that there exists a refinement~$\mathcal{S}'_{\mathbb{S}^{d-1}}$ of~$\mathcal{S}_{\mathbb{S}^{d-1}}$ such that   $B_p$ admits directional derivatives along incident strata of~$\mathcal{S}'_{\mathbb{S}^{d-1}}$ at every point $\param\in\mathbb{S}^{d-1}$. In fact, we can even take $\mathcal{S}'_{\mathbb{S}^{d-1}}$ to be $\mathcal{S}_{\mathbb{S}^{d-1}}$ itself. Indeed, all the directions in a given stratum $\manifoldm'\in \mathcal{S}_{\mathbb{S}^{d-1}}$ induce the same pre-order on the simplices of $K$, therefore
\begin{itemize}
    \item the restriction $\parametrization_{|\manifoldm'}$ is valued in a stratum of $\Omega(\R^K)$, and
    \item for every simplex $\sigma\in K$, there is a vertex $\bar \vertex(\simplex)$ such that $\parametrization_{|\manifoldm'}(.)(\simplex)=\langle . , \bar \vertex(\simplex{})\rangle$.
\end{itemize}
Consequently, the assumptions of Theorem~\ref{theorem_PH_partial_derivatives_stratification} hold, and the barcode valued map~$B_p$ admits directional derivatives along incident strata of~$\mathcal{S}_{\mathbb{S}^{d-1}}$ at every point~$\param\in\mathbb{S}^{d-1}$. 
\end{example}

\subsection{The barcode valued map as a permutation map}

In this section, we work out a global lift of the barcode valued map, which restricts nicely to each stratum of a stratification of~$\paramspace$. To do so, we first focus on the map~$\Dgm$ which, given a filter function~$f\in \R^K$ on a fixed simplicial complex~$K$ of dimension~$d$, returns the vector of all its barcodes $(\Dgm_p(f))_{p=0}^d$. We observe that~$\Dgm$ admits a global Euclidean lift, and furthermore, that this lift is essentially a permutation map on each stratum of $\Omega(\R^K)$. Throughout, we fix an ordering of the simplices of $K$, so that the canonical basis of~$\R^K$ turns into a basis of $\R^{\#K}$, and we let  $\phi: \R^K \to \R^{\#K}$ be the corresponding isomorphism.

\begin{proposition}
\label{proposition_global_lift_permutation}
There exist integers $m_p,n_p$ for $0\leqslant p\leqslant d$ such that $\sum_{p=0}^d (2m_p+n_p) =\#K$, and a map $\Perm:\R^K \rightarrow \prod_{p=0}^d \R^{2m_p}\times \R^{n_p}\cong \R^{\#K}$ whose restriction $\Perm_{|S}$ to each ordering equivalence class $S\in \Omega(\R^K)$ is a permutation matrix, and such that the following diagram commutes:\footnote{Strictly speaking, like $\Dgm$, this diagram applies only to the set of filter functions in $\R^K$.} 
\begin{equation}
\label{lift_global}
\begin{gathered}
\begin{tikzpicture}

\node[] (a) at (-2,0) {$\R^{\#K}$};

\node[] (b) at (2.8,0) {$\prod_{p=0}^d \mathbb{R}^{2m_p}\times \mathbb{R}^{n_p}$};

\node[] (c) at (-2,-2) {$\R^K$};

\node[] (d) at (2,-2) {$Bar^{d+1}$};

\draw[->] (a)--(b);
\draw[->] (c)--(d);
\draw[->] (c)--(a);
\draw[->] (2.0,-0.5)--(d);

\node[label=right:{$Q:= \prod_{p=0}^d Q_{m_p,n_p}$}] (q) at (2,-1) {};
\node[label=above:{$\Dgm$}] (Dgm) at (-0.2,-2.1) {};
\node[label=left:{$\phi$}] (phi) at (-2,-1) {};
\node[label=above:{$\Perm$}] (P) at (-0.2,0) {};

\end{tikzpicture}
\end{gathered}
\end{equation}
\end{proposition}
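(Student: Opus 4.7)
The strategy is to pin down the integers $(m_p, n_p)$ as topological invariants of~$K$, and then to construct $\Perm$ one ordering equivalence class at a time, using the standard matrix reduction algorithm for persistent homology. Concretely, I would set $n_p := \dim H_p(K)$ (the $p$-th Betti number of~$K$) and inductively define $m_p := c_p - n_p - m_{p-1}$ with $m_{-1} := 0$, where $c_p$ is the number of $p$-simplices in~$K$. Since $K$ has no $(d+1)$-simplices we get $m_d = 0$, and telescoping the recurrence yields $\sum_{p=0}^d (2m_p + n_p) = \sum_p c_p = \#K$, as required.

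Next, I would build $\Perm$ on each equivalence class $S \in \Omega(\R^K)$ separately. Fix once and for all an arbitrary background total order on the simplices of~$K$. Given~$S$, extend the pre-order induced by any representative $f \in S$ into a total order, using the background order as tie-breaker (the result depends only on~$S$, not on the representative~$f$). Running the matrix reduction algorithm with respect to this total order produces a pairing of the simplices of~$K$: each simplex is either paired with exactly one other simplex, giving the birth--death pair of a bounded interval in some homology degree, or else left unpaired, giving the birth of an unbounded interval in some homology degree. This pairing is a barcode template for~$f$ in the sense of Definition~\ref{definition_pairing_simplices}, and, crucially, the resulting numbers of bounded and unbounded intervals in each degree are exactly $m_p$ and $n_p$, independently of~$S$: the unbounded count in degree~$p$ matches $n_p = \dim H_p(K)$ because for large~$t$ the sublevel set equals~$K$ itself, so unbounded intervals in $\Dgm_p(f)$ are in bijection with a basis of~$H_p(K)$; and the bounded count then matches~$m_p$ by the identity $c_p = m_p + n_p + m_{p-1}$ obtained by classifying each $p$-simplex as either the birth of a degree-$p$ interval (accounting for $m_p + n_p$ simplices) or the death of a degree-$(p-1)$ interval (accounting for $m_{p-1}$ simplices).

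With the pairing in hand, I would define $\Perm_{|S}:\R^{\#K}\to\prod_{p=0}^d\R^{2m_p}\times\R^{n_p}$ as the permutation matrix that sends the $\sigma$-th coordinate of $\phi(f)$ into the output slot assigned by the pairing to the role played by $\sigma$, namely the birth or death coordinate of a specified bounded pair in some degree, or the birth coordinate of a specified unbounded interval. Because each simplex occupies exactly one output slot, $\Perm_{|S}$ is indeed a permutation matrix of~$\R^{\#K}$. Moreover, applying~$Q$ to $\Perm(\phi(f))$ recovers precisely the expression of Equation~\eqref{equation_pairing_definition} for the chosen barcode template, hence reproduces $\Dgm(f)$ and establishes commutativity of the diagram.

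The main delicate step is the observation that the cardinalities of the pairing produced by the reduction algorithm are the same across all equivalence classes. This rigidity is what allows a single integer profile $(m_p, n_p)$ to host the permutation matrices for every equivalence class simultaneously, and it relies on the interpretation of $n_p$ as a Betti number of~$K$ combined with the combinatorial balance relation satisfied by $m_p$. Once this is in place, the rest of the construction is formal bookkeeping on top of the reduction algorithm.
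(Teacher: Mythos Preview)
Your proposal is correct and follows essentially the same approach as the paper: both fix a total barcode template per ordering equivalence class via the matrix reduction algorithm, argue that the resulting counts $(m_p,n_p)$ are independent of the class (the paper does this by ``an easy induction on the homology degree,'' while you make the same induction explicit through the balance relation $c_p = m_p + n_p + m_{p-1}$ and the identification $n_p = \dim H_p(K)$), and then read off the permutation matrix from the template. Your explicit tie-breaking rule and the upfront closed-form definition of $m_p$ are nice touches, but the underlying argument is the same.
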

For simplicity, from now on we identify $f\in \R^K$ with its image in $\R^{\#K}$ without explicitly mentioning the map $\phi$.
\begin{proof}
Given a filter function $f\in \R^K$, we define a total barcode template $(P,U)$ for $f$ to be the data of $d+1$ barcode templates $(P_p,U_p)$ for $f$ in each homology degree, such that each simplex of $K$ appears exactly once, in a unique $P_p$ or $U_p$. We further require that the pairs $(\simplex,\simplex')$ appearing in $P_p$ consist of a $p$-dimensional simplex $\simplex$ and a ($p+1$)-dimensional simplex $\simplex'$, while the unpaired simplices appearing in $U_p$ must be $p$-dimensional. A simplex $\simplex$ is then labelled {\em positive} if it appears as the first component of a pair in some $P_p$ or $U_p$, and {\em negative} otherwise. 

Note that total barcode templates always exist, by an argument similar to (yet somewhat more involved than) the one used in the proof of Proposition~\ref{proposition_existence_pairing}. Alternatively, note that applying the matrix reduction algorithm for computing persistence \citep{elz-tps-02,zomorodian2005computing} to the sublevel-sets filtration of $f$ produces a total barcode template. By Proposition~\ref{proposition_pairing_same_equivalence_class}, total barcode templates are invariant under ordering equivalences. We therefore fix a unique total barcode template $(P(S),U(S))$ per ordering equivalence class $S\in \Omega(\R^K)$ (there are only finitely many such classes), and we denote by $m_p(S):=\#P_p(S)$, $n_p(S):=\#U_p(S)$ their sizes in each homology degree~$p$. 

Since the barcode templates $(P(S),U(S))$ are total, we have $\sum_{p=0}^d (2m_p(S)+n_p(S))=\#K$. Besides, since the number of infinite intervals in the barcode of a filter function is given by the Betti numbers of the simplicial complex $K$, an easy induction on the homology degree shows that the number of positive (resp. negative) simplices in each homology degree is independent of the choice of filter function and of total barcode template. Therefore, the integers $m_p(S),n_p(S)$ do not depend on the stratum $S$. 

For each stratum $S\in \Omega(\R^K)$ and homology degree $p$, we pick arbitrary orderings  $(\simplex_{k,S},\simplex'_{k,S})_{k=0}^{m_p}$  of $P_p(S)$ and $(\tau_{k,S})_{k=0}^{n_p}$ of $U_p(S)$. Any filter function $f\in S$ admits $(P(S),U(S))$ as total barcode template, therefore we get that $\Dgm_p(f)=Q_{m_p,n_p}((f(\simplex_{k,S}),f(\simplex'_{k,S}))_{k=0}^{m_p}, (f(\tau_{k,S}))_{k=0}^{n_p})$ in every homology degree $p$. We simply set $\Perm(f):=[(f(\simplex_{k,S}),f(\simplex'_{k,S}))_{k=0}^{m_p}, (f(\tau_{k,S}))_{k=0}^{n_p}]_{p=0}^d\in \prod_{p=0}^d \R^{2m_p}\times \R^{n_p}$, which ensures the commutativity of~\eqref{lift_global}. Since each simplex of~$K$ appears exactly once in $(P(S), U(S))$, the vector $\Perm(f)$ is a re-ordering of the coordinates of~$f$ (i.e. of its values on the simplices) and therefore $\Perm_{|S}$ is a permutation matrix.
\end{proof}

We now turn to the parametrized barcode valued map
\[B: \param \in \manifoldm \xrightarrow[\parametrization{}]{} \parametrization{}(\param)\in \R^K \xrightarrow[\Dgm]{} [\Dgm_p(F(\param))]_{p=0}^d\in Bar^{d+1}\]
determined by a parametrization $\parametrization{}:\paramspace \rightarrow \R^K$ of filter functions. We show that if~$\manifoldm$ admits a Whitney stratification~$\mathcal{S}_\manifoldm$ satisfying the assumptions of Theorem~\ref{theorem_PH_partial_derivatives_stratification}, then~$B$ admits a global lift~$\tilde{B}$ that acts as a permutation of $\parametrization{}$-values on each stratum.

\begin{corollary}
\label{corollary_global_lift_parametrized}
Using the same notations as in Proposition~\ref{proposition_global_lift_permutation}, the map
\[\tilde{B}:\param \in \paramspace \longmapsto \Perm(\parametrization(\param))\in \prod_{p=0}^d \R^{2m_p}\times \R^{n_p}\]
is a global lift of $B$, i.e $Q\circ \tilde{B}=B$ everywhere on $\manifoldm$. If moreover~$\manifoldm$ admits a Whitney stratification~$\mathcal{S}_\manifoldm$ satisfying the assumptions of Theorem~\ref{theorem_PH_partial_derivatives_stratification}, then $\tilde{B}=\Perm_{\manifoldm'}\circ F$ for some permutation matrix $\Perm_{\manifoldm'}$ over each stratum $\manifoldm'\in \mathcal{S}_{\manifoldm}$. Consequently, $B$ is $r$-differentiable along incident strata everywhere on $\manifoldm$, with directional derivatives given by the ones of $\tilde{B}$.
\end{corollary}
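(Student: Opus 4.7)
The plan is to split the statement into its three components and reduce each to a direct application of results already established.

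First, I would verify the global lift property by a one-line diagram chase. The commutative square~\eqref{lift_global} of Proposition~\ref{proposition_global_lift_permutation} gives $Q\circ \Perm = \Dgm$ on $\R^K$ (identifying $\R^K\cong\R^{\#K}$ via $\phi$). Post-composing with $\parametrization$ yields $Q\circ \tilde{B} = Q\circ \Perm\circ \parametrization = \Dgm\circ \parametrization = B$, valid at every $\param\in\manifoldm$.

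Second, assume $\mathcal{S}_\manifoldm$ satisfies the hypotheses of Theorem~\ref{theorem_PH_partial_derivatives_stratification}, and fix a stratum $\manifoldm'\in\mathcal{S}_\manifoldm$. I would argue that $\parametrization(\manifoldm')$ lies in a single ordering equivalence class $S\in\Omega(\R^K)$. The set $\manifoldm'$ is connected by the definition of a Whitney stratification, and the pre-images $\parametrization^{-1}(S')$, for $S'$ ranging over $\Omega(\R^K)$, form a partition of $\manifoldm$ into unions of strata by the weakly stratified assumption~(i). Hence $\manifoldm'$ lies entirely in one such pre-image, say that of $S$. Setting $\Perm_{\manifoldm'} := \Perm_{|S}$, Proposition~\ref{proposition_global_lift_permutation} guarantees that this is a permutation matrix, and we obtain $\tilde{B}_{|\manifoldm'} = \Perm_{\manifoldm'}\circ \parametrization_{|\manifoldm'}$, as required.

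Third, I would deduce directional differentiability at any $\param\in\manifoldm$ along any incident stratum~$\manifoldm'$. By assumption~(iii) of Theorem~\ref{theorem_PH_partial_derivatives_stratification}, there is an open neighborhood~$U$ of~$\param$ in $\manifoldm$ and a $C^r$ extension $\bar{\parametrization}:U\to\R^K$ of $\parametrization_{|\manifoldm'\cap U}$. Post-composing with the linear (hence $C^\infty$) map $\Perm_{\manifoldm'}$ produces a $C^r$ map $\Perm_{\manifoldm'}\circ \bar{\parametrization}:U\to\prod_{p=0}^{d}\R^{2m_p}\times\R^{n_p}$ that agrees with $\tilde{B}$ on $\manifoldm'\cap U$ by the previous paragraph, and therefore lifts $B$ there via $Q$. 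This fits exactly the template of Definition~\ref{definition_partial_derivative} applied componentwise to each of the $d+1$ factors of the codomain, giving $r$-differentiability of $B$ at $\param$ along $\manifoldm'$ with directional derivative $\Perm_{\manifoldm'}\circ d_\param\bar{\parametrization}$, i.e.\ the derivative of $\tilde{B}_{|\manifoldm'}$ itself.

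I do not expect any substantial obstacle, as the bulk of the technical work has been packaged in Proposition~\ref{proposition_global_lift_permutation} and Theorem~\ref{theorem_PH_partial_derivatives_stratification}. The only subtlety worth highlighting is the connectedness argument in the second step: without connectedness of strata, a single $\manifoldm'$ could straddle several classes in $\Omega(\R^K)$, and a uniform permutation matrix $\Perm_{\manifoldm'}$ would fail to exist.
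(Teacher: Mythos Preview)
Your proposal is correct and follows essentially the same route as the paper's proof: both derive the global lift from the commutative square of Proposition~\ref{proposition_global_lift_permutation}, use the weakly stratified hypothesis to land each stratum in a single ordering equivalence class, and then extend $\tilde{B}_{|\manifoldm'}=\Perm_{\manifoldm'}\circ \parametrization_{|\manifoldm'}$ via assumption~(iii) to obtain directional derivatives. One small remark: your connectedness argument in the second step is harmless but superfluous, since the weakly stratified condition already forces each stratum $\manifoldm'$ to lie entirely in a single pre-image $\parametrization^{-1}(S)$ (the pre-images partition $\manifoldm$ into unions of strata, and a single stratum cannot straddle two pieces of a partition by strata).
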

The last part of the statement expresses the fact that directional derivatives of~$B$ are simply given by permuting the directional derivatives of the coordinate functions of~$\parametrization$. 
\begin{proof}
The first part of the statement is a straight consequence of Proposition~\ref{proposition_global_lift_permutation}. Let~$\mathcal{S}_\manifoldm$ be a stratification satisfying the assumptions of Theorem~\ref{theorem_PH_partial_derivatives_stratification}. As $\parametrization$ is weakly stratified with respect to $\mathcal{S}_\manifoldm$ and $\Omega(\R^K)$, it sends strata into strata and therefore by Proposition~\ref{proposition_global_lift_permutation} we have $\tilde{B}=\Perm_{\manifoldm'}\circ \parametrization$ for some permutation matrix $\Perm_{\manifoldm'}$ over each stratum $\manifoldm'\in \mathcal{S}_{\manifoldm}$. Then, since~$\parametrization$ admits local smooth extensions over each stratum~$\manifoldm'$ of~$\mathcal{S}_\manifoldm$, so do its coordinate functions and in turn so does~$\tilde{B}=\Perm_{\manifoldm'}\circ \parametrization$. These local extensions of~$\tilde{B}$ yield directional derivatives for~$B$ along incident strata.
\end{proof}

\begin{remark}
\label{remark_definable_lift_for_definable_parametrization}
Recall that the map~$\Perm$ is a linear map when restricted to the strata of~$\Omega(\R^K)$, which are simply polyhedra in~$\R^K$. Therefore, if~$\manifoldm$ is a semi-algebraic set (resp. subanalytic set or definable set in an o-minimal structure) and~$\parametrization$ is a semi-algebraic (resp. subanalytic or definable) map, then the global lift~$\tilde{B}=\Perm\circ \parametrization$ of Corollary~\ref{corollary_global_lift_parametrized} is itself a semi-algebraic (resp. subanalytic or definable) map. Thus, we recover Proposition~3.2 and Corollary~3.3 of~\cite{carriere2020note}. Meanwhile, the differentiability of~$\tilde{B}$ on top-dimensional strata (as per Corollary~\ref{cor:sem-alg_directional}) recovers their Proposition~3.4. 
\end{remark}

We conclude this section with a side result whose proof (deferred to the appendix~\ref{appendix_proof_coercivity}) relies on Proposition~\ref{proposition_global_lift_permutation}.  This result states that $\Dgm$ is locally an isometry on top-dimensional strata of $\Omega(\R^K)$. It involves the distance~$d_0(f)$  of any filter function~$f\in\R^K$ to the union of strata of $\Omega(\R^K)$ of codimension at least~$1$:
%
\[
d_0(f) = \frac{1}{2}\min_{\simplex\neq \simplex'} |f(\simplex)-f(\simplex')|.
\]
\begin{proposition}
\label{proposition_local_coercivity}
Let $f,g\in \mathbb{R}^{K}$ be two filter functions that are located in the closure of a common top-dimensional stratum $S\in \Omega(\R^K)$. Then:
\begin{equation}
    \label{equation_coercivity}
   \max_{0\leqslant p \leqslant d }\db(\Dgm_p(f),\Dgm_p(g))\geqslant \min(\|f-g\|_\infty, \max(d_0(f),d_0(g))).
\end{equation}
In particular, for any filter function~$f\in\R^K$ located in a top-dimensional stratum, the map~$\Dgm$ is a local isometry in a closed ball of radius $d_0(f)$ around~$f$, specifically:
\begin{align}
 \label{equation_first_isometry}
    \forall g \in \R^K,\ \|f-g\|_\infty \leq d_0(f) \Longrightarrow  \\\max_{0\leqslant p \leqslant d }\db(\Dgm_p(f),\Dgm_p(g)) &= \|f-g\|_\infty \nonumber
\end{align}
\begin{align}
\label{equation_second_isometry}
    \forall g,h \in \R^K,\ \max (\|f-g\|_\infty, \|f-h\|_\infty) \leq \frac{d_0(f)}{3} \Longrightarrow \\ \max_{0\leqslant p \leqslant d }\db(\Dgm_p(g),\Dgm_p(h))&=  \|g-h\|_\infty. \nonumber
\end{align}
\end{proposition}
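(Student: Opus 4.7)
The plan is to prove the main inequality~\eqref{equation_coercivity} by contradiction, then derive the two isometries~\eqref{equation_first_isometry} and~\eqref{equation_second_isometry} as direct corollaries. The key observation is that the formula of Proposition~\ref{proposition_global_lift_permutation} describing $\Dgm_p$ on a stratum $S$ extends by continuity to the closure $\overbar{S}$: for $f, g \in \overbar{S}$, both diagrams admit a simultaneous description through the same total barcode template $(P_p(S), U_p(S))$, with formal off-diagonal points $(f(\sigma), f(\sigma'))$ absorbed into $\Delta^\infty$ when $f(\sigma) = f(\sigma')$ (and likewise for $g$). In particular, the ``identity matching'' induced by reading off the same template on both sides has cost $\|f-g\|_\infty$, which already recovers the stability upper bound $\max_p \db \leq \|f-g\|_\infty$. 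Without loss of generality we assume $d_0(f) \geq d_0(g)$; if $d_0(f) = 0$ the right-hand side of~\eqref{equation_coercivity} vanishes, so we may further assume $d_0(f) > 0$, i.e.\ $f \in S$ itself. Setting $\delta := \max_p \db(\Dgm_p(f), \Dgm_p(g))$, we suppose towards contradiction that $\delta < \min(\|f-g\|_\infty, d_0(f))$.

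For each degree $p$, fix an optimal matching $\gamma_p$ of cost at most $\delta$. Any bounded off-diagonal $(f(\sigma), f(\sigma')) \in \Dgm_p(f)$ has $\ell^\infty$-distance $\geq d_0(f) > \delta$ from the diagonal, so $\gamma_p$ cannot pair it with any diagonal point; infinite bars moreover must be matched with infinite bars. This forces all $m_p + n_p$ off-diagonals of $\Dgm_p(f)$ (all genuine since $d_0(f) > 0$) to inject into the off-diagonals of $\Dgm_p(g)$, which themselves number at most $m_p + n_p$. By cardinality the restriction is a bijection, inducing permutations $\pi_p$ of $P_p(S)$ and of $U_p(S)$ such that $(f(\sigma), f(\sigma'))$ is matched componentwise to $(g(\sigma''), g(\sigma'''))$ with $(\sigma'', \sigma''') = \pi_p(\sigma, \sigma')$.

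We then distinguish two cases. If every $\pi_p$ is the identity, then $|f(\sigma) - g(\sigma)| \leq \delta$ for each simplex $\sigma$ appearing in some $P_p(S) \cup U_p(S)$; since the template is total across all degrees, this exhausts all simplices of $K$, giving $\|f-g\|_\infty \leq \delta$ and contradicting the assumption. Otherwise some $\pi_p$ contains a non-trivial cycle, either on $P_p(S)$ or on $U_p(S)$; focusing on its birth simplices $\sigma_1, \ldots, \sigma_k$, which are pairwise distinct by totality of the template, let $\sigma_{\max}$ attain the maximum of $f$ on the cycle and set $\sigma_j := \pi_p^{-1}(\sigma_{\max})$, $\sigma_l := \pi_p(\sigma_{\max})$. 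Both $\sigma_j$ and $\sigma_l$ differ from $\sigma_{\max}$ and, since $f$ is strict on $S$, satisfy $f(\sigma_j), f(\sigma_l) \leq f(\sigma_{\max}) - 2 d_0(f)$. The matching constraints $|f(\sigma_j) - g(\sigma_{\max})| \leq \delta$ and $|f(\sigma_{\max}) - g(\sigma_l)| \leq \delta$, combined with the weak order preservation $g(\sigma_l) \leq g(\sigma_{\max})$ inherited from $g \in \overbar{S}$, produce the chain
\[
f(\sigma_{\max}) - \delta \leq g(\sigma_l) \leq g(\sigma_{\max}) \leq f(\sigma_j) + \delta \leq f(\sigma_{\max}) - 2 d_0(f) + \delta,
\]
whence $\delta \geq d_0(f)$, a contradiction. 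This proves~\eqref{equation_coercivity}.

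The two isometries then follow quickly. For~\eqref{equation_first_isometry}, the hypothesis $\|f-g\|_\infty \leq d_0(f)$ with $f \in S$ ensures $g \in \overbar{S}$ (strict inequalities on $f$-values relax to weak ones on $g$), and~\eqref{equation_coercivity} yields $\max_p \db \geq \|f-g\|_\infty$, matched by stability. For~\eqref{equation_second_isometry}, both $g$ and $h$ lie in $\overbar{S}$ by the same argument; the triangle inequality gives $d_0(g), d_0(h) \geq d_0(f) - d_0(f)/3 = 2 d_0(f)/3$ while $\|g-h\|_\infty \leq 2 d_0(f)/3 \leq \max(d_0(g), d_0(h))$, so applying~\eqref{equation_coercivity} to $(g, h)$ gives the desired equality. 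The hard part will be the cycle analysis above, where one must exploit simultaneously the strict separation of $f$-values on $S$ (providing the $2 d_0(f)$ gap) and the mere weak order preservation of $g$ on $\overbar{S}$; the cardinality step in the second paragraph also deserves care when $g \in \partial S$ and some formal off-diagonals of $g$ collapse onto the diagonal.
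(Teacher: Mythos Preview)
Your argument is correct. The overall shape matches the paper's---split on whether the optimal matching keeps all off-diagonals of $\Dgm(f)$ away from the diagonal, then in the ``permutation case'' show the cost is at least $\|f-g\|_\infty$---but the two proofs diverge at that last step. The paper first restricts to $f,g$ both in the interior of $S$ (extending to $\overbar S$ by continuity of $\Dgm$ afterwards), observes that an off-diagonal-to-off-diagonal multi-matching induces a permutation $\pi$ of $\{1,\dots,\#K\}$ with cost $\max_i|f_i-g_{\pi(i)}|$, and then invokes the monotonicity of the one-dimensional $\infty$-Wasserstein optimal transport map to conclude that the identity permutation is optimal, whence the cost is $\geq\|f-g\|_\infty$. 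You instead work directly with $f\in S$, $g\in\overbar S$, argue by contradiction, and in the non-identity case run an explicit cycle argument: pick the $f$-maximal birth simplex on a nontrivial cycle and chain the two matching constraints through the weak order preserved by $g$ to force $\delta\geq d_0(f)$.

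Your route is more self-contained (no external optimal-transport fact), handles the boundary case $g\in\partial S$ in a single pass rather than by a limiting argument, and makes transparent exactly where the strict gap $2d_0(f)$ is spent. The paper's route is shorter once the transport monotonicity is granted, and it treats both $d_0(f)$ and $d_0(g)$ symmetrically in the diagonal case. Two small points worth making explicit in your write-up: first, the reason a nontrivial cycle on pairs in $P_p(S)$ descends to a nontrivial cycle on birth simplices is precisely that the first components of the pairs are pairwise distinct (totality of the template), so the pair is determined by its birth simplex; second, the cardinality step also rules out any off-diagonal point of $\Dgm_p(g)$ being matched to the diagonal of $\Dgm_p(f)$, which you use implicitly when asserting the restriction is a bijection.
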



\section{Application to common simplicial filtrations}
\label{section_discrete_smoothness_subsection_examples}

In this section we leverage Theorems~\ref{theorem_PH_differentiable_local} and~\ref{theorem_PH_differentiable_global} in the case of a few important classes of parametrizations of filter functions on a simplicial complex~$K$ of dimension~$d$. In each case, we derive a characterization of the parameter values where~$B_p$ is differentiable, and whenever possible we provide an explicit differential of~$B_p$ using Proposition~\ref{proposition_derivatives_barcode_simplicial_complex}. In the following we fix a homology degree~$0\leqslant p \leqslant d$.

\subsection{Lower star filtrations}
\label{section_discrete_smoothness_subsection_examples_ex1}

Parametrizations of lower star filtrations are involved in most practical scenarios \citep{ gabrielsson2018topology, chen2019topological,1905.12200, 2019arXiv190510996H,poulenard2018topological}, here we provide a common analysis of their differentiability.

\begin{definition}
\normalfont
\label{lower_filt}
Given a function $f:K_0\rightarrow \mathbb{R}$ defined on the vertices of $K$, we extend it to each simplex $\simplex{}$ of $K$ by its highest value on the vertices of $\simplex{}$. The sub-level sets of this function together form the
\textit{lower-star filtration} of $K$ induced by~$f$.
\end{definition}

One interest of lower-star filtrations is that any parametrization $\manifoldm\to\R^{K_0}$ on the vertex set of~$K$ induces a valid parametrization $\manifoldm\to\R^K$ on $K$ itself.
Sufficient conditions for the differentiablity of such parametrizations are easy to work out thanks to the following observation:
\begin{proposition}
\label{proposition_lower_star_nice_filtration}
Let $\parametrization_0:\paramspace{} \rightarrow \mathbb{R}^{K_0}$ be a~$C^r$ parametrization of filter functions on the vertices of~$K$. Then, the induced parametrization $\parametrization:\paramspace{} \rightarrow \mathbb{R}^K$ is $C^r$ at each $\param{} \notin \Sing(\parametrization_0)$, where~$\Sing(\parametrization_0)$ is the boundary of the set:
\[\{\param{} \in \paramspace{}, \ \exists (v,v')\in K_0, \parametrization_0(\param{})(v)=\parametrization_0(\param{})(v')\}. \]
Specifically, for every $\param\notin \Sing(\parametrization_0)$, letting $$\bar v: \simplex\in K \mapsto \argmax_{v \ \text{vertex in} \ \simplex{}}\parametrization_0(\param{})(v) \in K_0$$ by breaking ties wherever necessary, there is an open neighborhood~$U$ of~$\param$ such that $\parametrization(\param')(\simplex) = \parametrization_0(\param')(\bar v(\simplex))$ for every $\param'\in U$ and $\simplex\in K$, from which follows that $\parametrization$ is~$C^r$ at~$\param$.
\end{proposition}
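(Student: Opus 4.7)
My plan is to reduce the smoothness of $\parametrization$ to that of the coordinate functions of $\parametrization_0$ by showing that, locally around any $\param \notin \Sing(\parametrization_0)$, the pointwise maximum $\parametrization(\cdot)(\simplex) = \max_{v \in \simplex} \parametrization_0(\cdot)(v)$ is realized by a single fixed vertex $\bar v(\simplex)$. Once such a neighborhood $U$ is produced, $\parametrization(\param')(\simplex)$ collapses to $\parametrization_0(\param')(\bar v(\simplex))$ on $U$, which is $C^r$ as a coordinate projection of the $C^r$ map $\parametrization_0$; since $K$ is finite, a single neighborhood will work for all simplices simultaneously.

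The crucial ingredient is a sign-stability property for the comparison maps $g_{v,v'}(\param) := \parametrization_0(\param)(v) - \parametrization_0(\param)(v')$ at parameters outside $\Sing(\parametrization_0)$: I will show that $\mathrm{sign}(g_{v,v'}) \in \{-,0,+\}$ is locally constant near $\param$ for every pair $v \neq v' \in K_0$. For pairs with $g_{v,v'}(\param) \neq 0$, continuity of $g_{v,v'}$ preserves strict sign on a neighborhood. For pairs with $g_{v,v'}(\param) = 0$, I will argue that $\param$ lies in the interior of $\{g_{v,v'} = 0\}$: otherwise $\param$ would belong to the topological boundary of the locus $\{\param'\in\paramspace \mid \exists v \neq v',\ \parametrization_0(\param')(v) = \parametrization_0(\param')(v')\}$, contradicting $\param \notin \Sing(\parametrization_0)$. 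Hence $g_{v,v'}$ vanishes identically on a neighborhood of $\param$ in that case.

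With sign stability in hand, I will pick for each simplex $\simplex$ an arbitrary vertex $\bar v(\simplex) \in \argmax_{v \in \simplex} \parametrization_0(\param)(v)$, breaking ties arbitrarily, and take a common neighborhood $U$ of $\param$ on which the signs of all $g_{v,v'}$ (finitely many, over the vertices of $K$) remain constant. On $U$, the inequalities $\parametrization_0(\param)(\bar v(\simplex)) \geq \parametrization_0(\param)(v)$ for $v \in \simplex$ propagate to every $\param' \in U$, yielding $\parametrization(\param')(\simplex) = \parametrization_0(\param')(\bar v(\simplex))$ as required. The delicate step is the interior argument in the equality case, which rests on the topology of the union of closed level sets $\bigcup_{v \neq v'} \{g_{v,v'} = 0\}$ and on the relationship between the boundary of this union and the boundaries of its constituent level sets used in defining $\Sing(\parametrization_0)$.
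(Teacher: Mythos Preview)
Your overall strategy matches the paper's exactly: show that the pre-order on $K_0$ induced by $\parametrization_0$ is locally constant near any $\param\notin\Sing(\parametrization_0)$, then observe that $\parametrization(\cdot)(\simplex)$ collapses to the single $C^r$ coordinate function $\parametrization_0(\cdot)(\bar v(\simplex))$. The paper simply \emph{asserts} the local constancy of the pre-order; you go further and try to justify it via sign stability of the $g_{v,v'}$, and you are right to flag the equality case as the delicate step.

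Unfortunately the inference you propose there does not hold: from $\param\in\partial\{g_{v,v'}=0\}$ one cannot conclude $\param\in\partial S$ where $S=\bigcup_{w\neq w'}\{g_{w,w'}=0\}$, because the boundary of a finite union of closed sets can be strictly smaller than the union of their boundaries. Concretely, on $\paramspace=\R$ with three vertices take $\parametrization_0(\param)(v_1)=\parametrization_0(\param)(v_2)=0$ and $\parametrization_0(\param)(v_3)=\param$. Then $g_{v_1,v_2}\equiv 0$ forces $S=\R$, hence $\Sing(\parametrization_0)=\partial S=\emptyset$; yet at $\param=0$ the sign of $g_{v_1,v_3}$ is \emph{not} locally constant, and indeed $\parametrization(\param)(\{v_1,v_3\})=\max(0,\param)$ is not even $C^1$ there. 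So the statement is false as literally written, and neither your argument nor the paper's one-line assertion can be completed under the given definition of $\Sing(\parametrization_0)$. What both proofs actually require is the hypothesis $\param\notin\bigcup_{v\neq v'}\partial\{g_{v,v'}=0\}$ (union of boundaries rather than boundary of the union), exactly the device used in the paper's Lemma~\ref{lemma_tildeM_dense}; with that corrected definition of $\Sing(\parametrization_0)$ your sign-stability argument goes through verbatim.
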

\begin{proof}
The continuity of $\parametrization{}$ comes from the continuity of $\parametrization{}_0$ and of the $\max$ function. If $\param{} \in \paramspace{} \setminus \Sing(\parametrization_0)$, then the pre-order on~$K_0$ induced by $\parametrization_0(.)$ is constant in an open neighborhood~$U$ of~$\param$. 
We want to check that $\parametrization{}$ is $C^r$ at $\param{}$, i.e. that all maps $\param{}' \mapsto \parametrization(\param{}')(\simplex{})$ are $C^r$ at $\param$, for a fixed simplex $\simplex{} \in K$. For $\simplex$ a vertex of~$K$, this is true by assumption because $\parametrization{}(.)(\simplex)=\parametrization{}_0(.)(\simplex)$. For an arbitrary simplex $\simplex{}$, $\parametrization{}(.)(\simplex{})= \max_{v \ \text{vertex in} \ \simplex{}}\parametrization_0(.)(v)$. Since the pre-order induced on $K_0$ by $\parametrization_0$ is constant over $U$, the maximum above is attained at vertex $\bar v(\simplex{})$, and this fact holds for all $\param'$ in~$U$. Thus, $\parametrization(.)(\simplex{})_{|U}=\parametrization_0(.)(\bar v(\simplex{}))_{|U}$, which allows us to conclude.
\end{proof}
\begin{remark}
Recall that~$\Sing(\parametrization_0)$ is by definition the boundary of~$\{\param{} \in \paramspace{}, \ \exists (v,v')\in K_0, \parametrization_0(\param{})(v)=\parametrization_0(\param{})(v')\}$, whose complement may not be generic (in fact it may even be empty, e.g. when $F_0=0$). This shows the interest of working with locally constant pre-orders on vertices, and not just with locally injective parametrizations as in the works of \cite{gabrielsson2018topology,chen2019topological,1905.12200,2019arXiv190510996H,poulenard2018topological}.
\end{remark}
Defining~$\Sing(\parametrization_0)$ and~$\bar v$ as in Proposition~\ref{proposition_lower_star_nice_filtration}, and combining this result with  Proposition~\ref{proposition_derivatives_barcode_simplicial_complex}, we deduce the following result on the differentiability of~$B_p$, which only relies on the differentiability of~$\parametrization_0$: 
\begin{corollary}
\label{cor_lower_star_barcode_differentiability}
For any $C^r$ parametrization $\parametrization_0:\paramspace \rightarrow\mathbb{R}^{K_0}$ on the vertices of $K$, the induced barcode valued map~$B_p: \param{}\in \manifoldm{}\mapsto \Dgm_p(\parametrization(\param{}))\in Bar$ is $r$-differentiable outside $\Sing(\parametrization_0)$. Moreover, at $\param{}\in \paramspace{}\setminus \Sing(\parametrization_0)$, for any barcode template $(P_p,U_p)$ of $\parametrization(\param{})$ and any choice of ordering $(\simplex_1, \simplex'_1), \cdots, (\simplex_m, \simplex'_m)$, $\tau_1, \cdots, \tau_n$ of $(P_p,U_p)$, the map~$\tilde B_p:\paramspace \rightarrow \R^m\times \R^n$ defined by:
%
\[
  \tilde B_p: \param' \longmapsto  \left[ (F_0(\param')(\bar v(\simplex_i)), F_0(\param')(\bar v(\simplex'_i)))_{i=1}^m, (F_0(\param')(\bar v(\simplex'_j)))_{j=1}^n \right]
    \]
%
    is a local $C^r$ lift of~$B_p$ around~$\param$. The corresponding differential for~$B_p$ at $\param$ is:
\[d_{\param, \tilde B_p} B_p(\cdot)=
\left[ (d_\param F_0(\cdot)(\bar v (\simplex_i)), d_\param F_0(\cdot)(\bar v(\simplex'_i)))_{i=1}^m, (d_\param F_0(\cdot)(\bar v(\simplex'_j)))_{j=1}^n\right]. \]
\end{corollary}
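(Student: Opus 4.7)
The plan is to reduce the statement to Theorem~\ref{theorem_PH_differentiable_local} and Proposition~\ref{proposition_derivatives_barcode_simplicial_complex}, using Proposition~\ref{proposition_lower_star_nice_filtration} as the bridge that transfers regularity and structural information from $F_0$ to the induced parametrization $F:\paramspace\to\R^K$. Fix $\param\notin\Sing(F_0)$. Proposition~\ref{proposition_lower_star_nice_filtration} provides an open neighborhood $U$ of $\param$ and a map $\bar v:K\to K_0$ sending each simplex to a (locally constant) argmax vertex, such that
\[ F(\param')(\simplex) = F_0(\param')(\bar v(\simplex)) \quad \text{for all } \param'\in U,\ \simplex\in K. \]
Since $F_0$ is $C^r$ and $\bar v$ does not depend on $\param'\in U$, this identity immediately shows that $F$ is $C^r$ on $U$.

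The next step is to verify that $\param\in\tilde\paramspace$, so that Theorem~\ref{theorem_PH_differentiable_local} (or equivalently Theorem~\ref{theorem_PH_differentiable_global}) applies and yields $r$-differentiability of $B_p = \Dgm_p\circ F$ at $\param$. For any two simplices $\simplex,\simplex'\in K$ the sign of
\[ F(\param')(\simplex) - F(\param')(\simplex') = F_0(\param')(\bar v(\simplex)) - F_0(\param')(\bar v(\simplex')) \]
is governed by the pre-order on $K_0$ induced by $F_0(\param')$. Because $\param\notin\Sing(F_0)$, this pre-order is constant over $U$ (shrinking $U$ if necessary), and hence so is the induced pre-order on $K$. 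All filter functions $F(\param')$, $\param'\in U$, are therefore ordering equivalent, which is precisely the hypothesis of Theorem~\ref{theorem_PH_differentiable_local}.

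For the explicit local lift, I would apply Proposition~\ref{proposition_derivatives_barcode_simplicial_complex} to $F$ with any fixed barcode template $(P_p,U_p)$ of $F(\param)$ and any chosen ordering of its pairs. The formula from that proposition expresses the lift and its differential in terms of $F(\param')(\simplex)$ and $d_\param F(\cdot)(\simplex)$. Substituting the identity $F(\param')(\simplex)=F_0(\param')(\bar v(\simplex))$ valid on $U$, and differentiating (permissible because $\bar v$ is locally constant), yields exactly the stated formulas for $\tilde B_p$ and $d_{\param,\tilde B_p}B_p$.

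There is no genuine obstacle here: the corollary is a packaging of previously established results, and the only point requiring some care is the verification that local constancy of the pre-order on $K_0$ implies local constancy of the induced pre-order on $K$, which is what ensures $\param\in\tilde\paramspace$ and legitimizes both the application of Theorem~\ref{theorem_PH_differentiable_local} and the substitution $\bar v(\simplex)$ in the differential formula.
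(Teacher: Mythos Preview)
Your proof is correct and follows essentially the same route as the paper: invoke Proposition~\ref{proposition_lower_star_nice_filtration} to get $F(\param')(\simplex)=F_0(\param')(\bar v(\simplex))$ on a neighborhood $U$, deduce that the pre-order on $K$ is locally constant (hence Theorem~\ref{theorem_PH_differentiable_local} applies), and then substitute into the formula of Proposition~\ref{proposition_derivatives_barcode_simplicial_complex}. The only cosmetic difference is that you spell out slightly more explicitly why constancy of the pre-order on $K_0$ forces constancy of the induced pre-order on $K$.
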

\begin{proof}
  For $\param \in \manifoldm\setminus \Sing(\parametrization_0)$, the pre-order on the vertices $K_0$ induced by $\parametrization_0$ is constant in an open neighborhood~$U$ of~$\param$. By Proposition~\ref{proposition_lower_star_nice_filtration}, each $\parametrization(\param')(\simplex)$ rewrites as $\parametrization_0(\param')(\bar v(\simplex))$ for $\param'\in U$, which implies that the pre-order on the simplices of~$K$ induced by $\parametrization$ is also constant over~$U$. The fact that  $B_p$ is $r$-differentiable at~$\param$ follows then from  Theorem~\ref{theorem_PH_differentiable_local}, since~$\parametrization$ itself is $C^r$ on an open neighborhood of~$\param$ (again by Proposition~\ref{proposition_lower_star_nice_filtration}, and by the fact that~$\Sing(\parametrization_0)$ is closed). The rest of the corollary is an immediate consequence of Proposition \ref{proposition_derivatives_barcode_simplicial_complex}.
\end{proof}
\begin{example}
\label{example_derivative_height}
Consider our running example of parametrization of height filtrations $\parametrization_0(\param) = h_\param: v\in K_0 \mapsto \langle v, \param\rangle \in \mathbb{R} $, where $K$ is  a fixed geometric simplicial complex in $\R^d$ and $\param \in \mathbb{S}^{d-1}$. In this case, we know from Example~\ref{ex:height_func_diff1} that $B_p$ is generically $\infty$-differentiable. Corollary~\ref{cor_lower_star_barcode_differentiability} provides  another proof of this fact:
since~$F_0$ is $C^\infty$, $B_p$ is $\infty$-differentiable outside~$\Sing(\parametrization_0)$, which has generic complement in~$\mathbb{S}^{d-1}$. Moreover, the components of the differential of~$B_p$ at~$\param\in \mathbb{S}^{d-1} \setminus \Sing(\parametrization_0)$ are the~$d_\param \parametrization_0(\cdot)(v)$, whose corresponding gradients (in the tangent space~$T_\param \mathbb{S}^{d-1}$ equipped with the Riemannian structure inherited from~$\R^d$) are $v - \langle v, \param\rangle\, \param$. 
\end{example}
%

\subsection{Rips filtrations of point clouds}
\label{section_discrete_smoothness_subsection_examples_ex2}

Given a finite point cloud $P=(p_1, \cdots, p_n) \in\R^{nd}$, the \textit{Rips} filtration of~$P$ is a filtration of the total complex~$K:=2^{\{1,\cdots,n\}}\setminus \{\emptyset\}$ with $n:=\# P$ vertices, where the time of appearance of a simplex~$\simplex \subseteq \{1, \cdots, n\}$ is $\max_{i,j \in \simplex} \|p_i - p_j\|_2$. 
\cite{gameiro2016continuation} optimize the positions of the points of~$P$ in~$\R^d$ so that the barcode of the Rips filtration reaches some target barcode. Here we see~$\R^{nd}$ as our parameter space~$\manifoldm$, and we consider the parametrization
\[ \parametrization{}(P)(\simplex{}):=\max_{i, j \in \simplex{}} \, \|p_i-p_j\|_2. \]
The differentiability result of \cite{gameiro2016continuation} can be expressed as a result on the differentiability of the barcode-valued map~$B_p = \Dgm_p\circ \parametrization$ using our framework. We require that  the points of~$P$ lie in general position as defined hereafter:
\begin{definition}[\cite{gameiro2016continuation}]
\normalfont
\label{general_position}
$P$ is in \textit{general position} if the following two conditions hold:
\begin{itemize}
    \item[(i)] $\forall i\neq j\in \{1,...,n\}$, $p_i \neq p_j$;
    \item[(ii)] $\forall \{i,j\}\neq \{k,l\}$, where $i,j,k,l \in \{1,...,n\}$, $\|p_i - p_j\|_2 \neq \|p_k - p_l\|_2$. 
\end{itemize}
We denote by~$\GenPos\subseteq \R^{nd}$ the subspace of point clouds in general position. 
\end{definition}
\begin{proposition}
\label{proposition_genericity_general_position}
$\GenPos$ is generic in $\R^{nd}$.
\end{proposition}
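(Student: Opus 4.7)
The plan is to write $\GenPos$ as the complement of a finite union of proper closed algebraic subvarieties of $\R^{nd}$, then argue that such a complement is open and dense.

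First I would observe that $\R^{nd}\setminus\GenPos$ is the union of the finitely many sets $A_{ij}:=\{P\mid p_i=p_j\}$ for $i\neq j$ and $B_{ij,kl}:=\{P\mid \|p_i-p_j\|_2^2=\|p_k-p_l\|_2^2\}$ for $\{i,j\}\neq\{k,l\}$. Each $A_{ij}$ is a proper affine subspace (of codimension $d$), and each $B_{ij,kl}$ is the zero set of the real polynomial $P_{ij,kl}(p):=\|p_i-p_j\|_2^2-\|p_k-p_l\|_2^2$ in the coordinates of $P$. In particular all of these sets are closed in $\R^{nd}$, so their finite union is closed and $\GenPos$ is open.

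For density, it suffices to show that each $A_{ij}$ and each $B_{ij,kl}$ is nowhere dense, since a finite union of nowhere-dense sets is nowhere dense and hence has dense complement. Nowhere-density for $A_{ij}$ is clear. For $B_{ij,kl}$, I would verify that the polynomial $P_{ij,kl}$ is not identically zero on $\R^{nd}$ by a short case analysis on $|\{i,j\}\cap\{k,l\}|$: if the pairs are disjoint, then setting $p_i=p_j$ and $p_k\neq p_l$ gives $P_{ij,kl}\neq 0$; if they share exactly one index (say $j=l$ with $i\neq k$), then fixing $p_j,p_k$ with $p_k\neq p_j$ and letting $p_i$ vary shows that $p_i\mapsto\|p_i-p_j\|_2^2-\|p_k-p_j\|_2^2$ is non-constant, so not identically zero. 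Since $P_{ij,kl}$ is a nonzero real polynomial on $\R^{nd}$, its zero locus $B_{ij,kl}$ has empty interior (any open ball in which $P_{ij,kl}$ vanishes would force $P_{ij,kl}\equiv 0$ by real-analyticity), hence is nowhere dense.

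Combining the two steps, $\R^{nd}\setminus\GenPos$ is a closed nowhere dense set, so $\GenPos$ is open and dense in $\R^{nd}$. The argument is completely routine; the only mild care required is the case analysis ensuring that each distance-equality polynomial $P_{ij,kl}$ is genuinely nonzero whenever $\{i,j\}\neq\{k,l\}$, which I expect to be the only point where one needs to be explicit rather than invoke a black-box result.
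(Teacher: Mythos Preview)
Your argument is correct and follows the same overall shape as the paper's proof: write $\R^{nd}\setminus\GenPos$ as a finite union of closed nowhere-dense sets and conclude. The only difference is in how nowhere-density of the $B_{ij,kl}$ is established. The paper observes that the maps $P\mapsto\|p_i-p_j\|_2^2-\|p_k-p_l\|_2^2$ are submersions on a generic subset of $\R^{nd}$, so their zero sets are proper submanifolds there and hence have generic complement; you instead verify directly that each $P_{ij,kl}$ is a nonzero polynomial and invoke the standard fact that the zero locus of a nonzero real polynomial has empty interior. Your route is slightly more elementary (no constant-rank/implicit-function theorem needed) and your case analysis on $|\{i,j\}\cap\{k,l\}|$ makes explicit a point the paper leaves to the reader.
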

\begin{proof}
The set of point clouds $P$ such that $p_i\neq p_j$ for all $1\leqslant i\neq j \leqslant n$ is clearly generic in $\R^{nd}$. Moreover, the maps $P=(p_1,...,p_n)\mapsto \|p_i-p_j\|_2^2- \|p_k-p_l\|_2^2$ are smooth everywhere and are submersions on a generic subset of $\R^{nd}$, therefore their $0$-sets have generic complements, whose (finite) intersection is also generic. 
\end{proof}
We next observe that the parametrization $\parametrization{}$ is $C^\infty$ at point clouds~$P$ in general position. 
\begin{proposition}
\label{proposition_Rips_nice_filtration}
The parametrization~$\parametrization:\R^{nd} \rightarrow \mathbb{R}^K$ is $C^\infty$ over~$\GenPos$. 
%
%
Specifically, given $P\in \GenPos$, letting $\{\bar v(\simplex), \bar w(\simplex)\} = \argmax_{i, j \in \simplex{}} \, \|p_i-p_j\|_2$ for every $\simplex\in K$, there is an open neighborhood~$U$ of~$P$ such that $F(P')(\simplex) = \|p'_{\bar v(\simplex)} - p'_{\bar w(\simplex)}\|_2$ for every $P'=(p'_1, \cdots, p'_n)\in U$ and $\simplex\in K$, from which follows that $\parametrization$ is~$C^\infty$ at~$P$.
\end{proposition}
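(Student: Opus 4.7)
My plan is to mimic the proof of Proposition~\ref{proposition_lower_star_nice_filtration}, replacing the role of the vertex-value function $\parametrization_0(\param)(v)$ by the pairwise distance function $d_{ij}:P\mapsto \|p_i-p_j\|_2$. The key structural observation is that $F(\cdot)(\simplex)$ is a finite maximum of the $d_{ij}$, and a maximum of finitely many smooth functions is itself smooth at any point where the argmax is a singleton that remains constant in a neighborhood.

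Concretely, I would proceed in three short steps. First, I would note that for each pair $i\neq j$, the squared-distance function $P\mapsto \|p_i-p_j\|_2^2$ is a polynomial in the coordinates of $P$, hence $C^\infty$ everywhere, and therefore $d_{ij}$ itself is $C^\infty$ on the open set $\{P : p_i\neq p_j\}$. By condition (i) in Definition~\ref{general_position}, every $d_{ij}$ is $C^\infty$ on an open neighborhood of any $P\in\GenPos$. Second, I would fix $P\in\GenPos$ and a simplex $\simplex\in K$. By condition (ii), the pairwise distances $\{\|p_i-p_j\|_2 : \{i,j\}\subseteq\simplex\}$ are pairwise distinct, so there is a unique maximizing pair $\{\bar v(\simplex),\bar w(\simplex)\}$. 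The strict inequalities $\|p_{\bar v(\simplex)}-p_{\bar w(\simplex)}\|_2 > \|p_i-p_j\|_2$ for $\{i,j\}\neq\{\bar v(\simplex),\bar w(\simplex)\}\subseteq\simplex$ persist on an open neighborhood of $P$ by continuity of the $d_{ij}$. Intersecting these finitely many open neighborhoods over all simplices $\simplex\in K$ (which is finite) yields a single open neighborhood $U$ of $P$ on which, for every $\simplex\in K$ and $P'\in U$, the argmax of the pairwise distances in $\simplex$ is still $\{\bar v(\simplex),\bar w(\simplex)\}$. Third, on $U$ we therefore have the identity $\parametrization(P')(\simplex)=\|p'_{\bar v(\simplex)}-p'_{\bar w(\simplex)}\|_2=d_{\bar v(\simplex)\bar w(\simplex)}(P')$ for all $\simplex\in K$, so every coordinate of $\parametrization$ coincides on $U$ with a $C^\infty$ function (by the first step), which proves that $\parametrization$ is $C^\infty$ at $P$.

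There is no real obstacle here: everything follows from continuity of the pairwise distances together with the strictness of the inequalities guaranteed by general position. The only mild point to be careful about is to shrink $U$ so that simultaneously (a) no two points of $P'$ coincide (guaranteed by condition (i) and continuity) so that the $d_{ij}$ remain smooth, and (b) the argmax pair is preserved in every simplex of $K$, which is possible because $K$ is finite. As in Proposition~\ref{proposition_lower_star_nice_filtration}, the crucial point that distinguishes this situation from an arbitrary point cloud is that general position ensures a unique local smooth extension of the simplex-wise $\max$, rather than just continuity.
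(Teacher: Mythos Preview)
Your proof is correct and follows essentially the same approach as the paper's: both argue that general position makes all pairwise distances strictly ordered, that this strict ordering (hence the argmax pair for each simplex) persists on an open neighborhood by continuity, and that on this neighborhood each coordinate of $\parametrization$ equals a single $d_{ij}$, which is $C^\infty$ since the corresponding points are distinct. The only cosmetic difference is that the paper invokes the global strict order on all $\|p_i-p_j\|_2$ at once to obtain the neighborhood $U$, whereas you work simplex-by-simplex and then intersect over the finitely many simplices; the two arguments are equivalent.
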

\begin{proof}
The continuity of~$\parametrization$ follows from the continuity of the Euclidean norm and $\max$ function.
  Assuming $P$ is in general position, the distances $\|p_i-p_j\|_2$, for $i\neq j$ ranging in $\{1, \cdots, n\}$, are strictly ordered. By continuity of $\parametrization$, this order remains the same over an open neighborhood $U$ of~$P$ in~$\R^{nd}$. Therefore, every $P'=(p'_1, \cdots, p'_n)\in U$ is also in general position, and $F(P')(\simplex) = \|p'_{\bar v(\simplex)} - p'_{\bar w(\simplex)}\|_2$  for all $\simplex\in K$. Now, the map  $P'\mapsto \|p'_{\bar v(\simplex)} - p'_{\bar w(\simplex)}\|_2$ is $C^\infty$ at~$P$ for each $\simplex$ because $p'_{\bar v(\simplex)} \neq p'_{\bar w(\simplex)}$. This implies that $\parametrization$ is $C^\infty$ at~$P$.  
\end{proof}
Defining $\bar v,\bar w$ as in Proposition~\ref{proposition_Rips_nice_filtration}, and combining this result with  Proposition~\ref{proposition_derivatives_barcode_simplicial_complex}, we deduce the following differential of~$B_p$, which only relies on derivatives of the Euclidean distance between points: 

\begin{corollary}
\label{cor_rips_barcode_differentiability}
The barcode valued map~$B_p: P\in \R^{nd}\mapsto \Dgm_p(\parametrization(P))\in Bar$ is $\infty$-differentiable in $\GenPos$. Moreover, at $P\in \GenPos$, for any barcode template $(P_p,U_p)$ o~$\parametrization(P)$ and any choice of ordering $(\simplex_1, \simplex'_1), \cdots, (\simplex_m, \simplex'_m)$, $\tau_1, \cdots, \tau_n$ of $(P_p,U_p)$, the map~$\tilde B_p$ defined on a point cloud $P'=(p'_1,...,p'_n)$ by:
 
\[
    \tilde B_p(P')= \left[ \left(\|p'_{\bar v (\simplex{}_i)}-p'_{\bar w (\simplex{}_i)}\|_2, \|p'_{\bar v (\simplex{}'_i)}-p'_{\bar w (\simplex{}'_i)}\|_2\right)_{i=1}^m, \left( \|p'_{\bar v (\tau{}_j)}-p'_{\bar w (\tau_j)}\|_2 \right)_{j=1}^n\right]
    \]
    is a local $C^\infty$ lift of~$B_p$ around~$P$. The corresponding differential~$d_{P, \tilde B_p} B_p:\R^{nd} \rightarrow  \R^{2m}\times\R^n$ is defined on a tangent vector~$u \in \R^{nd}$ by:
    \[d_{P, \tilde B_p} B_p(u)=
\left[ \left(\langle \mathbf{P}_{\bar v(\simplex_i), \bar w(\simplex_i)}, u \rangle,  \langle \mathbf{P}_{\bar v(\simplex'_i), \bar w(\simplex'_i)}, u \rangle \right)_{i=1}^m, \left(\langle \mathbf{P}_{\bar v(\tau_j), \bar w(\tau_j)}, u  \rangle\right)_{j=1}^n\right], \]
where $\mathbf{P}_{i, j}$ denotes the vector with $\frac{p_i-p_j}{\|p_i-p_j\|_2}$ as $i$-th component (resp. $\frac{p_j-p_i}{\|p_i-p_j\|_2}$ as $j$-th component) and $0$ as other components.
\end{corollary}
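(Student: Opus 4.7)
The plan is to assemble this corollary directly from the preceding results in the section, using Proposition~\ref{proposition_Rips_nice_filtration} to handle the smoothness of~$\parametrization$ and the local constancy of the simplex pre-order, Theorem~\ref{theorem_PH_differentiable_local} to upgrade this to $\infty$-differentiability of~$B_p$, and Proposition~\ref{proposition_derivatives_barcode_simplicial_complex} to produce the explicit lift and differential. The only nontrivial calculation is the closed-form derivative of the pairwise Euclidean distance, which is where the vectors~$\mathbf{P}_{i,j}$ enter.

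First I would fix $P\in \GenPos$ and invoke Proposition~\ref{proposition_Rips_nice_filtration}: it provides an open neighborhood~$U\subseteq \GenPos$ of~$P$ on which (i)~$\parametrization$ is $C^\infty$, and (ii)~$\parametrization(P')(\simplex) = \|p'_{\bar v(\simplex)} - p'_{\bar w(\simplex)}\|_2$ for every $P'\in U$ and every $\simplex\in K$. Consequence~(ii) (together with $P'$ remaining in general position throughout~$U$) implies that the strict total order on the quantities $\{\|p'_i - p'_j\|_2\}_{i\neq j}$ is the same for every $P'\in U$, and hence the induced pre-order on the simplices of~$K$ is constant over~$U$; that is, all the filter functions~$\parametrization(P')$ for $P'\in U$ are ordering equivalent in the sense of Definition~\ref{definition_ordering_equivalent_and_integer_filter}. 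Theorem~\ref{theorem_PH_differentiable_local} then gives $\infty$-differentiability of $B_p$ at~$P$.

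For the explicit lift and differential, I would apply Proposition~\ref{proposition_derivatives_barcode_simplicial_complex}: given a barcode template $(P_p,U_p)$ of $\parametrization(P)$ together with an arbitrary ordering, the map $\param' \mapsto [(F(P')(\simplex_i),F(P')(\simplex'_i))_i,(F(P')(\tau_j))_j]$ is a local $C^\infty$ lift. Substituting the local formula $F(P')(\simplex) = \|p'_{\bar v(\simplex)} - p'_{\bar w(\simplex)}\|_2$ from Proposition~\ref{proposition_Rips_nice_filtration} yields exactly the stated expression for~$\tilde B_p$.

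The final step is the differential computation. I would compute, for any coordinate function of the form $P' \mapsto \|p'_i - p'_j\|_2$ (with $i\neq j$ and $p_i\neq p_j$, which holds on~$\GenPos$), the directional derivative at~$P$ in direction $u=(u_1,\dots,u_n)\in\R^{nd}$ via the chain rule:
\[
\frac{d}{dt}\Big|_{t=0} \|(p_i+tu_i)-(p_j+tu_j)\|_2 \;=\; \Big\langle \tfrac{p_i-p_j}{\|p_i-p_j\|_2},\,u_i\Big\rangle + \Big\langle \tfrac{p_j-p_i}{\|p_i-p_j\|_2},\,u_j\Big\rangle \;=\; \langle \mathbf{P}_{i,j},u\rangle,
\]
with the vectors~$\mathbf{P}_{i,j}\in\R^{nd}$ exactly as defined in the statement. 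Assembling these coordinate-wise derivatives into the vector-valued differential of~$\tilde B_p$, as prescribed by Proposition~\ref{proposition_derivatives_barcode_simplicial_complex}, produces the formula for $d_{P,\tilde B_p}B_p(u)$. There is no genuine obstacle here; the single point requiring care is making sure one is allowed to differentiate $\|\cdot\|_2$ at each relevant pair, which is precisely guaranteed by the general position condition.
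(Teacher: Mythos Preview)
Your proposal is correct and follows essentially the same route as the paper's proof: use Proposition~\ref{proposition_Rips_nice_filtration} to get smoothness of~$\parametrization$ and local constancy of the simplex pre-order, apply Theorem~\ref{theorem_PH_differentiable_local} for $\infty$-differentiability, and invoke Proposition~\ref{proposition_derivatives_barcode_simplicial_complex} for the explicit lift and differential. The only difference is that you spell out the elementary derivative of $\|p_i-p_j\|_2$ explicitly, whereas the paper leaves this as an immediate consequence.
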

This result implies in particular  that $B_p$ is generically $\infty$-differentiable, since by Proposition~\ref{proposition_genericity_general_position} the set of point clouds in general position is generic in $\R^{nd}$. 
\begin{proof}
By Proposition \ref{proposition_Rips_nice_filtration}, $F$ is $C^\infty$ in $\GenPos$, which is open by Proposition~\ref{proposition_genericity_general_position}. Given~$P$ in general position, the distances $\|p_i-p_j\|_2$, for $i\neq j$ ranging in $\{1, \cdots, n\}$, are strictly ordered, and this order remains the same over an open neighborhood $U$ of $P$ in $\R^{nd}$ by continuity. By Proposition \ref{proposition_Rips_nice_filtration} again, we have $F(P')(\simplex{})=\|p'_{\bar v (\simplex{})}-p'_{\bar w (\simplex{})}\|_2$ for every $P'=(p'_1,...,p'_n)\in U$ and $\simplex\in K$. Therefore, the pre-order induced by $F$ on the simplices of $K$ is constant over $U$. Consequently, $B_p$ is $\infty$-differentiable at $P$ by Theorem \ref{theorem_PH_differentiable_local}. The rest of the statement is an immediate consequence of Proposition~\ref{proposition_derivatives_barcode_simplicial_complex}.
\end{proof}
%
%
We conclude this section by considering a parametrization that constraints the points $p_1,...,p_n$ to evolve along smooth submanifolds $\manifoldm_1,...,\manifoldm_n$ of $\R^d$: 

\begin{proposition}
\label{proposition_rips_differetniable_submanifolds}
Let $\manifoldm_1,...,\manifoldm_n$ be smooth submanifolds of $\R^d$. Denoting by $\iota:\manifoldm_1\times ...\times \manifoldm_n \hookrightarrow \R^{nd}$ the inclusion map, the barcode valued map $B_p=\Dgm_p\circ F \circ \iota$
is generically $\infty$-differentiable.
\end{proposition}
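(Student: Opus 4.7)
The plan is to reduce to Corollary~\ref{cor_rips_barcode_differentiability} via the chain rule of Proposition~\ref{proposition_chain_rule_vectorization_barcode}. Setting $\manifoldm := \manifoldm_1\times\cdots\times\manifoldm_n$, the inclusion $\iota:\manifoldm \hookrightarrow \R^{nd}$ is $C^\infty$ as a map of smooth manifolds. Corollary~\ref{cor_rips_barcode_differentiability} tells us that $\Dgm_p\circ F : \R^{nd}\to Bar$ is $\infty$-differentiable at every $P\in \GenPos$. The chain rule will then immediately yield that the composition $B_p=(\Dgm_p\circ F)\circ \iota$ is $\infty$-differentiable at every $\param\in\iota^{-1}(\GenPos)$, with derivative obtained by composing the derivative of the Rips barcode map with~$d\iota$. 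The whole task thus reduces to showing that $\iota^{-1}(\GenPos)$ is a generic subset of~$\manifoldm$.

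Openness is straightforward: $\GenPos$ is open in $\R^{nd}$, being the complement of the finite union of zero sets of the polynomial maps $P\mapsto \|p_i-p_j\|_2^2-\|p_k-p_l\|_2^2$ (for $\{i,j\}\neq\{k,l\}$) and $P\mapsto p_i-p_j$ (for $i\neq j$), and $\iota$ is continuous, so $\iota^{-1}(\GenPos)$ is open in~$\manifoldm$. For density, the complement decomposes as the finite union, over $i\neq j$ and $\{i,j\}\neq\{k,l\}$, of the closed subsets obtained by intersecting $\manifoldm$ with the zero sets above, each such subset being the zero locus in~$\manifoldm$ of the restriction of a polynomial from~$\R^{nd}$.

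The main obstacle will be this density step. A polynomial restricted to an arbitrary smooth submanifold is a priori only $C^\infty$, not real-analytic, so its zero set could in principle have nonempty interior in~$\manifoldm$ without vanishing on an entire connected component. To handle this, I would invoke a mild non-degeneracy on the $\manifoldm_i$ ensuring that none of the defining polynomials above is identically zero on a connected component of~$\manifoldm$, so that each closed zero set has empty interior and is nowhere dense; equivalently, in the real-analytic case the dichotomy \emph{identically zero or nowhere dense} holds automatically by analytic continuation. Components on which some such polynomial vanishes identically correspond to degenerate configurations where two interpoint distances are forced to coincide everywhere, and these can be treated separately by passing to an effective product of fewer factors (identifying coincident points or fixing the redundant equality) and proceeding by induction on~$n$. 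Once density is established, the finite intersection of open and dense subsets remains open and dense, and combined with the chain-rule argument from the first paragraph this concludes the proof.
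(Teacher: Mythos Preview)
Your reduction has a genuine gap: the set $\iota^{-1}(\GenPos)$ need not be dense in $\manifoldm$, and your proposed workarounds do not close this gap without altering the hypotheses. Here is a concrete counterexample. Take $d=2$, $n=3$, $\manifoldm_1=\{(0,t):t\in\R\}$ (the $y$-axis), $\manifoldm_2=\{(1,0)\}$, $\manifoldm_3=\{(-1,0)\}$. Then for every $\param=(t,\cdot,\cdot)\in\manifoldm\cong\R$ we have $\|p_1-p_2\|_2^2=1+t^2=\|p_1-p_3\|_2^2$, so condition~(ii) of Definition~\ref{general_position} fails identically and $\iota^{-1}(\GenPos)=\emptyset$. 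Your suggestion to ``invoke a mild non-degeneracy on the $\manifoldm_i$'' or to assume real-analyticity changes the statement, which is about \emph{arbitrary} smooth submanifolds. The fallback ``pass to an effective product of fewer factors and induct on~$n$'' is not a proof: in the example above no two points coincide, so nothing can be identified, and the persistent equality $\|p_1-p_2\|=\|p_1-p_3\|$ does not reduce the number of points.

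The paper takes a different route that sidesteps this obstruction entirely. Rather than aiming for the strong condition $\iota(\param)\in\GenPos$, it invokes Theorem~\ref{theorem_PH_differentiable_global}, which only requires that the pre-order on pairwise distances induced by $\iota(\param)$ be \emph{locally constant}. Permanent equalities such as $\|p_1-p_2\|\equiv\|p_1-p_3\|$ are perfectly compatible with this weaker condition. The genericity argument then relies on a boundary trick: for each quadric $U_{ijkl}=\{\|p_i-p_j\|=\|p_k-p_l\|\}$ and each hyperplane $U'_{ij}=\{p_i=p_j\}$, one removes only the \emph{boundary} $\partial\iota^{-1}(U_{ijkl})$ (resp.\ $\partial\iota^{-1}(U'_{ij})$) in $\manifoldm$, not the whole preimage. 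Boundaries of closed sets always have empty interior, so their complements are automatically generic, regardless of whether the preimage itself has interior. On the resulting generic set the pre-order is locally constant and $\parametrization\circ\iota$ is $C^\infty$, so Theorem~\ref{theorem_PH_differentiable_global} applies. This is precisely the mechanism already used in Lemma~\ref{lemma_tildeM_dense}, and it is what makes the argument go through for arbitrary smooth $\manifoldm_i$.
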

\begin{proof}
Let $\manifoldm:=\manifoldm_1\times ...\times \manifoldm_n$. The parametrization $\parametrization\circ \iota$ is $C^\infty$ at parameters $\param\in\paramspace$ such that, locally, for all $\param'$ in a sufficiently small open neighborhood around $\param$, the following quantities are constant:
\begin{itemize}
     \item[(i)] the indices of the points at distance $0$ of each other in $\iota(\param')$, and
    \item[(ii)] the pre-order on the pairwise distances in $\iota(\param')$.  
    \end{itemize}
Note that, in this case, the point clouds~$\iota(\param')$ are not necessarily in general position, but the way they violate conditions (i) and (ii) of Definition~\ref{general_position} is constant. Let $U'$ (resp. $U$) denote the set of points in $\paramspace$ where (i) (resp. (ii)) is satisfied. From the above, $\parametrization\circ\iota$ is $C^\infty$ over $U\cap U'$. We now show that $U\cap U'$ is generic in $\paramspace$. 

Calling $U_{ijkl}$ the quadric $\{P\in \R^{nd} \mid \|p_i-p_j\|_2=\|p_k-p_l\|_2\}$, and  $U'_{ij}$ the hyperplane $\{P\in \R^{nd} \mid p_i=p_j\}$, for $i,j,k,l$ ranging in $\{1, \cdots, n\}$, we have: 
\begin{align*}
    U &= \bigcap_{\{i,j\}\neq\{k,l\}} \manifoldm \setminus \partial \iota^{-1}(U_{ijkl}) \\ 
     U'&= \bigcap_{i\neq j} \, \manifoldm\setminus \partial \iota^{-1}(U'_{ij}).
\end{align*} 
Indeed, for any $\{i,j\}\neq\{k,l\}$, the order between $\|p_i-p_j\|_2$ and $\|p_k-p_l\|_2$ in $\iota(\theta)$ is strict when $\param$ is in the (open) complement of $\iota^{-1}(U_{ijkl})$, constantly an equality when $\param$ is inside  the (open) interior $\iota^{-1}(U_{ijkl})^\circ$, and not locally constant when $\param$ lies on the boundary $\partial\iota^{-1}(U_{ijkl})$. Hence the formula for~$U$. The formula for~$U'$ follows from the same argument. 

The sets $\partial \iota^{-1}(U_{ijkl})$ and $\partial \iota^{-1}(U'_{ij})$ are boundaries of closed sets, and thus their complements in~$\paramspace$ are generic. As finite intersections of generic sets,~$U$ and~$U'$ are themselves generic. Theorem~\ref{theorem_PH_differentiable_global} allows us to conclude.
%
\end{proof}

\subsection{Rips filtrations of clouds of ellipsoids}
\label{section_discrete_smoothness_subsection_examples_ex3}
As pointed out by \cite{breiding2018learning}, in some cases, growing isotropic balls around the points of~$P=(p_1,...,p_n)\in \mathbb{R}^{nd}$ may result in a loss of geometric information. It is then advised to grow rather ellipsoids with distinct covariance matrices around each point, to account for the local anistropy of the problem. Formally, the \textit{Ellipsoid-Rips} filtration of $P$ with respect to the vector of covariance matrices $A=(A_1,...,A_n)\in (S_{d,+}(\mathbb{R}))^n$ is a filtration of the total complex $K:=2^{\{1,...,n\}}\setminus \{\emptyset\}$ with $n:=\#P$ vertices, in which the time of appearance of a simplex $\simplex{}\subseteq \{1,...,n\}$ is given by:
\[ \max_{i,j\in \sigma} r_{i,j}(A) \quad \mathrm{where} \quad r_{i,j}(A):=\left\|\frac{p_i-p_j}{\frac{1}{2}\left(\sqrt{q_i\left(\frac{p_i-p_j}{\|p_i-p_j\|_2}\right)}+\sqrt{q_j\left(\frac{p_j-p_i}{\|p_i-p_j\|_2}\right)}\right)}\right\|_2, \]
where the $q_i: x\in \R^d \mapsto \langle A_ix,x \rangle $ are the quadrics determined by the positive definite matrices $A_i$.\footnote{The quantity $r_{i,j}(A)$ serves as a proxy for the intersection of the two ellipsoids with covariance matrices $A_i$ and $A_j$ centered at $p_i$ and $p_j$ suggested by \cite{breiding2018learning}, as the problem of computing intersections of quadrics is in general NP-hard.} Here we see the space $(S_{d,+}(\mathbb{R}))^n$ as our parameter space $\manifoldm$, whose smooth structure is inherited from that of $\left(\R^{\frac{d(d+1)}{2}}\right)^n$, and we consider the parametrization:
\[ \parametrization(A)(\simplex{}):=\max_{i,j\in \sigma} r_{i,j}(A). \]
We are then interested in the differentiability of the barcode valued map $B_p=\Dgm_p\circ F$. Inspired by the case of isotropic Rips filtrations, we require that the covariance matrices in~$A$ lie in general position as defined hereafter:
\begin{definition}
\normalfont
\label{general_position_ellipsoid}
The pair $(A,P)$ is in \textit{general position} if the two following conditions hold:
\begin{itemize}
    \item all points in $P$ are distinct, i.e, $p_i\neq p_j$ whenever $1\leqslant i\neq j \leqslant n$~;
    \item all pairwise "ellipsoidal" distances are distinct, i.e, $r_{i,j}(A)\neq r_{k,l}(A)$ whenever $\{i,j\}\neq \{k,l\}\subseteq \{1,...,n\}$.  
\end{itemize}
\end{definition}
\begin{proposition}
\label{genericity_general_position_ellipsoid}
Assume the points of $P$ to be pairwise distinct. Then, the set of vectors of covariance matrices~$A$ such that $(A,P)$ is in general position is generic in~$S_{d,+}(\mathbb{R}^d)^n$.
\end{proposition}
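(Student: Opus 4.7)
The plan is to show that the set $\mathcal{G}$ of vectors $A\in S_{d,+}(\mathbb{R})^n$ satisfying the second condition of Definition~\ref{general_position_ellipsoid} is generic; the first condition holds by the standing hypothesis on $P$. For each pair of distinct pairs $\{i,j\}\neq\{k,l\}$ in $\{1,\dots,n\}$, introduce
\[
\phi_{ijkl}:A\in S_{d,+}(\mathbb{R})^n\longmapsto r_{i,j}(A)-r_{k,l}(A)\in\mathbb{R}.
\]
Then $\mathcal{G}=\bigcap_{\{i,j\}\neq\{k,l\}} \phi_{ijkl}^{-1}(\mathbb{R}\setminus\{0\})$ is a finite intersection of preimages of an open set under continuous maps, hence is open. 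The remaining task is to establish density of each of these preimages.

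Next, I would observe that each $\phi_{ijkl}$ is real-analytic on the open connected cone $S_{d,+}(\mathbb{R})^n$. Indeed, since $p_i\neq p_j$, the unit vector $u_{ij}:=(p_i-p_j)/\|p_i-p_j\|_2$ is well-defined, and $q_i(u_{ij})=\langle A_iu_{ij},u_{ij}\rangle$ depends linearly on $A_i$ and is strictly positive whenever $A_i$ is positive definite; hence $\sqrt{q_i(u_{ij})}$ is real-analytic in $A$, and so is
\[
r_{i,j}(A)=\frac{2\,\|p_i-p_j\|_2}{\sqrt{q_i(u_{ij})}+\sqrt{q_j(u_{ji})}}.
\]
By the identity principle for real-analytic functions on a connected open set, $\phi_{ijkl}^{-1}(0)$ will have empty interior, hence $\phi_{ijkl}^{-1}(\mathbb{R}\setminus\{0\})$ will be open and dense, as soon as we exhibit a single point of $S_{d,+}(\mathbb{R})^n$ where $\phi_{ijkl}$ does not vanish.

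To produce such a point, I would restrict to isotropic matrices $A_i=t_i\,\mathrm{Id}$ with $t_i>0$, under which $r_{i,j}(A)$ simplifies to $2\|p_i-p_j\|_2/(\sqrt{t_i}+\sqrt{t_j})$. A short case analysis on the overlap $|\{i,j\}\cap\{k,l\}|\in\{0,1\}$ then shows that the $t_m$'s can be tuned to break the equality $r_{i,j}(A)=r_{k,l}(A)$: if the pairs are disjoint, varying $t_i$ while keeping $t_k,t_l$ fixed changes only $r_{i,j}$; if they share one index, say $i=k$, one fixes $t_i$ and varies $t_j,t_l$ so that $\sqrt{t_j}$ and $\sqrt{t_l}$ achieve the required mismatch between the two denominators (using $\|p_i-p_j\|_2,\|p_i-p_l\|_2>0$). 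This yields the non-vanishing of $\phi_{ijkl}$, completing the argument by the preceding paragraph, and finally $\mathcal{G}$ is generic as a finite intersection of open dense sets.

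The main obstacle is the index-sharing case in the non-vanishing step: both $r_{i,j}$ and $r_{i,l}$ depend on $t_i$ simultaneously, so care is needed to verify that $\phi_{ijkl}$ is nonzero at some point along the chosen two-parameter family, rather than identically zero on it by a coincidence relating $\|p_i-p_j\|_2$ and $\|p_i-p_l\|_2$. However, this is elementary once one views the restriction as an explicit rational-in-$\sqrt{t_\cdot}$ expression.
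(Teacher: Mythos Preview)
Your proof is correct but takes a genuinely different route from the paper's. The paper computes an explicit partial gradient of the ratio $f_{ijkl}(A)=(\sqrt{q_i(x)}+\sqrt{q_j(x)})/(\sqrt{q_k(y)}+\sqrt{q_l(y)})$ with respect to $A_i$, shows it is nowhere zero on $S_{d,+}(\mathbb{R})^n$, and then invokes the constant rank theorem to conclude that each $O_{ijkl}=\{r_{i,j}=r_{k,l}\}$ is a smooth hypersurface, hence has empty interior. By contrast, you argue via real-analyticity: since $\phi_{ijkl}$ is real-analytic on the connected open cone $S_{d,+}(\mathbb{R})^n$, the identity principle reduces the density claim to exhibiting a single non-vanishing point, which you locate among isotropic tuples $A_m=t_m\mathrm{Id}$. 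The paper's approach yields the stronger structural conclusion that the bad loci are codimension-one submanifolds, whereas your approach is softer and avoids any gradient computation; the case analysis you flag as the main obstacle is indeed elementary (when $a:=\|p_i-p_j\|_2$ and $b:=\|p_i-p_l\|_2$ differ, take all $t_m=1$; when $a=b$, take $t_j\neq t_l$).
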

\begin{proof}
First, we claim that the sets $O_{ijkl}:=\{A \in S_{d,+}(\mathbb{R}^d)^n \mid r_{i,j}(A)= r_{k,l}(A)\}$, for $\{i,j\}\neq \{k,l\}$, are level-sets of some smooth real valued functions on $S_{d,+}(\mathbb{R}^d)^n$ whose gradients are nowhere zero. To prove this fact, we introduce the quantities $C:=\frac{\|p_i-p_j\|_2}{\|p_k-p_l\|_2}$ and $(x,y):=(\frac{p_i-p_j}{\|p_i-p_j\|_2},\frac{p_k-p_l}{\|p_k-p_l\|_2})$. Then: 
\begin{align*}
  A=(A_1,...,A_n)\in O_{ijkl} & \Leftrightarrow r_{i,j}(A)= r_{k,l}(A) \\
 & \Leftrightarrow \frac{\sqrt{q_i(x)}+\sqrt{q_j(x)}}{\sqrt{q_k(y)}+\sqrt{q_l(y)}} = C  \\
& \Leftrightarrow \frac{\sqrt{<A_i x,x>}+\sqrt{<A_j x,x>}}{\sqrt{<A_ky,y>}+\sqrt{<A_l y,y>}} = C.
\end{align*}
Note that $x,y$ are non zero because points in~$P$ are distinct. Therefore, the map $f_{ijkl}:=A \in S_{d,+}(\mathbb{R})^d  \mapsto \frac{\sqrt{<A_i x,x>}+\sqrt{<A_j x,x>}}{\sqrt{<A_ky,y>}+\sqrt{<A_l y,y>}} \in \mathbb{R}$ is well-defined and smooth on $S_{d,+}(\mathbb{R})^n$ as the two inner products in the denominator are always strictly positive. We want to compute $\nabla f_{ijkl}=(\nabla_{A_1} f_{ijkl}, ...,  \nabla_{A_n} f_{ijkl})$ where $\nabla_{A_t} f_{ijkl}$ is the gradient of $f_{ijkl}$ with respect to the $t$-th component of $A$. For $t=i$:
\[ \nabla_{A_i} f_{ijkl}=\frac{1}{\sqrt{<A_ky,y>}+\sqrt{<A_l y,y>}} \times \frac{1}{2 \sqrt{<A_i x,x>}} \times \nabla_{A_i} <A_i x, x>. \]
The first  two factors are strictly positive scalars for any $A\in S_{d,+}(\mathbb{R})^d$. The last factor is the gradient of a non-zero linear map, so it is non-zero. As a consequence, the gradient $\nabla_{A} f_{ijkl}$ is nowhere zero, which proves our claim.

Then, by the constant rank theorem, each $O_{ijkl}$ is a smooth sub-manifold of $S_{d,+}(\mathbb{R}^d)^n$ of dimension strictly lower than that of $S_{d,+}(\mathbb{R}^d)^n$. Taking their (finite) union allows us to conclude. 
\end{proof}
From this point, the same chain of arguments as in the isotropic case allows us to show that the parametrization $\parametrization{}$ is $C^\infty$ at vectors of covariance matrices~$A$ in general position, and to express the differential of $B_p$ at~$A$. Assume the points of~$P$ to be pairwise distinct, and denote by~$\GenPosA\subseteq S_{d,+}(\mathbb{R}^d)^n$ the subspace of covariance matrices~$A$ such that $(A,P)$ is in general position.
\begin{proposition}
\label{proposition_ellipsoid_nice_filtration}
The parametrization $F:S_{d,+}(\mathbb{R}^d)^n \rightarrow \R^K$ is $C^\infty$ over~$\GenPosA$. %
%
%
Specifically, given $A\in \GenPosA$, letting $\{\bar v(\simplex), \bar w(\simplex)\}= \argmax_{i,j\in \sigma} r_{i,j}(A)$ for every $\sigma\in K$, there is an open neighborhood~$U$ of~$A$ such that $F(A')(\simplex)=r_{\bar v(\simplex),\bar w(\simplex)}(A')$ for every $A'=(A'_1,...,A'_n)\in U$ and $\simplex\in K$, from which follows that~$F$ is~$C^\infty $ at~$A$.
\end{proposition}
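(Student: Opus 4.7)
The plan is to mimic closely the argument used for the isotropic Rips parametrization in Proposition~\ref{proposition_Rips_nice_filtration}, replacing the Euclidean distances $\|p_i-p_j\|_2$ by the ellipsoidal ones $r_{i,j}(A)$, and proceeding in three steps: first establish the continuity of $\parametrization$ and of each $r_{i,j}$; then exploit the general position hypothesis to show that on a small open neighborhood $U$ of $A$ the $\max$ defining $\parametrization(A')(\simplex)$ is achieved at a fixed pair of indices $(\bar v(\simplex),\bar w(\simplex))$; finally verify that each individual map $A' \mapsto r_{i,j}(A')$ is $C^\infty$ at $A$, so that $\parametrization$ itself is.

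For the first and second steps, I would start by observing that, since the points of $P$ are pairwise distinct, the unit vectors $u_{ij} := (p_i-p_j)/\|p_i-p_j\|_2$ are well defined for all $i\neq j$. For any positive definite $A'_i$, the scalars $q'_i(u_{ij}) = \langle A'_i u_{ij}, u_{ij}\rangle$ and $q'_j(-u_{ij})$ are strictly positive, and their square roots depend smoothly on $A'_i, A'_j$. Hence the denominator appearing in $r_{i,j}(A')$ is a strictly positive smooth function of $A'$ on all of $S_{d,+}(\mathbb{R})^n$, and in turn $r_{i,j}$ itself is a smooth, hence continuous, function of $A'$ on that whole space. Since $\parametrization(A')(\simplex)$ is the maximum of finitely many such continuous functions, $\parametrization$ is continuous.

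Next, the general position hypothesis for $(A,P)$ asserts that the finitely many values $r_{i,j}(A)$, for pairs $\{i,j\}\subseteq \{1,\dots,n\}$, are pairwise distinct. By continuity of each $r_{i,j}$, this strict ordering is preserved on some open neighborhood $U$ of $A$ in $S_{d,+}(\mathbb{R})^n$. Consequently, for every $\simplex \in K$, the $\argmax_{\{i,j\}\subseteq \simplex} r_{i,j}(A')$ is a singleton that is constant over $A' \in U$, and we may denote it by $\{\bar v(\simplex), \bar w(\simplex)\}$. This yields the announced local identity $\parametrization(A')(\simplex) = r_{\bar v(\simplex), \bar w(\simplex)}(A')$ for all $A' \in U$.

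The last step is immediate: by the smoothness statement established above, $A' \mapsto r_{\bar v(\simplex), \bar w(\simplex)}(A')$ is $C^\infty$ at $A$ for each $\simplex \in K$, so $\parametrization$ is $C^\infty$ at $A$. The only subtle point in the whole argument, i.e.\ the sole obstacle to a direct transposition of the isotropic Rips proof, is verifying that the denominator of $r_{i,j}$ never vanishes; this however is not hard, as it reduces to positive definiteness of the $A_i$ combined with the nonvanishing of $u_{ij}$ guaranteed by the assumption that the points of $P$ are pairwise distinct.
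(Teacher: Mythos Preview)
Your proof is correct and follows essentially the same approach as the paper's own proof: both use the pairwise distinctness of the points of $P$ to ensure each $r_{i,j}$ is smooth, then invoke the general position hypothesis to freeze the $\argmax$ on a neighborhood of $A$, reducing $\parametrization(\cdot)(\simplex)$ locally to a single smooth function $r_{\bar v(\simplex),\bar w(\simplex)}$. If anything, you spell out the denominator-nonvanishing argument more explicitly than the paper does.
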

\begin{proof}
  Let $A\in  \GenPosA$. Then, the maps $r_{i,j}$ are $C^\infty$ because the points of~$P$ are pairwise distinct, and furthermore  the quantities~$r_{i,j}(A)$, for~$i\neq j$ ranging in~$\{1,\cdots,n\}$, are strictly ordered. By continuity, this order remains the same over an open neighborhood~$U$ of~$A$ in $S_{d,+}(\mathbb{R}^d)^n$. Therefore, for every~$A'\in U$, for all~$\simplex\in K$, we have $F(A')(\simplex)=r_{\bar v(\simplex),\bar w(\simplex)}(A')$. This implies that~$F$ is~$C^\infty$ at~$A$.
\end{proof}
Defining $\bar v, \bar w$ as in Proposition \ref{proposition_ellipsoid_nice_filtration}, and combining this result with Proposition \ref{proposition_derivatives_barcode_simplicial_complex}, we deduce the following formula for the differential of~$B_p$, which only rely on derivatives of the maps~$r_{i,j}$:
\begin{corollary}
\label{cor_ellipsoid_barcode_differentiability}
The barcode valued map~$B_p: A \in S_{d,+}(\mathbb{R}^d)^n \mapsto \Dgm_p(\parametrization(A))\in Bar$ is $\infty$-differentiable over $\GenPosA$. Moreover, at $A\in \GenPosA$, for any barcode template $(P_p,U_p)$ of $\parametrization(A)$ and any choice of ordering $(\simplex_1, \simplex'_1), \cdots, (\simplex_m, \simplex'_m)$, $\tau_1, \cdots, \tau_n$ of $(P_p,U_p)$, the map~$\tilde B_p$ defined by:

\[
   A'=(A'_1,...,A'_n) \longmapsto  \left[ \left(r_{\bar v (\simplex{}_i), \bar w (\simplex{}_i)}(A'), r_{\bar v (\simplex{}'_i), \bar w (\simplex{}'_i)}(A')\right)_{i=1}^m, \left( r_{\bar v (\tau{}_j), \bar w (\tau{}_j)}(A') \right)_{j=1}^n\right]
    \]
    is a local $C^\infty$ lift of~$B_p$ around~$P$, whose differential provides a closed formula for~$d_{A, \tilde B_p} B_p$.
\end{corollary}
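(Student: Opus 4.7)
The plan is to mirror closely the argument used for Corollary~\ref{cor_rips_barcode_differentiability} in the isotropic Rips case, substituting the ellipsoidal quantities $r_{i,j}(A)$ for the Euclidean distances and invoking the ellipsoid-specific versions of the preliminary results. The key inputs will be Proposition~\ref{proposition_ellipsoid_nice_filtration} (smoothness of $F$ on $\GenPosA$), Proposition~\ref{genericity_general_position_ellipsoid} (openness/genericity of $\GenPosA$), together with Theorem~\ref{theorem_PH_differentiable_local} and Proposition~\ref{proposition_derivatives_barcode_simplicial_complex} to derive the lift and its differential.

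First, I would fix $A \in \GenPosA$. By Proposition~\ref{genericity_general_position_ellipsoid}, $\GenPosA$ is open in $S_{d,+}(\R)^n$, hence there is an open neighborhood of $A$ contained in $\GenPosA$. On this neighborhood, Proposition~\ref{proposition_ellipsoid_nice_filtration} says that $F$ is $C^\infty$ and, moreover, that the $\argmax$ indices $\{\bar v(\simplex), \bar w(\simplex)\}$ can be chosen consistently on an open neighborhood $U$ of $A$. This immediately gives that the strict order on the pairwise quantities $r_{i,j}(A')$, and hence the pre-order induced by $F(A')$ on the simplices of $K$, is locally constant over $U$. Applying Theorem~\ref{theorem_PH_differentiable_local} yields that $B_p$ is $\infty$-differentiable at $A$.

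Next, for the closed formula, I pick a barcode template $(P_p, U_p)$ of $F(A)$ via Proposition~\ref{proposition_existence_pairing}. By Proposition~\ref{proposition_pairing_same_equivalence_class}, since all $F(A')$ for $A' \in U$ lie in the same ordering-equivalence class, $(P_p, U_p)$ remains a barcode template of $F(A')$ throughout $U$. Fixing an ordering $(\simplex_i, \simplex'_i)_{i=1}^m$, $(\tau_j)_{j=1}^n$ of the template, Proposition~\ref{proposition_derivatives_barcode_simplicial_complex} then furnishes the local $C^\infty$ lift
\[
\tilde B_p(A') = \left[\left(F(A')(\simplex_i), F(A')(\simplex'_i)\right)_{i=1}^m, \left(F(A')(\tau_j)\right)_{j=1}^n\right]
\]
and identifies its differential as the differential of $B_p$. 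Substituting $F(A')(\simplex) = r_{\bar v(\simplex), \bar w(\simplex)}(A')$ from Proposition~\ref{proposition_ellipsoid_nice_filtration} gives exactly the claimed expression for $\tilde B_p$.

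There is no genuine obstacle here: all the heavy lifting has been done by Propositions~\ref{proposition_ellipsoid_nice_filtration} and~\ref{genericity_general_position_ellipsoid}, which handle the analytical subtleties of the ellipsoidal distances $r_{i,j}$ (well-definedness thanks to $p_i \neq p_j$, smoothness away from the degenerate locus, and genericity of $\GenPosA$). The remaining step is purely formal: combining local constancy of the pre-order with the general chain of results of Section~\ref{section_discrete_smoothness_subsection_smoothness_theorem}. The closed expression for $d_{A, \tilde B_p} B_p$ follows by the chain rule applied coordinatewise to the smooth maps $A' \mapsto r_{\bar v(\simplex), \bar w(\simplex)}(A')$, the explicit gradient computation of which is a routine (if slightly cumbersome) calculation of the type already carried out in the proof of Proposition~\ref{genericity_general_position_ellipsoid}, and would be the only place requiring a bit of bookkeeping if one wished to write the formula out entirely in terms of the entries of the matrices $A_i$.
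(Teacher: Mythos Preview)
Your proposal is correct and follows essentially the same route as the paper's proof: use Proposition~\ref{genericity_general_position_ellipsoid} for openness of $\GenPosA$, Proposition~\ref{proposition_ellipsoid_nice_filtration} for smoothness of $F$ and the local identity $F(A')(\simplex)=r_{\bar v(\simplex),\bar w(\simplex)}(A')$, deduce local constancy of the simplex pre-order, apply Theorem~\ref{theorem_PH_differentiable_local}, and read off the lift via Proposition~\ref{proposition_derivatives_barcode_simplicial_complex}. If anything, you spell out the intermediate invocations of Propositions~\ref{proposition_existence_pairing} and~\ref{proposition_pairing_same_equivalence_class} more explicitly than the paper does.
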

This result implies in particular  that $B_p$ is generically $\infty$-differentiable, since by Proposition~\ref{genericity_general_position_ellipsoid} the set of vectors of covariance matrices in general position is generic in $S_{d,+}(\mathbb{R}^d)^n$ (provided the points of~$P$ are pairwise distinct).
\begin{proof}
By Proposition \ref{proposition_ellipsoid_nice_filtration}, $F$ is $C^\infty$ in~$\GenPosA$, which is open by Proposition~\ref{genericity_general_position_ellipsoid}. Given~$A\in \GenPosA$, the quantities $r_{i,j}(A)$, for~$i\neq j$ ranging in $\{1, \cdots, n\}$, are strictly ordered, and this order remains the same over an open neighborhood~$U$ of~$A$ in $S_{d,+}(\mathbb{R}^d)^n$ by continuity. By Proposition \ref{proposition_ellipsoid_nice_filtration} again, we have $F(A')(\simplex{})=r_{\bar v (\simplex{}), \bar w (\simplex{})}(A')$ for every $A'=(A'_1,...,A'_n)\in U$ and $\simplex\in K$. Therefore, the pre-order induced by $F$ on the simplices of $K$ is constant over $U$. Consequently,~$B_p$ is $\infty$-differentiable at~$A$ by Theorem \ref{theorem_PH_differentiable_local}. The rest of the statement is an immediate consequence of Proposition~\ref{proposition_derivatives_barcode_simplicial_complex}.
\end{proof}

\begin{remark}
Corollaries~\ref{cor_rips_barcode_differentiability} and~\ref{cor_ellipsoid_barcode_differentiability} can be combined together to generically differentiate the barcode valued map~$B_p$ with respect to both the point positions and the covariance matrices. The corresponding parameter space is~$\R^{nd}\times S_{d,+}(\mathbb{R}^d)^n$. 
\end{remark}

\subsection{Arbitrary filtrations of a simplicial complex}
\label{section_discrete_smoothness_subsection_examples_ex4}

In certain scenarios, the optimization takes place in the entire space of filter functions~$\Filt(K)$ on a fixed simplicial complex~$K$. For instance, in the context of topological simplification of a filter function $f_0$, as described by \cite{attali2009persistence,elz-tps-02}, one looks for a filter function $f\in \R^K$ which is $\e$-close to $f_0$ in supremum norm and whose diagram $\Dgm_p(f)$ equals $\Dgm_p(f_0)\setminus \Delta_{\epsilon}$, where $\Delta_{\epsilon}$ is the set of intervals of $\Dgm_p(f_0)$ that are $\e$-close to the diagonal. One way to formalize this question is as a soft-constrained optimization problem, whereby the bottleneck distance to the simplified barcode is to be minimized in tandem with the supremum-norm distance to the original function:
\[
\min_{f\in \Filt(K)} \db(\Dgm_p(f), \Dgm_p(f_0)\setminus \Delta_\epsilon) + \lambda\, \|f-f_0\|_\infty,
\]
for some fixed mixing parameter~$\lambda$. This optimization problem can be tackled using a variational approach, for which it is more convenient to work in the manifold~$\R^K$ containing~$\Filt(K)$. However, in order to avoid leaving $\Filt(K)$, we consider the parametrization of~$\R^K$ given by the indicator function of~$\Filt(K)$:
\[ \begin{array}{rccl}
  \parametrization := \mathds{1}_{\Filt(K)}: & \R^K & \to & \R^K \\[0.5em]
  & f & \mapsto & \left\{\begin{array}{l}
  f\ \mathrm{if}\ f\in \Filt(K)\\[0.5em]
  0\ \mathrm{otherwise,}
  \end{array}\right.
  \end{array} \]
which is smooth generically. The optimisation becomes then: 
\begin{equation}
\label{equation_simplification}
\min_{f\in \R^K} \db(\Dgm_p(F(f)), \Dgm_p(f_0)\setminus \Delta_\epsilon) + \lambda\, \|F(f)-f_0\|_\infty.
\end{equation}
Implementing a variational approach such as gradient descent requires both terms in~\eqref{equation_simplification} to be differentiable. The second term is generically differentiable, as the parametrization~$F$ and the norm~$\|\cdot\|_\infty$ are. The first term is the composition
%
%
\begin{equation}\label{eq:decomp_simplification}
\xymatrix{ f\in\R^K \ar[r] & \Dgm_p(F(f))\in Bar \ar[r] & \db(\Dgm_p(F(f)), \Dgm_p(f_0)\setminus \Delta_\epsilon) \in \bar\R,
} 
\end{equation}
which by the chain rule (Proposition~\ref{proposition_chain_rule_vectorization_barcode}) is differentiable as long as both arrows are. Since $\parametrization{}$ is generically differentiable, so is the first arrow by Theorem~\ref{theorem_PH_differentiable_global}.  The second arrow is the bottleneck distance to a fixed diagram and therefore also generically differentiable, as will be argued in Section~\ref{section_differentiability_vectorization}. There, we also view Eq.~\eqref{equation_simplification} as an instance of semi-algebraic loss function, which can be minimised via Stochastic Gradient Descent (SGD).
%
%
\section{The case of barcode valued maps derived from real functions on a manifold}
\label{section_continuous_smoothness_theorem}

In this section we consider barcode valued maps that factor through the space~$\R^\manifoldx$ of real functions on a fixed smooth compact $d$-manifold $\manifoldx$ without boundary. Since we seek statements about the differentiability of~$B$,  we restrict the focus to maps that factor through $C^\infty(\manifoldx,\R)$ equipped with the standard Whitney $C^\infty$ topology:\footnote{This topology coincides with all usual topologies on $C^\infty(\manifoldx,\R)$ because $\manifoldx$ is compact.}
\[ B:\ \xymatrix{\manifoldm{} \ar^-{F}[r] & C^\infty(\manifoldx,\R) \ar^-{\Dgm}[r] & Bar^{d+1}}. \]
Here, $\Dgm$ is the map that takes a function~$f\in
C^\infty(\manifoldx,\R)$ to the vector of its barcodes
$(\Dgm_p(f))_{p=0}^{d}$. It is well-defined on
$C^\infty(\manifoldx,\R)$, as continuous functions on triangulable
spaces have well-defined persistence
diagrams \citep{chazal2016structure}. However, as in the previous sections, we want to work only with  barcodes that have finitely many off-diagonal points, therefore we further assume
that $F$ takes its values in the subset $\text{Tame}(\manifoldx)$ of
tame $C^\infty$ functions---note that $\text{Tame}(\manifoldx)$ contains the generic subset of Morse functions
on~$\manifoldx$ \citep{milnor2016morse}. Hence the factorization:
\[ B:\ \xymatrix{\manifoldm{} \ar^-{F}[r] & \text{Tame}(\manifoldx) \ar^-{\Dgm}[r] & Bar^{d+1}}. \]
As before, we call~$\parametrization$ the {\em parametrization} associated to~$B$, and $\paramspace$ the {\em parameter space}, whose elements are generally refered to as~$\param$. We also denote $F(\param{})$ by $f_{\param{}}$ to emphasize the fact that $F$ is valued in a function space. The map $\Dgm$ takes~$f_\param$ to the vector of its barcodes $(\Dgm_p(f_\param))_{p=0}^{d}$, so we can take advantage of the bijective correspondence between the critical points of~$f_\param$ (provided~$f_\param$ is Morse) and the interval endpoints in this vector (Proposition~\ref{endpoints_in_barcode_are_critical_values}). 

As in the case of a parametrization valued in the space of filter functions on a simplicial complex, we need~$\parametrization$ to be smooth in some reasonable sense to ensure that the composite~$B$ is $\infty$-differentiable. For this, we define a curve~$c:\R\rightarrow C^\infty(\manifoldx,\R)$ to be {\em differentiable} if the limit~$\lim_{h\rightarrow 0}\frac{c(t+h)-c(t)}{h}$ exists for all~$t\in \R$. The limit can be viewed as a curve, and when iterated limits exist, we say that~$c$ is a {\em smooth} curve. We then say that the parametrization~$\parametrization$ is {\em smooth}\footnote{This notion of smooth map is key in the theory developped in \cite{frolicher1988linear,kriegl1997convenient} for adapting the concepts of differential geometry to a wide category of infinite-dimensional vector spaces, including Banach and Fr\'echet spaces.} if it sends every smooth curve~$\param(t)$ in~$\manifoldm$ to a smooth curve~$\parametrization(\param(t))$ in~$C^\infty(\manifoldx,\R)$. By Corollary~11.9 in~\cite{michor1980manifolds}, if~$\parametrization$ is smooth, then its uncurrified version
\begin{equation}
\label{equation_exp_law_finite_manifold}
\tilde{\parametrization}:(\param,x) \in \manifoldm{}\times \manifoldx\longmapsto \parametrization(\param)(x) \in \mathbb{R}
\end{equation}
is a smooth map in the usual sense, to which we can therefore apply standard results from differential calculus, typically the implicit function theorem. This will be instrumental in the proof of our main result (Theorem~\ref{smoothness_barcode_manifold}).

\subsection{Smoothness of the barcode valued map}

\begin{theorem}[Continuous smoothness]
\label{smoothness_barcode_manifold}
Let $\parametrization{}: \paramspace{} \rightarrow C^\infty(\manifoldx, \R)$ be a parametrization of class $C_c^\infty$ valued in $\text{Tame}(\manifoldx)$. Let $\param{}\in \paramspace{}$ be a parameter such that $f_\param{}$ is Morse with critical values of multiplicity 1. Then, $B$ is $\infty$-differentiable at $\param{}$. 
\end{theorem}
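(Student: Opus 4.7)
The plan is to construct, for each homology degree $0\leqslant p\leqslant d$, a $C^\infty$ local lift $\tilde B_p: U \to \R^{2m_p}\times\R^{n_p}$ of $B_p = \Dgm_p\circ F$ on some open neighborhood $U$ of $\param$, and then conclude via Proposition~\ref{proposition_relating_differentiability_barcode_ordered_barcode}. By Proposition~\ref{endpoints_in_barcode_are_critical_values} applied to $f_\param$, the finite interval endpoints of the barcodes $\Dgm_0(f_\param),\dots,\Dgm_d(f_\param)$ are in bijection with the critical points of $f_\param$ via the map $x\mapsto f_\param(x)$. The strategy is therefore to smoothly track each critical point of $f_\param$ as $\param'$ varies in a neighborhood of $\param$, and then to use the Stability Theorem to verify that the resulting tracked critical values are in bijection with the interval endpoints in a combinatorially consistent way.

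For the tracking, by Eq.~\eqref{equation_exp_law_finite_manifold} the uncurrified map $\tilde F:(\param',x)\mapsto f_{\param'}(x)$ is smooth in the usual sense as a map $\paramspace\times\manifoldx\to\R$, so the implicit function theorem applies. At each critical point $x_i$ of $f_\param$, the fiberwise Hessian $\partial^2_x\tilde F(\param,x_i)$ is nondegenerate by the Morse hypothesis, so the equation $\partial_x \tilde F(\param',x)=0$ implicitly defines a smooth function $x_i(\cdot)$ on some open neighborhood $U_i$ of $\param$ with $x_i(\param)=x_i$. A standard compactness argument shows that, after shrinking to an open neighborhood $U\subseteq \bigcap_i U_i$ of $\param$, the critical set of $f_{\param'}$ is exactly $\{x_i(\param')\}_i$ for every $\param'\in U$: on the complement of small fixed disjoint neighborhoods $V_i\ni x_i$ in $\manifoldx$, the gradient norm $\|\partial_x \tilde F(\param,\cdot)\|$ attains a positive minimum by compactness of $\manifoldx$, which by continuity is preserved on a neighborhood of $\param$, forbidding any critical point to appear outside $\bigsqcup_i V_i$. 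The critical values $c_i(\param'):=\tilde F(\param',x_i(\param'))$ are then smooth in $\param'$ on $U$.

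Since the $c_i(\param)$ are pairwise distinct by assumption, I can shrink $U$ further so that the $c_i(\param')$ remain pairwise distinct for all $\param' \in U$, and so that Theorem~\ref{stability theorem} gives, for each $p$,
\[
\db(\Dgm_p(f_{\param'}),\Dgm_p(f_\param))\leqslant \|f_{\param'}-f_\param\|_\infty < \tfrac{1}{2}\min_{i\neq j}|c_i(\param)-c_j(\param)|.
\]
Under this gap condition, any bottleneck-optimal matching between $\Dgm_p(f_{\param'})$ and $\Dgm_p(f_\param)$ must pair the finite endpoint labeled by $c_i(\param')$ with the one labeled by $c_i(\param)$, so the combinatorial pairing of the $c_i(\param')$ into births, deaths and left endpoints of infinite intervals inside each $\Dgm_p$ is constant across $U$. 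Fixing one such pairing (extracted from a barcode template of $f_\param$) and ordering it arbitrarily yields smooth local lifts $\tilde B_p : U \to \R^{2m_p}\times \R^{n_p}$ whose coordinate functions are among the $c_i(\cdot)$; hence each $B_p$, and therefore $B$ itself, is $\infty$-differentiable at $\param$.

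The main obstacle is this last combinatorial step: showing that the assignment of critical values to specific interval endpoints in specific degrees is locally constant in $\param'$. The implicit function theorem readily produces smooth trackings of the critical points individually, but one must rule out the possibility that a bar of $\Dgm_p(f_{\param'})$ could discontinuously swap endpoints with another bar as $\param'$ varies. The Stability Theorem, combined with the distinctness of the critical values of $f_\param$, is precisely what forbids such combinatorial jumps.
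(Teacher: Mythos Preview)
Your proposal is correct and follows essentially the same approach as the paper's proof: implicit function theorem plus a compactness argument to smoothly track the critical points (the paper's assertions (i)--(ii)), then the Stability Theorem combined with the gap between critical values to pin down the combinatorial pairing (the paper's assertions (iii)--(iv)), yielding a smooth local coordinate system (assertion (v)). The only minor point you leave implicit is that $f_{\param'}$ remains Morse for $\param'$ near $\param$ (needed to invoke Proposition~\ref{endpoints_in_barcode_are_critical_values} at $\param'$), but this follows immediately from continuity of the Hessian and is handled in the paper's assertion~(ii).
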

\begin{proof}
Since $f_{\param{}}$ is a Morse function on a compact manifold, $\Crit(f_{\param{}})$ is a finite set whose cardinality we denote by $N_\param{}$. We will proceed by proving the following statements in sequence:
\begin{enumerate}
    \item[(i)] There exist an open neighborhood $U$ of $\param{}$ and smooth maps $\pi_l:U\rightarrow \manifoldx$ for $1\leqslant l \leqslant N_\param{}$ that track the critical points, that is:
      \begin{equation}
    \label{equation_tracking_critical_points}
    \forall \param{}' \in U,
    \Crit(f_{\param{}'})=\{\pi_{l}(\param{}')\}_{1\leqslant
      l \leqslant N_\param{}}
\end{equation}
    \item[(ii)] Shrinking $U$ if necessary, we further have that for any $\param{}'\in U$, $f_{\param{}'}$ is Morse with critical values of multiplicity~1.
    \item[(iii)] Let $\param{}'\in U$ and $(b,d)\in \Dgm_p(\manifoldx,f_{\param{}'})\setminus \Delta$ for some homology degree $p$. Then, either $d=+\infty$, in which case there exists a unique $1\leqslant l \leqslant N_\param{}$ such that $b=f_{\param{}'}(\pi_l(\param{}'))$, or $d<+\infty$, in which case there exist unique $1\leqslant l\neq l' \leqslant N_\param{}$ such that $(b,d)=(f_{\param{}'}(\pi_l(\param{}')),f_{\param{}'}(\pi_{l'}(\param{}')))$.
    \item[(iv)] For all $\param{}_1, \param{}_2 \in U$, $1\leqslant l\neq l'\leqslant N_\param{}$, and $0\leqslant p \leqslant d$, we have: \\ $(f_{\param{}_1}(\pi_l(\param{}_1)),f_{\param{}_1}(\pi_{l'}(\param{}_1)))\in \Dgm_p(f_{\param{}_1})$ (resp. $(f_{\param{}_1}(\pi_l({\param{}}_1)),+\infty)\in \Dgm_p(f_{\param{}_1})$) if and only if $(f_{\param{}_2}(\pi_l(\param{}_2)),f_{\param{}_2}(\pi_{l'}(\param{}_2)))\in \Dgm_p(f_{\param{}_2})$ (resp. $(f_{\param{}_2}(\pi_l(\param{}_2)),+\infty)\in \Dgm_p(f_{\param{}_2})$).
    \item[(v)] There exist smooth local coordinate systems for $B_p$ at $\param{}$ for every $0\leqslant p \leqslant d$. Therefore, by Proposition \ref{proposition_relating_differentiability_barcode_ordered_barcode}, the barcode valued map $B$ is $\infty$-differentiable at $\param{}$.
\end{enumerate}
 The proofs of assertions (i) and (ii) use differential geometry: we show that we can smoothly track the critical points of $f_{\param{}'}$ as $\param{}'$ varies in a neighborhood of $\param{}$. The proof of assertion (iii) simply exploits the fact that the endpoints in the barcodes of a Morse function are its critical values (Propostion \ref{endpoints_in_barcode_are_critical_values}). Assertion (iv) means that the critical points do not exchange their contributions to the persistence diagrams when the parameter is varying. This will be shown using standard tools in persistence theory. Assertion (v) is obtained by re-indexing the set $\{1,...,N_\param{}\}$ such that, through this re-indexation, the maps $\param{}'\mapsto f_{\param{}'}(\pi_l(\param{}'))$ provide local coordinate systems as defined in Definition \ref{definition_derivatives_barcode_valued_maps}. 

\paragraph*{Proof of assertion (i): \, }

The tangent bundle $T\manifoldx=\bigsqcup_{x\in \manifoldx}\{x\}\times T_x\manifoldx$ is a smooth manifold of dimension $2d$.
%
Let $x_1,...,x_{N_\param{}}$ be the critical points of $f_{\param{}}$. Locally, in an open neighborhood $\mathds V$ of these critical points, the tangent bundle is parallelizable, i.e. we have a diffeomorphism $T\mathds V \cong \mathds V \times \mathbb R^ d$ and the projection onto the second component provides a smooth map to $\mathbb R^d$. 
Consider the map:
\[ \partial \parametrization{}: (\param{}',x)\in \paramspace{}\times \mathds V \mapsto \nabla f_{\param{}'}(x) \in T_x\mathbb{V} \cong \mathbb{R}^d, \]
which is smooth due to the smoothness of~$\tilde{\parametrization{}}$, see Eq.~\eqref{equation_exp_law_finite_manifold}. 
Then, at the critical points we have $\partial \parametrization{}(\param{},x_l)=\nabla f_{\param{}}(x_l)=0$. Moreover, because $f_\param{}$ is Morse, $\nabla_x \partial \parametrization{}(\param{},x_l)= \nabla^2f_{\param{}}(x_l)$ is invertible, where $\nabla_x \partial \parametrization{}$ denotes the first derivative of $\partial \parametrization{}$ with respect to its second argument. We can then apply the implicit function theorem to $\partial \parametrization{}$: there exist an open neighborhood $U_l$ of $\param{}$, an open neighborhood $V_l$ of $x_l$ (contained in $\mathds V$) and a smooth diffeomorphism $\pi_l: U_l \rightarrow V_l$ such that 
\begin{equation}
\label{eq_implicit_function_theorem_critical_points}
    \forall (\param{}',x) \in U_l\times V_l, \, \partial \parametrization{}(\param{}',x)=0 \Longleftrightarrow x=\pi_l(\param{}').
\end{equation}
Let $U=\bigcap_{l=1}^{N_\param{}} U_l$.  After shrinking each $V_l$ so that it equals $\pi_l(U)$, we obtain that \eqref{eq_implicit_function_theorem_critical_points}~holds over~$U\times V_l$ for every $1\leqslant l\leqslant N_\param{}$. Now, by definition of $\partial \parametrization{}$ and the $(\Leftarrow)$ of \eqref{eq_implicit_function_theorem_critical_points}, we have
\[ \forall \param{}' \in U, \{\pi_l(\param{}')\}_{1\leqslant l\leqslant N_\param{}} \subseteq \Crit(f_{\param{}'}). \]
We now show the converse inclusion. From the $(\Rightarrow)$ in Equation \eqref{eq_implicit_function_theorem_critical_points}, it is sufficient to prove that no critical points of $f_{\param{}'}$ can be found in the compact set $W:=\manifoldx\setminus (\bigcup_{l=1}^{N_\param{}} V_l)$ when $\param{}'$ ranges over $U$. We equip~$\manifoldx$ with an arbitrary  Riemannian metric $g$, and we consider the smooth map:
\[ \partial G: (\param{}', \elementm{})\in U\times \manifoldx \longmapsto g(\nabla f_{\param{}'}(x), \nabla f_{\param{}'}(x))\in \mathbb{R}, \]
where $\nabla f_{\param{}'}(x)\in T_x \manifoldx$. In particular, $\partial G (\param{}',\elementm{})$ is zero if and only if $\elementm{}$ is a critical point of $f_{\param{}'}$.
As a result, $\partial G$ does not vanish on~$\{\param{}\} \times W$ since $W$  includes no critical point of~$f_{\param'}$. By the compactness of $W$ and the continuity of $\partial G$, there exists an open neighborhood $U'$ of $\param{}$ such that $\partial G_{|U'\times W}$ does not vanish either. Assertion~(i) follows after shrinking $U$ to $U\cap U'$. 

\paragraph*{Proof of assertion (ii): \,}
Let $U$ be as in assertion~(i). Since $f_\param{}$ is Morse, $\nabla_x \partial \parametrization{}(\param{},x_l)= \nabla^2f_{\param{}}(x_l)$ is invertible for each $l\in \{1,...,N_{\param{}}\}$. $\partial \parametrization{}$ is of class $C^1$ as it is of class $C^\infty$, so we get open neighborhoods $U_l'$ of $\param{}$ and $V'_l$ of $x_l$ such that $\nabla_x \partial \parametrization{}$ is invertible over $U'_l \times V'_l$. We shrink $U$ to $U \cap (\bigcap_{l=1}^{N_\param{}} U'_l)$ and each $V_l$ to $V_l \cap V'_l$, so that the critical points of $f_{\param{}'}$ are non-degenerate for $\param{}' \in U$. Shrinking $U$ further if necessary, a similar argument ensures that the critical values of $f_{\param{}'}$ have multiplicity 1 for all $\param{}'\in U$. This concludes the proof of assertion (ii).

\paragraph*{Proof of assertion (iii): \,}
Let $\param{}'\in U$. Let $(b,d)\in \Dgm_p(f_{\param{}'})\setminus \Delta$ for some homology degree $0\leqslant p\leqslant d$. We assume that $d<+\infty$. From assertion (ii), $f_{\param{}'}$ is Morse with critical values of multiplicity $1$. Therefore, by Proposition \ref{endpoints_in_barcode_are_critical_values}, $f_{\param{}'}$ induces a bijection between the multisets $\Crit(f_{\param{}'})$ and $E(f_{\param{}'})$. Meanwhile, assertion~(i) provides the equality $\Crit(f_{\param{}'})= \{\pi_l(\param{}')\}_{1\leqslant l \leqslant N_\param{}}$, so $f_{\param{}'}$ induces a bijection $\{\pi_l(\param{}')\}_{1\leqslant l \leqslant N_\param{}} \rightarrow E(f_{\param{}'})$. By taking pre-images of $b$ and $d$ which are in $E(f_{\param{}'})$, there exist some unique indices $1\leqslant l\neq l' \leqslant N_\param{}$ such that $(b,d)=(f_{\param{}'}(\pi_l(\param{}')),f_{\param{}'}(\pi_{l'}(\param{}')))$. The case $d=+\infty$ is proven the same way.

\paragraph*{Proof of assertion (iv): \,}
The maps $(\param{}_1,\param{}_2) \in U^2 \mapsto |f_{\param{}_1}(\pi_l(\param{}_1))- f_{\param{}_2}(\pi_{l'}(\param{}_2))|\in \mathbb{R}_+$, for varying $1\leqslant l\neq l' \leqslant N_\param{}$, are continuous. They are strictly positive at $(\param{},\param{})$ because $f_\param{}$ has critical values of multiplicity $1$, so $$\inf_{1 \leqslant l\neq l' \leqslant N_\param{} }{|f_{\param{}}(\pi_l(\param{}))-f_{\param{}}(\pi_{l'}(\param{}))|} > 0.$$ By continuity, shrinking~$U$ further if necessary, we have $$\inf_{1 \leqslant l\neq l' \leqslant N_\param{},\, (\param{}_1,\param{}_2) \in U^2 }{|f_{\param{}_1}(\pi_l(\param{}_1))-f_{\param{}_2}(\pi_{l'}(\param{}_2))|}>0.$$ Let $\e$ be a real number such that:
\begin{equation}
    \label{epsilon_continuous_smoothness_inter_critical}
    0<\e < \inf_{1 \leqslant l\neq l' \leqslant N_\param{},\, (\param{}_1,\param{}_2) \in U^2 }{|f_{\param{}_1}(\pi_l(\param{}_1))-f_{\param{}_2}(\pi_{l'}(\param{}_2))|}.
\end{equation}
By continuity of $\tilde{\parametrization{}}$ and compactness of $\manifoldx$, we can shrink $U$ further\footnote{This does not prevent our choice of $\e$ from satisfying~\eqref{epsilon_continuous_smoothness_inter_critical}, because shrinking $U$ increases the right-hand side of this equation.} so that $\|f_{\param{}_1}-f_{\param{}_2}\|_{\infty} \leqslant \frac{\e}{2}$ for any $\param{}_1,\param{}_2 \in U$.
  From the Stability Theorem~\ref{stability theorem} we then have:
  \begin{equation}
     \label{equation_applying_stability_theorem_continuous_smoothness}
     \forall \param{}_1,\param{}_2 \in U, \forall 0\leqslant p \leqslant d, \quad \db(\Dgm_p(f_{\param{}_1}),\Dgm_p(f_{\param{}_2}))\leqslant \frac{\e}{2}.
 \end{equation} 
 Let us fix two parameters $\param{}_1,\param{}_2 \in U$ and a homology degree~$p$. Let $1\leqslant l_1\neq l_1'\leqslant N_\param{}$ be such that $(f_{\param{}_1}(\pi_{l_1}(\param{}_1)),f_\param{}(\pi_{l_1'}(\param{}_1)))\in \Dgm_p(f_{\param{}_1})$. From Equation \eqref{equation_applying_stability_theorem_continuous_smoothness}, there exists a matching $\gamma: \Dgm_p(f_{\param{}_1}) \rightarrow \Dgm_p(f_{\param{}_2})$ with cost $c(\gamma)\leqslant \frac{\e}{2}$. In particular, if we denote $(b,d):=\gamma(f_{\param{}_1}(\pi_{l_1}(\param{}_1)),f_{\param{}_1}(\pi_{l'_1}(\param{}_1)))\in \R^2$, then
\begin{equation}
    \label{equation_matched_points_less_epsilon_distance}
    |f_{\param{}_1}(\pi_{l_1}(\param{}_1))- b|\leqslant \frac{\e}{2}  \quad \text{ and } \quad |f_{\param{}_1}(\pi_{l'_1}(\param{}_1))- d|\leqslant \frac{\e}{2}.
\end{equation}
Of course we cannot have $d=+\infty$. Also, we cannot have $(b,d)\in \Delta$, i.e $b=d$, because then the triangle inequality would imply that $|f_{\param{}_1}({\pi_{l_1}(\param{}_1)})-f_{\param{}_1}({\pi_{l_1'}(\param{}_1)})|\leqslant \frac{\e}{2}+\frac{\e}{2}= \e$, which contradicts \eqref{epsilon_continuous_smoothness_inter_critical}. Thus, $(b,d)$ is a bounded off-diagonal point of $\Dgm_p(f_{\param{}_2})$. By assertion~(iii), there exist indices $1\leqslant l_2 \neq l_2' \leqslant N_\param{}$ such that $b=f_{\param{}_2}(\pi_{l_2}(\param{}_2))$ and $d=f_{\param{}_2}(\pi_{l_2'}(\param{}_2))$. Equations \eqref{equation_matched_points_less_epsilon_distance} and \eqref{epsilon_continuous_smoothness_inter_critical} together force $l_2=l_1$ and $l_2'=l_1'$. Hence,  $(f_{\param{}_2}(\pi_{l_1}(\param{}_2)),f_{\param{}_2}(\pi_{l_1'}(\param{}_2)))=(b,d) \in \Dgm_p(f_{\param{}_2})$, which proves the result. The case of an index $1\leqslant l \leqslant N_\param{}$ such that $(f_{\param{}_1}(\pi_l(\param{}_1)),+\infty)\in \Dgm_p(f_{\param{}_1})$ is treated in the same way.

\paragraph*{Proof of assertion (v): \,}
For any homology degree $0\leqslant p\leqslant d$, by assertion~(iii), each bounded off-diagonal interval $(b,d)$ in $\Dgm_p(f_{\param{}})\setminus \Delta$ can be rewritten as $(f_{\param{}}(\pi_{l_{b,p}}(\param{})),f_{\param{}}(\pi_{{l}_{d,p}}(\param{})))$ for some indices $l_{b,p}\neq l_{d,p}$. Similarly, each interval $(v,+\infty)$ can be rewritten as $(f_{\param{}}(\pi_{l_{v,p}}(\param{})),+\infty)$ for some index $l_{v,p}$. By assertion (iv), for any parameter $\param{}'\in U$, $B_p(\param{}')$ equals
\[\big\{ (f_{\param{}'}(\pi_{l_{b,p}}(\param{}')),f_{\param{}'}(\pi_{{l}_{d,p}}(\param{}')))\big\}_{(b,d)\in \Dgm_p(f_{\param{}})\setminus \Delta} \cup \big\{ (f_{\param{}'}(\pi_{l_{v,p}}(\param{}')),+\infty)\big\}_{(v,+\infty)\in \Dgm_p(f_{\param{}})}  \cup \Delta^\infty. \]
This provides a smooth local coordinate system (see Definition \ref{definition_coordinate_system}) for~$B_p$ at~$\param{}$, therefore $B_p$~is $\infty$-differentiable at~$\param{}$ by Proposition \ref{proposition_relating_differentiability_barcode_ordered_barcode}. Since this is true for every $0\leqslant p \leqslant d$, $B$~itself is $\infty$-differentiable at~$\param$. 
\end{proof}

\begin{remark}[Multiplicity one]
\normalfont
\label{remark_multiplicity_one_necessary}
The upcoming Figure~\ref{fig:two_torus} shows how important the assumption that~$f_\param{}$ has critical values of multiplicity~1 is for the conclusion of Theorem~\ref{smoothness_barcode_manifold} to hold.   
Roughly speaking, the assumption implies that the critical points do not exchange their contributions to the persistence diagrams of~$f_\param$ under perturbations of~$\param$. We proved this fact using the Stability Theorem for persistence diagrams (see the proof of assertion~(iv) above), however it is also a consequence of the so-called {\em structural stability theorem} for dynamical systems \citep{palis2000structural}. This result implies that the gradient vector field induced by a Morse function $f_\theta$ with distinct critical values is \textit{structurally stable}, and as an immediate consequence, that the Morse-Smale complex of $f_\theta$ does not change as we smoothly perturb $f_\param{}$. The Morse-Smale complex allows us to recover the persistence module completely and, in turn, the barcode of $f_\param{}$. 
\end{remark}

\subsection{Discussion: generic differentiability}

Theorem \ref{smoothness_barcode_manifold} guarantees that $B$ is $\infty$-differentiable at parameters $\param{}$ that produce Morse functions with critical values of multiplicity 1. The set of such functions is a generic subspace of~$C^{\infty}(\manifoldx,\mathbb{R})$ \citep{golubitsky2012stable}. 
We can also argue that, under some extra conditions on the parametrization $\parametrization{}$, the set~$D(\paramspace{},\manifoldx)$ of parameters $\param{} \in \paramspace{}$ that produce Morse functions $f_\param$ with critical values of multiplicity~1 is generic in $\paramspace{}$:
\begin{proposition}[\cite{nicolaescu2011invitation}]
\label{sufficiently_large_implies_nice}
If~$\parametrization$ is smooth and {\em generically large}, i.e. for generic $x\in \manifoldx$ the map $\param{} \in \paramspace{} \mapsto df_\param{} (x) \in T_x\manifoldx^*$ is a submersion, then $D(\paramspace{},\manifoldx)$ is generic in $\paramspace{}$.
\end{proposition}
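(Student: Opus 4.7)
The plan is to prove that $D(\paramspace, \manifoldx)$ is both open and dense in $\paramspace$. Openness is standard and does not use the generic largeness hypothesis: since $\manifoldx$ is compact, the subset of $C^\infty(\manifoldx, \R)$ consisting of Morse functions with pairwise distinct critical values is open for the Whitney $C^\infty$-topology, and the (smooth, hence continuous) parametrization $\parametrization$ pulls this back to an open set in $\paramspace$. Density is where the generic largeness enters, and the plan is to apply parametric transversality twice: once to get the Morse property, and once to upgrade to distinct critical values.

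For the first application, consider the smooth evaluation
\begin{equation*}
\Phi:\paramspace \times \manifoldx \longrightarrow T^*\manifoldx,\qquad (\param,x) \longmapsto (x, df_\param(x)),
\end{equation*}
which is smooth thanks to the exponential law already invoked in the proof of Theorem~\ref{smoothness_barcode_manifold}. By a standard reformulation, $f_\param$ is Morse exactly when $x\mapsto \Phi(\param,x)$ is transverse to the zero section $Z \subset T^*\manifoldx$ (this is fibrewise non-degeneracy of the Hessian at zeros of $df_\param$). The generic largeness hypothesis asserts that $\partial_\param\Phi(\param,x):T_\param\paramspace \to T_x^*\manifoldx$ is surjective for generic $x$, which in particular implies that $\Phi$ is transverse to $Z$ at every $(\param,x)$ lying in $\Phi^{-1}(Z)$. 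Thom's parametric transversality theorem then yields that the set of $\param \in \paramspace$ for which $f_\param$ is Morse is residual in $\paramspace$.

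For the second application, restrict to the Morse locus and consider
\begin{equation*}
\Psi:\paramspace \times (\manifoldx \times \manifoldx \setminus \Delta) \longrightarrow T^*\manifoldx \times T^*\manifoldx \times \R,
\end{equation*}
\begin{equation*}
\Psi(\param,x,y) = \bigl(df_\param(x),\, df_\param(y),\, f_\param(x) - f_\param(y)\bigr).
\end{equation*}
A Morse $f_\param$ has a repeated critical value precisely when $(x,y)$ with $x \neq y$ satisfies $\Psi(\param,x,y) \in W$, where $W := Z \times Z \times \{0\}$ is a submanifold of codimension $2d+1$ in the target. Since $\dim(\manifoldx \times \manifoldx \setminus \Delta) = 2d < 2d+1$, transversality of $\Psi_\param$ to $W$ forces the empty intersection $\Psi_\param^{-1}(W) = \emptyset$. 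Parametric transversality then delivers a residual set of $\param$ with no repeated critical values, and intersecting with the Morse locus established above gives density of $D(\paramspace, \manifoldx)$.

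The main obstacle is verifying that $\Psi$ itself is transverse to $W$ at every $(\param,x,y) \in \Psi^{-1}(W)$. This requires decoupling the three target constraints: varying $\param$ must be able to independently adjust $df_\param(x)$, $df_\param(y)$, and the scalar $f_\param(x) - f_\param(y)$. Single-point submersivity of generic largeness does not deliver joint independence at two distinct points automatically. The workaround is to exploit $x \neq y$ by localising in disjoint coordinate neighbourhoods of $x$ and $y$: after possibly enlarging the parametrization with bump-function-supported perturbations localised near one point and not the other, one decouples the $df_\param(x)$ and $df_\param(y)$ contributions, and then the freedom to shift values on one side independently of the other produces the missing $\R$ direction. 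Making this independence precise from only the hypothesis of generic largeness is the delicate step, and is where the proof follows Nicolaescu's detailed treatment.
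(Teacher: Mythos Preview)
The paper does not supply a proof of this proposition; it is quoted from \cite{nicolaescu2011invitation} and used as a black box. There is therefore nothing in the paper to compare your argument against.

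That said, your sketch has a genuine gap already at the first transversality step. You assert that generic largeness ``in particular implies that $\Phi$ is transverse to $Z$ at every $(\param,x)$ lying in $\Phi^{-1}(Z)$,'' but the hypothesis only gives surjectivity of $\partial_\param\Phi(\param,x)$ onto $T_x^*\manifoldx$ for $x$ in some open dense subset of $\manifoldx$. A point $(\param,x)\in\Phi^{-1}(Z)$ has $x$ critical for $f_\param$, and nothing prevents such $x$ from lying in the exceptional set where the submersion property fails; the implication you claim does not follow. Parametric transversality then cannot be invoked as stated. For the second step you correctly identify the obstacle, but your proposed remedy---``enlarging the parametrization with bump-function-supported perturbations''---alters $\parametrization$ and hence proves density for a different family, not the given one. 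Your final sentence concedes this by deferring to Nicolaescu's treatment, so in the end your proposal, like the paper itself, relies on the cited reference for the substantive argument.
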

There are important examples where this result applies, such as for instance:
\begin{example}[\cite{nicolaescu2011invitation}]
\normalfont
\label{canonical_parametrizations_smooth_manifold}
Assume $\manifoldx$ is embedded in $\mathbb{R}^d$ and translated so as not to contain the origin. Then, each of the following parametrizations~$\parametrization$ is smooth and generically large:
\begin{align*}
  v \in \mathbb{R}^d & \mapsto (x \in \manifoldx \mapsto \langle v,x \rangle \in \mathbb{R}) \\
  p\in \mathbb{R}^d & \mapsto( x\in \manifoldx \mapsto |x-p|^2 \in \mathbb{R}) \\
  A \in S_+(\mathbb{R}^d) &\mapsto(x \in \manifoldx \mapsto \frac{1}{2}\langle Ax,x\rangle\in \mathbb{R})
  \end{align*}
\end{example}

\subsection{A simple example}

Take the ground space~$\manifoldx$ to be the torus~$\mathbb{S}^1\times\mathbb{S}^1$ embedded in $\mathbb{R}^3$, the parameter space~$\paramspace{}$ to be the $2$-sphere~$\mathbb{S}^2$, and the parametrization~$\parametrization$ to be the family of height filtrations, i.e $\parametrization{}: \param\in \mathbb{S}^2\mapsto (x\in \manifoldx \mapsto \langle \param,x \rangle \in \mathbb{R})$. For a generic direction $\param\in \mathbb{S}^2$, the induced height function, which we denote by $h_\param$, will be Morse and no two critical points are in the same level set. In this case we can track the critical points smoothly as we vary~$\param$, and the barcodes~$\Dgm_p(h_\param)$ also evolve smoothly. An example of this situation is given in Figure \ref{fig:generic_height_torus}.

\begin{figure}[htb]
  \centering
  \includegraphics[width=0.8\textwidth]{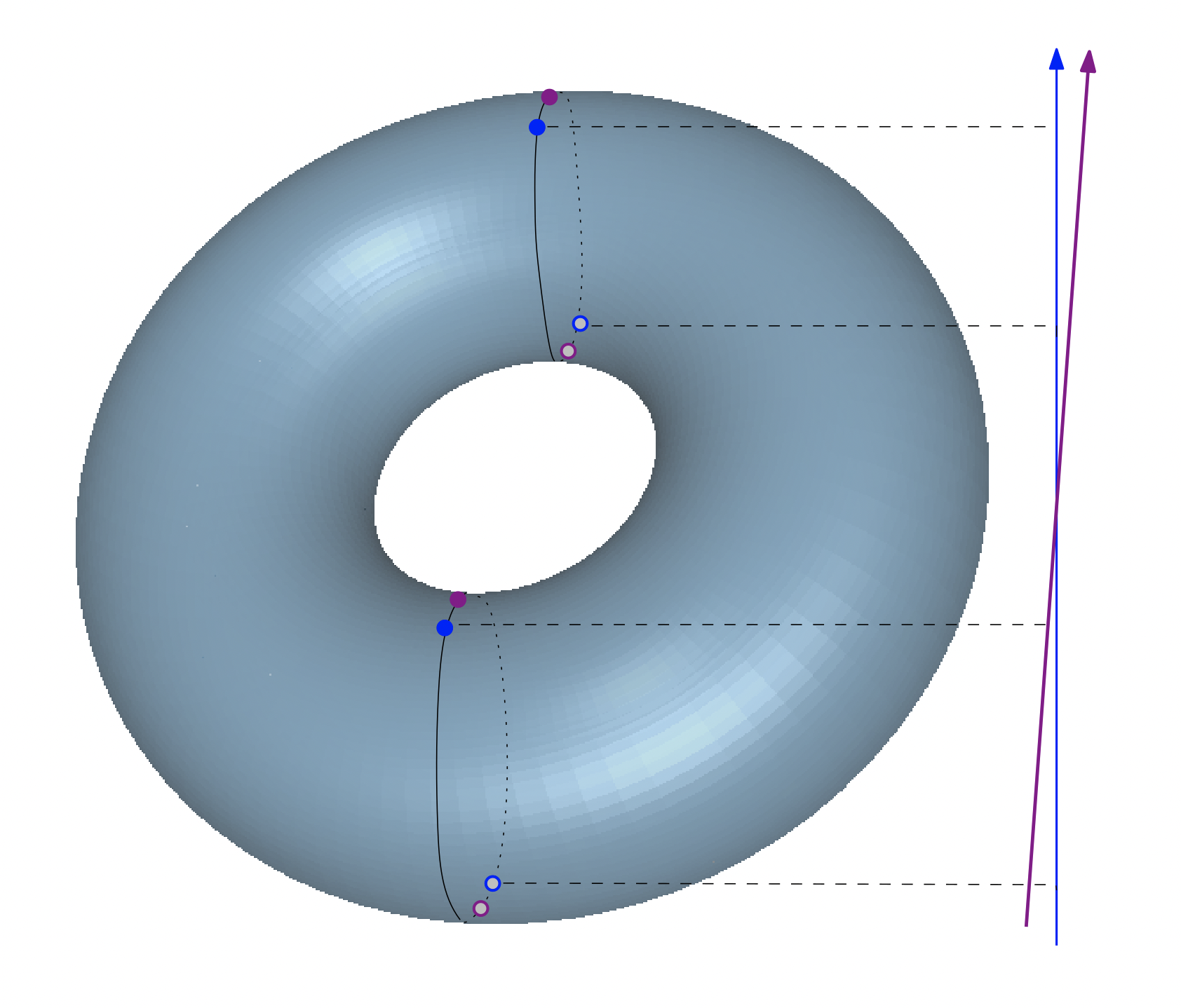}
  \caption{A torus filtered by a generic height function. The blue arrow indicates the direction~$\param$. By the correspondence of Proposition~\ref{endpoints_in_barcode_are_critical_values}, the 4 critical points (blue dots) correspond from bottom to top to an infinite bar in degree 0, an infinite bar in degree 1, another infinite bar in degree 1, and an infinite bar in degree 2 of the resulting barcode $\Dgm(h_\param)$. The implicit function theorem applied to these critical points allows us to track them smoothly when perturbing the height function (purple arrow). The correspondence to points in the barcode remains unchanged. }
  \label{fig:generic_height_torus}
\end{figure}

Even in this elementary situation, the singular parameters~$\param\in \mathbb{S}^2$ can exhibit pathological behaviors. There are two specific heights, on opposite sides of the sphere~$\mathbb{S}^2$, that produce \textit{Morse-Bott} functions. We show one of them in Figure~\ref{fig:MorseBottTorus}. At such a parameter~$\param$, the critical sets are codimension-$1$ submanifolds of~$\manifoldx$, and smooth perturbations of~$\param$ may result in discontinuous changes in the critical set.

\begin{figure}[htb]
  \centering
  \includegraphics[width=0.8 \textwidth]{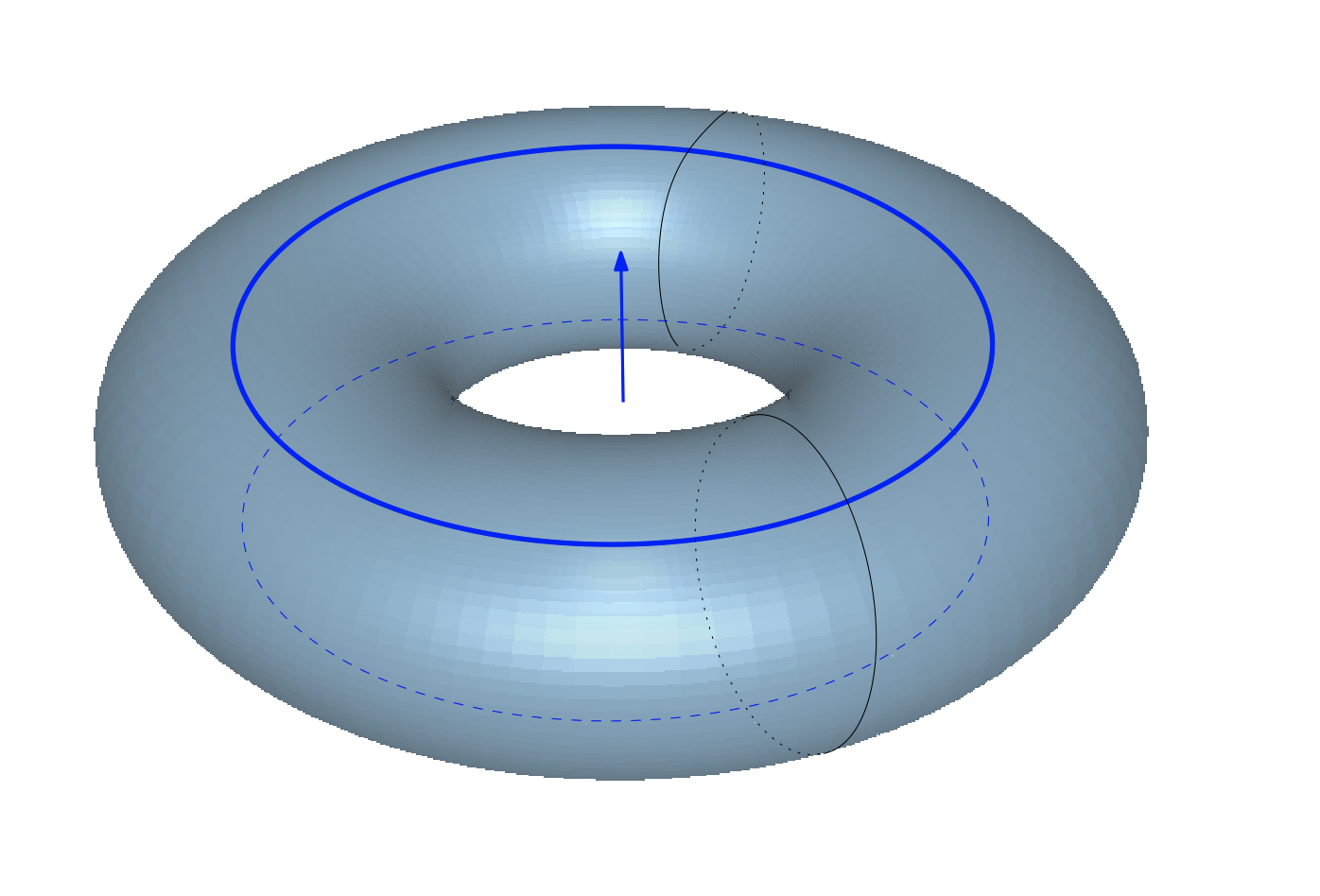}
  \caption{Horizontal torus filtered by the vertical height function~$h_\param$. The critical sets are the two blue circles, one of which corresponds to both a birth of a connected component and a loop, while the other corresponds to the births of a loop and a $2$-cycle. Observe that any slight perturbation of~$\param$ results in a valid Morse function with 4 critical points, however, the locations of these points do not vary smoothly, and not even continuously, at~$\param$. }
  \label{fig:MorseBottTorus}
\end{figure}

There are other directions~$\param$ at which the assumptions of Theorem~\ref{smoothness_barcode_manifold} are not met, yet the interval endpoints in the barcode can still be tracked smoothly. Such a case is shown in Figure~\ref{fig:two_critical_point_same_set}, where the height function~$h_\param$ is Morse but with a critical value of multiplicity 2. In this specific case, the implicit function theorem still applies to both critical points and provides a smooth local coordinate system for the barcode of~$h_\param$. 
\begin{figure}[htb]
  \centering
  \includegraphics[width=0.75 \textwidth]{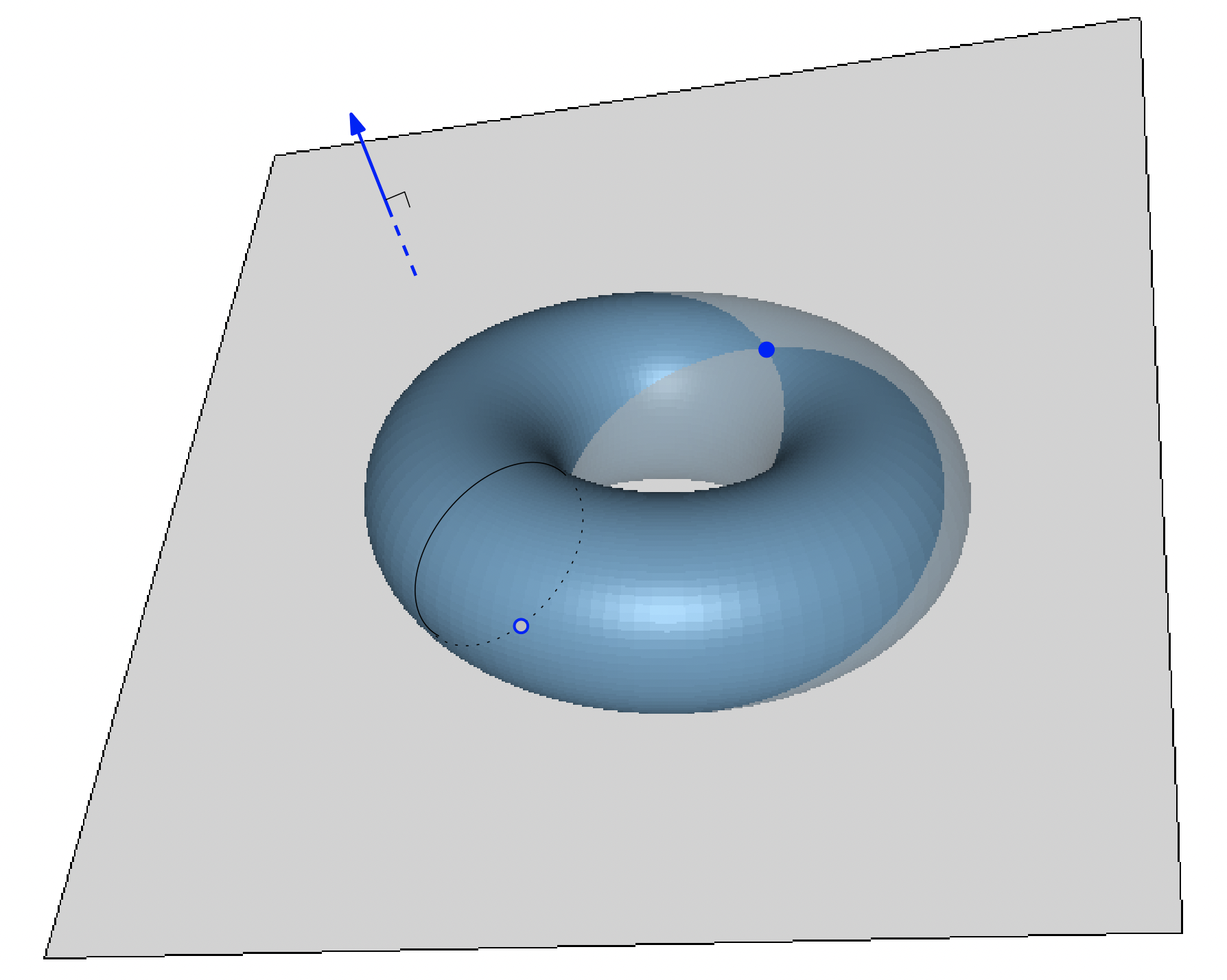}
  \caption{A height function~$h_\param$ (blue arrow) that is Morse with two critical points (blue dots) in the same level set (the hyperplane), producing two distinct loops in the persistence diagram. The critical points can still be tracked smoothly around~$\param$, and no change in the pairing occurs in the barcode~$\Dgm(h_\param)$. }
  \label{fig:two_critical_point_same_set}
\end{figure}
However, in the general case, such a Morse function with two critical points sitting in the same level-set can induce a change in the correspondence with interval endpoints in the barcode, potentially resulting in non-smooth behavior of the barcode valued map $B$. An example is given in Figure~\ref{fig:two_torus}.

\begin{figure}[htb]
\centering
\begin{subfigure}{0.92\textwidth}
  \centering
  \includegraphics[width=0.92\textwidth]{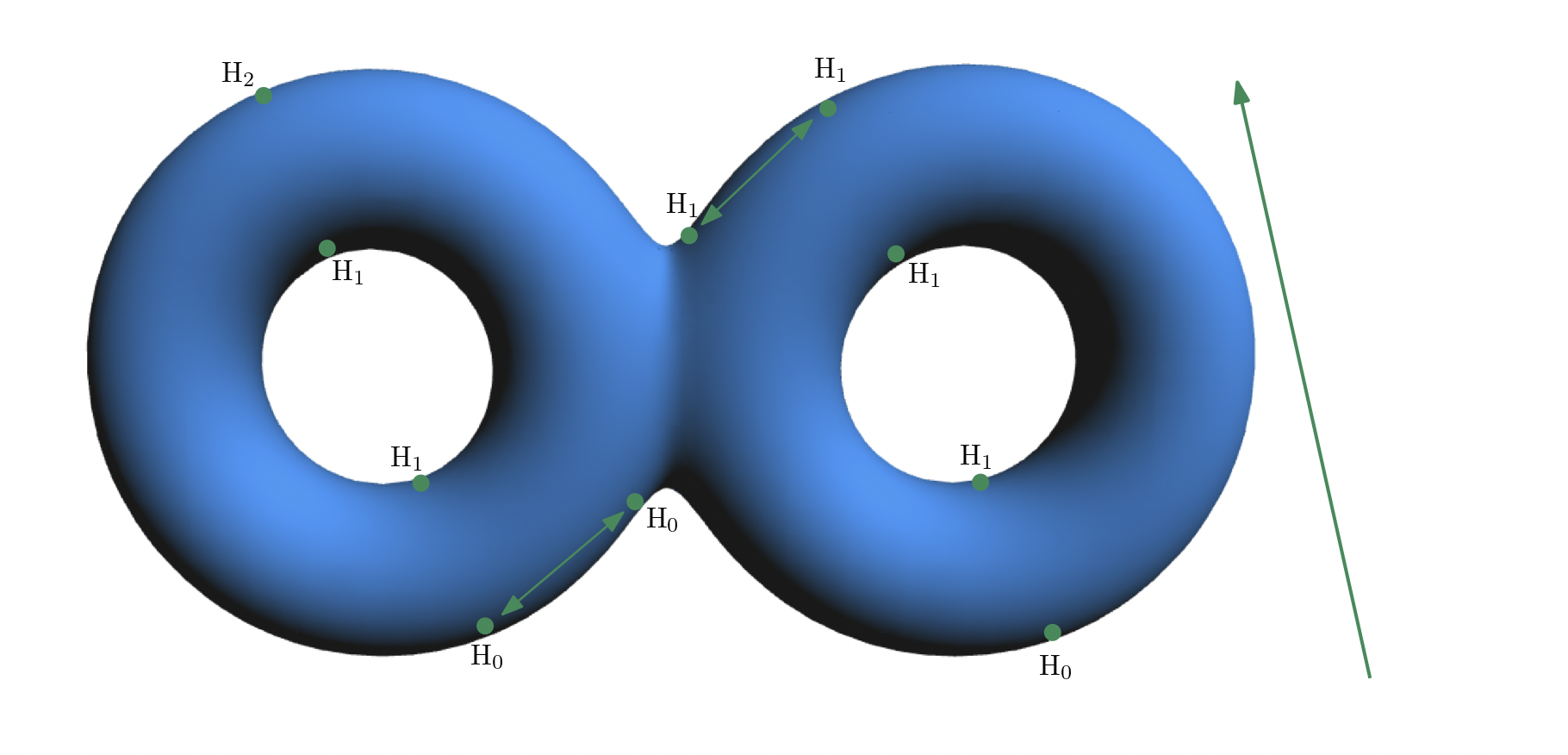}
  \label{fig:sub4}
\end{subfigure}

\begin{subfigure}{0.87\textwidth}
  \centering
  \includegraphics[width=0.87\textwidth]{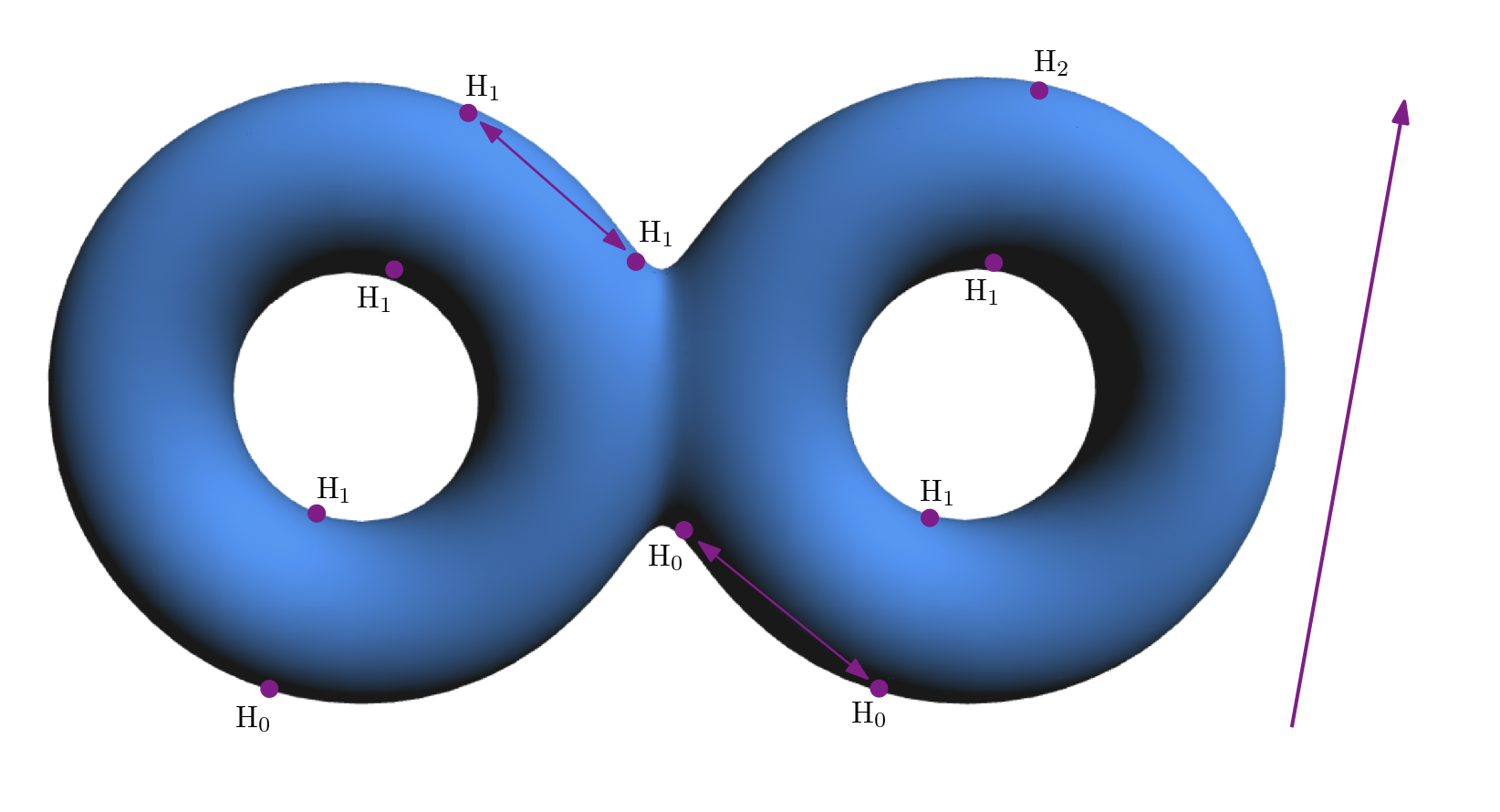}
  \label{fig:sub5}
\end{subfigure}
\caption{A 2-torus filtered by two infinitesimal perturbations of the vertical height function, together with their critical points and labels to indicate the dimension of the homology group that they affect. Paired critical points correspond to bounded intervals in the associated barcodes. Here, the vertical height function is Morse and the critical points evolve smoothly. However, the pairing between critical points is not constant, nor their homological dimensions. Therefore, the barcode valued map is not smooth at the vertical direction.}
\label{fig:two_torus}
\end{figure}

\section{The case of maps on barcodes derived from vectorizations and loss functions}
\label{section_differentiability_vectorization}

We continue on with examples of differentiable maps, this time focusing on maps~$V: Bar \to \manifoldn$ defined on barcodes and valued in a smooth finite-dimensional manifold.
There is a plethora of examples of such maps~$V$ in the literature on topological data analysis \citep{adams2017persistence,bubenik2015statistical,carriere2015stable,carriere2017sliced,di2015comparing,verovsek2016tropical}.
Most of them take $\manifoldn$ to be a Euclidean or Hilbert space, and they 
were designed to
provide meaningful (e.g. stable, discriminative) representations of barcodes that can be fed to machine learning algorithms. A prototypical example of such a map is the  persistence image of \cite{adams2017persistence}, which we study in Section~\ref{subsection_persistence_images}. Other maps even have $\manifoldn=\R$ as codomain, and they are meant to be used as loss terms in optimization tasks \citep{chen2019topological,gabrielsson2018topology,hu2019topology}. Many examples of such vectorizations and loss functions are part of the wide class of {\em linear representations}, which we study in Section~\ref{subsection_differentiability_linear_representation}.  In Section~\ref{subsection_differentiability_bottleneck}, we study an important example of non-linear loss, namely the bottleneck distance to a fixed barcode, which we believe can be of interest in the context of inverse problems. The machinery developed in this section is likely to be adaptable to other examples of maps on barcodes, however the purpose of the section is to provide a proof of concept rather than an exhaustive treatment.

\subsection{The differentiability of persistence images}
\label{subsection_persistence_images}


Recall that $Bar$ is equipped with the bottleneck topology. Let $Bar_n$ be the subset of $Bar$ containing the barcodes with $n$ infinite intervals. In particular, $Bar_0$ is the set of barcodes whose intervals are bounded.  
\begin{proposition} 
\label{proposition_connected_component_Bar }
The set of path connected components of $Bar$ is enumerable. More precisely, $\pi_0(Bar)= \{Bar_n\}_{n=0}^{+\infty}.$
\end{proposition}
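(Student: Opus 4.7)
The plan is to establish two facts: (a) each $Bar_n$ is path-connected; and (b) barcodes with different numbers of infinite intervals cannot lie in the same path component, because the bottleneck distance separating them is infinite. The key elementary observation driving (b) is that any matching $\gamma: D \to D'$ between $D\in Bar_n$ and $D'\in Bar_m$ with $n\neq m$ must send at least one infinite interval on one side to a finite point (possibly diagonal) on the other side; any such matched pair contributes $+\infty$ to the cost, so $\db(D,D') = +\infty$.

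For path-connectedness of $Bar_n$, given two elements
\[
D=\{(b_i,d_i)\}_{i=1}^{m}\cup\{(v_j,+\infty)\}_{j=1}^{n}\cup\Delta^\infty,\qquad D'=\{(b'_i,d'_i)\}_{i=1}^{m'}\cup\{(v'_j,+\infty)\}_{j=1}^{n}\cup\Delta^\infty,
\]
I will construct an explicit path in three stages. On $[0,1/3]$, linearly slide each off-diagonal point $(b_i,d_i)$ of $D$ to the diagonal point $(\bar x_i,\bar x_i)$, where $\bar x_i=(b_i+d_i)/2$, leaving infinite intervals fixed. On $[1/3,2/3]$, after fixing an arbitrary bijection between the infinite intervals of $D$ and those of $D'$, linearly slide $v_j$ to $v'_j$. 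On $[2/3,1]$, reverse the first stage using $D'$, expanding diagonal points out to $(b'_i,d'_i)$. Continuity in $\db$ on each stage follows from the obvious index-wise matching, whose cost is bounded by a linear function of the time parameter (this uses only the triangle-like computation $\db(D_t,D_{t'})\leqslant 3|t-t'|\cdot M$ for an explicit constant $M$). The concatenation is a continuous path inside $Bar_n$, since no stage creates or destroys infinite intervals.

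For the converse direction, the observation on infinite distances implies that each $Bar_n$ is clopen in the bottleneck topology: any open ball $B(D,1)$ around $D\in Bar_n$ is contained in $Bar_n$, because any $D'$ with $\db(D,D')<\infty$ must have the same number of infinite intervals; hence $Bar_n$ is a union of open balls, and its complement $\bigsqcup_{m\neq n} Bar_m$ is open by the same argument. If $\gamma:[0,1]\to Bar$ is a path with $\gamma(0)\in Bar_n$ and $\gamma(1)\in Bar_m$, $n\neq m$, then $\gamma^{-1}(Bar_n)$ and $\gamma^{-1}(Bar_m)$ would be disjoint nonempty open subsets of $[0,1]$ whose union covers a portion of $[0,1]$ containing both endpoints, contradicting connectedness once one checks the pieces exhaust $[0,1]$. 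I do not anticipate any serious obstacle here: the whole argument reduces to two routine verifications, namely Lipschitz-continuity of the explicit homotopy in $\db$ (which amounts to bounding the natural matching cost by a linear function of $t$) and the clopen decomposition $Bar=\bigsqcup_{n\geqslant 0} Bar_n$ derived from the $+\infty$-valued matching cost.
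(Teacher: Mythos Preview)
Your proposal is correct and follows essentially the same approach as the paper: both argue that each $Bar_n$ is path-connected by sliding bounded intervals to the diagonal and moving the infinite intervals linearly, and both use the fact that barcodes with different numbers of infinite intervals lie at infinite bottleneck distance to conclude that no path can cross between distinct $Bar_n$'s. Your clopen argument is a slightly more explicit packaging of the paper's maximality argument, and your minor hesitation about the pieces exhausting $[0,1]$ is resolved immediately by taking $\gamma^{-1}(Bar_n)$ versus its complement $\gamma^{-1}\bigl(\bigsqcup_{m\neq n} Bar_m\bigr)$, both of which are open.
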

\begin{proof}
Since $Bar=\bigsqcup_{n=0}^{+\infty}Bar_n$, we only need to prove that each $Bar_n$ is a maximal connected subset of $Bar$. 
First note that $Bar_n$ is path connected, as we can always move $n$ infinite intervals to $n$ other ones continuously, and similarly move the bounded off-diagonal intervals to the diagonal.
We now prove the maximality of $Bar_n$. Let $A\subseteq Bar\setminus Bar_n$ be non-empty. Any element in $A$ has infinite bottleneck distance to any element in $Bar_n$, since their numbers of infinite intervals are different.
Therefore, $A\cup Bar_n$ cannot be path-connected, and so $Bar_n$ is maximal.
\end{proof}
 We view the persistence image as a map $V:Bar_0\rightarrow \mathbb{R}^{n^2}$ for some \textit{discretization step}~$n\in \mathds{N}$:
 \begin{definition}
\normalfont
\label{definition_persistence_images}
Let $D\in Bar_0$. 
We fix a \textit{weighting function} $\omega:\mathbb{R} \rightarrow \mathbb{R}$ that is zero at the origin. For $(b,d)\in \mathbb{R}^2$, consider the Gaussian
\[ g_{b,d}:(x,y)\in \mathbb{R}^2\mapsto \frac{1}{2\pi \sigma^2} e^{-[(x-b)^2+(y-(d-b))^2]/2\sigma^2} \]
for some fixed variance $\sigma>0$. 
The \textit{persistence surface} associated to $D$ is the map
\[ \rho_D: (x,y)\in \mathbb{R}^2 \mapsto \sum_{(b,d)\in D} \omega(d-b) g_{b,d}(x,y). \]
Given a square~$B\subset \mathbb{R}^2$, we subdivide it into~$n^2$ regular squares $B_{k,l}$ for $1\leqslant k,l\leqslant n$. Then we define the \textit{persistence image} of~$D$ to be the histogram
$$V_{B,n}: D\in Bar_0 \mapsto \left( \int_{(x,y)\in B_{k,l}} \rho_D(x,y) dxdy \right)_{1\leqslant k,l\leqslant n}\in \mathbb{R}^{n^2}$$
\end{definition}
\begin{proposition}
\label{proposition_persistence_image_smooth}
If~$\omega$ is~$C^r$ over~$\mathbb{R}^2$ for some integer~$r\in \mathbb{N}$, then~$V_{B,n}$ is $r$-differentiable everywhere in~$Bar_0$.
\end{proposition}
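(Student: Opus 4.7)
My plan is to verify the criterion of Definition~\ref{definition_smooth_vectorization} directly. First I would observe that, since $D\in Bar_0$ has no infinite intervals while $Q_{m',n'}(\tilde{D})$ always contains $n'$ off-diagonal points at infinity, the only pre-images of $D$ under some $Q_{m',n'}$ are those with $n'=0$. It therefore suffices to show that, for every $m'\in \mathbb{N}$, the composite $V_{B,n}\circ Q_{m',0}:\mathbb{R}^{2m'}\to \mathbb{R}^{n^2}$ is $C^r$ everywhere on $\mathbb{R}^{2m'}$ (here $n$ is the fixed discretization step of the persistence image, not to be confused with $n'$).

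Next I would write this composite explicitly. The key point here is that $\omega(0)=0$, so the countably many diagonal points of any barcode contribute nothing to the persistence surface $\rho$. Thus for $\tilde{D}=(b_1,d_1,\ldots,b_{m'},d_{m'})$ the $(k,l)$-th coordinate of $(V_{B,n}\circ Q_{m',0})(\tilde{D})$ reduces to the finite sum $\sum_{i=1}^{m'}\int_{B_{k,l}}\omega(d_i-b_i)\,g_{b_i,d_i}(x,y)\,dx\,dy$. The integrand $(b,d,x,y)\mapsto \omega(d-b)\,g_{b,d}(x,y)$ is jointly $C^r$ on $\mathbb{R}^4$, since $g_{b,d}(x,y)$ is $C^\infty$ as an elementary function of all four arguments, and $\omega$ is $C^r$ by hypothesis.

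Since each square $B_{k,l}$ is compact, I would then invoke the classical differentiation-under-the-integral theorem (all partial derivatives of the integrand up to order $r$ are continuous, hence locally uniformly bounded on $\mathbb{R}^2\times B_{k,l}$, which is what is needed to pull the derivatives inside) to conclude that each term $\int_{B_{k,l}}\omega(d_i-b_i)\,g_{b_i,d_i}(x,y)\,dx\,dy$ is a $C^r$ function of $(b_i,d_i)$. Summing over $i$ and collecting components yields the $C^r$ regularity of $V_{B,n}\circ Q_{m',0}$ on $\mathbb{R}^{2m'}$, completing the verification of Definition~\ref{definition_smooth_vectorization}.

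There is no serious technical obstacle here; rather, the only subtle conceptual points are (i) recognizing that the assumption $\omega(0)=0$ is precisely what makes the a priori infinite sum defining $\rho_D$ collapse to a finite expression indexed by off-diagonal points, and (ii) correctly reading $r$-differentiability for a map defined only on the open subset $Bar_0\subset Bar$ (which by Proposition~\ref{proposition_connected_component_Bar } is even a path-component), so that the restriction $n'=0$ in the pre-images is legitimate. Once both observations are in place, the proof is a routine application of differentiation under the integral over a compact domain.
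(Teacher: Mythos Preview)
Your proposal is correct and follows essentially the same approach as the paper's proof: both write $V_{B,n}\circ Q_{m,0}$ explicitly as a finite sum of terms $\omega(d_i-b_i)\int_{B_{k,l}}g_{b_i,d_i}$ and check that each term is $C^r$ in $(b_i,d_i)$. Your version is somewhat more explicit than the paper's in three places --- you justify the restriction to $n'=0$, you spell out that $\omega(0)=0$ is what collapses the infinite sum over $D$ to a finite one, and you invoke differentiation under the integral over the compact $B_{k,l}$ rather than simply asserting that $(b,d)\mapsto\int_{B_{k,l}}g_{b,d}$ is $C^\infty$ --- but these are elaborations of the same argument, not a different route.
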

\begin{proof}
%
The maps $(b,d)\in \mathbb{R}^2\mapsto \int_{(x,y)\in B_{k,l}}g_{b,d}(x,y)dxdy \in \mathbb{R}$ are $C^\infty$ for any fixed box~$B_{k,l}$. For any space of ordered barcodes $\mathbb{R}^{2m}\times \mathbb{R}^0$ and any $\obarcode=(b_1,d_1,...,b_m,d_m)\in \mathbb{R}^{2m}\times \mathbb{R}^0$,
\[ V_{B,n}(Q_{m,0}(\obarcode))=\left( \sum_{1\leqslant i \leqslant m} \omega(d_i-b_i) \int_{(x,y)\in B_{k,l}} g_{b_i,d_i}(x,y) dxdy\right)_{1\leqslant k,l\leqslant n}\in \mathbb{R}^{n^2}, \]
which is $C^r$ at every~$\obarcode\in \mathbb{R}^{2m}\times \mathbb{R}^0$.
\end{proof}
In~\cite{adams2017persistence}, the weighting function~$\omega$ is chosen to be the ramp function $\omega_t:\mathbb{R}\mapsto \mathbb{R}$ defined as
\begin{equation}
\label{eq:ramp_function}
 \omega_t(u) =   \left\{
   \begin{aligned}
     0 \text{ if } & u\leqslant 0  \\
    \frac{u}{t} \text{ if } & 0\leqslant u \leqslant t  \\
    1 \text{ if } & t \leqslant u
   \end{aligned}
   \right.
\end{equation}
for some parameter $t>0$. 
Thus, the ramp function is differentiable everywhere except at~$0$ and~$t$. This implies that the persistence image $V_{B,n}$ is nowhere differentiable, as every neighborhood of a barcode always contains some neighborhood of the diagonal~$\Delta$. Thanks to Proposition~\ref{proposition_persistence_image_smooth}, this issue can be resolved by taking any~$C^r$ approximation of the ramp function, which makes the persistence image $r$-differentiable over~$Bar_0$.

\subsection{Linear representations of barcodes}
\label{subsection_differentiability_linear_representation}
The analysis of persistence images in the previous section can be generalized to the following wide class of vectorizations:
\begin{definition}
Let $\phi: \R^2\rightarrow \R^k$, $\psi: \R\rightarrow \R^k$ and $\omega:\R\rightarrow \R$ be continuous maps such that $\omega(0)=0$. The associated {\em linear representation} is the map
\[V: D\in Bar \longmapsto \sum_{(b,d)\in D \text{ bounded}} \omega(d-b)\phi(b,d) + \sum_{(v,+\infty)\in D} \psi(v)\in \R^k. \]
\end{definition}
Properties of linear representations valued in Banach spaces such as continuity, lipschitzness and stochastic convergence are analyzed in~\cite{chazal2018density,divol2019understanding}. Many vectorizations in the litterature are linear representations, e.g. persistence images~\cite{adams2017persistence} and its variations~\cite{chen2015statistical, kusano2016persistence, reininghaus2015stable}, persistence silhouettes~\cite{chazal2014stochastic} and weighted Betti curves~\cite{umeda2017time}. 

When~$k=1$, a linear representation may be viewed as a loss function on persistence diagrams. The total persistence in Example~\ref{example_total_persistence}, and more generally the $q$-Wasserstein distance to the empty diagram, are such loss functions. In addition, the structure elements of~\cite[Definition~9]{hofer2019learning} form a wide class of parametrized linear losses and linear representations that can be optimised. 

In all these examples, the maps~$\phi,\psi$ and~$\omega$ are not necessarily smooth by design, see e.g. the ramp function in Eq.~\eqref{eq:ramp_function} for persistence images, but one can always replace them with smooth approximations. We then get $r$-differentiable maps on barcodes, as expressed in the following result. 

\begin{proposition}
\label{proposition_linear_representation_differentiable}
If the maps $\phi,\psi$ are~$C^r$ on generic subsets of~$\R^2$ containing the diagonal~$\Delta$,  and if~$\omega$ is~$C^r$ on a generic subset of~$\R$ containing the origin, then the associated linear representation~$V$ is generically $r$-differentiable. Whenever $\phi,\psi$ and~$\omega$ are in fact~$C^r$ everywhere, then~$V$ is $r$-differentiable everywhere. 
\end{proposition}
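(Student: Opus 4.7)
The plan is to mirror the proof of Proposition~\ref{proposition_persistence_image_smooth} and unpack Definition~\ref{definition_smooth_vectorization} directly. For any pre-image $\obarcode=(b_1,d_1,\ldots,b_m,d_m,v_1,\ldots,v_n)$ of a barcode $D$ via $Q_{m,n}$, using $\omega(0)=0$ to absorb any diagonal entries into the bounded sum, one has
$$V\circ Q_{m,n}(\obarcode)=\sum_{i=1}^{m}\omega(d_i-b_i)\,\phi(b_i,d_i)+\sum_{j=1}^{n}\psi(v_j).$$
Each summand is a product/composition of $\omega$, $\phi$, $\psi$ with smooth coordinate projections and the smooth map $h\colon(b,d)\mapsto d-b$, so it is locally $C^r$ at $\obarcode$ whenever $\omega$ is $C^r$ at $d_i-b_i$, $\phi$ at $(b_i,d_i)$, and $\psi$ at $v_j$. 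The everywhere-$C^r$ case is then immediate: $V\circ Q_{m,n}$ is $C^r$ on $\R^{2m+n}$ for all $m,n$, so $V$ is $r$-differentiable at every $D\in Bar$.

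For the generic case, I would denote by $U_\phi\subset\R^2$, $U_\psi\subset\R$, and $U_\omega\subset\R$ the open dense subsets on which $\phi,\psi,\omega$ are $C^r$, with $\Delta\subset U_\phi$ and $0\in U_\omega$. Because $h$ is a smooth submersion, $h^{-1}(U_\omega)$ is open dense, so the intersection $U:=U_\phi\cap h^{-1}(U_\omega)$ is an open dense subset of $\R^2$ that still contains $\Delta$. Define $\mathcal{G}\subset Bar$ to be the set of barcodes whose off-diagonal bounded points all lie in $U$ and whose infinite endpoints all lie in $U_\psi$. The core verification is that for any $D\in\mathcal{G}$ and any pre-image $\obarcode$, each coordinate pair $(b_i,d_i)$ is either an off-diagonal point of $D$ (so in $U$ by assumption) or a diagonal entry with $b_i=d_i$ (so $(b_i,d_i)\in\Delta\subset U_\phi$ and $d_i-b_i=0\in U_\omega$, placing it in $U$), while each $v_j$ lies in $U_\psi$; thus $V\circ Q_{m,n}$ is $C^r$ at $\obarcode$, showing that $V$ is $r$-differentiable on $\mathcal{G}$.

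It then remains to check that $\mathcal{G}$ is generic in $Bar$ (i.e.\ open and dense in the bottleneck topology). Density is direct: any $D\in Bar$ has finitely many off-diagonal and infinite endpoints, each of which can be perturbed by an arbitrarily small bottleneck amount to land in the dense sets $U$ and $U_\psi$. For openness around a fixed $D\in\mathcal{G}$, I would first localize: the finitely many off-diagonal/infinite endpoints of $D$ sit at strictly positive distance from $\Delta$, and one can choose $\varepsilon>0$ small enough that open $2\varepsilon$-balls around them lie inside $U$ (resp.\ $U_\psi$) and are disjoint from the $\varepsilon$-neighborhood of $\Delta$. Then any $D'$ with $\db(D,D')<\varepsilon$ must match each such endpoint of $D$ with a nearby endpoint of $D'$ in the corresponding ball, while any additional off-diagonal points of $D'$ necessarily lie within $\varepsilon$ of $\Delta$. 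The main obstacle I anticipate is this last point: the newly created off-diagonal points of $D'$ near $\Delta$ must also be shown to land in $U$, which I would handle by restricting to a compact rectangle $[-R,R]^2$ containing the support of $D$ with a margin, using that $U$ contains an open tubular neighborhood of the compact set $\Delta\cap[-R,R]^2$, and noting that off-diagonal points of $D'$ outside this rectangle are forbidden by the bottleneck bound for $\varepsilon$ small enough.
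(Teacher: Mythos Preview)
Your approach mirrors the paper's: write $V\circ Q_{m,n}$ as the finite sum $\sum_i \omega(d_i-b_i)\phi(b_i,d_i)+\sum_j\psi(v_j)$ and read off $C^r$-regularity termwise. The paper's own proof is very terse --- it simply asserts that the set of barcodes whose intervals avoid the non-differentiability loci of $\phi,\psi,\omega$ is ``clearly generic'' and then checks the formula. Your write-up is more careful on two counts: you explicitly handle diagonal entries $(b_i,b_i)$ of a pre-image via the hypotheses $\Delta\subset U_\phi$ and $0\in U_\omega$, and you actually attempt to prove openness of $\mathcal{G}$ in the bottleneck topology rather than asserting it.

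That openness argument, however, contains a genuine error. Your claim that ``off-diagonal points of $D'$ outside this rectangle are forbidden by the bottleneck bound for $\varepsilon$ small enough'' is false: a matching of cost $<\varepsilon$ may pair any diagonal point of $D$ --- anywhere along $\Delta$ --- with a new off-diagonal point of $D'$ sitting within $\varepsilon$ of $\Delta$, at arbitrarily large abscissa, far from the bounded support of $D$. So confining attention to a compact rectangle $[-R,R]^2$ does not control these newly created points. For $\mathcal{G}$ to be open you would in fact need $U$ to contain a \emph{uniform} strip $\{|d-b|<\delta\}$ about the unbounded diagonal, and the stated hypothesis ``$U_\phi$ open dense with $\Delta\subset U_\phi$'' does not guarantee this (consider for instance $U_\phi=\R^2\setminus\{(n,n+\tfrac{1}{n}):n\geq 1\}$, which is open, dense, and contains $\Delta$, yet admits no uniform tubular neighborhood of $\Delta$). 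The paper's proof does not address this point either; you have correctly located the obstacle, but the fix you sketch does not close it.
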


\begin{proof}
The subspace of barcodes whose intervals avoid the set of non-differentiability of~$\phi,\psi$ and~$\omega$ is clearly generic in~$Bar$. Let~$D$ be a barcode therein. For any space of ordered barcodes $\mathbb{R}^{2m}\times \mathbb{R}^n$ and pre-image $\obarcode=[(b_i,d_i)_{i=1}^m,(v_j)_{j=1}^n]\in \mathbb{R}^{2m}\times \mathbb{R}^n$ of~$D$, we have
\[ V(Q_{m,n}(\obarcode))=\sum_{1\leqslant i \leqslant m} \omega(d_i-b_i) \phi(b_i,d_i) + \sum_{1\leqslant j \leqslant n} \psi(v_j), \]
which is~$C^r$ in a neighborhood of~$\obarcode$.
\end{proof}
Let us consider an everywhere $r$-differentiable linear representation~$V$, and a barcode valued map~$B$ on a simplicial complex, which is (generically) differentiable (Theorem~\ref{theorem_PH_differentiable_global}). Using the chain rule~\ref{proposition_chain_rule_vectorization_barcode}, the composition~$V\circ B$ is then itself (generically) differentiable, hence amenable to gradient descent based optimisation.

\subsection{Semi-algebraic and subanalytic functions on barcodes}
\label{subsection_semialgebraic_vectorisation}
We consider another important class of examples arising from loss functions on barcodes that restrict to semi-algebraic maps on the spaces of ordered barcodes. The subanalytic and definable counterparts are analogously defined and the results of this section are valid in these situations as well. See also~\cite{carriere2020note} for a full treatment of semi-algebraic loss functions in persistence.
\begin{definition}
\label{definition_semi_algebraic_loss}
We say that a map~$V: Bar\rightarrow \R$ is {\em semi-algebraic} if all the precompositions~$V\circ Q_{m,n}: \R^{2m}\times \R^n\rightarrow \R$ are semi-algebraic. 
\end{definition}
A prototypical example of semi-algebraic loss on barcodes is the distance to a target barcode~$D_0$:
\[ d_{q}(D_0,.) :D\in Bar \mapsto d_{q}(D_0,D)\in \mathbb{R}\cup \{+\infty\}. \]
Here,~$d_q$ is the $q$-th Wasserstein distance on barcodes for any~$q\in \R_+^{*}$ as defined in Eq.~\eqref{eq_wasserstein}, and~$\db$ is the bottleneck distance. 
\begin{proposition}
\label{prop_distance_diagram_semi-_algebraic}
For any target barcode~$D_0$ and non-negative number~$q\in \R_+^{*}$, the map $d_{q}(D_0,.):Bar \rightarrow \R$ is semi-algebraic. 
\end{proposition}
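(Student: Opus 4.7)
By Definition~\ref{definition_semi_algebraic_loss}, it suffices to show that for every pair of integers $m,n\in\N$, the composition $d_q(D_0,\cdot)\circ Q_{m,n}:\R^{2m}\times\R^n\to\R$ is semi-algebraic. Let us fix such a pair $(m,n)$ and write the target barcode as $D_0=\{(b_i^0,d_i^0)\}_{i=1}^{m_0}\cup\{(v_j^0,+\infty)\}_{j=1}^{n_0}\cup\Delta^\infty$. First I would dispose of the trivial case $n\neq n_0$: since any matching between $D_0$ and $Q_{m,n}(\obarcode)$ must pair infinite bars with infinite bars, the cost is identically $+\infty$ on $\R^{2m}\times\R^n$, and the precomposition is a (generalized) constant function, which is semi-algebraic by convention.

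The substantive case is $n=n_0$. Here the plan is to reduce the infimum defining the Wasserstein distance to a minimum over a finite, purely combinatorial set of matching types. A matching $\gamma\in\Gamma(D_0,Q_{m,n}(\obarcode))$ is determined, up to its restriction to the diagonal (which contributes zero cost), by: a pair of subsets $S_0\subseteq[m_0]$ and $S\subseteq[m]$ with $|S_0|=|S|$; a bijection $\sigma:S_0\to S$ pairing off-diagonal bounded intervals; a bijection $\tau:[n_0]\to[n]$ pairing infinite intervals; and the convention that indices outside $S_0$ or $S$ are matched to the diagonal. For any fixed such data $\mathcal{M}=(S_0,S,\sigma,\tau)$, the associated cost on the ordered barcode $\obarcode=(b_1,d_1,\ldots,b_m,d_m,v_1,\ldots,v_n)$ is
\[
C_{\mathcal{M}}(\obarcode)=\sum_{i\in S_0}\max\bigl(|b_i^0-b_{\sigma(i)}|,|d_i^0-d_{\sigma(i)}|\bigr)^q+\sum_{i\notin S_0}\Bigl(\tfrac{d_i^0-b_i^0}{2}\Bigr)^q+\sum_{j\notin S}\Bigl(\tfrac{d_j-b_j}{2}\Bigr)^q+\sum_{j=1}^{n_0}|v_j^0-v_{\tau(j)}|^q.
\]
I would then observe that, because every unmatched off-diagonal point of $Q_{m,n}(\obarcode)$ is matched to the diagonal at cost $(d_j-b_j)/2$, and similarly for $D_0$, one has $d_q(D_0,Q_{m,n}(\obarcode))^q=\min_{\mathcal{M}}C_{\mathcal{M}}(\obarcode)$, the minimum being taken over the finite set of matching types described above.

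It remains to argue that each $C_{\mathcal{M}}$ is semi-algebraic and that the minimum, together with the final $q$-th root, preserves this property. Each summand of $C_{\mathcal{M}}$ is built from polynomial coordinate functions by absolute values, binary maxima, and the real power $t\mapsto t^q$; all of these are semi-algebraic operations (for rational $q$; the same argument lifts to subanalytic or globally subanalytic structures, and to arbitrary o-minimal structures in which $t\mapsto t^q$ is definable, which matches the generality invoked elsewhere in the paper). A finite sum of semi-algebraic functions is semi-algebraic, as is a minimum of finitely many semi-algebraic functions (the graph is a finite union of semi-algebraic sets cut out by the inequalities $y=C_{\mathcal{M}}$ and $y\leqslant C_{\mathcal{M}'}$ for all $\mathcal{M}'$). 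Finally, composing with the semi-algebraic map $u\mapsto u^{1/q}$ on $[0,+\infty)$ recovers $d_q(D_0,\cdot)\circ Q_{m,n}$ as a semi-algebraic function on $\R^{2m}\times\R^n$, and the proposition follows.

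The main conceptual obstacle is the first reduction: justifying that taking the infimum over the uncountable set $\Gamma(D_0,D)$ of bijections (which include the identifications on $\Delta^\infty$) collapses to a minimum over the finite combinatorial data $\mathcal{M}$. This is standard in persistence, but requires the observation that matchings between diagonal points contribute zero cost and so can be ignored. The only minor technical point is the handling of irrational $q$, which I would address by restricting to rational $q$ or by invoking an appropriate o-minimal structure in which $t\mapsto t^q$ is definable.
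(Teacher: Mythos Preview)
Your proposal is correct and follows essentially the same approach as the paper: reduce to a fixed pair $(m,n)$, dispose of the case $n\neq n_0$ as trivially constant, and in the main case express $d_q(D_0,\cdot)\circ Q_{m,n}$ as a minimum over a finite combinatorial set of partial matchings, each of whose cost functions is manifestly semi-algebraic. The paper's proof is terser---it treats only $q=\infty$ explicitly and asserts the general case follows by the same line---whereas you write out the Wasserstein cost in full and, notably, flag the issue that $t\mapsto t^q$ is semi-algebraic only for rational $q$ (a point the paper elides in the proof itself but hints at in the surrounding text by mentioning definable/o-minimal variants).
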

\begin{proof}
We consider the case where~$q=\infty$, as the same line of arguments works for arbitrary Wasserstein metrics, and rewrite~$d_{q}(D_0,.)$ as~$d_{D_0}$ for simplicity. Let~$m,n\in \N$. We assume that~$n$ is the number of infinite intervals in~$D_0$, as otherwise the map~$d_{D_0}\circ Q_{m,n}: \R^{2m}\times \R^n\rightarrow \R$ takes infinite value everywhere. Then,~$d_{D_0}\circ Q_{m,n}$ can be expressed as a minimum of finitely many cost functions, $\min c(\gamma_{m,n})(.)$, each of which is defined in terms of a fixed partial matching~$\gamma_{m,n}$ of coordinates in~$\R^{2m}\times \R^n$ with interval endpoints of~$D_0$. As a point-wise maximum of finitely many absolute values, each cost function~$c(\gamma_{m,n})(.)$ is semi-algebraic, and so~$d_{D_0}\circ Q_{m,n}$ is semi-algebraic. 
\end{proof}

Semi-algebraic functions~$V$ on barcodes are particularly useful in the context of optimisation when pre-composed with a semi-algebraic parametrization of filter functions~$\parametrization: \paramspace \rightarrow \R^K$ on a fixed simplicial complex~$K$. Indeed, composition preserves semi-algebraicity, and so from Remark~\ref{remark_definable_lift_for_definable_parametrization} the loss function given by the composition
\begin{equation}
\label{eq_loss_semi_algebraic}
\mathcal{L}: \xymatrix{\paramspace{} \ar[rr]^-{\parametrization} && \R^K \ar[rr]^-{\Dgm_p} && Bar \ar[rr]^-{V} && \R}
\end{equation}
is a semi-algebraic map. Then, \cite[Corollary~5.9]{davis2020stochastic} guarantees that the well-known stochastic gradient descent (SGD) algorithm converges almost surely to critical points of~$\mathcal{L}$.\footnote{The loss~$\mathcal{L}$ must also be locally Lipschitz for this result to hold. By the Stability Theorem~\cite{cohen2007stability},~$\Dgm_p$ is Lipschitz continuous, hence this additional mild requirement is met whenever~$\parametrization$ and~$V$ are locally Lipschitz (for instance when~$V=d_q(D_0,.)$ is the distance to a fixed barcode).} 

This guarantee can be applied to various optimisation problems. When choosing the Rips parametrization~$\parametrization$ of point clouds as in Section~\ref{section_discrete_smoothness_subsection_examples_ex2}, minimizing the loss~$\mathcal{L}=d_q(D_0,.)\circ \Dgm_p\circ \parametrization$ amounts to solving the problem of point cloud inference originally proposed in~\cite{gameiro2016continuation}, see~\cite{1905.12200} for implementations. Besides, from Section~\ref{section_discrete_smoothness_subsection_examples_ex4}, for~$\parametrization$ the parametrization of all filter functions on a fixed simplicial complex and an adequate target barcode~$D_0$, the minimisation of~$\mathcal{L}$ yields an approach to function simplification. However, when~$\parametrization$ is not semi-algebraic, typically in the continuous setting developped in Section~\ref{section_continuous_smoothness_theorem}, and more generally for an arbitrary barcode valued map~$B:\paramspace\rightarrow Bar$, it is unclear how to perform full-fledged continuous gradient descent to minimize
\begin{equation}
\label{eq_general_loss_again}
\mathcal{L}: \xymatrix{\paramspace{} \ar[rr]^-{B} &&  Bar \ar[rr]^-{d_q(D_0,.)} && \R}.
\end{equation}
While implementing a solution to this problem is beyond the scope of this paper, it serves as a motivation for the next section where we show that the bottleneck distance to~$D_0$ is generically $\infty$-differentiable, as then the chain rule of Proposition~\ref{proposition_chain_rule_vectorization_barcode} enables the use of gradient descent. 
\subsection{The bottleneck distance to a diagram}
\label{subsection_differentiability_bottleneck}
%
For simplicity, we denote the bottleneck distance to a fixed barcode~$D_0$ by:
\[d_{D_0}: D\in Bar\longmapsto \db(D,D_0)\in \R.\]
For ease of exposition, we consider the special case where~$D_0=\Delta^{\infty}$ is the empty diagram (the diagonal $\Delta$ with infinite multiplicity). The analysis of the general case of an arbitrary fixed barcode $D_0$ is technically more involved and is deferred to Appendix~\ref{appendix_bottleneck}. 

Recall that $d_{\Delta^{\infty}}(D)=+\infty$ for any diagram $D\in Bar$ with infinite bars. Consequently, we consider the restriction of $d_{\Delta^{\infty}}$ to the subset~$Bar_0$ introduced in Section~\ref{subsection_persistence_images}. This restriction is valued in the real line: $d_{\Delta^{\infty}}:Bar_0 \rightarrow \mathbb{R}$.
Consider the set~$Bar_{\Delta}$ of barcodes which admit a unique point at maximal distance to the diagonal~$\Delta$:
\begin{equation}
    \label{equation_bar_delta_empty}
    Bar_{\Delta}:=\left\{D\in Bar_0\ \mid\ \# \argmax_{(b,d)\in D}\ \frac{|d-b|}{2} = 1 \right\}.
\end{equation}
For~$D\in Bar_{\Delta}$, we let $(\bar{b}_D,\bar{d}_D)\in D$ be the unique interval in the set~$\argmax_{(b,d)\in D}\ \frac{|d-b|}{2}$.
\begin{proposition}
$Bar_{\Delta}$ is generic in $Bar_0$. Moreover, given $D\in Bar_{\Delta}$, for $\e>0$ small enough, any $D'$ at bottleneck distance less than~$\e$ from~$D$ satisfies~$d_{\Delta^{\infty}}(D')=\frac{|\bar d_{D'}- \bar b_{D'}|}{2}$ and~$\|(\bar b_{D'}, \bar d_{D'})- (\bar b_D, \bar d_D)\|_\infty < \e$.
\label{proposition_bar_delta_generic}
\end{proposition}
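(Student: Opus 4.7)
I would split the first assertion into openness and density. For openness, fix $D \in Bar_{\Delta}$ with unique maximizer $(\bar b_D, \bar d_D)$ and set
\[
\delta := \tfrac{1}{2}|\bar d_D - \bar b_D| - \max_{(b,d) \in D \setminus \{(\bar b_D,\bar d_D)\}} \tfrac{1}{2}|d - b| > 0,
\]
where the second term is taken to be $0$ if $D$ has no other off-diagonal points. Choose any $\e < \min(\delta/2,\ \tfrac{1}{2}|\bar d_D - \bar b_D|)$. For any $D'\in Bar_0$ with $\db(D,D') < \e$, fix a matching $\gamma: D \to D'$ of cost strictly less than $\e$. Since the $\ell^\infty$-distance from $(\bar b_D, \bar d_D)$ to $\Delta$ equals $|\bar d_D - \bar b_D|/2 > \e$, the interval $(\bar b_D,\bar d_D)$ cannot be matched to a diagonal element, so $(b',d') := \gamma(\bar b_D, \bar d_D)$ is an off-diagonal point of $D'$ with $\|(b',d')-(\bar b_D, \bar d_D)\|_\infty < \e$.

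Next I would show that $(b',d')$ is the unique maximizer of $|d-b|/2$ over $D'$. Any other off-diagonal $(\tilde b, \tilde d) \in D'$ is the image under $\gamma$ of either another off-diagonal point of $D$, in which case $|\tilde d-\tilde b|/2 \leq \max_{(b,d)\in D\setminus\{(\bar b_D,\bar d_D)\}}|d-b|/2 + \e$, or of a diagonal element of $D$, in which case $|\tilde d-\tilde b|/2 \leq \e$. By the choice of $\e$, both bounds are strictly less than $|\bar d_D - \bar b_D|/2 - \e \leq |d'-b'|/2$. Hence $D' \in Bar_{\Delta}$ with $(\bar b_{D'}, \bar d_{D'}) = (b', d')$, which simultaneously establishes the required $\ell^\infty$ estimate in the second part of the proposition.

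For density, I would take any $D \in Bar_0 \setminus Bar_\Delta$ and perturb one of its tied maximal off-diagonal points by pushing it slightly further from $\Delta$ (e.g. increase its death coordinate by $\eta > 0$); the resulting barcode lies in $Bar_{\Delta}$ and is at bottleneck distance $\eta$ from $D$. Finally, the equality $d_{\Delta^\infty}(D') = \tfrac{1}{2}|\bar d_{D'} - \bar b_{D'}|$ follows from the standard computation that the $\ell^\infty$-distance from $(b,d)$ to $\Delta$ equals $|d-b|/2$, combined with the observation that the optimal matching of $D'$ with $\Delta^\infty$ sends each off-diagonal point to its nearest diagonal element. The main obstacle is purely bookkeeping: carefully ruling out matches of $(\bar b_D, \bar d_D)$ to the diagonal in the corner case where $D$ has only one off-diagonal point, which is why the bound on $\e$ has two terms.
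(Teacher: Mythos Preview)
Your proof is correct and follows essentially the same route as the paper's: density by nudging a tied maximizer off the diagonal, and openness (together with the estimate on $(\bar b_{D'},\bar d_{D'})$) by fixing a matching of cost $<\e$ and tracking where each point can land. The paper uses the threshold $\e<\alpha/4$ where your $\delta$ is their $\alpha$, while you use $\e<\delta/2$; both suffice. One cosmetic remark: since $\delta\le \tfrac{1}{2}|\bar d_D-\bar b_D|$ always (the second maximum being nonnegative), the second term in your $\min$ is redundant, so the ``corner case'' you flag does not actually require it.
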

\begin{proof}
  Given $D\in Bar_0$, consider the set $\argmax_{(b,d)\in D}\ \frac{|d-b|}{2}$. If this set is not a singleton, then we can move infinitesimally one of its elements away from the diagonal, so as to get a diagram in $Bar_{\Delta}$. Thus, $Bar_{\Delta}$ is dense in $Bar_0$. Let now $D\in Bar_{\Delta}$, and let $\delta$ be the second maximal distance to the diagonal:
  \[ \delta := \max_{(b,d) \in D\setminus \{(\bar b_D, \bar d_D)\}}\frac{|d-b|}{2} \]
  and $\alpha:=\frac{|\bar d_D- \bar b_D|}{2}- \delta > 0$. Take $\e\in \left(0, \frac{\alpha}{4}\right)$.
If $D'$ is at bottleneck distance less than $\e$ from
  $D$, all the points of $D'$ are within distance less than~$\e$ either from the diagonal or from an
  off-diagonal point of~$D$. As we have picked $\e <
  \frac{\alpha}{4}$, there is a unique off-diagonal point
  $(\bar b', \bar d')$ of $D'$ that is within distance less than $\e$ from
  $(\bar b_D, \bar d_D)$, and it must be the unique furthest point from~$\Delta$ in~$D'$. So indeed~$D'\in Bar_{\Delta}$ and $(\bar b_{D'}, \bar d_{D'})=(\bar b', \bar d')$. 
Therefore, $Bar_{\Delta}$ is open, which concludes the proof.
\end{proof}
%
Not surprisingly, $d_{\Delta^{\infty}}$ is smooth at every $D\in Bar_{\Delta}$, with partial derivatives related to the ones of the map $(\bar b_D, \bar d_D)\mapsto \frac{|\bar d_D- \bar b_D|}{2}$.
\begin{proposition}
\label{prop:dist_empty_diag_diff}
  For any $D\in Bar_{\Delta}$, 
\begin{itemize}
    \item[(i)] $d_{\Delta^{\infty}}$ is $\infty$-differentiable at $D$, and
    \item[(ii)] for any $m\in \mathds{N}$ and $\obarcode{}\in \mathbb{R}^{2m}\times \mathbb{R}^0$ such that $Q_{m,0}(\obarcode{})=D$, there are exactly two non-zero components in the gradient $\nabla_{\obarcode{}} (d_{\Delta^{\infty}} \circ Q_{m,0})$, one with value $\frac{1}{2}$ and the other with value $-\frac{1}{2}$.
\end{itemize}
\end{proposition}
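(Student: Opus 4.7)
My plan is to show that, locally around any pre-image $\tilde{D} \in \mathbb{R}^{2m}$ of $D$, the composition $d_{\Delta^{\infty}} \circ Q_{m,0}$ coincides with a single linear form $\tilde{D}' \mapsto \pm \tfrac{d_{i_0}' - b_{i_0}'}{2}$, where $i_0$ is the unique coordinate index carrying the furthest point of $D$. Both (i) and (ii) will follow at once: smoothness from linearity, and the gradient formula by inspection. Since $D \in Bar_0$ has no infinite intervals, all pre-images sit in $\mathbb{R}^{2m} \times \mathbb{R}^0$ for some $m$, so these are the only cases to check.

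First I would simplify the distance to the empty diagram: for any $D' \in Bar_0$, the bottleneck distance to $\Delta^{\infty}$ reduces to $\max_{(b,d) \in D'} \tfrac{|d-b|}{2}$, since in the $\ell^\infty$ norm the closest diagonal point to $(b,d)$ sits at $\left(\tfrac{b+d}{2}, \tfrac{b+d}{2}\right)$, at distance $\tfrac{|d-b|}{2}$, and the empty diagram forces every off-diagonal point to be matched to its projection onto $\Delta$. Consequently, writing $\tilde{D}' = (b_1', d_1', \ldots, b_m', d_m')$, one obtains $d_{\Delta^{\infty}} \circ Q_{m,0}(\tilde{D}') = \max_i \tfrac{|d_i' - b_i'|}{2}$, which is a maximum of $m$ smooth functions. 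The heart of the argument is then to exhibit a neighborhood of $\tilde{D}$ on which this maximum is achieved at a single, fixed index.

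To produce this neighborhood, I would fix $\tilde{D} = (b_1, d_1, \ldots, b_m, d_m)$ with $Q_{m,0}(\tilde{D}) = D$. Because $(\bar{b}_D, \bar{d}_D)$ has multiplicity one in $D$ (that is the defining property of $Bar_\Delta$), precisely one index $i_0$ satisfies $(b_{i_0}, d_{i_0}) = (\bar{b}_D, \bar{d}_D)$; every other index encodes either a diagonal pair ($b_i = d_i$) or a different off-diagonal point of $D$ that is strictly closer to $\Delta$. Hence the quantity $\tfrac{|d_{i_0} - b_{i_0}|}{2} - \max_{i \neq i_0} \tfrac{|d_i - b_i|}{2}$ is strictly positive, and by continuity this gap persists on a small open neighborhood $U$ of $\tilde{D}$, on which $i_0$ is the unique maximizer. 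Shrinking $U$ further if necessary, the sign of $d_{i_0}' - b_{i_0}'$ stays constant equal to that of $\bar{d}_D - \bar{b}_D$, so the absolute value can be removed and $d_{\Delta^{\infty}} \circ Q_{m,0}$ restricts on $U$ to the linear map $\pm \tfrac{d_{i_0}' - b_{i_0}'}{2}$. This proves (i), and reading off the two nonzero partial derivatives gives (ii).

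I do not expect any real obstacle. The only analytic input is the elementary fact that a maximum of finitely many smooth functions is smooth at a point where the max is uniquely attained, and uniqueness is precisely what the definition of $Bar_\Delta$ was engineered to guarantee; Proposition~\ref{proposition_bar_delta_generic} is essentially the same observation transported back to $Bar_0$. The only bookkeeping to watch is the presence of ``dummy'' coordinate pairs with $b_i = d_i$ in $\tilde{D}$ (encoding extra diagonal multiplicity), but these contribute $0$ to the max and so are dominated automatically by the gap above.
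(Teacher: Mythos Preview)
Your proposal is correct and follows essentially the same approach as the paper: both identify the unique index $i_0$ carrying $(\bar b_D,\bar d_D)$, use the strict gap to the other coordinate pairs to show that $d_{\Delta^\infty}\circ Q_{m,0}$ locally equals $\tfrac{|d'_{i_0}-b'_{i_0}|}{2}$, and read off the gradient. The only cosmetic difference is that the paper packages the gap argument via Proposition~\ref{proposition_bar_delta_generic} and the continuity of $Q_{m,0}$, whereas you redo it directly at the level of coordinates in $\R^{2m}$.
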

\begin{proof}
Let $m\in \mathds{N}$ and $\obarcode{}\in \mathbb{R}^{2m}\times \mathbb{R}^0$ be such that $Q_{m,0}(\obarcode{})=D$. Without loss of generality, we can write $\obarcode{}=(\bar b_D, \bar d_D, b_2, d_2, ..., b_m, d_m)$ where~$(b_i, d_i)$ is distinct from $(\bar b_D, \bar d_D)$ for all $2 \leq i \leq m$. By Proposition \ref{proposition_continuity_quotient_map}, $Q_{m,0}$ is continuous. Therefore, by Proposition \ref{proposition_bar_delta_generic}, there is an open neighborhood $U$ of $\obarcode{}$, such that for any $\obarcode{}'=(\bar b_{D'}, \bar d_{D'}, b'_2, d'_2, ..., b'_m, d'_m)\in U$, $Q_{m,0}(\obarcode{}')$ is in $Bar_\Delta$ and $d_{\Delta^{\infty}}(Q_{m,0}(\obarcode{}')) = \frac{|\bar d_{D'}- \bar b_{D'}|}{2}>0$. Assertions~(i) and~(ii) follow.
\end{proof}

\bibliographystyle{alpha}
\bibliography{reference}

\appendix
\section{Local isometry of the barcode on a simplicial complex}
\label{appendix_proof_coercivity}
\begin{proof}[Proof of Proposition~\ref{proposition_local_coercivity}]
We have several persistence diagrams to compare, so we first simplify the problem as follows. Given two vectors $D=(D_0,...,D_d)\in Bar^{d+1}$ and $D'=(D'_0,...,D'_d)^{d+1}$ of $d+1$ barcodes, let $\Gamma(D,D')$ be the set of \textit{multi-matchings} between $D$ and $D'$, where a multi-matching is a bijection $\gamma: \bigsqcup_{p=0}^d D_i \rightarrow \bigsqcup_{p=0}^d D'_i $ such that $\gamma(D_p)=D'_p$ for all $0\leqslant p \leqslant d$. The notions of cost $c(\gamma)$ and optimality are the same as for ordinary matchings. Specifically, for an optimal $\gamma$ in $\Gamma(\Dgm(f), \Dgm(g))$, we have $c(\gamma)=\max_{0\leqslant p \leqslant d}\db(\Dgm_p(f),\Dgm_p(g))$.

We fix an ordering $\simplex_1<...<\simplex_{\#K}$ of the simplices of $K$, which yields an isomorphism $\phi:\R^K \to \R^{\#K}$. We denote by $f_i$ the $i$-th component of $f$ through this isomorphism, i.e $f_i=f(\simplex_i)$. Let us assume that $f,g \in \mathbb{R}^{K}$ are two filter functions in a common top dimensional stratum $S$. If we can prove~\eqref{equation_coercivity} in this case then it will hold for $f,g$ in the closure of $S$ by a continuity argument. Since $f$ and $g$ are both in $S$, they induce the same strict order on the simplices, and without loss of generality we can assume that $f_1<...<f_{\#K}$ and $g_1<...<g_{\#K}$. By Proposition \ref{proposition_global_lift_permutation}, we can write $\Dgm(f)=Q(P(S)f)$ and $\Dgm(g)=Q(P(S)g)$ for a fixed permutation matrix $P(S)$, which implies that:
\begin{align}
    \label{equation_correspondance_coercivity_proof}
    \forall 1\leqslant i\neq j \leqslant \#K, \forall 0\leqslant p \leqslant d, \, \, (f_i,f_j)\in \Dgm_p(f) \Leftrightarrow( g_i,g_j)\in \Dgm_p(g) \\
    \forall 1\leqslant k \leqslant \#K, \forall 0\leqslant p \leqslant d, \, \, (f_k,+\infty)\in \Dgm_p(f) \Leftrightarrow( g_k,+\infty)\in \Dgm_p(g) \nonumber 
\end{align}
Let $\gamma\in \Gamma(\Dgm(f), \Dgm(g))$ be optimal. Consider the case where $\gamma$ sends an off-diagonal point $(b,d)$ of $\Dgm(f)$ onto the diagonal $\Delta$. As $(b,d)$ is of the form $(f_i,f_j)$ (or $(f_i,+\infty)$), this implies that $c(\gamma)\geqslant \frac{|f_i-f_j|}{2} \geqslant d_0(f)$. In addition, $\Dgm(f)$ and $ \Dgm(g)$ have exactly the same number of bounded and unbounded intervals in each degree, which implies that there exists an off-diagonal interval $(b',d')$ of $\Dgm(g)$ which has pre-image in the diagonal $\Delta$. Again, $(b',d')$ must be of the form $(g_k,g_l)$ (or $(g_k,+\infty)$), so that $c(\gamma)\geqslant \frac{|g_k-g_l|}{2} \geqslant d_0(g)$. Therefore, $c(\gamma)\geqslant  \max(d_0(f),d_0(g))$ and we are done. 

We now treat the case where all off-diagonal intervals are sent to off-diagonal intervals by~$\gamma$. We denote by $O(f,g)\subset \Gamma(\Dgm(f), \Dgm(g))$ the set of multi-matchings that send off-diagonal intervals to off-diagonal intervals. By the decomposition $\Dgm(f)=Q(P(S)f)$ (resp. $\Dgm(g)=Q(P(S)g)$) and from the fact that no two values of $f$ (resp. of $g$) are equal, the bounded end-points of off-diagonal intervals of $\Dgm(f)$ (resp. $\Dgm(g)$) are in bijection with the set $\{f_1,...,f_{\#K}\}$ (resp. $\{g_1,...,g_{\#K}\}$). Therefore, any multi-matching $\nu \in O(f,g)$ induces a permutation $\pi(\nu)$ of $\{1,...,\#K\}$. Let us denote by $c(\pi):=\max_i(|f_i-g_{\pi(i)}|)$ the \textit{cost} of a permutation $\pi$ of  $\{1,...,\#K\}$. In this formulation, we have:
\begin{equation}
    \label{equation_cost_permutation_equals_bottleneck}
    \max_{0\leqslant p \leqslant d}\db(\Dgm_p(f),\Dgm_p(g))=c(\gamma)= \min_{\nu \in O(f,g)} c(\nu)= \min_{\nu \in O(f,g)} c(\pi(\nu))
\end{equation}
Consider the following relaxed optimization problem, in which the pairing of coordinates in~\eqref{equation_correspondance_coercivity_proof} is ignored: 
\begin{equation}
    \label{equation_relaxed_permutation_optimization}
    \min_{\pi \text{ permutation of } \{1,...,\#K\}} c(\pi)
\end{equation}

From the fact that $f_1<...<f_{\#K}$ and $g_1<...<g_{\#K}$, the monotonicity of the optimal transport map for the $\infty$-Wasserstein distance in $\R$ \citep{rabin2011wasserstein} guarantees that $\pi=\text{Id}$ is a minimizer\footnote{Indeed, for a permutation $\pi$, let $\text{inv}(\pi)$ denote its number of inversions. Let $\pi$ be a minimizer \eqref{equation_relaxed_permutation_optimization} with minimal $\text{inv}(\pi)$. Assuming by contradiction that $\text{inv}(\pi)>0$, there exist $i<j$ such that $\pi(i)>\pi(j)$. Let $\tau{}$ be the transposition that swaps $\pi(i)$ and $\pi(j)$. Since $f_1<...<f_{\#K}$ and $g_1<...<g_{\#K}$, a simple case analysis shows that $c(\tau\circ \pi)\leqslant c(\pi)$, thus raising a contradiction with the minimality of $\text{inv}(\pi)$. Therefore $\pi=\text{Id}$ is a minimizer of~\eqref{equation_relaxed_permutation_optimization}. } of~\eqref{equation_relaxed_permutation_optimization}. 
Therefore,
\[\max_{0\leqslant p \leqslant d}\db(\Dgm_p(f),\Dgm_p(g))= \min_{\nu \in O(f,g)} c(\nu)\geqslant \min_{\pi \text{ permutation }} c(\pi)= c(\text{Id})=\|f-g\|_{\infty}\]
and we are done.

We now address the second part of the proposition. Let $f\in \mathbb{R}^{K}$ be a filter function. By the stability Theorem~\ref{stability theorem}, showing that Eq.~\eqref{equation_first_isometry} holds amounts to showing that $\max_{0\leqslant p \leqslant d }\db(\Dgm_p(f),\Dgm_p(g)) \geqslant \|f-g\|_\infty$. We denote by $S$ the top dimensional stratum $S$ that contains $f$, and let $g \in \mathbb{R}^{K}$ be another filter such that $\|f-g\|_\infty \leqslant d_0(f)$. This implies that $g$ is also in the (closure of the) stratum $S$. We can then apply~\eqref{equation_coercivity}, and since by assumption $\|f-g\|_{\infty}\leqslant d_0(f)\leqslant \max(d_0(f),d_0(g))$, we have the desired result. 

Using similar arguments, we finally prove Eq.~\eqref{equation_second_isometry}.  Let $f\in \mathbb{R}^{K}$ be a filter function in some top dimensional stratum $S$, and $g,h\in \R^K$ be such that $\|f-g\|\leqslant \frac{d_0(f)}{3}$ and $\|f-h\|\leqslant \frac{d_0(f)}{3}$. By the stability Theorem~\ref{stability theorem}, showing that Eq. \eqref{equation_second_isometry} holds amounts to showing that $\max_{0\leqslant p \leqslant d }\db(\Dgm_p(g),\Dgm_p(h)) \geqslant \|g-h\|_\infty$. For every $i\neq j \in \{1,...,\#K\}$, 
\[|g_i-g_j|=|(f_i-f_j)- [(f_i-g_i)+(g_j-f_j)]|\geqslant |f_i-f_j| - 2\|f-g\|_\infty \geqslant 2d_0(f)-2\|f-g\|_\infty\geqslant \frac{4d_0(f)}{3},\]
so that $d_0(g)\geqslant \frac{2d_0(f)}{3}$. Similarly, $d_0(h)\geqslant \frac{2d_0(f)}{3}$. Meanwhile, 
\[\|g-h\|_\infty\leqslant \|f-g\|_\infty+\|f-h\|_\infty \leqslant \frac{2d_0(f)}{3}. \]
Therefore, $\|g-h\|_\infty \leqslant \max(d_0(g),d_0(h))$, and since both $g,h$ are in (the closure of) $S$, we conclude by using Eq.~\eqref{equation_coercivity}.
\end{proof}
\section{The bottleneck distance to a fixed diagram: the general case}
\label{appendix_bottleneck}

Throughout, we denote by $\Delta_\epsilon$ the set of elements in $\R^2$ that are at distance less than $\epsilon>0$ to the diagonal $\Delta$. We equip, for the purpose of this section only, the spaces of ordered barcodes with the supremum norm $\|.\|_\infty$ rather than the Euclidean norm. Note that (the proof of) Proposition~\ref{proposition_continuity_quotient_map} ensures that the quotient maps $Q_{m,n}$ are $1$-Lipchitz with respect to the metrics in place. We denote by $\mathcal{B}(.,*)$ the ball centered at $.$ with radius $*$ with respect to the supremum norm or bottleneck metric depending on the context.

In this section we generalize Proposition~\ref{prop:dist_empty_diag_diff}, namely  we show the generic differentiability of the bottleneck distance $d_{D_0}:Bar\rightarrow \R\cup \{+\infty\}$ to an arbitrary fixed diagram $D_0\in Bar$. 
\begin{proposition}
\label{prop:dist__diag_diff}
Let $D_0\in Bar$ and $n$ be the number of infinite bars in $D_0$. For generic $D\in Bar_n$, $d_{D_0}$ is $\infty$-differentiable at $D$. Moreover, for any $m\in \mathds{N}$ and $\obarcode{}\in \mathbb{R}^{2m}\times \mathbb{R}^n$ such that $Q_{m,n}(\obarcode{})=D$, exactly one of the following possibilities holds:
\begin{itemize}
    \item[(i)] either the gradient $\nabla_{\obarcode{}} (d_{D_0} \circ Q_{m,n})$ has exactly two non-zero components, one with value $\frac{1}{2}$ and the other with value $-\frac{1}{2}$; or
    \item[(ii)] the gradient $\nabla_{\obarcode{}} (d_{D_0} \circ Q_{m,n})$ has a unique non-zero component with value $1$ or $-1$.
\end{itemize}
\end{proposition}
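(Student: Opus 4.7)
The plan is to generalize the approach of Proposition~\ref{prop:dist_empty_diag_diff} by identifying a generic (open, dense) subset $\mathrm{Gen}(D_0)\subset Bar_n$ on which $d_{D_0}$ admits an explicit smooth local formula. The core of the argument will be to show that the \emph{combinatorial type} of the optimal matching is locally constant, reducing the computation of $d_{D_0}$ to a single smooth expression and its gradient to an elementary calculation. I would define $\mathrm{Gen}(D_0)$ to consist of those $D\in Bar_n$ satisfying: (G1) there is a unique optimal matching $\gamma^{*}\colon D\to D_0$ realising $d_{D_0}(D)=c(\gamma^{*})$; (G2) the supremum $c(\gamma^{*})=\sup_{x\in D}\|x-\gamma^{*}(x)\|_\infty$ is attained at a unique critical pair $(x^{*},y^{*})=(x^{*},\gamma^{*}(x^{*}))$, with $x^{*}\in D$ off-diagonal; and (G3) if $y^{*}\in D_0$ is also off-diagonal, writing $x^{*}=(b^{*},d^{*})$ and $y^{*}=(b_0,d_0)$, then $|b^{*}-b_0|\neq|d^{*}-d_0|$.

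I would first establish the genericity of $\mathrm{Gen}(D_0)$. Density follows from arbitrarily small perturbations of the off-diagonal points of $D$ that break ties in the cost of competing matchings (enforcing (G1)), separate the two largest matched-pair costs (enforcing (G2)), and break the coordinate equality in (G3). Openness relies on the finiteness of the combinatorial types of matchings between $D$ and $D_0$ and on the fact that, for each fixed type, the cost depends continuously on the off-diagonal points of $D$.

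The central step is showing that for $D\in \mathrm{Gen}(D_0)$, any preimage $\obarcode\in \R^{2m}\times\R^n$ of $D$ admits an open neighborhood $U$ on which $d_{D_0}\circ Q_{m,n}$ is given by a single smooth formula. If $y^{*}\in\Delta$, this formula is $\tfrac{|d'-b'|}{2}$ in the two coordinates of $\obarcode{}'$ corresponding to $x^{*}$ (case (i)); if $y^{*}\in D_0$ is off-diagonal and, say, $|b^{*}-b_0|>|d^{*}-d_0|$, the formula is $|b'-b_0|$ in the single coordinate corresponding to the abscissa of $x^{*}$ (case (ii), with the symmetric subcase handled identically). The justification requires that the combinatorial type of $\gamma^{*}$ be locally constant: any $\obarcode{}'$ close to $\obarcode{}$ yields a diagram $D'$ whose optimal matching cost is arbitrarily close to $d_{D_0}(D)$ by the Stability Theorem, and by the strict minimality in (G1) together with the finiteness of combinatorial types, this forces the optimal matching at $D'$ to share the combinatorial type of $\gamma^{*}$. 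A parallel argument using (G2) and (G3) pins down the critical pair and the dominating coordinate. Once this local formula is established, the gradient in case (i) is immediately seen to have exactly two non-zero components, namely $\pm\tfrac{1}{2}$ at the two coordinates of $x^{*}$, while in case (ii) it has a unique non-zero component $\pm 1$ at the dominating coordinate of $x^{*}$, exactly as asserted.

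The hardest part will be the local constancy of the optimal matching and its critical pair in the central step. Although this has the flavor of a stability result, it requires careful bookkeeping of \emph{which} off-diagonal points of $D$ are matched to \emph{which} off-diagonal points of $D_0$ (as opposed to the diagonal), and the statement must accommodate the fact that the ambient space $\R^{2m}\times\R^n$ overcounts intervals, since any off-diagonal coordinate pair of $\obarcode{}$ may drift into the diagonal under perturbation. My intended remedy is to exploit the strict inequalities granted by (G1)--(G3): each provides a positive margin (uniqueness gap, second-largest-pair gap, coordinate gap) that absorbs any small enough perturbation, so that the same optimal combinatorial type persists throughout a sufficiently small neighborhood of $\obarcode{}$.
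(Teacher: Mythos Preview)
Your approach differs from the paper's. The paper works bottom-up through four lemmas: first each $d_{D_0}\circ Q_{m,n}$ is shown generically smooth on $\R^{2m+n}$ as a minimum of finitely many max-of-affine cost functions; then a key lemma shows that for $D$ in an auxiliary set $\hat{Bar}$ (no interval of $D_0$ has diagonal distance exactly $d_\infty(D,D_0)$), every optimal matching $D'\to D_0$ for nearby $D'$ can be chosen to send all near-diagonal points of $D'$ to $\Delta$; two further lemmas transfer smoothness from a minimal preimage of $D$ to all preimages (via a linear projection forgetting the extra coordinates) and cover the bottleneck ball around $D$ by Euclidean balls around preimages. You instead work top-down, defining the generic set directly via (G1)--(G3) and aiming for a single local formula.

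There is a real gap in your local-constancy step, and you correctly locate it as the hardest part. Your justification invokes ``finiteness of the combinatorial types of matchings between $D$ and $D_0$'' together with the margins from (G1)--(G3), but this only controls matchings of $D'$ that correspond to matchings of $D$. A preimage $\obarcode\in\R^{2m+n}$ with large $m$ carries many trivial pairs $(b,b)$; perturbing $\obarcode$ turns these into new off-diagonal points of $D':=Q_{m,n}(\obarcode')$ near $\Delta$, each of which may be matched to any off-diagonal point of $D_0$. These are combinatorial types with no analogue at $D$, unboundedly many as $m$ grows, and your margins say nothing about them. In fact (G1) will typically \emph{fail} for such $D'$: if $D_0$ has a short bar $p_0$ (matched to $\Delta$ in $\gamma^*$), then routing a new near-diagonal point of $D'$ either to $\Delta$ or to $p_0$ yields two distinct optimal matchings with the same cost (both dominated by the critical pair). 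What you actually need is not uniqueness of the optimal matching for $D'$ but that the \emph{value} $d_{D_0}(D')$ is still realized at the critical pair; proving this amounts to showing that near-diagonal points of $D'$ can always be routed to $\Delta$ in some optimal matching, which is precisely the content of the paper's dedicated lemma. With that ingredient in place your argument goes through and is arguably more direct than the paper's modular route.
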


Proposition~\ref{prop:dist__diag_diff} states the generic smoothness of $d_{D_0}$. We first observe that all the compositions $d_{D_0}\circ Q_{m,n}$ are smooth on a generic subset of $\R^{2m+n}$.

\begin{lemma}
\label{lemma_generic_diff_bottleneck_pre_comp_quotient}
For every $m\in \mathbb{N}$, the map \[d_{D_0}\circ Q_{m,n}:\R^{2m+n}\rightarrow \R\]
is generically smooth, with gradients that are either $0$ or as in $(i)$ or $(ii)$ of Proposition~\ref{prop:dist__diag_diff}.
\end{lemma}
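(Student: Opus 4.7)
The plan is to express $d_{D_0}\circ Q_{m,n}$ as a minimum of finitely many piecewise-linear cost functions and then identify an open dense set on which this min-max is locally equal to a single smooth linear term, whose gradient can be read off directly.

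First I would write $D_0 = \{(x_\ell, y_\ell)\}_{\ell=1}^M \cup \{(w_j, +\infty)\}_{j=1}^n \cup \Delta^\infty$. Any matching $\gamma\in\Gamma(Q_{m,n}(\obarcode), D_0)$ of finite cost must bijectively pair infinite endpoints of $\obarcode$ with those of $D_0$, and pair each off-diagonal finite bar of either diagram either with an off-diagonal finite bar of the other or with the diagonal. Enumerating the finitely many combinatorial types of such matching schemes $\pi\in P$---each specified by a permutation $\sigma_\pi$ of $\{1,\dots,n\}$, a subset $S_\pi\subseteq\{1,\dots,m\}$, and an injection $\tau_\pi: S_\pi\to\{1,\dots,M\}$---one writes the cost of $\pi$ as
\[ C_\pi(\obarcode) := \max\Big(\max_j |v_j - w_{\sigma_\pi(j)}|,\ \max_{i\in S_\pi}\|(b_i,d_i)-(x_{\tau_\pi(i)}, y_{\tau_\pi(i)})\|_\infty,\ \max_{i\notin S_\pi}\tfrac{d_i-b_i}{2},\ \max_{\ell \notin \mathrm{im}(\tau_\pi)}\tfrac{y_\ell - x_\ell}{2}\Big), \]
so that $d_{D_0}\circ Q_{m,n}(\obarcode) = \min_{\pi\in P} C_\pi(\obarcode)$.

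Next, each $C_\pi$ is itself a max of finitely many elementary terms of four kinds: a constant $\tfrac{y_\ell - x_\ell}{2}$, a half-difference $\tfrac{d_i-b_i}{2}$, an absolute value $|v_j - w_{\sigma_\pi(j)}|$, or a sup norm $\max(|b_i-x_{\tau_\pi(i)}|,|d_i-y_{\tau_\pi(i)}|)$. Each is piecewise-linear and fails to be smooth only on the vanishing locus of the inner quantity or on the equality locus of two components of the sup norm. Let $\tilde{\R}^{2m+n}$ be the complement of the finite union of three kinds of loci: (i) $\{C_\pi = C_{\pi'}\}$ for $\pi\neq\pi'$; (ii) within a fixed $C_\pi$, the sets where two of its elementary terms coincide; and (iii) the non-smoothness loci of individual elementary terms (e.g. $v_j = w_{\sigma_\pi(j)}$, $b_i = d_i$, or $|b_i-x_{\tau_\pi(i)}| = |d_i-y_{\tau_\pi(i)}|$). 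Each such locus is a semi-algebraic subset of codimension at least $1$ in $\R^{2m+n}$, and the finite union is closed with empty interior, so $\tilde{\R}^{2m+n}$ is open and dense. On this set, $d_{D_0}\circ Q_{m,n}$ locally coincides with a single smooth elementary term, hence is smooth with gradient equal to that of the term.

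Finally, one reads off the gradients term by term: constants give gradient $0$; $\tfrac{d_i-b_i}{2}$ gives $\tfrac{1}{2}(e_{d_i} - e_{b_i})$, matching case $(i)$; $|v_j - w_{\sigma_\pi(j)}|$ gives $\pm e_{v_j}$, matching case $(ii)$; and $\max(|b_i-x_{\tau_\pi(i)}|,|d_i-y_{\tau_\pi(i)}|)$ gives $\pm e_{b_i}$ or $\pm e_{d_i}$ depending on which component strictly dominates, again matching case $(ii)$. This exhausts the possibilities. The only real subtlety is the bookkeeping in the genericity step---enumerating and verifying the codimension of the various ``bad'' loci---but this is routine because each locus is cut out by a single algebraic equation and there are only finitely many of them.
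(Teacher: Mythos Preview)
Your approach is essentially the same as the paper's: express $d_{D_0}\circ Q_{m,n}$ as a $\min$ over finitely many combinatorial matching types of a $\max$ of finitely many affine or absolute-value terms, observe that such a piecewise-linear function is smooth on a generic set, and read off the gradient from the locally dominant elementary term. The paper packages the matching types as ``ordered matchings'' (affine maps $\tilde\gamma:\R^{2m+n}\to\R^{2m+n}$) rather than triples $(\sigma_\pi,S_\pi,\tau_\pi)$, but the content is identical.

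One small correction to your genericity step: the claim that each equality locus $\{C_\pi=C_{\pi'}\}$ or $\{\text{two elementary terms of }C_\pi\text{ coincide}\}$ has codimension $\geq 1$ is not true in general. Two distinct schemes $\pi,\pi'$ can yield $C_\pi\equiv C_{\pi'}$ on a full-dimensional region (for instance when they differ only in a part of the matching that does not realize the max there), and likewise two constants $\tfrac{y_\ell-x_\ell}{2}$ in $D_0$ may coincide identically. The right move, which the paper makes, is to exclude only the \emph{boundaries} of these equality sets: the boundary of any closed set has empty interior, and on the complement of these boundaries the min--max is still locally equal to a single smooth elementary term (if several tie on an open region, they all agree there, so any one serves). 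With this adjustment your argument goes through verbatim; the gradient identification is exactly as you wrote (with $\tfrac{|d_i-b_i|}{2}$ in place of $\tfrac{d_i-b_i}{2}$, since barcode points need not lie above the diagonal).
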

\begin{proof}
Let $m\in \mathbb{N}$. Define an {\em ordered matching} $\tilde{\gamma}: \R^{2m+n}\rightarrow \R^{2m+n}$ to be an affine map whose first $m$ pairs of coordinate functions (resp. last  $n$ coordinate functions) are of the form  $\obarcode:=[(b_i,d_i)_{i=1}^m,(v_j)_{j=1}^n] \mapsto (b_i,d_i)-(b_{0,i},d_{0,i})$ where $(b_{0,i},d_{0,i})$ is either an off-diagonal point in $D_0$ or $(b_{0,i},d_{0,i})=(\frac{b_i+d_i}{2},\frac{b_i+d_i}{2})$ is the orthogonal projection of $(b_i,d_i)$ onto $\Delta$ (resp. are of the form $\obarcode \mapsto v_j-v_{0,j}$ for some infinite interval $(v_{0,j},+\infty)$ in $D_0$). We further require that the collection of intervals $(b_{0,i},d_{0,i})$ (resp. $(v_{0,j},+\infty)$) involved in this way are distinct elements in $D_0$. We denote by $D_0(\tilde{\gamma})$ the set of bounded off-diagonal intervals $(b_0,d_0)\in D_0$ that are not in the collection $\{b_{0,i},d_{0,i}\}_{i=1}^m$. 

Since the maximum of smooth functions over $\R^{2m+n}$ is smooth\footnote{This is the same argument as in the proof of Proposition~\ref{theorem_PH_differentiable_global}. Namely, the set where at least two of the smooth functions involved in the maximum are equal is closed, and therefore the boundary of this set has generic complement. On this complement the maximum of the smooth functions locally equals a unique smooth function.} on a generic subset of $\R^{2m+n}$, the map
\begin{equation}
\label{equation_ordered_cost}
   \tilde{c}(\tilde{\gamma}): \obarcode \in \R^{2m+n} \mapsto \max(\|\tilde{\gamma}(\obarcode)\|_\infty, \{\frac{|d_0-b_0|}{2}\}_{(b_0,d_0)\in D_0(\tilde{\gamma})}) \in \R 
\end{equation}
is itself $C^\infty$ on a generic subset of $\R^{2m+n}$, with gradients either equal to $0$ or as in $(i)$ or $(ii)$ of Proposition~\ref{prop:dist__diag_diff}. Let $\tilde{\Gamma}_m$ be the set of ordered matchings $\tilde{\gamma}: \R^{2m+n}\rightarrow \R^{2m+n}$, which is non-empty and finite. Then the map
\[\tilde{d}_{D_0,m}: \obarcode{} \in \R^{2m+n} \mapsto \min_{\tilde{\gamma}\in \tilde{\Gamma}_m} \tilde{c}(\tilde{\gamma})(\obarcode{}) \in \R\]
is $C^\infty$ on a generic subset of $\R^{2m+n}$, with gradients either equal to $0$ or as in $(i)$ or $(ii)$ of Proposition~\ref{prop:dist__diag_diff}. 

We will be done if we can show that the two maps $d_{D_0}\circ Q_{m,n}$ and $\tilde{d}_{D_0,m}$ are equal over $\R^{2m+n}$. Fix an ordered barcode $\obarcode{}\in \R^{2m+n}$ and let $D:=Q_{m,n}(\obarcode{})$. Let $\tilde{\gamma}:\R^{2m+n}\rightarrow \R^{2m+n}$ be an ordered matching. The components of $\tilde{\gamma}$ determine a matching $\gamma$ between $D$ and $D_0$, sending $(b_i,d_i)$ onto $(b_{0,i},d_{0,i})$ and $(v_j,+\infty)$ onto $(v_{0,j},+\infty)$. By definition of the cost of a matching~\ref{definition_matching} and Equation~\eqref{equation_ordered_cost}, we have $c(\gamma)=\tilde{c}(\tilde{\gamma})(\obarcode)$. This yields $\tilde{d}_{D_0,m}(\obarcode{})\geqslant d_{D_0}(D)=d_{D_0}\circ Q_{m,n}(\obarcode)$. Conversely, among the optimal matchings from $D$ to $D_0$, it is always possible to find one that sends off-diagonal points of $D$ (and $D_0$) on the diagonal only by orthogonal projection. This allows us to lift $\gamma$ at the level of $\obarcode$ and to define an ordered matching $\tilde{\gamma}$ such that $\tilde{c}(\tilde{\gamma})(\obarcode{})=c(\gamma)$. This yields $\tilde{d}_{D_0,m}(\obarcode{})\leqslant d_{D_0}(D)=d_{D_0}\circ Q_{m,n}(\obarcode)$ and therefore $d_{D_0}\circ Q_{m,n}=\tilde{d}_{D_0,m}$ on $\R^{2m+n}$.

\end{proof}
We cannot directly use Lemma~\ref{lemma_generic_diff_bottleneck_pre_comp_quotient} to prove Proposition~\ref{prop:dist__diag_diff}. As a matter of fact, by the definition of $\infty$-differentiability (Definition~\ref{definition_smooth_vectorization}),  Proposition~\ref{prop:dist__diag_diff} is asking that for generic $D\in Bar_n$ {\em all} the maps $d_{D_0}\circ Q_{m,n}$, for varying $m\in \mathbb{N}$, should be smooth at pre-images of $D$. However, Lemma~\ref{lemma_generic_diff_bottleneck_pre_comp_quotient} only guarantees that the maps $d_{D_0}\circ Q_{m,n}$, taken individually, are smooth over generic subsets of $\R^{2m+n}$, and it is not clear a priori how to glue at the level of barcodes these generic subsets lying in different spaces of ordered barcodes $\R^{2m+n}$. In order to leverage Lemma~\ref{lemma_generic_diff_bottleneck_pre_comp_quotient}, we devise intermediate results that infer the smoothness of the maps $d_{D_0}\circ Q_{m',n}$ from the knowledge of the smoothness of a well-chosen map $d_{D_0}\circ Q_{m,n}$. The high-level intuition of each of these intermediate steps is as follows:
\begin{enumerate}
    \item Infinitesimal perturbations of a given diagram $D$ can be understood as infinitesimal moves of the off-diagonal points of $D$, together with appearances of small intervals from the diagonal. In Lemma~\ref{lemma_sending_points_diagonal}, we devise a generic condition on $D$ ensuring that these new small off-diagonal intervals appearing when perturbing $D$ do not play any role in the bottleneck distance to $D_0$.
    \item Given a barcode $D$, we take a pre-image $\obarcode{}_m\in Q_{m,n}^{-1}(D)$ of $D$ which is minimal in the sense that its pairs of adjacent components are not trivial, i.e not of the form $(b,b)$. In other words, $\obarcode{}_m$ is an ordering of the endpoints of off-diagonal intervals appearing in $D$ without extra pairs $(b,b)$ lying on the diagonal. Up to an infinitesimal perturbation of $\obarcode{}_m$, Lemma~\ref{lemma_generic_diff_bottleneck_pre_comp_quotient} ensures that $d_{D_0}\circ Q_{m,n}$ is smooth in an open neighborhood of $\obarcode_{m}$. It is easy to observe that for any other pre-image $\obarcode{}_{m'}$ of $D$, the components of the ordered barcode $\obarcode{}_{m'}$ only differ with those of $\obarcode_{m}$ by the addition of trivial pairs of the form $(b,b)$. According to the previous item, those trivial pairs do not play any role when computing the bottleneck distance to $D_0$. Therefore, since $d_{D_0}\circ Q_{m,n}$ is smooth in a neighborhood of $\obarcode{}_m$, the map $d_{D_0}\circ Q_{m',n}$ is itself smooth in an open neighborhood of $\obarcode{}_{m'}$. We make these intuitions rigorous in Lemma~\ref{lemma_minimality_implies_compositions_smooth}.
    \item The previous arguments allow us to construct open balls $\mathcal{B}(\obarcode_{m'},\epsilon)$ of the same radius $\epsilon>0$ around all pre-images $\obarcode_{m'}\in \R^{2m'+n}$ of a generic diagram $D\in Bar$ over which all maps $d_{D_0}\circ Q_{m',n}$ are smooth. To conclude that $d_{D_0}$ itself is $\infty$-differentiable in a neighborhood of $D$, we show in Lemma~\ref{lemma_convering_bottleneck_ball_Euclidean_ball} that the $\epsilon$-bottleneck ball around $D$ is covered by the union of the images of the balls $\mathcal{B}(\obarcode_{m'},\epsilon)$.
\end{enumerate}
Let $\hat{Bar}$ be the set of barcodes $D\in Bar_n$ such that no intervals of $D_0$ is at distance $\db(D,D_0)$ to its diagonal projection. It is easy to check that $\hat{Bar}$ is generic in $Bar_n$. When perturbing a given barcode $D$ infinitesimally, an arbitrary number of new off-diagonal points may appear from the diagonal. We show that, for $D\in \hat{Bar}$, these new off-diagonal intervals can be disregarded when computing the bottleneck distance to $D_0$. 

\begin{lemma}
\label{lemma_sending_points_diagonal}
Let $D\in \hat{Bar}$. There exists $\epsilon>0$ such that for any barcode $D'$ which is $\epsilon$-close to $D$ we have that $\db(D',D_0)>\epsilon$, and there exists an optimal matching from $D'$ to $D_0$ sending $D'\cap \Delta_\epsilon$ (i.e. those points of $D'$ that are $\epsilon$-close to the diagonal), onto the diagonal $\Delta$.
\end{lemma}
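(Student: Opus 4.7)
The plan is to choose $\epsilon$ small with respect to a few geometric quantities associated to $D$ and $D_0$, and then to massage any optimal matching into one with the desired property by a re-routing argument.

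First, I would set $\delta_0 := \db(D,D_0)$ and assume $\delta_0 > 0$ (the barcode $D_0$ is a single point of $Bar_n$, so excluding it does not affect genericity of $\hat{Bar}$). The defining property of $\hat{Bar}$ ensures that
\[
\alpha := \min_{(b_0,d_0)\in D_0\setminus \Delta} \left| \tfrac{|d_0-b_0|}{2} - \delta_0 \right| > 0,
\]
so the finitely many off-diagonal intervals of $D_0$ split into the subset $A_+$ of those at distance $\geq \delta_0+\alpha$ from $\Delta$ and the subset $A_-$ of those at distance $\leq \delta_0-\alpha$. Let also $\beta$ denote the minimum distance of an off-diagonal point of $D$ to $\Delta$ (with the convention $\beta=+\infty$ if $D$ has no bounded off-diagonal points). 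I would then pick $\epsilon < \tfrac{1}{3}\min(\delta_0,\alpha,\beta)$.

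With this choice, the first assertion $\db(D',D_0) > \epsilon$ follows immediately from the reverse triangle inequality $\db(D',D_0) \geq \delta_0 - \epsilon > \epsilon$. A standard tracking argument using the Stability Theorem then shows that if $D'$ lies within $\epsilon$ of $D$, each off-diagonal point of $D$ corresponds to a unique off-diagonal point of $D'$ at distance $<\epsilon$ from it, which automatically lies at distance $\geq \beta-\epsilon > 2\epsilon$ from $\Delta$; consequently $D'\cap \Delta_\epsilon$ consists entirely of ``new'' off-diagonal points of $D'$ that do not track any point of $D$.

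To establish the second assertion, I would start from an optimal matching $\gamma':D'\to D_0$. Suppose some $p\in D'\cap \Delta_\epsilon$ is matched to an off-diagonal interval $(b_0,d_0)\in D_0$. The triangle inequality gives $\|p-(b_0,d_0)\|_\infty \geq \tfrac{|d_0-b_0|}{2}-\epsilon$, so if $(b_0,d_0)$ were in $A_+$ the matching cost would exceed $\delta_0+\alpha-\epsilon > \delta_0+\epsilon \geq c(\gamma')$, a contradiction; hence $(b_0,d_0)\in A_-$. I then re-route $\gamma'$ by un-pairing $p$ from $(b_0,d_0)$ and sending each to its own diagonal projection. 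The two new cost contributions are bounded by $\epsilon$ and $\delta_0-\alpha$ respectively, both strictly smaller than $\delta_0-\epsilon \leq c(\gamma')$, so the overall maximum cost is unchanged and the re-routed matching remains optimal.

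Iterating this re-routing over every offending new point yields an optimal matching sending all of $D'\cap \Delta_\epsilon$ to $\Delta$, proving the lemma. The main subtlety I anticipate is making precise the notion of ``re-routing'' inside the multiset formalism of Definition~\ref{barcode_definition}, where $\Delta^\infty$ provides infinitely many interchangeable diagonal copies; this is handled by reformulating a matching as a partial assignment between the finite off-diagonal parts of $D'$ and $D_0$, with unassigned points implicitly sent to their diagonal projections, and then removing a single assigned pair from this partial bijection.
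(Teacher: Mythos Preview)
Your proof is correct and follows essentially the same route as the paper's: define the gap $\alpha$, choose $\epsilon$ small relative to $\delta_0$ and $\alpha$, rule out the case $(b_0,d_0)\in A_+$ by a cost contradiction, and in the case $(b_0,d_0)\in A_-$ re-route both endpoints to the diagonal without increasing the cost. The paper packages the iteration as a ``maximal $\Delta$-degree'' argument on the finite set $\Gamma^*(D',D_0)$ of optimal matchings using only orthogonal diagonal projections, but this is equivalent to your explicit finite iteration; your extra constant $\beta$ and the tracking observation about $D'\cap\Delta_\epsilon$ consisting of ``new'' points are harmless but not needed, since the case analysis applies to any point of $D'$ within $\epsilon$ of $\Delta$ regardless of its origin.
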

 
\begin{proof}
Let $D\in \hat{Bar}$. Denote by $\alpha$ the minimal gap $|\frac{|d_0-b_0|}{2}- \db(D,D_0)|$ between the distance of off-diagonal intervals $(b_0,d_0)$ of $D_0$ from their diagonal projections and $\db(D,D_0)$. Since $D\in \hat{Bar}$, $\alpha$ is strictly positive.


We have $\db(D,D_0)>0$ as otherwise $\db(D,D_0)=0$ would imply that $D\notin \hat{Bar}$ (as the distance from a diagonal element of $D_0$ to its diagonal projection would also be $0$), so we can pick $\epsilon>0$ such that %
\[\epsilon < \min(\frac{\db(D,D_0)}{2},\frac{\alpha}{2}).\]
We now prove that the conclusion of the Lemma holds in the bottleneck ball $\mathcal{B}(D,\epsilon)$. Let $D'\in \mathcal{B}(D,\epsilon)$. Since $\epsilon < \frac{\db(D,D_0)}{2}$, we have $\db(D',D_0)>\epsilon$.
We assume, seeking contradiction, that there is no optimal matching from $D'$ to $D_0$ that sends all points of $D'\cap \Delta_\epsilon$ onto $\Delta$.

We restrict our attention to the set $\Gamma^*(D',D_0)$ of optimal matchings from $D'$ to $D_0$ that are allowed to send off-diagonal points of $D'$ and $D_0$ to the diagonal only by orthogonal projections. This set is finite and non-empty. We define the {\em $\Delta$-degree} of a matching $\gamma\in \Gamma^*(D',D_0)$ to be the number of off-diagonal points of $D'$ and $D_0$ that are sent to their diagonal projections, and take $\gamma$ with maximal $\Delta$-degree. By assumption, there exists an off-diagonal point $(b',d')\in D'\cap \Delta_\epsilon$ sent to some off-diagonal point $(b_0,d_0)\in D_0$. Recall that $|\frac{|d_0-b_0|}{2}-\db(D,D_0)|\geqslant \alpha$. We divide the analysis into two cases: either $\frac{|d_0-b_0|}{2}\geqslant \db(D,D_0)+ \alpha$, or $\frac{|d_0-b_0|}{2}\leqslant \db(D,D_0)- \alpha$.

In the case where $\frac{|d_0-b_0|}{2}\geqslant \db(D,D_0)+ \alpha$, we have:
\begin{align*}
\|(b_0,d_0)-(b',d')\|_\infty &= \|[(b_0,d_0)-(\frac{b'+d'}{2},\frac{b'+d'}{2})]-[(b',d')-(\frac{b'+d'}{2},\frac{b'+d'}{2})]\|_\infty \\  &\geqslant \|(b_0,d_0)-(\frac{b'+d'}{2},\frac{b'+d'}{2})\|_\infty - \|(b',d')-(\frac{b'+d'}{2},\frac{b'+d'}{2})\|_\infty\\ 
&\geqslant  \frac{|d_0-b_0|}{2}- \frac{|d'-b'|}{2}\\
&\geqslant  \db(D,D_0)+ \alpha - \epsilon \\
& \geqslant \db(D',D_0) + \alpha - 2\epsilon\\
&> \db(D',D_0)
\end{align*}
where the first inequality holds by the triangle inequality, the second from the fact that a minimizer of the distance from $(b_0,d_0)$ to the diagonal is the orthogonal projection of $(b_0,d_0)$ onto $\Delta$, the third by assumption on $(b_0,d_0)$ and $(b',d')$, the fourth by the triangle inequality and the last one by $\epsilon<\frac{\alpha}{2}$. This yields a contradiction as $\gamma$ is optimal and its cost may not exceed $\db(D',D_0)$.

Consider now the case where $\frac{|d_0-b_0|}{2}\leqslant \db(D,D_0)- \alpha$. On the one hand, by the triangle inequality and by the fact that $\epsilon<\alpha$:
\[\frac{|d_0-b_0|}{2}\leqslant \db(D,D_0)- \alpha \leqslant \db(D',D_0) +\epsilon - \alpha <  \db(D',D_0)\]
On the other hand, since $\epsilon < \frac{\db(D,D_0)}{2}$, 
\[\frac{|d'-b'|}{2}\leqslant \epsilon < \frac{\db(D,D_0)}{2}\leqslant \frac{\db(D',D_0) +\epsilon}{2} <  \db(D',D_0),\]
where the last inequality comes from the fact that $\epsilon < \db(D',D_0)$ because $\epsilon <  \frac{\db(D',D_0)+ \epsilon}{2}$.
To sum up, both quantities $\frac{|d_0-b_0|}{2}$ and $\frac{|d'-b'|}{2}$ are upper-bounded by $\db(D',D_0)$. Modifying $\gamma$ by sending $(b_0,d_0)$ and $(b',d')$ to their diagonal projections, we obtain a matching in $\Gamma^*(D',D_0)$ with $\Delta$-degree strictly higher than that of $\gamma$, which contradicts the maximality of the $\Delta$-degree of $\gamma$.
\end{proof}

We say that an ordered barcode $\obarcode_{m}=[(b_i,d_i)_{i=1}^m,(v_j)_{j=1}^n] \in\R^{2m+n}$ is {\em minimal} if $b_i\neq d_i$ for $1\leqslant i \leqslant m$. This terminology is justified by the fact that the image $D:=Q_{m,n}(\obarcode_{m})\in Bar_n$ contains exactly $m$ bounded-off diagonal intervals and $n$ unbounded ones, and therefore any other pre-image $\obarcode{}_{m'}\in \R^{2m'+n}$ of $D$ must lie in a space of ordered barcodes of dimension at least $2m+n$ (i.e. $m'\geqslant m$). We show that under suitable assumptions, the differentiability of all the maps $d_{D_0}\circ Q_{m',n}$ at pre-images $\obarcode{}_{m'}$ of $D$ can be inferred from the differentiability of $d_{D_0}\circ Q_{m,n}$ at the minimal pre-image $\obarcode{}_m$.

\begin{lemma}
\label{lemma_minimality_implies_compositions_smooth}
For every $m\in \mathbb{N}$, the set of minimal ordered barcodes in $\R^{2m+n}$ is open. Moreover, given a minimal $\obarcode{}_m\in \R^{2m+n}$ with $D:=Q_{m,n}(\obarcode{}_m)\in \hat{Bar}$, if $d_{D_0}\circ Q_{m,n}$ is $C^\infty$ in an open neighborhood of $\obarcode{}_m$, then there is an $\epsilon>0$ such that for all other pre-images $\obarcode{}_{m'}$ of $D$, the map $d_{D_0}\circ Q_{m',n}$ is $C^\infty$ in $\mathcal{B}(\obarcode_{m'},\epsilon)$, with gradients as in $(i)$ or $(ii)$ of Proposition~\ref{prop:dist__diag_diff}.
\end{lemma}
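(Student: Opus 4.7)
The plan is to tackle the two assertions separately. Openness is immediate: minimality of $\obarcode_m=[(b_i,d_i)_{i=1}^m,(v_j)_{j=1}^n]$ amounts to $\bigcap_{i=1}^m\{b_i\ne d_i\}$, the complement of a finite union of hyperplanes in $\R^{2m+n}$.

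For the second assertion, my first step would be to describe the fibre $Q_{m',n}^{-1}(D)$ for $m'\ge m$. Since $\obarcode_m$ is minimal and $D\in \hat{Bar}\subset Bar_n$, the diagram $D$ contains precisely $m$ bounded off-diagonal intervals and $n$ infinite ones; consequently, up to a permutation of finite pairs and of infinite coordinates, any $\obarcode_{m'}\in Q_{m',n}^{-1}(D)$ is the concatenation of the $m$ non-trivial pairs and $n$ infinite coordinates of $\obarcode_m$ together with $m'-m$ trivial pairs of the form $(c_k,c_k)$. Because $d_{D_0}\circ Q_{m',n}$ and the balls $\mathcal{B}(\obarcode_{m'},\epsilon)$ are $\mathfrak{S}_{m'}\times \mathfrak{S}_n$-equivariant, I may assume without loss of generality that $\obarcode_{m'}$ is in the canonical form in which $\obarcode_m$'s finite pairs occupy the first $2m$ coordinates, the next $2(m'-m)$ coordinates carry the trivial pairs, and the last $n$ coordinates match those of $\obarcode_m$.

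The crux is then to invoke Lemma~\ref{lemma_sending_points_diagonal}: it yields an $\epsilon_1>0$ such that every $D'\in \mathcal{B}(D,\epsilon_1)$ satisfies $d_{D_0}(D')>\epsilon_1$ and admits an optimal matching to $D_0$ sending all off-diagonal points of $D'$ inside $\Delta_{\epsilon_1}$ onto $\Delta$. I would choose $\epsilon\in(0,\epsilon_1)$ with $\mathcal{B}(\obarcode_m,\epsilon)\subset U$, where $U$ is the given $C^\infty$ neighborhood of $\obarcode_m$ for $d_{D_0}\circ Q_{m,n}$; crucially, neither $\epsilon_1$ nor $U$ depends on the choice of $\obarcode_{m'}$, so the same $\epsilon$ will serve uniformly across all pre-images. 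For any $\obarcode'_{m'}\in\mathcal{B}(\obarcode_{m'},\epsilon)$, each perturbed trivial pair $(c'_k,e'_k)$ satisfies $|c'_k-c_k|,|e'_k-c_k|\le \epsilon$, hence lies within $\epsilon<\epsilon_1$ of $\Delta$; meanwhile $D':=Q_{m',n}(\obarcode'_{m'})\in\mathcal{B}(D,\epsilon_1)$ by the $1$-Lipschitz continuity of $Q_{m',n}$ (Proposition~\ref{proposition_continuity_quotient_map}). Applying Lemma~\ref{lemma_sending_points_diagonal} shows that the off-diagonal points of $D'$ arising from perturbed trivial pairs contribute nothing to $d_{D_0}(D')$, yielding
\begin{equation*}
d_{D_0}\circ Q_{m',n}(\obarcode'_{m'}) \;=\; d_{D_0}\circ Q_{m,n}\bigl(\pi(\obarcode'_{m'})\bigr),
\end{equation*}
where $\pi:\R^{2m'+n}\to\R^{2m+n}$ is the linear coordinate projection erasing the $2(m'-m)$ trivial-pair coordinates.

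The conclusion would then follow immediately. Since $\pi$ is linear (hence smooth) and $\pi(\obarcode'_{m'})\in\mathcal{B}(\obarcode_m,\epsilon)\subset U$, the composition $d_{D_0}\circ Q_{m',n}=(d_{D_0}\circ Q_{m,n})\circ\pi$ is $C^\infty$ on $\mathcal{B}(\obarcode_{m'},\epsilon)$; by the chain rule its gradient equals $d\pi^{*}\nabla(d_{D_0}\circ Q_{m,n})(\pi(\obarcode'_{m'}))$, i.e.\ the gradient of $d_{D_0}\circ Q_{m,n}$ extended by zeros at the trivial-pair positions. Its non-zero components therefore coincide with those of $\nabla(d_{D_0}\circ Q_{m,n})$, which by Lemma~\ref{lemma_generic_diff_bottleneck_pre_comp_quotient} match patterns (i) or (ii) of Proposition~\ref{prop:dist__diag_diff}. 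The main obstacle, I expect, is the bookkeeping: keeping track of which coordinates are trivial, controlling the $\mathfrak{S}_{m'}\times \mathfrak{S}_n$ symmetries across different pre-images, and verifying that one single $\epsilon$ suffices simultaneously for the countable family of all $\obarcode_{m'}$, which is exactly what the $D$-only dependence of $\epsilon_1$ and $U$ delivers.
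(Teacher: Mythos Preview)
Your approach is essentially the paper's: describe the fibre $Q_{m',n}^{-1}(D)$, invoke Lemma~\ref{lemma_sending_points_diagonal} to show that perturbed trivial pairs can be disregarded, obtain the factorisation $d_{D_0}\circ Q_{m',n}=(d_{D_0}\circ Q_{m,n})\circ\pi$ through a linear projection, and conclude by the chain rule. The paper's map $L_{m',m}$ and your $\pi$ differ only by the permutation you absorbed via $\mathfrak{S}_{m'}\times\mathfrak{S}_n$-equivariance, and your observation that $\epsilon_1$ and $U$ depend only on $D$ is exactly what makes a single $\epsilon$ work for all pre-images.

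There is one genuine gap in your last step. Lemma~\ref{lemma_generic_diff_bottleneck_pre_comp_quotient} asserts that the gradient of $d_{D_0}\circ Q_{m,n}$ is either \emph{zero} or of type (i) or (ii); it does not by itself exclude the zero case, so your final sentence does not yet deliver the gradient claim in the statement. The paper closes this by first shrinking the neighborhood so that $Q_{m,n}(U)\subset\hat{Bar}$ (legitimate since $\hat{Bar}$ is open and contains $D$), and then arguing that a vanishing gradient at some $\obarcode'\in U$ would force $\db(Q_{m,n}(\obarcode'),D_0)=\tfrac{|d_0-b_0|}{2}$ for some off-diagonal $(b_0,d_0)\in D_0$---the constant term in the $\min$--$\max$ expression of Lemma~\ref{lemma_generic_diff_bottleneck_pre_comp_quotient}---contradicting membership in $\hat{Bar}$. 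You should insert this argument before invoking the gradient patterns.
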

\begin{proof}
Is is clear that the set of minimal ordered barcodes in $\R^{2m+n}$ is open. We address the second part of the Lemma. Let $\obarcode{}_m\in \R^{2m+n}$ be a minimal ordered barcode such that $D:=Q_{m,n}(\obarcode{}_m)\in \hat{Bar}$, and assume there is an open neighborhood $U$ of $\obarcode{}_m$ within which $d_{D_0}\circ Q_{m,n}$ is $C^\infty$. By continuity of the quotient map and from the fact that $\hat{Bar}$ is open, we can assume without loss of generality that $Q_{m,n}(U)$ is contained in $\hat{Bar}$.

For any other pre-image $\obarcode_{m'}\in \R^{2m'+n}$ of $D$, i.e. an ordered barcode such that $Q_{m',n}(\obarcode_{m'})=D=Q_{m,n}(\obarcode_{m})$, the first $m'$ adjacent pairs of components of $\obarcode_{m'}$ must describe  in an arbitrary order the $m$ bounded off-diagonal points of $D$ together with $m'-m$ trivial pairs of the form $(b,b)$. The last $n$ components of $\obarcode_{m'}$ must be in correspondance with the left endpoints of infinite intervals in $D$. In other words, the first $2m'$ components of $\obarcode{}_{m'}$ consist of a re-ordering of the first $2m$ components of $\obarcode{}_{m}$, together with $m'-m$ trivial pairs of the form $(b,b)$. The last $n$ components of $\obarcode{}_{m'}$ consist of a re-ordering of those of $\obarcode{}_{m}$. 

To every pre-image $\obarcode_{m'}$ of $D$ as above, we associate the linear projection $L_{m',m}:\R^{2m'+n}\rightarrow \R^{2m+n}$ that sends $\obarcode{}_{m'}$ to $\obarcode{}_{m}$ by re-arranging the $m$ non trivial pairs of components and the $n$ last components, and forgetting the $m'-m$ trivial pairs. Since $D\in \hat{Bar}$, Lemma~\ref{lemma_sending_points_diagonal} provides an $\epsilon>0$ such that for any $D'\in \mathcal{B}(D,\epsilon)$, the points of $D'$ that are in $\Delta_\epsilon$ may be sent onto the diagonal when computing the bottleneck distance from $D'$ to $D_0$, and furthermore they can be disregarded when computing $\db(D',D_0)$. Therefore, using that the quotient map $Q_{m',n}$ is $1$-Lipschitz, we know that for any pre-image $\obarcode{}_{m'}$ of $D$ and $\obarcode{}'_{m'}\in \mathcal{B}(\obarcode{}_{m'}, \epsilon)$, the $m'-m$ pairs of components $\obarcode{}'_{m'}$ with persistence less than $\epsilon$ can be disregarded when computing $d_{D_0}\circ Q_{m',n}(\obarcode{}'_{m'})$. Formally, for every $m'\in \mathbb{N}$,
\begin{equation}
\label{equation_equality_minimal_db_all_preimages}
    \forall \obarcode_{m'}\in Q_{m',n}^{-1}(D), \, \forall  \obarcode{}'_{m'} \in \mathcal{B}(\obarcode{}_{m'}, \epsilon), \, \, d_{D_0}\circ Q_{m',n}(\obarcode{}'_{m'})=d_{D_0}\circ Q_{m,n} \circ L_{m',m}(\obarcode{}'_{m'}).
\end{equation}
Note that the maps $L_{m',m}$ are $1$-Lipschitz. Therefore, we can reduce $\epsilon$ in order to ensure that $L_{m',m}(\mathcal{B}(\obarcode{}_{m'},\epsilon))\subset U$ for every pre-image $\obarcode{}_{m'}$ of $D$. Applying the chain rule on $d_{D_0}\circ Q_{m,n}$ and $L_{m',m}$ ---which is an affine map hence $C^\infty$--- in Equation~\eqref{equation_equality_minimal_db_all_preimages}, we obtain that all the maps $d_{D_0}\circ Q_{m',n}$ are $C^\infty$ in $\mathcal{B}(\obarcode{}_{m'}, \epsilon)$. Also by the chain rule, by definition of $L_{m',m}$, the components of the gradients of the maps $d_{D_0}\circ Q_{m',n}$ are a re-ordering of the components of the gradient of $d_{D_0}\circ Q_{m,n}$. By Lemma~\ref{lemma_generic_diff_bottleneck_pre_comp_quotient}, the gradient of the latter is either $0$ or as in $(i)$ or $(ii)$ of Proposition~\ref{prop:dist__diag_diff}. However, the gradient of $d_{D_0}\circ Q_{m,n}$ being $0$ at some elements $\obarcode{}_m'\in U$ would mean that the bottleneck distance $\db(Q_{m',n}(\obarcode{}_m'),D_0)$ equals the distance of some off-diagonal interval $(b_0,d_0)$ to its diagonal projection, which is impossible since $Q_{m',n}(\obarcode{}_m')\in \hat{Bar}$.
\end{proof}
By means of Lemma~\ref{lemma_minimality_implies_compositions_smooth}, we can deduce at once the differentiability of all the maps $d_{D_0}\circ Q_{m',n}$ over balls of the same radius. We need a last result that connects these balls to an actual open neighborhood of $D$ in $Bar_n$.

\begin{lemma}
\label{lemma_convering_bottleneck_ball_Euclidean_ball}
For any $D\in Bar_n$, there exists an $\epsilon>0$ such that for every $m'\in \mathbb{N}$,
\[Q_{m',n}^{-1}(\mathcal{B}(D,\epsilon))\subseteq \bigcup_{\obarcode_{m'} \in R^{2m'+n}, Q_{m',n}(\obarcode{}_{m'})=D} \mathcal{B}(\obarcode{}_{m'},\epsilon).\]
\end{lemma}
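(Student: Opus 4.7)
The plan is to pick $\epsilon$ small enough that the ``multiset structure'' of $D$ is preserved under $\epsilon$-bottleneck perturbations, and then explicitly build a pre-image of $D$ from any pre-image of a nearby $D'$ by swapping matched coordinates. Write $D = \{(b_i, d_i)\}_{i=1}^m \cup \{(v_j, +\infty)\}_{j=1}^n \cup \Delta^\infty$ with $b_i < d_i$, and set
\[ \eta := \min_{1 \leq i \leq m} \frac{d_i - b_i}{2} > 0 \quad \text{(with the convention } \eta = +\infty \text{ if } m = 0\text{).} \]
I will choose $\epsilon < \eta$. The role of this choice is to guarantee that any matching $\gamma : D \to D'$ of cost less than $\epsilon$ must send each of the $m$ bounded off-diagonal points of $D$ to a bounded off-diagonal point of $D'$ (otherwise $\gamma$ would send such a point to the diagonal at cost $\geq \eta > \epsilon$), and must send each of the $n$ infinite bars of $D$ to one of the $n$ infinite bars of $D'$.

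Now fix $m' \in \mathbb{N}$ and $\obarcode'_{m'} = [(b'_k, d'_k)_{k=1}^{m'}, (v'_l)_{l=1}^n] \in \R^{2m'+n}$ with $D' := Q_{m',n}(\obarcode'_{m'}) \in \mathcal{B}(D, \epsilon)$. Fix a matching $\gamma : D \to D'$ with $c(\gamma) < \epsilon$ (existence comes from the definition of the bottleneck distance). By the preceding paragraph, after a suitable relabeling, there is an injection $\iota : \{1, \ldots, m\} \hookrightarrow \{1, \ldots, m'\}$ and a bijection $\tau : \{1, \ldots, n\} \to \{1, \ldots, n\}$ such that $\gamma$ sends $(b_i, d_i) \mapsto (b'_{\iota(i)}, d'_{\iota(i)})$ and $(v_j, +\infty) \mapsto (v'_{\tau(j)}, +\infty)$, with $\max(|b_i - b'_{\iota(i)}|, |d_i - d'_{\iota(i)}|) < \epsilon$ and $|v_j - v'_{\tau(j)}| < \epsilon$. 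The remaining pairs $(b'_k, d'_k)$ for $k \notin \iota(\{1, \ldots, m\})$ correspond to points of $D'$ matched by $\gamma$ to the diagonal; by definition of the matching cost, each of them satisfies $\tfrac{d'_k - b'_k}{2} < \epsilon$.

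Define $\obarcode_{m'} \in \R^{2m'+n}$ coordinate-by-coordinate as follows. For each $i \in \{1, \ldots, m\}$, set the $\iota(i)$-th pair of $\obarcode_{m'}$ equal to $(b_i, d_i)$. For each $k \notin \iota(\{1, \ldots, m\})$, set the $k$-th pair to the trivial diagonal pair $\bigl(\tfrac{b'_k + d'_k}{2}, \tfrac{b'_k + d'_k}{2}\bigr)$. Finally, for each $j \in \{1, \ldots, n\}$, set the $\tau(j)$-th last coordinate to $v_j$. By construction $Q_{m',n}(\obarcode_{m'}) = D$. Moreover, component-by-component:
\begin{itemize}
    \item on the pairs indexed by $\iota(i)$, the $\ell^\infty$-difference is $\max(|b_i - b'_{\iota(i)}|, |d_i - d'_{\iota(i)}|) < \epsilon$;
    \item on the trivial pairs, the $\ell^\infty$-difference is $\tfrac{d'_k - b'_k}{2} < \epsilon$;
    \item on the last coordinates, the difference is $|v_j - v'_{\tau(j)}| < \epsilon$.
\end{itemize}
Thus $\|\obarcode_{m'} - \obarcode'_{m'}\|_\infty < \epsilon$, i.e. $\obarcode'_{m'} \in \mathcal{B}(\obarcode_{m'}, \epsilon)$, as required.

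The main care point, rather than an obstacle, is the bookkeeping in defining $\obarcode_{m'}$: one must ensure that replacing the chosen pairs of components of $\obarcode'_{m'}$ with the corresponding endpoints of $D$ in the \emph{same positions} (and in the same internal order, $b$ before $d$) yields a valid ordered pre-image of $D$. This is where the condition $\epsilon < \eta$ is essential, since it ensures that each matched pair $(b'_{\iota(i)}, d'_{\iota(i)})$ already satisfies $b'_{\iota(i)} < d'_{\iota(i)}$ with the same ordering as $(b_i, d_i)$, so the component-wise replacement is compatible with $Q_{m',n}$.
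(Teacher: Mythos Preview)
Your proof is correct and follows essentially the same scheme as the paper's: given a pre-image $\obarcode'_{m'}$ of some $D'\in\mathcal{B}(D,\epsilon)$, you build a pre-image $\obarcode_{m'}$ of $D$ by replacing each pair of components either by a corresponding off-diagonal point of $D$ or by its diagonal projection, and then check the coordinate-wise $\ell^\infty$ distance. The one genuine difference is organizational: the paper first classifies the pairs of $\obarcode'_{m'}$ by proximity to the off-diagonal points of $D$ (which requires $\eta$ to also bound the pairwise distances between geometrically distinct points of $D$, and $\epsilon<\eta/2$), whereas you route everything through the optimal matching $\gamma$ and the induced injection $\iota$. Your version is slightly more economical, since it only needs $\epsilon<\min_i\frac{d_i-b_i}{2}$ and avoids the separation hypothesis between distinct off-diagonal points. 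Your closing remark about the ``internal order $b$ before $d$'' is harmless but unnecessary: the condition $\epsilon<\eta$ is essential precisely where you first used it (to force $\gamma$ to send off-diagonal points of $D$ to off-diagonal points of $D'$), not for any ordering compatibility in the coordinates of $\obarcode_{m'}$.
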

\begin{proof}
Let $D\in Bar_n$, and $\eta>0$ be less than all the pairwise distances between geometrically distinct off-diagonal points in $D$, and less than all the distances from off-diagonal points in $D$ to the diagonal. We take $\epsilon>0$ such that $\epsilon<\frac{\eta}{2}$. Let $D'\in \mathcal{B}(D,\epsilon)$. Then, for every off-diagonal point $(b,d)$ of $D$, the number of (off-diagonal) points of $D'$ lying in $\mathcal{B}((b,d),\epsilon)$ equals the multiplicity of $(b,d)$ in $D$. Let us say that these points in $D'$ are of {\em type} {\bf (a)}. The points of $D'$ that are not in the balls $\mathcal{B}((b,d),\epsilon)$, for $(b,d)$ ranging over off-diagonal intervals of $D$, must be $\epsilon$-close to the diagonal, and we say that these points are of type {\bf (b)}. Note that we can accordingly characterize the components of a pre-image $\obarcode{}'_{m'}\in Q_{m',n}^{-1}(D')$: the pairs of components in $\obarcode{}'_{m'}$ must either be trivial (i.e of the form $(b,b)$), or equal to some off-diagonal point of type {\bf (a)} or {\bf (b)}. All off-diagonal points of $D'$, of type {\bf (a)} or {\bf (b)}, counted with multiplicity, must appear as a pair in $\obarcode{}'_{m'}$.

Given such a pre-image $\obarcode{}'_{m'}\in Q_{m',n}^{-1}(\mathcal{B}(D,\epsilon))$ of $D'$, we construct another ordered barcode $\obarcode{}_{m'}\in \R^{2m'+n}$ by modifying the components of $\obarcode{}'_{m'}$ at cost less than $\epsilon$ (i.e such that $\|\obarcode{}'_{m'}-\obarcode{}_{m'}\|_\infty <\epsilon$) as follows:
\begin{itemize}
    \item The last $n$ components of $\obarcode{}'_{m'}$ parametrize the left endpoints of infinite intervals in $D'$. We change them at cost less than $\epsilon$ into the left endpoints of infinite intervals in $D$.
    \item If a pair $(b',d')$ among the first $m'$ pairs of components of $\obarcode{}'_{m'}$ is of type {\bf (a)}, it is $\epsilon$-close to a unique off-diagonal point $(b,d)$ of $D$. We change it into $(b,d)$. 
    \item If a pair $(b',d')$ among the first $m'$ pairs of components of $\obarcode{}'_{m'}$ is of type {\bf (b)}, it is $\epsilon$-close to the diagonal. We transform it into $(\frac{b'+d'}{2},\frac{b'+d'}{2})$.
    \item The remaining pairs in the first~$m'$ pairs of components of $\obarcode{}'_{m'}$ must be trivial, and we leave them unchanged.
\end{itemize}
In this way, we have constructed an ordered barcode $\obarcode{}_{m'}$ such that $\obarcode'_{m'}\in \mathcal{B}(\obarcode_{m'},\epsilon)$ and also, by construction, $\obarcode_{m'}$ is a pre-image of $D$, i.e $Q_{m',n}(\obarcode_{m'})=D$.
\end{proof}
We are now ready to prove Proposition~\ref{prop:dist__diag_diff}.
\begin{proof}[Proof of Proposition~\ref{prop:dist__diag_diff}]
Consider the set of barcodes $D\in Bar_n$ that admit an open neighborhood within which $d_{D_0}$ is $\infty$-differentiable. By definition, this set is open in $Bar_n$, and we are left to show that it is also dense. Given an arbitrary $D\in Bar^n$, we will perform a series of infinitesimal perturbations of $D$, so that there exists a (small) open neighborhood $U$ of $D$ over which $d_{D_0}$ is $\infty$-differentiable.

Since $\hat{Bar}$ is generic in $Bar_n$, up to an infinitesimal perturbation, we can assume that $D$ lies in $\hat{Bar}$. Let $\obarcode{}_m\in \R^{2m+n}$ be a minimal pre-image of $D$. By Lemma~\ref{lemma_minimality_implies_compositions_smooth}, the set of minimal ordered barcodes in $\R^{2m+n}$ is open. Moreover, $d_{D_0}\circ Q_{m,n}$ is smooth on a generic subset of~$\R^{2m+n}$ by Lemma~\ref{lemma_generic_diff_bottleneck_pre_comp_quotient}. Therefore, up to an infinitesimal perturbation of $\obarcode{}_m$ (which results in an infinitesimal perturbation of~$D$ by continuity of~$Q_{m,n}$), we can further assume that~$d_{D_0}\circ Q_{m,n}$ is smooth on a ball $\mathcal{B}(\obarcode{}_m,\epsilon)$ for some~$\epsilon>0$, while $\obarcode{}_m$ remains minimal and~$D$ stays in~$\hat{Bar}$. 

Reducing $\epsilon$ if necessary, by Lemma~\ref{lemma_minimality_implies_compositions_smooth} all the maps $d_{D_0}\circ Q_{m',n}$ are smooth over $\mathcal{B}(\obarcode{}_{m'},\epsilon)$, with gradients as in $(i)$ or $(ii)$ of Proposition~\ref{prop:dist__diag_diff}, where $\obarcode{}_{m'}$ ranges over the pre-images of $D$. Reducing $\epsilon$ further if necessary, we conclude that $d_{D_0}$ is $\infty$-differentiable over $\mathcal{B}(D,\epsilon)$ by Lemma~\ref{lemma_convering_bottleneck_ball_Euclidean_ball}.
\end{proof}
\end{document}